\title{Cuspidal Calogero--Moser and Lusztig families\protect\\ for Coxeter groups} 
\author{Gwyn Bellamy and Ulrich Thiel}
\keywords{}
\newcolumntype{L}[1]{>{\raggedright\let\newline\\\arraybackslash\hspace{0pt}}m{#1}}
\newcolumntype{C}[1]{>{\centering\let\newline\\\arraybackslash\hspace{0pt}}m{#1}}
\newcolumntype{R}[1]{>{\raggedleft\let\newline\\\arraybackslash\hspace{0pt}}m{#1}}
\begin{document}  

\blfootnote{Date: May 26, 2016 (first version: May 3, 2015)} 
\blfootnote{Final version to appear in J. Algebra}
\blfootnote{\textsc{Gwyn Bellamy}, School of Mathematics and Statistics, University Gardens, University of Glasgow, Glasgow, G12 8QW, UK, \texttt{gwyn.bellamy@glasgow.ac.uk}}
\blfootnote{\textsc{Ulrich Thiel}, Universität Stuttgart, Fachbereich Mathematik, Institut für Algebra und Zahlentheorie, Lehrstuhl für Algebra, Pfaffenwaldring 57, 70569 Stuttgart, Germany,  \texttt{thiel@mathematik.uni-stuttgart.de}}

\begin{abstract}
The goal of this paper is to compute the cuspidal Calogero--Moser families for all infinite families of finite Coxeter groups, at all parameters. We do this by first computing the symplectic leaves of the associated Calogero--Moser space and then by classifying certain ``rigid'' modules. Numerical evidence suggests that there is a very close relationship between Calogero--Moser families and Lusztig families. Our classification shows that, additionally, the cuspidal Calogero–Moser families equal cuspidal Lusztig families for the infinite families of Coxeter groups. 
\end{abstract}

\maketitle
\thispagestyle{empty}

\section*{Introduction}

Based on the relationship between Dunkl operators, the Knizhnik–Zamolodchikov connection,  and Hecke algebras, it became apparent very soon after the introduction of rational Cherednik algebras by Etingof and Ginzburg \cite{EG} that there is a very close connection between these algebras and cyclotomic Hecke algebras \cite{BMR}. This connection is encoded in the Knizhnik–Zamolodchikov functor, introduced in \cite{GGOR}, and is a key tool in the representation theory of rational Cherednik algebras at $t \neq 0$. 

In the quasi-classical limit $t = 0$ the Knizhnik–Zamolodchikov functor no longer exists and no functorial connection to Hecke algebras is currently known. Astonishingly, as first noticed by Gordon and Martino \cite{GordonMartinoCM}, it seems that there is still, none the less, a close relationship between rational Cherednik algebras in $t=0$ and Hecke algebras, suggesting that there may be an asymptotic Knizhnik–Zamolodchikov functor in the quasi-classical limit. The aim of this article is to add weight to this expectation by comparing cuspidal Calogero–Moser families with cuspidal Lusztig families. 

\subsection*{Families} 
Etingof and Ginzburg \cite{EG} defined, for any finite reflection group $(\fh,W)$ and a function $\bc:\Ref(W) \rarr \bbC$ from the set of reflections of $W$ to the complex numbers which is invariant under $W$-conjugation, the rational Cherednik algebra $\H_\bc(W)$ at $t=0$. The spectrum of the centre of this algebra is an affine Poisson deformation $\cX_\bc(W)$ of the symplectic singularity $(\fh \times \fh^*)/W$, called the Calogero–Moser space.  This theory exists in particular for finite Coxeter groups $W$. In this case, one can also attach to $W$ the Hecke algebra $\mathcal{H}_L(W)$ depending on a weight function $L:W \rarr \bbR$. The space of weight functions $L$ and the space of real valued $\bc$-functions is the same so that one can relate invariants coming from Hecke algebras with those coming from rational Cherednik algebras.

Gordon \cite{Baby} has defined the notion of Calogero–Moser $\bc$-families of $\Irr(W)$, which on the geometric side correspond to the $\bbC^*$-fixed points of the Calogero–Moser space $\cX_\bc(W)$. Work of several people, in particular Gordon and Martino, has shown that:

\begin{fact*}\label{thm:familiesequal} 
If $W$ is a Coxeter group of type $A,B,D$, $I_2(m)$ or $H_3$, then the Lusztig $\bc$-families equal the Calogero–Moser $\bc$-families for all $\bc:\Ref(W) \rarr \bbR$. 
\end{fact*}

We refer to \S\ref{lusztig_fam_vs_cm_fam} for more details. It is conjectured by Gordon–Martino \cite{GordonMartinoCM} that this is indeed true for all finite Coxeter groups; see also Bonnafé–Rouquier \cite{BonnafeRouquier}. There is so far no conceptual explanation for this connection. Bonnafé and Rouquier \cite{BonnafeRouquier} furthermore constructed analogs of constructible characters and cells on the Calogero–Moser side, and collected evidence supporting their conjecture that these notions coincide with Lusztig's notions; see also \cite{Bonnafe:2013aa}.

\subsection*{Cuspidal families}The key to defining constructible representations and Lusztig families for Hecke algebras is Lusztig's truncated induction, also called $\bj$-induction. This leads to the concept of \textit{cuspidal} Lusztig families, which are those that cannot be described as being $\bj$-induced from a family for a proper parabolic subgroup. Cuspidal families play a key role in describing certain unipotent representations for the corresponding finite groups of Lie type. In \cite{Cuspidal} the first author also introduced the notion of cuspidal Calogero–Moser families. This time the definition is geometric: a family is cuspidal if the support of every module in the family is a zero-dimensional symplectic leaf of the Calogero–Moser space. In this article we determine the cuspidal Calogero–Moser families for the Coxeter groups of type $A,B,D$ and $I_2(m)$. Our main result states (see \S\ref{cuspidal_families}):

\begin{maintheorem}\label{thm:mainresultintro} 
If $W$ is of type $A,B,D$ or $I_2(m)$, then the \textit{cuspidal} Lusztig $\bc$-families equal the \textit{cuspidal} Calogero–Moser $\bc$-families for all $\bc:\Ref(W) \rarr \bbR$.
\end{maintheorem}

The proof follows from a case-by-case analysis in sections \S\ref{type_A} to \S\ref{dihedral_groups} using theoretical methods we develop in section \S\ref{calculating}. Based on this theorem we make the following conjecture.

\begin{mainconjecture}\label{cm_cusp_lusztigz_cusp_conjecture}
For any finite Coxeter group the cuspidal Lusztig $\bc$-families equal the cuspidal Calogero–Moser $\bc$-families for all real parameters $\bc$.
\end{mainconjecture}

Because of Theorem \ref{thm:mainresultintro} this conjecture remains open only for the six exceptional Coxeter groups $H_3,H_4,F_4,E_6,E_7,E_8$.

\subsection*{Rigid representations} The main ingredient for calculating the cuspidal Calogero–Moser families, and hence confirming Theorem \ref{thm:mainresultintro}, is the notion of a \word{rigid} module: a $\H_{\bc}(W)$-module is said to be rigid if it is irreducible as a $W$-module. These have already played a role in the representation theory of rational Cherednik algebras at $t  \neq 0$, see e.g. \cite{Finitedimreps} or \cite{MontaraniEtingof}, and at $t = 0$ they were studied by the second author in \cite{Thiel.UOn-restricted-ration}.  The terminology comes from the theory of module varieties. Namely, for any $d < |W|$, we show in Lemma \ref{lem:openrep} that the set $X$ of rigid modules in $\Rep_d(\H_\bc(W))$, the variety parameterizing representations of dimension $d$, is open. Therefore, though these modules often appear in families with respect to the parameter $\bc$, the module structure (for fixed parameter $\bc$) on a rigid module cannot be deformed to a continuous family. This is the first clue that there is a strong connection between rigid representations and zero-dimensional leaves of $\cX_{\bc}(W)$ (and hence to cuspidal Calogero-Moser families). 

In this article we classify the rigid modules for all non-exceptional Coxeter groups and all parameters. The importance of these modules is explained by our second main result which we prove in \S\ref{calculating}:   

\begin{maintheorem} \label{maintheorem:rigid_cuspidal}
Let $W$ be an arbitrary finite complex reflection group. If the simple module $L_\bc(\lambda)$, where $\lambda \in \Irr (W)$, is a rigid $\H_\bc(W)$-module, then the Calogero–Moser $\bc$-family to which it belongs is cuspidal. 
\end{maintheorem}

Rigid modules are easily computed, and using Theorem \ref{maintheorem:rigid_cuspidal} this allows us to identify certain cuspidal families. Remarkably, for the non-exceptional Coxeter groups we can show that the cuspidal Calogero–Moser families are precisely those containing the rigid modules. The cuspidal Lusztig families are similarly characterized.

\begin{remark*}
While this paper was in preparation, the preprint \cite{Ciubotaru:2015aa} appeared, where rigid modules also play a key role (though the definition there is slightly different). Based on the analogy with affine Hecke algebras, they are called "one-$W$-type" modules in \textit{loc. cit.}. In the preprint \cite{Ciubotaru:2015aa} the author gives a different notion of \textit{cuspidal} Calogero--Moser families. Namely, in \emph{loc. cit.} a family is said to be cuspidal if it contains a rigid module. By Theorem \ref{maintheorem:rigid_cuspidal}, every cuspidal family in our sense is cuspidal in the sense of \cite{Ciubotaru:2015aa}. However, it is clear that for most complex reflection groups that are not of Coxeter type  there exist many cuspidal families (in our sense) that are not cuspidal in the sense of \emph{loc. cit.}. Moreover, as shown in \emph{loc. cit.}, Conjecture \ref{cm_cusp_lusztigz_cusp_conjecture} is \textit{false} for the Weyl group of type $E_7$ if we use the definition of cuspidal used in \emph{loc. cit.}
\end{remark*}

\subsection*{Symplectic leaves}

As previously noted, the notion of cuspidal Calogero–Moser families depends on the fact that the Calogero–Moser space $\cX_{\bc}(W)$ is stratified by finitely many symplectic leaves. These leaves are naturally labeled by conjugacy classes of parabolic subgroups $(W')$ of $W$. There are two natural partial orderings on the set of symplectic leaves: a \textit{geometric} one given in terms of the closures of leaves, and another, \textit{algebraic} one given in terms of inclusions of parabolic subgroups. It is clear that the geometric ordering refines the algebraic ordering. 

Using results of Martino, we describe all symplectic leaves for the Coxeter groups of type $A,B,D$ and $I_2(m)$ in terms of the conjugacy classes of parabolic subgroups. We also describe the two orderings on the set of symplectic leaves in these cases (see Theorem \ref{thm:BSympleaves}, Theorem \ref{thm:Dleaves} and \cite[Tables 1,2]{Cuspidal}). Based on this we arrive at the following conjecture.

\begin{mainconjecture}\label{conj_leaves}
Let $W$ be a finite Coxeter group. 
\begin{enum_thm}
\item Each conjugacy class of parabolic subgroups $(W')$ labels at most one symplectic leaf. 
\item The geometric ordering on leaves equals the algebraic ordering.  
\end{enum_thm}
\end{mainconjecture}

We note that both statements of Conjecture \ref{conj_leaves} may fail if $W$ is not a Coxeter group.

\subsection*{Clifford Theory}

Our results for Coxeter groups of type $D$ are deduced from the corresponding results for the groups of type $B$ using the fact that $D_n \lhd B_n$. More generally, we consider a complex reflection group $(\h,W)$ and a normal subgroup $K \lhd W$ such that $(\h |_{K},K)$ is also a reflection group. This situation is also considered in \cite{CMPartitions} and by Liboz \cite{Liboz}.

Based on a suggestion of Rouquier, we show that $\Gamma := W/K$ acts on the Calogero–Moser space $\cX_{\bc}(K)$ such that $\cX_{\bc}(W) = \cX_{\bc}(K)/ \Gamma$. This allows us to deduce the Calogero–Moser families for $K$ from the Calogero–Moser families for $W$, generalising results of \cite{CMPartitions}. Cuspidal families and rigid representations behave well under this correspondence. We also describe the symplectic leaves in $\cX_{\bc}(K)$ in terms of those of $\cX_{\bc}(W)$.

\begin{ack}
The authors would like to thank Cédric Bonnafé and Meinolf Geck for many fruitful discussions. We also thank Dan Ciubotaru for informing us about his preprint \cite{Ciubotaru:2015aa} and his result that for $E_7$ the cuspidal Lusztig family does not contain rigid modules. Moreover, we would like to thank Gunter Malle for commenting on a preliminary version of this article. The second author was partially supported by the DFG Schwerpunktprogramm 1489.
\end{ack}

\newpage

\tableofcontents

\section{Calogero–Moser families}

We begin by recalling the definition of the main protagonists of this paper—the Calogero–Moser families for complex reflection groups. They are obtained from the block structure of the restricted rational Cherednik algebra studied by Gordon \cite{Baby}, which is a finite-dimensional quotient of the rational Cherednik algebra introduced by Etingof and Ginzburg \cite{EG}.

\subsection{Rational Cherednik algebras}

Let $(\h,W)$ be a finite complex reflection group. By this we mean that $W$ is a non-trivial finite subgroup of $\GL(\h)$ for some finite-dimensional complex vector space $\h$ such that $W$ is generated by its set $\Ref(W)$ of \word{reflections}, i.e., by those elements $s \in W$ such that $\Ker(\id_\fh - s)$ is of codimension one in $\fh$. Let $( \cdot, \cdot ) : \mathfrak{h} \times \mathfrak{h}^* \rightarrow \C$ be the natural pairing defined by $(y,x) = x(y)$. For $s \in \Ref(W)$ we fix $\alpha_s \in \mathfrak{h}^*$ to be a basis of the one-dimensional space $\Im (s - 1)|_{\mathfrak{h}^*}$ and $\alpha_s^{\vee} \in \mathfrak{h}$ to be a basis of the one-dimensional space $\Im (s - 1)|_{\mathfrak{h}}$, normalised so that $\alpha_s(\alpha_s^\vee) = 2$. Our discussion will not depend on the choice of  $\alpha_s$ and $\alpha_s^\vee$. Note that the group $W$ acts on $\Ref(W)$ by conjugation. Choose a function $\mathbf{c} : \Ref(W) \rightarrow \C$ which is invariant under $W$-conjugation (we say that $\bc$ is \word{$W$-equivariant}) and furthermore choose a complex number $t \in \bbC$. The \textit{rational Cherednik algebra} $\H_{t,\mathbf{c}}(W)$, as introduced by Etingof and Ginzburg \cite{EG}, is the quotient of the skew group algebra of the tensor algebra, $T(\mf{h} \oplus \mf{h}^*) \rtimes W$, by the ideal generated by the relations $[x,x'] = [y,y'] = 0$ for all $x,x' \in \mathfrak{h}^*$ and $y,y' \in \mathfrak{h}$, and 
\begin{equation}\label{eq:rel}
[y,x] = t(y,x)  - \!\! \sum_{s \in \Ref(W)} \bc(s) (y,\alpha_s)(\alpha_s^\vee,x) s \;, \quad \forall \ y \in \h, \ x \in \h^* \;. 
\end{equation}
We concentrate on the case $t=0$ and set $\H_\bc \dopgleich \H_{0,\bc}$. 
For any $\alpha \in \C \backslash \{ 0 \}$, the algebras $\H_{\alpha \mathbf{c}}(W)$ and $\H_{\mathbf{c}}(W)$ are naturally isomorphic. Therefore we are free to rescale $\mbf{c}$ by $\alpha$ whenever this is convenient. A fundamental result for rational Cherednik algebras, proved by Etingof and Ginzburg \cite[Theorem 1.3]{EG}, is that the \word{PBW property} holds for all $\mbf{c}$, i.e., the natural map 
\begin{equation}\label{eq:PBW}
 \C [\h] \otimes_\C \C W \otimes_\C \C [\h^*] \rarr \H_\bc(W) 
\end{equation}
is an isomorphism of $\bbC$-vector spaces. The rational Cherednik algebra is naturally $\Z$-graded by $\deg(x) = 1$ for $x \in \h^*$, $\deg(y) = -1$ for $y \in \h$, and $\deg(w) = 0$ for $w \in W$. We note that no such grading exists for general symplectic reflection algebras. 

\subsection{Calogero–Moser space}

The centre $\ZH_{\bc}(W)$ of $\H_{\bc}(W)$ is an affine domain. We shall denote by $\cX_{\mathbf{c}}(W) := \Spec(\ZH_{\mathbf{c}}(W))$ the corresponding affine variety. It is called the (generalized) \textit{Calogero–Moser space} associated to $W$ at parameter $\mathbf{c}$. These varieties define a flat family of deformations of $(\fh \oplus \fh^*)/W$ over the affine $\bbC$-space of dimension $|\Ref(W)/W|$. The following was shown for Coxeter groups in \cite[Proposition 4.15]{EG}, and the general case is due to \cite[Proposition 3.6]{Baby}. 

\begin{prop}[Etingof–Ginzburg, Gordon]
The subspace $D(W) \dopgleich \C[\h]^W \otimes_\bbC \C[\h^*]^W$ of $\H_{\mbf{c}}(W)$ is a central subalgebra and $\ZH_{\mbf{c}}(W)$ is a free $D(W)$-module of rank $|W|$.
\end{prop}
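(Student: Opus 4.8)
The plan is to treat the two claims separately: centrality of $D(W)$ by a direct derivation computation, and freeness by passing to the associated graded ring and reducing to a statement in commutative algebra.

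For centrality, note first that $\C[\h^*]^W$ commutes with $\C[\h^*]$ and, being $W$-invariant, with $W$; the only thing to check is that it commutes with $\h$. Since $[y,-]$ is a derivation of $\H_\bc(W)$, an induction on $\deg f$ starting from \eqref{eq:rel} shows that for $y \in \h$ and $f \in \C[\h^*]$ one has
\[
[y,f] = \sum_{s \in \Ref(W)} a_s(y)\,\frac{f - s(f)}{\alpha_s}\,s,
\]
where $a_s(y) \in \C$ depends linearly on $y$ and $\frac{f-s(f)}{\alpha_s} \in \C[\h^*]$ is a genuine polynomial because $s$ fixes the hyperplane $\alpha_s = 0$. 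If $f \in \C[\h^*]^W$ then $f = s(f)$ for every reflection $s$ and the right-hand side vanishes, so $f$ is central; the symmetric argument gives centrality of $\C[\h]^W$. By the PBW isomorphism \eqref{eq:PBW} the two commuting central subalgebras generate a polynomial subalgebra $D(W) \subseteq \ZH_\bc(W)$.

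For freeness I would equip $\H_\bc(W)$ with the total-degree filtration $F$ ($\deg \h = \deg \h^* = 1$, $\deg W = 0$). The right-hand side of \eqref{eq:rel} has degree $0$, so $[\sigma(y),\sigma(x)] = 0$ in $\operatorname{gr}_F \H_\bc(W)$, and comparing Hilbert series via PBW gives $\operatorname{gr}_F \H_\bc(W) \cong \C[\h \oplus \h^*] \rtimes W$. Here $D(W)$ is already homogeneous, so $\operatorname{gr}_F D(W) = A := \C[\h]^W \otimes_\C \C[\h^*]^W$, and Chevalley shows $\H_\bc(W)$ is a finite $D(W)$-module, hence so is $\ZH_\bc(W)$. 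The commutative engine of the proof is that $Z_0 := \C[\h \oplus \h^*]^W$ is free of rank $|W|$ over $A$: the ring $A$ is a graded polynomial ring (Chevalley), $Z_0$ is Cohen--Macaulay (Hochster--Eagon), and $A \subseteq Z_0$ is a finite inclusion of equal Krull dimension $2\dim \h$, so miracle flatness makes $Z_0$ flat, hence graded-free, over $A$; the rank is $[\operatorname{Frac} Z_0 : \operatorname{Frac} A] = |W|^2/|W| = |W|$ by a Galois-degree count.

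The crux — and the step I expect to be the main obstacle — is to identify $\operatorname{gr}_F \ZH_\bc(W)$ with the full invariant ring $Z_0$. The inclusion $\operatorname{gr}_F \ZH_\bc(W) \subseteq Z\big(\operatorname{gr}_F \H_\bc(W)\big) = \C[\h\oplus\h^*]^W = Z_0$ is formal, but the reverse — that every $W$-invariant is the principal symbol of an honest central element — carries the real content. I would deduce it from the Satake isomorphism $\ZH_\bc(W) \xrightarrow{\ \sim\ } e\H_\bc(W)e$, valid precisely because $t = 0$, where $e := |W|^{-1}\sum_{w \in W} w$; since $e$ is homogeneous of degree $0$ one has $\operatorname{gr}_F(e\H_\bc(W)e) = e\,\big(\operatorname{gr}_F \H_\bc(W)\big)\,e = e\big(\C[\h\oplus\h^*]\rtimes W\big)e \cong Z_0$. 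Granting this identification, $\operatorname{gr}_F \ZH_\bc(W) = Z_0$ is free of rank $|W|$ over $\operatorname{gr}_F D(W) = A$, and the standard lifting lemma for filtered modules — lift a homogeneous $A$-basis of $Z_0$ to $\ZH_\bc(W)$ — produces a $D(W)$-basis of $\ZH_\bc(W)$ of cardinality $|W|$, which is the assertion.
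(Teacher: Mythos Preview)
The paper does not prove this proposition: it simply attributes it to \cite[Proposition~4.15]{EG} (Coxeter case) and \cite[Proposition~3.6]{Baby} (general case) and states it without argument, so there is no in-paper proof to compare against.

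Your argument is correct and is essentially the one in those references. The centrality computation via the Dunkl-type commutator formula is standard. For freeness, the passage to the associated graded, the Hochster--Eagon plus miracle-flatness argument that $\C[\h\oplus\h^*]^W$ is free of rank $|W|$ over $\C[\h]^W\otimes\C[\h^*]^W$, and the use of the Satake isomorphism at $t=0$ to identify $\operatorname{gr}_F\ZH_\bc(W)$ with $\C[\h\oplus\h^*]^W$ are exactly the steps in Etingof--Ginzburg. One point worth making explicit: the Satake map $z\mapsto ze$ is \emph{strictly} filtered---if $\sigma(z)\neq 0$ then $\sigma(z)e\neq 0$, since $\sigma(z)\in\C[\h\oplus\h^*]^W$ and $f\mapsto fe$ is injective on invariants---so passing to $\operatorname{gr}_F$ genuinely yields an isomorphism and not merely a surjection; with that checked, your lifting of a graded basis goes through because the filtration is non-negative and exhaustive.
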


The inclusions $\C[\mathfrak{h}]^W \hookrightarrow Z_{\mathbf{c}}(W)$ and $\C[\mathfrak{h}^*]^W \hookrightarrow Z_{\mathbf{c}}(W)$ define finite surjective morphisms 
\[
\pi_\bc : \cX_{\mathbf{c}}(W) \twoheadrightarrow \h/W \quad \tn{and} \quad \varpi_\bc :\cX_{\mathbf{c}}(W) \twoheadrightarrow \h^*/W \;.
\]
We write 
$$
\Upsilon_\bc \dopgleich \pi_\bc \times \varpi_\bc \,  : \,  \cX_{\mathbf{c}}(W) \twoheadrightarrow \h / W \times \h^*/W
$$
for the product morphism. It is a finite, and hence closed, surjective morphism. Note that both $\ZH_\bc(W)$ and $D(W)$ are graded subalgebras of $\H_\bc(W)$. This implies that $\cX_\bc(W)$ and $\h/W \times \h^*/W$ carry a $\bbC^*$-action making $\Upsilon_\bc$ a $\bbC^*$-equivariant morphism.

\subsection{Restricted rational Cherednik algebras}\label{sec:RRCA}

The inclusion of algebras $D(W) \hookrightarrow \ZH_{\mathbf{c}}(W)$ allows us to define the \textit{restricted rational Cherednik algebra} $\overline{\H}_{\mbf{c}}(W)$ as the quotient
\begin{displaymath}
\overline{\H}_{\mbf{c}}(W) = \frac{\H_{\mathbf{c}}(W)}{D(W)_+ \cdot H_{\mathbf{c}}(W)} \;,
\end{displaymath}
where $D(W)_+$ denotes the ideal in $D(W)$ of elements with zero constant term. This algebra was originally introduced, and extensively studied, by Gordon \cite{Baby}. The PBW theorem implies that 
\begin{equation} \label{rrca_triangular}
\overline{\H}_{\mbf{c}}(W) \simeq \C [\h]^{\mrm{co} W} \otimes_\C \C W \otimes_\C \C [\h^*]^{\mrm{co}W}
\end{equation}
as $\bbC$-vector spaces. Here, 
$$
\C [\h]^{\mrm{co} W} = \C [\h] / \langle \C [\h]^W_+ \rangle
$$
is the \textit{coinvariant algebra} of $W$ and $\bbC \lbrack \fh^* \rbrack^{\mrm{co}W}$ is defined analogously. Since $W$ is a reflection group, the coinvariant algebra $\C [\h]^{\mrm{co} W}$ is of dimension $|W|$ and is isomorphic to the regular representation as a $W$-module. Thus, $\dim \overline{\H}_{\mbf{c}}(W) = |W|^3$. The restricted rational Cherednik algebra is a quotient of $\H_{\mbf{c}}(W)$ by an ideal generated by homogeneous elements and so it is also a graded algebra. This combined with the triangular decomposition (\ref{rrca_triangular}) of $\ol{\H}_\bc(W)$ implies that the representation theory of $\ol{\H}_\bc(W)$ has a rich combinatorial structure. The following is due to Gordon \cite{Baby}, based on an abstract framework by Holmes and Nakano \cite{HN}. 
First of all, note that the skew-group algebra $\C [\h^*]^{\mrm{co} W} \rtimes W$ is a graded subalgebra of $\overline{\H}_{\mbf{c}}(W)$. %

\begin{defn}
The \textit{baby Verma module} of $\overline{\H}_{\mbf{c}}(W)$ associated to a $W$-module $\lambda$ is 
$$
\Delta_\bc(\lambda) := \overline{\H}_{\mbf{c}}(W) \otimes_{\C [\h^*]^{\mrm{co} W} \rtimes W} \lambda \;,
$$
where $\C [\h^*]^{\mrm{co} W}_+$ acts on $\lambda$ as zero. 
\end{defn}

The baby Verma module $\Delta_\bc(\lambda)$ is naturally a graded $\overline{\H}_{\mbf{c}}(W)$-module, where $1 \otimes \lambda$ sits in degree zero. By studying quotients of baby Verma modules, it is possible to completely classify the simple $ \overline{\H}_{\mbf{c}}(W)$-modules. We denote by $\Irr W$ the set of simple $W$-modules (up to isomorphism). Similarly, we understand $\Irr \ol{\H}_\bc(W)$.

\begin{prop}[Gordon]\label{prop:simplebaby}
Let $\lambda, \mu \in \Irr W$.
\begin{enumerate}
\item The baby Verma module $\Delta_\bc(\lambda)$ has a simple head. We denote it by $L_\bc(\lambda)$.
\item $L_\bc(\lambda)$ is isomorphic to $L_\bc(\mu)$ if and only if $\lambda \simeq \mu$.  
\item The map $\Irr W  \rarr \Irr \ol{\H}_\bc(W)$, $\lambda \rarr L_\bc(\lambda)$, is a bijection.
\end{enumerate}
\end{prop}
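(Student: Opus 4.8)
The plan is to run the standard highest-weight argument for the graded triangular algebra $\ol{\H}_\bc(W)$, using the decomposition \eqref{rrca_triangular} together with the grading. Write $A^{+} = \C[\h]^{\mrm{co}W}$ and $A^{-} = \C[\h^{*}]^{\mrm{co}W}$; the first is concentrated in degrees $\ge 0$ and the second in degrees $\le 0$, both are local, and their augmentation ideals are nilpotent. In particular $\mathfrak{n} := \C[\h^{*}]^{\mrm{co}W}_{+}$ is nilpotent and concentrated in strictly negative degrees. The decisive structural features are that the degree-zero component $\ol{\H}_\bc(W)_0 = A^{+}_0 \otimes \C W \otimes A^{-}_0 = \C W$ is semisimple, while the outer factors are nilpotent and oppositely graded. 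From \eqref{rrca_triangular} the baby Verma module $\Delta_\bc(\lambda) \cong A^{+} \otimes_\C \lambda$ is concentrated in non-negative degrees with $\Delta_\bc(\lambda)_0 = \lambda$, and is generated as an $\ol{\H}_\bc(W)$-module by this degree-zero part.

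For part (1) I would first construct the head in the graded category. Any graded submodule $N \subseteq \Delta_\bc(\lambda)$ with $N_0 \ne 0$ contains the nonzero $W$-submodule $N_0$ of the irreducible $\lambda$, hence $N_0 = \lambda$ and $N \supseteq \ol{\H}_\bc(W)\cdot\lambda = \Delta_\bc(\lambda)$. Thus every proper graded submodule sits in degrees $\ge 1$, so the sum of all of them is still proper, giving a unique maximal graded submodule and a graded-simple quotient $L_\bc(\lambda)$ with $L_\bc(\lambda)_0 = \lambda$. To promote this to the genuine head, I would use that the Jacobson radical $J$ of the finite-dimensional graded algebra $\ol{\H}_\bc(W)$ is homogeneous---being a characteristic ideal, it is stable under the $\C^{*}$-action defining the grading---and that $J_0 = J \cap \C W$ is a nilpotent ideal of the semisimple algebra $\C W$, hence $J_0 = 0$. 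Consequently $J\cdot\Delta_\bc(\lambda)$ is a graded submodule with vanishing degree-zero part, so the semisimple head $\Delta_\bc(\lambda)/J\Delta_\bc(\lambda)$ again has degree-zero part $\lambda$; as each of its simple summands is generated by the image of $\lambda$ and so has nonzero degree-zero component, the irreducibility of $\lambda$ forces a single summand. Hence the head is simple and equals $L_\bc(\lambda)$.

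Parts (2) and (3) I would deduce from one observation: the space of $\mathfrak{n}$-invariants recovers the label. Indeed $L_\bc(\lambda)^{\mathfrak{n}}$ is graded and $W$-stable, and a nonzero homogeneous invariant $v$ in positive degree would generate the graded submodule $\ol{\H}_\bc(W)\cdot v = A^{+}\,\C W\,v$, which lives in strictly positive degrees and is therefore proper and nonzero---impossible by graded-simplicity. Thus $L_\bc(\lambda)^{\mathfrak{n}} = L_\bc(\lambda)_0 = \lambda$ as $W$-modules. Since $\mathfrak{n}$ and $W$ are intrinsic to the algebra, an isomorphism $L_\bc(\lambda) \cong L_\bc(\mu)$ restricts to a $W$-isomorphism $\lambda \cong \mu$, proving injectivity. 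For surjectivity, let $S$ be any simple $\ol{\H}_\bc(W)$-module; nilpotency of $\mathfrak{n}$ forces $S^{\mathfrak{n}} \ne 0$, this space is $W$-stable, and it contains an irreducible $W$-module $\mu$ on which $\mathfrak{n}$ acts as zero. By the adjunction defining $\Delta_\bc(\mu)$ the inclusion $\mu \hookrightarrow S$ corresponds to a nonzero, hence surjective, map $\Delta_\bc(\mu) \twoheadrightarrow S$, and since $\Delta_\bc(\mu)$ has simple head $L_\bc(\mu)$ by part (1), we get $S \cong L_\bc(\mu)$. Together these show $\lambda \mapsto L_\bc(\lambda)$ is a bijection $\Irr W \to \Irr \ol{\H}_\bc(W)$.

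The step I expect to be most delicate is the passage between graded and ungraded simplicity: a priori $L_\bc(\lambda)$ is only simple in the graded category and an arbitrary simple $S$ carries no grading. For a general finite-dimensional $\Z$-graded algebra neither the graded-simplicity of the head nor the exhaustion of all simples by the $L_\bc(\lambda)$ need hold. What makes the grading control the full module category here is exactly the triangular structure highlighted above: semisimplicity of $\ol{\H}_\bc(W)_0 = \C W$ forces $J_0 = 0$ and pins the head to degree zero, while the nilpotent, oppositely graded factors $A^{\pm}$ guarantee the existence of lowest-weight vectors. Keeping both features in play is the crux of the whole argument.
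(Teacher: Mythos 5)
The paper does not actually prove this proposition: it is quoted from Gordon \cite{Baby}, whose proof runs through the Holmes--Nakano framework for graded algebras with triangular decomposition, and your argument is essentially a self-contained rendering of exactly that framework, so in substance you are following the same route as the cited source. Two justifications are thinner than they should be, though both conclusions are salvageable from facts you already establish. First, $(J\cdot\Delta_\bc(\lambda))_0=0$ does \emph{not} follow from $J_0=0$ alone: $J$ has components in negative degrees (coming from $\C[\h^*]^{\mrm{co}W}_+$), and these could a priori carry $\Delta_\bc(\lambda)_{>0}$ into degree zero. The correct deduction is that $J$ is homogeneous, so $J\Delta_\bc(\lambda)$ is a \emph{graded} submodule, proper by Nakayama, and your opening observation that every proper graded submodule lives in degrees $\geq 1$ then gives $(J\Delta_\bc(\lambda))_0=0$. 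Second, the final step ``each simple summand has nonzero degree-zero component, so irreducibility of $\lambda$ forces a single summand'' tacitly assumes the decomposition of the semisimple head into simples can be chosen compatibly with the grading; this is true (complete reducibility for the rational $\C^*$-action gives a decomposition into graded-simple summands, and for a $\Z$-grading a graded-simple module that is semisimple is simple, since $\C^*$-stable subspaces of $\Hom(S,S^{\oplus k})$ always contain eigenlines), but it is precisely the graded-versus-ungraded subtlety you flag at the end, and it deserves to be said rather than left implicit. Alternatively, one can bypass the head computation entirely: your identity $L_\bc(\lambda)^{\mathfrak n}=L_\bc(\lambda)_0=\lambda$ shows directly that any nonzero (ungraded) submodule $N$ of the graded-simple quotient satisfies $0\neq N^{\mathfrak n}\subseteq\lambda$, hence contains $\lambda$ and is everything, which proves ungraded simplicity and then simplicity of the head in one stroke. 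The arguments for (2) and (3) via $\mathfrak n$-invariants are correct as written.
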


The bijection in the proposition allows us to transform representation theoretic information about $\ol{\H}_\bc(W)$ into combinatorial $\bc$-dependent data about $W$. The Calogero–Moser families are the primary example of this process.

\subsection{Calogero–Moser families}\label{sec:defnCalogeroMoser}

Since the algebra $\overline{\H}_{\mbf{c}}(W)$ is finite-dimensional, it has a block decomposition $\overline{\H}_{\mbf{c}}(W) = \bigoplus_{i = 1}^k B_i$, with each $B_i$ an  indecomposable algebra. If $b_i$ is the identity element of $B_i$ then the identity element $1$ of $\overline{\H}_{\mbf{c}}(W)$ is the sum $1 = b_1 + \ds + b_k$ of the $b_i$. For each simple $\overline{\H}_{\mbf{c}}(W)$-module $L$, there exists a unique $i$ such that $b_i \cdot L \neq 0$. In this case we say that $L$ \textit{belongs to the block} $B_i$. By Proposition \ref{prop:simplebaby}, we can (and will) identify $\Irr  \overline{\H}_{\mbf{c}}(W)$ with $\Irr W$. Let $\Omega_\bc(W)$ be the set of equivalence classes of $\Irr W$ under the equivalence relation $\lambda \sim \mu$ if and only if $L_\bc(\lambda)$ and $L_\bc(\mu)$ belong to the same block. These equivalence classes are called the \textit{Calogero–Moser $\bc$-families} of $W$. 

These families have an important geometric interpretation. The image of the natural map 
\[
\ZH_{\mbf{c}}(W) / D(W)_+ \cdot \ZH_{\mbf{c}}(W) \rightarrow \overline{\H}_{\mbf{c}}(W)
\]
is clearly contained in the centre of $\overline{\H}_{\mbf{c}}(W)$. In general it is not equal to the centre of $\overline{\H}_{\mbf{c}}(W)$. However, it is a consequence of a theorem by M\"uller, see \cite[Corollary 2.7]{Ramifications}, that the primitive central idempotents of $\overline{\H}_{\mbf{c}}(W)$, the block idempotents $b_i$ above, are precisely the images of the primitive idempotents of $\ZH_{\mbf{c}}(W) / D(W)_+ \cdot \ZH_{\mbf{c}}(W)$. This shows that the natural map $\Irr W \rightarrow \Upsilon^{-1}_\bc(0)$, $\lambda \mapsto \Supp L_\bc(\lambda) = \chi_{L_\bc(\lambda)}$, factors through the Calogero–Moser partition. Here, $\Upsilon^{-1}_\bc(0)$ is considered as the set theoretic fibre over the origin $0$ of $\h/W \times \h^*/W$. In other words, we have a natural bijection between $\Omega_\bc(W)$ and $\Upsilon_\bc^{-1}(0)$. Now, recall that $\Upsilon_\bc$ is $\bbC^*$-equivariant. The only $\bbC^*$-fixed point of $\h/W \times \h^*/W$ is the origin $0$ and therefore $\Upsilon_\bc^{-1}(0) = \cX_\bc(W)^{\bbC^*}$. Hence, we can identify the Calogero–Moser families $\Omega_\bc(W)$ with the $\bbC^*$-fixed closed points of the Calogero–Moser space $\cX_\bc(W)$.

The next theorem follows from the fact that the Azumaya locus of $\H_\bc(W)$ is equal to the smooth locus of $\ZH_\bc(W)$, which in turn follows from results by Etingof–Ginzburg \cite[Theorem 1.7]{EG} and Brown (see \cite[Lemma 7.2]{Baby}).

\begin{theorem}[Etingof–Ginzburg, Brown] \label{singleton_smooth}
A $\bbC^*$-fixed closed point of $\cX_\bc(W)$ is smooth if and only if the corresponding Calogero–Moser family is a singleton, i.e. it consists only of one irreducible character of $W$.
\end{theorem}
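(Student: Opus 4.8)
The plan is to exploit the quoted input directly: since the Azumaya locus of $\H_\bc(W)$ over its centre $Z := \ZH_\bc(W)$ coincides with the smooth locus of $Z$, it suffices to prove the purely algebraic statement that a $\bbC^*$-fixed closed point $\chi$ is an Azumaya point if and only if its Calogero--Moser family is a singleton. First I would record the PI-theoretic set-up. Writing $A := \H_\bc(W)$, the PBW property shows $A$ is free of rank $|W|^3$ over $D(W)$, while $Z$ is free of rank $|W|$ over $D(W)$; hence $A$ has rank $|W|^2$ as a $Z$-module and so PI degree $|W|$. The $\bbC^*$-fixed closed points are exactly the points of $\Upsilon_\bc^{-1}(0)$, and under the bijection $\Omega_\bc(W) \cong \Upsilon_\bc^{-1}(0)$, $\lambda \mapsto \chi_{L_\bc(\lambda)}$, established above, the family attached to $\chi$ is precisely the set of $\lambda \in \Irr W$ with $\chi_{L_\bc(\lambda)} = \chi$.

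\textbf{Translation to the fibre algebra.} The second step identifies this family with the simple modules of the fibre algebra $A \otimes_Z k(\chi) = A/\mathfrak m_\chi A$, where $\mathfrak m_\chi \subset Z$ is the maximal ideal of $\chi$ and $k(\chi) \cong \bbC$. A simple $A$-module with central character $\chi$ is the same thing as a simple module over $A/\mathfrak m_\chi A$, so the family of $\chi$ is in bijection with $\Irr(A \otimes_Z k(\chi))$. (One may further note that, since $\chi$ lies over $0 \in \h/W \times \h^*/W$, one has $\mathfrak m_\chi \supseteq D(W)_+ Z$, so $A \otimes_Z k(\chi)$ is a quotient of the block $B_\chi$ of $\overline{\H}_\bc(W)$ by the nilpotent ideal generated by the maximal ideal of the local factor $\overline{Z}_\chi$ of $\overline{Z} = Z/D(W)_+ Z$; this quotient does not change the simple modules, reconfirming that $\Irr(A\otimes_Z k(\chi))$ is the family.) With this in hand one implication is immediate: if $\chi$ is Azumaya then $A \otimes_Z k(\chi) \cong M_{|W|}(\bbC)$, which has a single simple module, so the family is a singleton.

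\textbf{The hard direction and the obstacle.} The converse — that a singleton family forces $\chi$ to be Azumaya — is where the real work lies, and I would handle it by invoking the general representation theory of algebras that are module-finite over their centre. The algebra $A$ is prime, Auslander--Gorenstein and (GK-)Cohen--Macaulay over the normal centre $Z$, and for such algebras the Azumaya locus is characterised as the set of closed points carrying a \emph{unique} simple module with that central character \emph{of the maximal dimension} $|W|$. The subtle point, and the main obstacle, is disentangling ``one simple module'' from ``one simple module of the correct dimension'': one must exclude a singular singleton point, i.e.\ a $\chi$ whose unique simple $L_\bc(\lambda)$ has $\dim L_\bc(\lambda) < |W|$. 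This is exactly what the Cohen--Macaulay hypothesis rules out, since $\dim_\bbC (A \otimes_Z k(\chi)) \geq |W|^2$ for every $\chi$ (upper semicontinuity of fibre dimension, the generic rank being $|W|^2$), with equality and $A \otimes_Z k(\chi)$ a full matrix algebra precisely on the Azumaya locus. I expect the cleanest route is to quote this Brown--Goodearl machinery, whose hypotheses for $\H_\bc(W)$ are recorded in \cite{EG} and \cite{Baby}, rather than to reprove it by hand. Once the Azumaya locus has been identified with the singleton locus, combining this with the assumed equality ``Azumaya locus $=$ smooth locus'' yields the theorem.
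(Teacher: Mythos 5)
Your overall route is the one the paper takes: the paper offers no argument for this theorem beyond citing that the Azumaya locus of $\H_\bc(W)$ equals the smooth locus of $\ZH_\bc(W)$ (Etingof--Ginzburg \cite[Theorem 1.7]{EG} and Brown, as recorded in \cite[Lemma 7.2]{Baby}), and your reduction to ``Azumaya point $\Leftrightarrow$ singleton family'', together with the identification of the family of $\chi$ with the simple modules of $\H_\bc(W)/\fm_\chi\H_\bc(W)$ and the implication ``Azumaya $\Rightarrow$ matrix algebra $\Rightarrow$ singleton'', is correct and is exactly what sits behind that citation.

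The gap is in your treatment of the converse, which you rightly identify as the crux. You claim that the Cohen--Macaulay property, via upper semicontinuity of the fibre dimension ($\dim_\bbC \H_\bc(W)/\fm_\chi\H_\bc(W) \geq |W|^2$ for every $\chi$), excludes a singular point whose unique simple module has dimension $< |W|$. It does not: a finite-dimensional algebra of dimension at least $|W|^2$ can perfectly well have a unique simple module of dimension $1$ --- it need only have a large Jacobson radical --- so the lower bound on fibre dimensions says nothing about the dimension of the unique simple sitting on top of that fibre. The statement you actually need, namely that for $\H_\bc(W)$ a closed point supporting a unique simple module is automatically in the Azumaya locus, is precisely Brown's result that the paper cites as \cite[Lemma 7.2]{Baby}; its proof is genuinely homological, exploiting the Auslander-regularity and GK--Cohen--Macaulay property of $\H_\bc(W)$ through grade and Ext-vanishing conditions on its simple modules (or, alternatively, the baby Verma combinatorics), and it is not a formal consequence of ``prime, module-finite over the centre, with semicontinuous fibre ranks''. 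So you should either quote that lemma as a black box --- which is what the paper itself does --- or supply the homological argument; the justification you wrote down is a non sequitur at exactly the step on which the whole theorem turns.
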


\begin{example} \label{cherednik_0}
Consider the special case $\bc = 0$. In this case $\cX_{0}(W) = (\fh \oplus \fh^*)/W$. The quotient morphism $\fh \oplus \fh^* \rarr  (\fh \oplus \fh^*)/W$ is $\bbC^*$-equivariant and finite, hence $\cX_{0}(W)$ has only one $\bbC^*$-fixed closed point, namely the origin. In particular, there is only one Calogero–Moser family.
\end{example}

\section{Lusztig families} \label{lusztig_fam}

In this section we give a short summary of the other protagonist of this paper—Lusztig's families. We review some of the constructions involved in the definition of Lusztig families, such as truncated induction, as we will make use of these in the case-by-case analysis in sections \S\ref{type_A} to \S\ref{dihedral_groups}. For more details we refer to Lusztig's books \cite{Lusztig-characters-reductive-groups, LusztigUnequalparameters}, and also to \cite{Geck.M;Jacon.N11Representations-of-H} and \cite{Geck-left-cells-and-constructible}.

\subsection{Hecke algebras}
Throughout this section, let $(W,S)$ be a finite Coxeter system. We choose an $\bbR$-valued \word{weight function} $L$ on $(W,S)$, i.e., a function $L: W \rarr \bbR$ satisfying $L(ww') = L(w) + L(w')$ for all $w,w' \in W$ with $\ell(ww') = \ell(w) + \ell(w')$, where $\ell$ is the length function of $(W,S)$. 
Let $A \dopgleich \bbZ_W \lbrack \bbR \rbrack$ be the group ring of the additive group $\bbR$ over the subring $\bbZ_W$ of $\bbC$ generated by the values of the irreducible complex characters of $W$. This is an integral domain and we denote by $q^\alpha$ the element of $A$ corresponding to $\alpha \in \bbR$. Note that $q^\alpha q^\beta = q^{\alpha+\beta}$. Set $q_w \dopgleich q^{L(w)}$ for $w \in W$. Let $\mathcal{H} \dopgleich \mathcal{H}_L(W,S)$ be the \word{Hecke algebra} of $(W,S)$ over $A$ with respect to $L$. 
This is the free $A$-algebra with basis $\lbrace T_w \mid w \in W \rbrace$ whose multiplication is uniquely determined by the relations
\begin{equation}
T_sT_w = \left\lbrace \begin{array}{ll} T_{sw} & \tn{if } \ell(sw) > \ell(w) \\ T_{sw} + (q_s-q_s^{-1})T_w & \tn{if } \ell(sw) < \ell(w) \end{array} \right. 
\end{equation}
for all $s \in S$ and $w \in W$. 
It is a standard fact that the scalar extension $\mathcal{H}^K$ of $\mathcal{H}$ to the fraction field $K$ of $A$ is split semisimple. It is then a consequence of Tits's deformation theorem that there is a natural bijection between $\Irr W$ and $\Irr \mathcal{H}^K$. We write $E_q^\lambda$ for the simple $\mathcal{H}^K$-module corresponding to the simple $W$-module $\lambda$ under this bijection. It is also well-known that $\mathcal{H}$ is a symmetric $A$-algebra. This implies that the scalar extension $\mathcal{H}^K$ is symmetric and so by the theory in \cite[\S7]{GeckPfeiffer} there is a \word{Schur element} $\bs_\lambda \in A$ attached to every simple module $E_q^\lambda$. There is a unique element $\ba_\lambda \in \bbR_{\geq 0}$ satisfying $q^{2 \ba_\lambda} \bs_\lambda \in \bbZ_W \lbrack \bbR_{\geq 0} \rbrack$ and $q^{2\ba_\lambda}\bs_\lambda \equiv f_\lambda \ \mathsf{mod}\ \bbZ_W \lbrack \bbR_{>0} \rbrack$ for some $f_\lambda > 0$. This is called Lusztig's \word{$\ba$-invariant} of $\lambda$. The Schur elements and $\ba$-invariants are known for all Coxeter groups and all weight functions. Note that despite the notation the Schur elements $\bs_\lambda$ and the $\ba$-invariants $\ba_\lambda$ depend on $L$.

\subsection{Truncated induction}

Recall that if $I \subs S$ is any subset, then $(W_I,I)$ is naturally a Coxeter system, where $W_I$ is the group generated by $I$. This is called a (standard) \word{parabolic subgroup} of $(W,S)$. The restriction $L_I$ of our weight function $L$ to $W_I$ is a weight function on $(W_I,I)$. For any simple module $\mu$ of $W_I$, Lusztig defined the \word{truncated induction} (or \word{$\bj$-induction}) as
\begin{equation}
\bj_{W_I}^W \mu \dopgleich \sum_{ \substack{ \lambda \in \Irr W \\ \ba_\lambda = \ \ba_\mu} } \langle \Ind_{W_I}^W \mu, \lambda \rangle \lambda \;,
\end{equation}
where $\langle \Ind_{W_I}^W \mu, \lambda \rangle$ denotes the multiplicity of $\lambda$ in the induction of $\mu$ from $W_I$ to $W$. Keep in mind that the $\ba$-invariant $\ba_\mu$ is computed using the restriction $L_I$ of $L$ to $W_I$. It is shown in \cite[Lemma 3.5]{Geck-constructibles-leading} that for any $\mu \in \Irr W'$ there is a $\lambda \in \Irr W$ with $\ba_\lambda = \ba_\mu$ so that the above sum is never empty. This operation extends to a morphism $\bj_{W_I}^W: \rK_0 (W_I\tn{-}\mathsf{mod}) \rarr \rK_0(W\tn{-}\mathsf{mod})$ of Grothendieck groups. It is transitive in the sense that $\bj_{W_I}^W \circ \bj_{W_J}^{W_I} = \bj_{W_J}^W$ for $J \subs I \subs S$.

\subsection{Constructible characters and families}

Using truncated induction, Lusztig inductively defined the set $\Con_L(W)$ of \word{$L$-constructible representations} of $W$ as follows: if $W$ is trivial, then $\Con_L(W)$ consists of the unit representation, and otherwise $\Con_L(W)$ consists of the $W$-modules of the form $\bj_{W_I}^W E$ and $(\bj_{W_I}^W E) \otimes \mrm{sgn}_{W}$ for all proper subsets $I \subsetneq S$ and all $E \in \Con_{L_I}(W_I)$. Here $\mrm{sgn}_W$ is the sign representation of $(W,S)$. A key result shown by Lusztig, \cite[Proposition 22.3]{LusztigUnequalparameters}, says
\begin{equation}\label{key_fact}
\textit{for each $\lambda \in \Irr W$ there exists $E \in \Con_L(W)$ such that $\langle E, \lambda \rangle \neq 0$.} 
\end{equation}
The \word{constructible graph} is the graph $\mathcal{C}_L(W)$ with vertices $\Irr W$ and an edge between $\lambda$ and $\mu$ if and only if $\lambda \neq \mu$ and they both occur in an $L$-constructible representation of $W$. The connected components of this graph are called Lusztig's \word{$L$-families}. They define a partition of $\Irr W$. We denote the set of these families by $\lus_L(W)$. Lusztig's families are known for all finite Coxeter groups (see \cite[\S22]{LusztigUnequalparameters} and also \S\ref{type_A} to \S\ref{dihedral_groups}).

\begin{example} \label{lusztig_0}
Consider the special case $L=0$. The map $T_w \mapsto w$ extends to an algebra isomorphism from $\mathcal{H}_0(W,S)$ to the group algebra $AW$ which is compatible with the symmetrising traces. Hence, $\bs_\lambda = \frac{|W|}{\dim \lambda} \in \bbZ_W \lbrack \bbR_{>0} \rbrack$ by \cite[7.2.5]{GeckPfeiffer} and so $\ba_\lambda = 0$. This in turn immediately shows that $\bj_{W_I}^W \mu = \Ind_{W_I}^W \mu$ for any parabolic subgroup $W_I$ of $W$ and $\mu \in \Irr W_I$. We then see that there is only one constructible representation, namely the regular representation of $W$. In particular, the constructible graph is connected and there is only one Lusztig family.
\end{example}

\subsection{Calogero–Moser families vs. Lusztig families} \label{lusztig_fam_vs_cm_fam}

It is a standard fact that $W$ admits a reflection representation on the complex vector space $\fh$ of dimension equal to the size of $S$, namely the complexification of the \word{geometric representation}. The set $\Ref(W)$ of reflections then consists precisely of all conjugates of $S$ in $W$. Hence, to $W$ and a $W$-equivariant function $\bc:\Ref(W) \rarr \bbC$ we can attach the rational Cherednik algebra $\H_\bc(W)$ and have the notion of Calogero–Moser $\bc$-families $\Omega_\bc(W)$ of $\Irr W$. 
It follows from Matsumoto's lemma (see \cite[1.1.5]{Geck.M;Jacon.N11Representations-of-H})  that a weight function $L$ on $W$ is already uniquely determined by the values on the $W$-conjugacy classes of $S$, and that conversely every collection of elements $c_s \in \bbR$ for $s \in S$ with $c_s = c_t$ whenever $c_s$ and $c_t$ are conjugate defines a unique weight function on $(W,S)$. This shows that weight functions $L:W \rarr \bbR$, i.e., parameters for Hecke algebras attached to $(W,S)$, are nothing else than $W$-equivariant functions $\bc :\Ref(W) \rarr \bbR$, i.e., $\bbR$-valued parameters for rational Cherednik algebras attached to $W$. We will thus use both notions interchangeably.

\begin{leftbar}
\vspace{6pt}
Whenever we write $\bc \geq 0$, resp. $\bc > 0$, we mean that $\bc$ takes values in $\bbR_{\geq 0}$, resp. $\bbR_{>0}$. ${}_{}$Similarly, we write $L \geq 0$, resp. $L>0$, if $L(s) \geq 0$, resp. $L(s) > 0$, for all $s \in S$.  \vspace{4pt}
\end{leftbar}

We can twist by linear characters of $W$ in order to ensure that we are always in the situation $\bc \geq 0$. Namely, let $\delta : W \rightarrow \bbR^{\times}$ be a linear character. Clearly $\delta$ is uniquely defined by its values on $S$, where it is $\pm 1$. Conversely, for any assignment of $\pm 1$ to each element of $S$, such that $\delta(s) = \delta(s')$ if $s$ is conjugate to $s'$, we get a well-defined linear character of $W$. Then $T_w \mapsto \delta(w) T_w$ defines an algebra isomorphism $\mathcal{H}_L(W,S) \stackrel{\sim}{\longrightarrow} \mathcal{H}_{\delta L}(W,S)$. Given a representation $\lambda$ of $W$, ${}^{\delta} \lambda$ denotes the twist of $\lambda$ by $\delta$. It is immediate from the definition of Lusztig families that $\lambda$ and $\mu$ belong to the same $L$-family if and only if ${}^{\delta} \lambda$ and ${}^{\delta} \mu$ belong to the same $\delta L$-family. Moreover, a family $\mathcal{F}$ is $L$-cuspidal (see below) if and only if ${}^{\delta} \mathcal{F}$ is $\delta L$-cuspidal. 

Similarly, one can twist the rational Cherednik algebra by the character $\delta$, as explained in  \cite[4.6B]{BonnafeRouquier}. Again, the two representations $\lambda, \mu$ belong to the same $\bc$-family if and only if ${}^{\delta} \lambda$ and ${}^{\delta} \mu$ belong to the same $\delta \bc$-family. Moreover, a family $\mathcal{F}$ is $\bc$-cuspidal (see below) if and only if ${}^{\delta} \mathcal{F}$ is $\delta \bc$-cuspidal. Therefore, to prove Theorem \ref{thm:mainresultintro}, it suffices to make the following assumption, as in \cite{Geck.M;Jacon.N11Representations-of-H}:  

\begin{leftbar}
\vspace{4pt}
We assume that $L \geq 0$. 
\vspace{4pt}
\end{leftbar}

The following conjecture is due to Gordon–Martino \cite{GordonMartinoCM}. 

\begin{conjecture} \label{cm_equal_lusztig_conjecture}
For any finite Coxeter group $W$ and any real parameter $\bc$ we have $\Omega_\bc(W) \!=\! \lus_\bc(W)$, i.e. the Calogero–Moser $\bc$-families are the same as the Lusztig $\bc$-families.
\end{conjecture}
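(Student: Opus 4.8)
The plan is to reduce the asserted equality $\Omega_\bc(W) = \lus_\bc(W)$ to the comparison of two numerical invariants and then to verify that comparison type by type, since no uniform argument is available. On the Calogero--Moser side I would exploit the $\bbC^*$-equivariance recorded above: the families are the $\bbC^*$-fixed points of $\cX_\bc(W)$, and the blocks of $\ol{\H}_\bc(W)$ are detected by central characters. Each family therefore carries a scalar $c_\lambda$ --- the value of Gordon's $c$-function --- which one reads off from the action of the Euler (grading) element on the graded module $L_\bc(\lambda)$, and which is constant on Calogero--Moser families, giving a necessary separating condition. On the Lusztig side the analogous role is played by the $\ba$-invariant $\ba_\lambda$ introduced above, which is likewise constant on $L$-families. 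The first key input is the (empirical) fact that $c_\lambda$ and $\ba_\lambda$ coincide up to an explicit normalisation depending only on $W$ and $\bc$, so that the two partitions are at least refined by the same scalar.

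Second, I would treat each infinite family separately. For type $A$ the Calogero--Moser space is smooth for every $\bc \neq 0$, so by Theorem \ref{singleton_smooth} all families are singletons; since Lusztig's families for the symmetric group are also all singletons, the two partitions agree, and the degenerate case $\bc = 0$ is covered by Examples \ref{cherednik_0} and \ref{lusztig_0}. For type $B_n$ I would invoke Gordon and Martino's explicit combinatorial description of the Calogero--Moser partition through the block theory of $\ol{\H}_\bc(W)$ --- phrased in terms of charged bipartitions, residues and $\bc$-cores --- and match it against Lusztig's partition, which is governed by symbols and the $\ba$-function. For type $D_n$ I would not argue directly but transport the type-$B$ result through the Clifford theory for $D_n \lhd B_n$ outlined in the introduction, verifying that both Calogero--Moser and Lusztig families are compatible with the quotient $\cX_\bc(B_n)/\Gamma$ by $\Gamma = \bbZ/2$. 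The dihedral groups $I_2(m)$ have rank two and can be settled by direct computation of both partitions.

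The main obstacle is that equality of the invariants $c_\lambda$ and $\ba_\lambda$, though necessary, is far from sufficient: two characters may share the same $c$-value (equivalently $\ba$-value) yet lie in distinct families, so matching a single scalar never closes the argument. One is forced to produce the full, parameter-dependent combinatorial description of each partition together with an exact combinatorial bijection between them; already in type $B$ this demands a careful analysis of how the residue data and symbols behave as $\bc$ crosses the walls of its two-dimensional parameter space. Crucially, there is no conceptual mechanism --- no analogue of the Knizhnik--Zamolodchikov functor at $t = 0$ --- forcing the two constructions to coincide. This is exactly why the statement can only be verified case by case, and why, beyond types $A$, $B$, $D$, $I_2(m)$ and $H_3$, it remains conjectural for the exceptional groups $H_4, F_4, E_6, E_7, E_8$.
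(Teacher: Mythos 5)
The statement you are addressing is Conjecture \ref{cm_equal_lusztig_conjecture} (due to Gordon--Martino), which the paper does not prove in general but only records in the known cases as Theorem \ref{thm:cm_equal_lusztig}; your case-by-case strategy --- type $A$ via smoothness and Theorem \ref{singleton_smooth}, type $B$ by matching Martino's residue description against Lusztig's symbols, type $D$ by Clifford theory for $D_n \lhd B_n$, and $I_2(m)$ by direct computation --- is essentially the paper's own route, and you correctly identify that the conjecture remains open for $H_4,F_4,E_6,E_7,E_8$. Your proposal is therefore consistent with the paper; the only superfluous ingredient is the appeal to a normalised identity between the Euler/$c$-function and the $\ba$-invariant, which the paper does not need outside the dihedral case (where Euler families are used) and which in any event only yields a common refinement, as you note.
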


We note that this conjecture was formulated in \cite{GordonMartinoCM} for Weyl groups and weight functions taking values in $\bbQ_{> 0}$. Moreover, both in \cite{GordonMartinoCM} and \cite{BonnafeRouquier} it was conjectured that $\Omega_\bc(W)$ coincides with the partition of $\Irr W$ into \word{Kazhdan–Lusztig families}. Assuming Lusztig's conjectures P1 to P15 (see \cite[\S14]{LusztigUnequalparameters}), the Kazhdan–Lusztig families and the Lusztig families are equal (see \cite[Theorem 4.3]{Geck-left-cells-and-constructible}), so that the conjecture above (which is also formulated in precisely this way by Bonnafé \cite{Bonnafe:2013aa} for parameters $\bc > 0$) seems feasible. 

Let us record the following observation we obtain from Examples \ref{cherednik_0} and \ref{lusztig_0}.

\begin{lemma} \label{lus_eq_cm_at_0}
For any $W$ we have $\Omega_0(W) = \lus_0(W)$, i.e. Conjecture \ref{cm_equal_lusztig_conjecture} holds for $\bc=0$.
\end{lemma}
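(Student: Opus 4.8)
The plan is to read off both partitions directly from the two worked examples already in hand, and observe that each collapses to the same trivial partition of $\Irr W$. Since a partition is determined by its blocks, it suffices to show that both $\Omega_0(W)$ and $\lus_0(W)$ consist of a single block equal to all of $\Irr W$; equality is then automatic.

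First I would handle the Calogero--Moser side. By Example \ref{cherednik_0}, at $\bc = 0$ we have $\cX_0(W) = (\fh \oplus \fh^*)/W$, and the quotient morphism $\fh \oplus \fh^* \twoheadrightarrow (\fh \oplus \fh^*)/W$ is finite and $\bbC^*$-equivariant. Consequently $\cX_0(W)$ has a unique $\bbC^*$-fixed closed point, the image of the origin. Under the identification of $\Omega_\bc(W)$ with $\cX_\bc(W)^{\bbC^*}$ established in \S\ref{sec:defnCalogeroMoser}, this forces $\Omega_0(W)$ to consist of exactly one family, which must therefore be all of $\Irr W$.

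Next I would treat the Lusztig side, which by the parameter dictionary of \S\ref{lusztig_fam_vs_cm_fam} corresponds to the weight function $L = 0$. By Example \ref{lusztig_0}, the assignment $T_w \mapsto w$ identifies $\mathcal{H}_0(W,S)$ with the group algebra $AW$ compatibly with the symmetrising traces, so that $\bs_\lambda = |W|/\dim\lambda$ and hence $\ba_\lambda = 0$ for every $\lambda \in \Irr W$. Because all $\ba$-invariants vanish, truncated induction reduces to ordinary induction, $\bj_{W_I}^W \mu = \Ind_{W_I}^W \mu$, and the only constructible representation is the regular representation of $W$. Thus the constructible graph $\mathcal{C}_0(W)$ is connected and $\lus_0(W)$ again consists of a single family equal to all of $\Irr W$.

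The two computations give $\Omega_0(W) = \{\Irr W\} = \lus_0(W)$, which is the assertion, and confirms that Conjecture \ref{cm_equal_lusztig_conjecture} holds at $\bc = 0$. I do not anticipate any genuine obstacle here: both examples have already done the substantive work, and the lemma is essentially the record that the two trivial partitions coincide. The only point requiring a word of care is that the correspondence between $\bc$-functions and weight functions $L$ matches $\bc = 0$ with $L = 0$, which is immediate from the linearity of that identification.
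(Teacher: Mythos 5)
Your proposal is correct and is exactly the argument the paper intends: the lemma is recorded as an immediate consequence of Examples \ref{cherednik_0} and \ref{lusztig_0}, each of which shows the respective partition is the single block $\Irr W$. Nothing further is needed.
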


The work of Lusztig \cite{Lusztig-characters-reductive-groups, LusztigUnequalparameters}, Etingof–Ginzburg \cite{EG}, Gordon \cite{Baby}, Gordon–Martino \cite{GordonMartinoCM}, Martino \cite{Martino-blocks-gmpn}, the first author \cite{Bellamy:2010aa, CMPartitions}, and the second author \cite{Thiel:2014aa} shows that Conjecture \ref{cm_equal_lusztig_conjecture} holds in many cases.

\begin{theorem} \label{thm:cm_equal_lusztig}
If $W$ is of type $A$, $B$, $D$, $I_2(m)$, or $H_3$, then $\Omega_\bc(W) \!=\! \lus_\bc(W)$ for any $\bc:\Ref(W) \rarr \bbR$. 
\end{theorem}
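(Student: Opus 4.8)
The plan is to reduce the parameter range as far as possible and then to assemble the statement from the separately-known computations of the two partitions for each individual type. By the standing assumption we may take $\bc \geq 0$; more precisely, the twisting discussion above shows that $\lambda,\mu$ lie in a common $\bc$-family (Calogero--Moser or Lusztig) if and only if ${}^\delta\lambda,{}^\delta\mu$ lie in a common $\delta\bc$-family, and since the linear characters $\delta$ of $W$ realise every sign pattern that is constant on the conjugacy classes of reflections, it suffices to treat $\bc \geq 0$. The case $\bc = 0$ is already disposed of by Lemma \ref{lus_eq_cm_at_0}, so I may assume $\bc \geq 0$ and $\bc \neq 0$ throughout.

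I would then argue type by type, in each case identifying $\lus_\bc(W)$ from Lusztig's explicit tables \cite{LusztigUnequalparameters} and $\Omega_\bc(W)$ from the block theory of the restricted algebra $\ol{\H}_\bc(W)$. For type $A$ there is a single conjugacy class of reflections, hence a single parameter; the Calogero--Moser space of $S_n$ is smooth for $\bc \neq 0$, so by Theorem \ref{singleton_smooth} every Calogero--Moser family is a singleton, and since the $\ba$-invariant separates $\Irr S_n$ the Lusztig families are singletons as well, so the two partitions trivially agree. For type $B = G(2,1,n)$ I would invoke the computation of the Calogero--Moser families by Gordon--Martino \cite{GordonMartinoCM} and Martino \cite{Martino-blocks-gmpn}, phrased in terms of the residue/charged-content combinatorics on bipartitions, and compare it with Lusztig's symbol combinatorics. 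Type $D$ is obtained from type $B$ through the Clifford-theoretic relationship $D_n \lhd B_n$ of \cite{CMPartitions}, which must be applied compatibly on the Calogero--Moser and the Lusztig side. Finally, the dihedral case $I_2(m)$ follows from the direct computation in \cite{Bellamy:2010aa}, and $H_3$ from the second author's explicit determination of the blocks of $\ol{\H}_\bc(H_3)$ in \cite{Thiel:2014aa}.

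The main obstacle will be the type $B$ comparison, and with it type $D$. Here both partitions of $\Irr W$ are indexed by bipartition data, but through genuinely different recipes: on the Lusztig side via the $\ba$-function together with $\bj$-induction, and on the Calogero--Moser side via the charged-residue combinatorics governing the blocks. One must verify that these two combinatorial partitions coincide literally for every nonnegative pair of parameters, not merely generically. The delicate part is the behaviour on the walls, where the two parameters satisfy an integral linear relation and families begin to merge; there the matching has to be checked at each special value, and the $B$-to-$D$ reduction must be shown to respect these degenerations on both sides simultaneously. Once this bookkeeping is carried out, the remaining types are comparatively routine, and the theorem follows by assembling the individual cases.
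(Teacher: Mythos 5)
Your proposal follows essentially the same route as the paper: reduce to $\bc\geq 0$ by twisting with linear characters, dispose of $\bc=0$ via Lemma \ref{lus_eq_cm_at_0}, and then compare the two partitions type by type using the known computations (smoothness of $\cX_\bc(\fS_{n+1})$ and Lusztig's constructibles in type $A$; Martino's residues versus Lusztig's symbols in type $B$; the $D_n\lhd B_n$ Clifford theory; the explicit dihedral tables; and \cite{Thiel:2014aa} for $H_3$). You also correctly single out type $B$ as the real work; the paper closes that gap by citing \cite[Proposition 3.4]{BroueKim}, which says that two bipartitions have symbols of equal content (for $N$ large) if and only if they have equal charged residues, so that Martino's Theorem \ref{typeB_cm_families} and Lusztig's Theorem \ref{typeB_lusztig_fams} match literally; the non-integral ratios are handled by the perturbation argument of Proposition \ref{lusztig_fam_typeB_non_multiple} together with Martino's smoothness criterion, and the degenerate case $\kappa=0$ by direct comparison of Lemmas \ref{cm_families_typeB_degenerate} and \ref{lusztig_fam_typeB_degenerate}.

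One step as written is wrong: in type $A$ you justify that the Lusztig families are singletons by claiming the $\ba$-invariant separates $\Irr\fS_n$. It does not --- for instance in $\fS_6$ the partitions $(3,3)$ and $(4,1,1)$ both have $\ba=3$ --- and in any case equality of $\ba$-invariants is not the defining relation for families. The correct input (and the one the paper uses) is Lusztig's \cite[Lemma 22.5]{LusztigUnequalparameters}, which states that every constructible representation of $\fS_n$ at $\bc>0$ is irreducible; since families are the connected components of the constructible graph, this is what forces them to be singletons. With that substitution your outline matches the paper's proof.
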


Except for type $H_3$, which follows from \cite{Thiel:2014aa}, the proof of this theorem is also obtained here from \S\ref{type_A}, Corollary \ref{cor:LusztigCMtypeB}, Theorem \ref{thm:typeDcusp}, and Corollary \ref{dihedral_lusztig_qual_cm}.

\subsection{Cuspidal Lusztig families}

What is now relevant for us in this paper is that it can happen that a Lusztig family $\mathcal{F} \in \lus_L(W)$ is \word{$\bj$-induced} from a parabolic subgroup $W_I$ of $W$ in the sense that there is a Lusztig family $\mathcal{F}' \in \lus_{L_I}(W_I)$ such that $\bj_{W_I}^W$ induces a bijection between $\mathcal{F}'$ and $\mathcal{F}$ or between $\mathcal{F}'$ and $\mathcal{F} \otimes \mrm{sgn}_W$. Lusztig called a family \word{cuspidal} if it is not $\bj$-induced from a proper parabolic subgroup of $W$. Let $\lus_L^{\mrm{cusp}}(W) \subs \lus_L(W)$ be the set of cuspidal Lusztig families. These families are the building blocks of Lusztig families and it is most important to understand them. 

The following useful lemma is well-known.

\begin{lemma} \label{lusztig_rescaling}
For any $\alpha \in \bbR_{>0}$ we have $\Con_{\alpha L}(W)$ = $\Con_{L}(W)$, $\lus_{\alpha L}(W) = \lus_{L}(W)$ and $\lus_{\alpha L}^{\mrm{cusp}}(W) = \lus_{L}^{\mrm{cusp}}(W)$.
\end{lemma}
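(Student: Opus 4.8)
The whole statement reduces to a single scaling property of Lusztig's $\ba$-invariant, namely that $\ba_\lambda^{\alpha L} = \alpha\,\ba_\lambda^{L}$ for every $\lambda \in \Irr W$. The plan is to prove this first and then to observe that all three equalities are formal consequences of it.

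To establish the scaling property, I would introduce the $\bbZ_W$-algebra automorphism $\sigma_\alpha$ of $A = \bbZ_W\lbrack\bbR\rbrack$ induced by the group automorphism $\beta \mapsto \alpha\beta$ of $(\bbR,+)$; explicitly $\sigma_\alpha(q^\beta) = q^{\alpha\beta}$. Because $\alpha > 0$, the map $\sigma_\alpha$ preserves the subrings $\bbZ_W\lbrack\bbR_{\geq 0}\rbrack$ and $\bbZ_W\lbrack\bbR_{>0}\rbrack$ and fixes $\bbZ_W$ pointwise. Since $\sigma_\alpha(q_w) = \sigma_\alpha(q^{L(w)}) = q^{\alpha L(w)}$, comparison of the defining relations shows that $T_w \mapsto T_w$ is a $\sigma_\alpha$-semilinear ring isomorphism $\mathcal{H}_L(W,S) \to \mathcal{H}_{\alpha L}(W,S)$. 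This isomorphism preserves the symmetrising trace (defined by $T_w \mapsto \delta_{w,1}$, independently of $L$) and commutes with the specialisation $q^\beta \mapsto 1$ to $\bbC W$; hence, by the theory of symmetric algebras in \cite[\S7]{GeckPfeiffer}, the Tits bijection $\Irr W \leftrightarrow \Irr\mathcal{H}^K$ is compatible with $\sigma_\alpha$ and the Schur elements transform by $\bs_\lambda^{\alpha L} = \sigma_\alpha(\bs_\lambda^{L})$ for every $\lambda$. Applying $\sigma_\alpha$ to the two defining conditions of $\ba_\lambda^{L}$—namely $q^{2\ba_\lambda^{L}}\bs_\lambda^{L} \in \bbZ_W\lbrack\bbR_{\geq 0}\rbrack$ and $q^{2\ba_\lambda^{L}}\bs_\lambda^{L} \equiv f_\lambda \bmod \bbZ_W\lbrack\bbR_{>0}\rbrack$—and using that $\sigma_\alpha$ respects $\bbR_{\geq 0}$, $\bbR_{>0}$ and fixes $f_\lambda \in \bbZ_W$, one finds that $q^{2\alpha\ba_\lambda^{L}}\bs_\lambda^{\alpha L}$ satisfies exactly these conditions with the same $f_\lambda$; uniqueness of the $\ba$-invariant then gives $\ba_\lambda^{\alpha L} = \alpha\,\ba_\lambda^{L}$.

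The scaling property immediately implies that $\bj$-induction is unchanged: for $\mu \in \Irr W_I$ and $\lambda \in \Irr W$ one has $(\alpha L)_I = \alpha L_I$, so $\ba_\lambda^{\alpha L} = \ba_\mu^{\alpha L_I}$ if and only if $\alpha\,\ba_\lambda^{L} = \alpha\,\ba_\mu^{L_I}$, i.e.\ if and only if $\ba_\lambda^{L} = \ba_\mu^{L_I}$, since $\alpha \neq 0$. As the ordinary induction multiplicities $\langle \Ind_{W_I}^W \mu, \lambda\rangle$ do not depend on the weight function at all, the operators $\bj_{W_I}^W$ computed for $\alpha L$ and for $L$ coincide on the nose. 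The equality $\Con_{\alpha L}(W) = \Con_L(W)$ now follows by induction on $|S|$: the base case ($W$ trivial) and the twist by $\mrm{sgn}_W$ are independent of the parameter, and in the inductive step each generator $\bj_{W_I}^W E$ with $I \subsetneq S$ and $E \in \Con_{\alpha L_I}(W_I) = \Con_{L_I}(W_I)$ is the same for both parameters. Since the constructible graph $\mathcal{C}_L(W)$, and hence the family partition $\lus_L(W)$, depends only on the set $\Con_L(W)$, we get $\lus_{\alpha L}(W) = \lus_L(W)$. Finally, a family being $\bj$-induced from a proper parabolic $W_I$ is a condition phrased purely in terms of $\bj_{W_I}^W$ and the families of $W_I$ (for the restricted parameter), both of which we have just shown are invariant under rescaling of the parameter; hence $\lus_{\alpha L}^{\mrm{cusp}}(W) = \lus_{L}^{\mrm{cusp}}(W)$.

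The one genuinely technical point—the main obstacle—is the compatibility statement $\bs_\lambda^{\alpha L} = \sigma_\alpha(\bs_\lambda^{L})$ with the \emph{same} label $\lambda$ on both sides: this requires knowing that the Tits deformation bijection intertwines the $\sigma_\alpha$-twist on the Hecke side with the identity on $\Irr W$, which rests on $\sigma_\alpha$ commuting with the specialisation to $\bbC W$. Everything else is formal, and the hypothesis $\alpha > 0$ enters only to guarantee that $\sigma_\alpha$ preserves positivity of exponents (for $\alpha < 0$ it would interchange $\bbR_{>0}$ and $\bbR_{<0}$ and the $\ba$-invariant would no longer simply rescale).
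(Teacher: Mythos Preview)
Your proof is correct and follows the same overall strategy as the paper: both reduce everything to the scaling relation $\ba_\lambda^{\alpha L} = \alpha\,\ba_\lambda^{L}$, obtained from the corresponding transformation law for Schur elements, and then observe that $\bj$-induction, constructible characters, families, and cuspidality depend only on equalities of $\ba$-invariants. The difference is in how the Schur-element transformation is justified. The paper invokes the universal Hecke algebra $\ul{\mathcal{H}}$ over $\bbZ_W[\bbR^n]$ from \cite[1.1.9]{Geck.M;Jacon.N11Representations-of-H} and uses that the universal Schur elements $\ul{\bs}_\lambda$ specialise to those of each $\mathcal{H}_L$; the relation $\bs_\lambda^{\alpha L} = \sigma_\alpha(\bs_\lambda^{L})$ then drops out by comparing two specialisations. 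You instead construct a direct $\sigma_\alpha$-semilinear isomorphism $\mathcal{H}_L \to \mathcal{H}_{\alpha L}$ compatible with the symmetrising trace and the specialisation to $\bbC W$, which is more self-contained and avoids introducing the extra variables. Your remark that the Tits bijection must intertwine correctly (so that the label $\lambda$ is preserved) is exactly the point that the paper's universal-algebra argument handles implicitly via specialisation; both approaches resolve it in the same way, through compatibility with $q^\beta \mapsto 1$.
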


\begin{proof}
As in \cite[1.1.9]{Geck.M;Jacon.N11Representations-of-H} one can introduce a universal Hecke algebra $\ul{\mathcal{H}}$ over $\bbZ_W \lbrack \bbR^n \rbrack$, where $n$ is the number of $W$-conjugacy classes in $S$. The Hecke algebra $\mathcal{H}_L$ for a particular weight function $L:S \rarr \bbR$ is then obtained by specialisation of $\ul{\mathcal{H}}$. The algebra $\ul{\mathcal{H}}$ admits Schur elements $\ul{\bs}_\lambda \in \bbZ_W \lbrack \bbR^n \rbrack$ and it follows from the theory in \cite[\S7]{GeckPfeiffer} that $\ul{\bs}_\lambda$ specialises to the Schur element $\bs_\lambda$ of $\mathcal{H}_L$. From this one can deduce that the $\ba$-invariant $\ba_\lambda$ of $\mathcal{H}_{\alpha L}$ is obtained from the one of $\mathcal{H}_L$ by multiplication by $\alpha$. This immediately proves the claim.
\end{proof}

The key fact (\ref{key_fact}) implies:

\begin{lemma} \label{singleton_lusztig_not_cuspidal}
If $\mc{F} = \lbrace \lambda \rbrace$ is a Lusztig family such that $\lambda \in \Con_\bc(W)$, then $\mc{F}$ is not cuspidal.
\end{lemma}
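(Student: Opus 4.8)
The plan is to unwind the inductive definition of $\Con_\bc(W)$ and match it against the definition of a $\bj$-induced family. Since the trivial group has no proper parabolic subgroup, its unique family would be cuspidal by convention, so we may and do assume $W \neq 1$ (equivalently $S \neq \emptyset$), as is implicit throughout. Now, because $\lambda \in \Con_\bc(W)$ and $W$ is nontrivial, the recursive description of constructible representations forces $\lambda$ to be of the form $\bj_{W_I}^W E$ or $(\bj_{W_I}^W E) \otimes \mrm{sgn}_W$ for some proper subset $I \subsetneq S$ and some $E \in \Con_{L_I}(W_I)$. These two possibilities correspond exactly to the two clauses ($\mc{F}$ and $\mc{F} \otimes \mrm{sgn}_W$) in the definition of a $\bj$-induced family, so there is no loss in treating the first case $\lambda = \bj_{W_I}^W E$; the sign-twisted case is handled identically after replacing $\lambda$ by $\lambda \otimes \mrm{sgn}_W$.

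Next I would fix the family on the parabolic side. Because $E$ is a constructible representation of $W_I$, all of its irreducible constituents lie in a single Lusztig family $\mc{F}' \in \lus_{L_I}(W_I)$; this is the candidate from which $\mc{F} = \lbrace \lambda \rbrace$ should be $\bj$-induced. It then remains to verify the defining condition of $\bj$-induction: that $\bj_{W_I}^W$ induces a bijection between $\mc{F}'$ and $\lbrace \lambda \rbrace$. Since the target is a singleton, the content is that $\bj_{W_I}^W$ carries the constructible characters of $\mc{F}'$ into the single family $\lbrace \lambda \rbrace$, bijectively; the equality $\bj_{W_I}^W E = \lambda$ already pins down the target family, as $\lambda$ occurs in $\bj_{W_I}^W E$.

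The substantive step, and the main obstacle, is upgrading the representation-level equality $\bj_{W_I}^W E = \lambda$ to the family-level bijection demanded by the definition. This is exactly where I would use the key fact (\ref{key_fact}) together with the compatibility of $\bj$-induction with the partition into families: the key fact guarantees that the singleton family $\lbrace \lambda \rbrace$ carries $\lambda$ as its constructible character, while the compatibility statement ensures that $\bj_{W_I}^W$ sends the constructible characters of $\mc{F}'$ to constructible characters of $W$ lying in one family, bijectively. Combining these identifies that family as $\lbrace \lambda \rbrace$ and produces the required bijection, so $\mc{F}$ is $\bj$-induced from $\mc{F}'$ and hence not cuspidal. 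The delicate point to get right is precisely this family-compatibility: one must rule out that the constructible characters of $\mc{F}'$ are distributed over several families of $W$, and confirm that the singleton structure of $\lbrace \lambda \rbrace$ is matched on the parabolic side, rather than merely knowing that the single representation $\lambda$ happens to be $\bj$-induced.
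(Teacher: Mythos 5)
Your overall skeleton is the intended one: the paper itself offers nothing beyond ``the key fact implies'', and the real content is indeed to unwind the recursive definition of $\Con_\bc(W)$ --- since $W \neq 1$, a constructible $\lambda$ equals $\bj_{W_I}^W E$ or $(\bj_{W_I}^W E) \otimes \mrm{sgn}_W$ for some proper $I \subsetneq S$ and $E \in \Con_{L_I}(W_I)$, and the sign-twisted case reduces to the other. One omission is easy to repair: you never establish that $E$ is irreducible, which you need (a bijection onto the singleton $\{\lambda\}$ forces the source family to be a singleton, hence $E$ to be supported on one irreducible). This does follow cheaply from the fact, recorded in the paper via \cite[Lemma 3.5]{Geck-constructibles-leading}, that $\bj_{W_I}^W \rho$ is a non-zero effective character for every irreducible $\rho$; hence $\bj_{W_I}^W E = \lambda$ forces $E = \mu$ for a single irreducible $\mu$ with $\bj_{W_I}^W \mu = \lambda$.

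The genuine gap is the step you yourself flag as delicate. The definition of ``$\bj$-induced'' demands a bijection between a Lusztig family $\mc{F}'$ of $W_I$ and $\{\lambda\}$, so you must show that the family $\mc{F}'$ of $W_I$ containing $\mu$ is itself a singleton. Your resolution --- a ``compatibility of $\bj$-induction with the partition into families'' sending the constructible characters of $\mc{F}'$ to those of a single family of $W$ \emph{bijectively} --- is not a statement available in the paper, and the bijectivity is false in general: for instance, in $I_2(m)$ at equal parameters the singleton family $\{\psi_1\}$ of the parabolic $P_1$ carries one constructible character, while the target family $\{\eps_1,\eps_2\}\cup\mc{F}$ carries two (see Table \ref{dihedral_j_induction}). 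What the definitions actually give is weaker: any constructible $E_1$ of $W_I$ containing $\mu$ has $\bj_{W_I}^W E_1 \in \Con_\bc(W)$ by construction, this contains $\lambda$, and since $\{\lambda\}$ is a family it must be a multiple of $\lambda$; hence every $\nu \in \mc{F}'$ satisfies $\bj_{W_I}^W \nu = k_\nu \lambda$. That shows $\mc{F}'$ maps \emph{onto} $\{\lambda\}$, not that $|\mc{F}'| = 1$. Your appeal to (\ref{key_fact}) does no work here either: it merely reproduces the hypothesis $\lambda \in \Con_\bc(W)$ (up to multiplicity) rather than anything about the parabolic side. To close the argument one needs a reason why an irreducible constructible representation of $W_I$ constitutes a singleton family of $W_I$ --- a point the paper also leaves implicit, but which your write-up claims to have settled without actually doing so.
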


\section{Cuspidal Calogero–Moser families} \label{cuspidal_families}

On the Calogero–Moser side we do not have anything similar to $\bj$-induction so far. However, the first author has introduced in \cite{Cuspidal} the notion of \textit{cuspidal} Calogero–Moser families. These are also minimal with respect to a certain condition, but this time they have a geometric interpretation via the Poisson structure on Calogero–Moser spaces. Despite their name, the two notions of cuspidality have, a priori, nothing in common. None the less, we will show that they coincide for all infinite families of Coxeter groups. In this paragraph we will review the foliation of Calogero–Moser spaces into symplectic leaves and the notion of cuspidal Calogero–Moser families.

\subsection{Poisson structure}

We consider again an arbitrary finite complex reflection group $(\fh,W)$. On the vector space $\fh \oplus \fh^*$ we have a natural $W$-invariant symplectic form $\omega$ defined by 
$$
\omega( (y,x), (y',x') ) \dopgleich x(y') - x'(y), \quad \forall  \ y,y' \in \fh, \ x,x' \in \fh^*.
$$
This induces a Poisson bracket $\lbrace \cdot, \cdot \rbrace$ on $\bbC \lbrack \fh \oplus \fh^* \rbrack$. Since the form $\omega$ is $W$-invariant, the Poisson bracket is $W$-invariant and restricts to the invariant ring $\bbC \lbrack \fh \oplus \fh^* \rbrack^W$ making the quotient variety $(\fh \oplus \fh^*)/W$ into a Poisson variety. 

The Calogero–Moser space $\cX_\bc(W)$ is a flat Poisson deformation of $(\fh \oplus \fh^*)/W$. The Poisson structure on $\cX_\bc(W)$ comes from the commutation in the rational Cherednik algebra at $t \neq 0$ as follows. Let $\bt$ be an indeterminate. Clearly, $\H_\bc(W) = \H_{\bt,\bc}(W)/\bt \H_{\bt,\bc}(W)$ and therefore we can lift elements $z_1,z_2 \in \ZH_\bc(W)$ to elements $\wh{z}_1,\wh{z}_2 \in \H_{\bt,\bc}(W)$. Now, define
\[
\lbrace z_1,z_2 \rbrace \dopgleich \lbrack \wh{z}_1,\wh{z}_2 \rbrack_{\bt = 0} \;,
\]
where $ \lbrack \wh{z}_1,\wh{z}_2 \rbrack$ is the commutator of $\wh{z}_1$ and $\wh{z}_2$ in $\H_{\bt,\bc}(W)$ and $\lbrack \wh{z}_1,\wh{z}_2 \rbrack_{\bt = 0}$ is the projection of this commutator to $\H_\bc(W) = \H_{\bt,\bc}(W)/\bt \H_{\bt,\bc}(W)$. This is indeed an element in $\ZH_\bc(W)$ and defines a Poisson structure on this ring. 

We recall that an ideal $I$ of an arbitrary Poisson algebra $A$ is a \word{Poisson ideal} if $\lbrace I, A \rbrace \subs I$, i.e., $I$ is stable under the Poisson bracket $\{ a, - \}$ for all $a \in A$. The \word{Poisson core} $\mathcal{P}(I)$ of an ideal $I$ of $A$ is the largest Poisson ideal contained in $I$. By a \word{Poisson prime (resp. maximal) ideal} we mean a prime (resp. maximal) ideal which is also a Poisson ideal. The Poisson core of any prime ideal is a Poisson prime ideal. We denote by $\PSpec(A)$ the set of all Poisson prime ideals of $A$ and by $\PMax(A)$ the set of all Poisson maximal ideals. 

\subsection{Symplectic leaves}

The (analytification of the) smooth part $(\cX_\bc(W))_{\mrm{sm}}$ of $\cX_\bc(W)$ is a Poisson manifold and admits a foliation into symplectic leaves; that is, a   stratification into smooth connected strata such that the rank of the bracket is maximal along strata. The strata are the \word{symplectic leaves} of the manifold (see \cite{Weinstein}). By continuing this process on the complement $\cX_\bc(W) \setminus (\cX_\bc)_{\mrm{sm}}$ we end up with a decomposition of $\cX_\bc(W)$ into symplectic leaves. Brown and Gordon \cite{PoissonOrders} have shown that the leaves obtained in this way are in fact \textit{algebraic}, i.e., locally closed in the Zariski topology and finite in number. The leaf of a closed point $\fm$ of $\cX_\bc(W)$ consists of all closed points $\fn \in \cX_\bc(W)$ such that $\fm$ and $\fn$ have the same Poisson core. Furthermore, it is shown in \textit{loc.\ cit.} that each leaf $\mathcal{L}$ is a smooth symplectic variety, and that the closure $\ol{\mathcal{L}}$ of the leaf $\mathcal{L}$ containing a closed point $\chi$ is the zero locus $\V(\mathcal{P}(\fm_\chi))$ of the Poisson core of its defining maximal ideal. This shows in particular that the closure of each symplectic leaf is an irreducible affine Poisson variety. 

\begin{lemma} \label{poisson_primes_leaves}
The set of symplectic leaves of $\cX_\bc(W)$ is naturally in bijection with the set $\PSpec(\ZH_\bc(W))$ of Poisson prime ideals of $\ZH_\bc(W)$.
\end{lemma}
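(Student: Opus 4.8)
The plan is to build the bijection directly out of the two facts recalled just above the statement: that the symplectic leaf of a closed point $\fm$ consists of all closed points sharing the Poisson core of $\fm$, and that the closure of the leaf through $\chi$ equals $\V(\mathcal{P}(\fm_\chi))$. To a leaf $\mathcal{L}$ I would assign the ideal $\mathcal{P}(\fm_\chi)$ for any closed point $\chi \in \mathcal{L}$. The first thing to verify is that this is well defined and lands in $\PSpec(\ZH_\bc(W))$: independence of the choice of $\chi$ is exactly the characterisation of leaves as the classes of closed points with a common Poisson core, and $\mathcal{P}(\fm_\chi)$ is a Poisson prime because the Poisson core of a prime ideal is Poisson prime. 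Since $\mathcal{P}(\fm_\chi)$ is prime, hence radical, and $\V(\mathcal{P}(\fm_\chi)) = \ol{\mathcal{L}}$, this ideal is precisely the vanishing ideal of the irreducible variety $\ol{\mathcal{L}}$; so the assignment is $\mathcal{L} \mapsto I(\ol{\mathcal{L}})$.

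Next I would prove injectivity. If two leaves $\mathcal{L}_1,\mathcal{L}_2$ are sent to the same Poisson prime $P$, then $\ol{\mathcal{L}_1} = \V(P) = \ol{\mathcal{L}_2}$. As the leaves are algebraic they are locally closed, so each is a nonempty open—and therefore dense—subset of the irreducible variety $\V(P)$. Two nonempty open subsets of an irreducible space must meet, whereas distinct leaves are disjoint; hence $\mathcal{L}_1 = \mathcal{L}_2$. At this stage the map is a bijection from the set of leaves onto the set of Poisson cores $\mathcal{P}(\fm)$ of maximal ideals $\fm$, and it remains only to identify the latter with all of $\PSpec(\ZH_\bc(W))$.

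The hard part is surjectivity, and this is where the finiteness of the set of leaves is indispensable. Given an arbitrary Poisson prime $Q$, I must realise it as $\mathcal{P}(\fm)$ for some maximal $\fm$. Writing $\mathcal{L}_1,\dots,\mathcal{L}_k$ for the finitely many leaves and $P_i$ for the corresponding Poisson primes, so that $\ol{\mathcal{L}_i} = \V(P_i)$, I would note that the closed points of $\V(Q)$ are partitioned among the leaves, giving $\V(Q) = \bigcup_i \ol{\V(Q) \cap \mathcal{L}_i}$ as a finite union of closed sets; irreducibility of $\V(Q)$ forces $\V(Q) = \ol{\V(Q)\cap\mathcal{L}_i}$ for some $i$, whence $\V(Q) \subseteq \V(P_i)$ and so $P_i \subseteq Q$. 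Choosing any closed point $\chi \in \V(Q)\cap\mathcal{L}_i$, the ideal $Q$ is a Poisson ideal contained in $\fm_\chi$, hence contained in its Poisson core $\mathcal{P}(\fm_\chi) = P_i$; this gives the reverse inclusion $Q \subseteq P_i$, so $Q = P_i = \mathcal{P}(\fm_\chi)$. The essential subtlety, and the only step that goes beyond formal manipulation, is precisely this realisation argument: for a general affine Poisson algebra there is no reason an abstract Poisson prime should be the Poisson core of a closed point, and it is the Brown–Gordon finiteness result that makes the irreducibility/pigeonhole argument above go through.
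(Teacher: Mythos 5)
Your proof is correct and is built on the same map as the paper's: both send a leaf $\mathcal{L}$ to $\I(\ol{\mathcal{L}}) = \mathcal{P}(\fm_\chi)$ for any closed point $\chi \in \mathcal{L}$, and both get injectivity from the fact that a leaf is recovered from its closure (you spell out the ``dense open subsets of an irreducible set must meet'' step, the paper just appeals to the stratification property). The genuine difference is in surjectivity. The paper disposes of it in one line by asserting that for an arbitrary Poisson prime $\fp$ the set $\lbrace \fm \mid \mathcal{P}(\fm) = \fp \rbrace$ is a nonempty symplectic leaf, ``by the description of symplectic leaves due to Brown and Gordon'' --- i.e.\ it outsources the nontrivial fact that every Poisson prime actually arises as a Poisson core. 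You prove that fact directly: decompose $\V(Q)$ as the finite union of the closures $\ol{\V(Q) \cap \mathcal{L}_i}$, use irreducibility of $\V(Q)$ to single out one leaf, and then sandwich $Q$ between $P_i$ and $\mathcal{P}(\fm_\chi) = P_i$ using maximality of the Poisson core. This is a clean, self-contained proof of the Poisson Dixmier--Moeglin-type statement for affine Poisson algebras with finitely many leaves, and it makes explicit exactly where the Brown--Gordon finiteness result is used; the paper's version is shorter but leans on the cited description. Both arguments are valid.
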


\begin{proof}
Let $\mathcal{L}$ be a symplectic leaf. As we noted above, the closure $\ol{\mathcal{L}}$ is an irreducible affine variety and therefore the defining ideal $\fp_\mathcal{L} = \I(\ol{\mathcal{L}})$ is a prime ideal. Moreover, as $\ol{\mathcal{L}} = \V(\mathcal{P}(\fm_\chi))$ for any closed point $\chi$ of $\mathcal{L}$, it follows that $\fp_L = \mathcal{P}(\fm_\chi)$ is a Poisson prime ideal. The map $\mathcal{L} \mapsto \fp_\mathcal{L}$ is injective since if $\fp_\mathcal{L} = \fp_{\mathcal{L}'}$, then $\ol{\mathcal{L}} = \V(\fp_\mathcal{L}) = \V(\fp_{\mathcal{L}'}) = \ol{\mathcal{L'}}$, and this implies $\mathcal{L} = \mathcal{L}'$ as the symplectic leaves form a stratification of $\cX_\bc(W)$. Now, let $\fp$ be an arbitrary Poisson prime ideal of $\ZH_\bc(W)$. Then $\mathcal{L}_\fp \dopgleich \lbrace \fm \in \Max(\ZH_\bc(W)) \mid \mathcal{P}(\fm) = \fp \rbrace$ is a symplectic leaf by the description of symplectic leaves due to Brown and Gordon. By construction $\fp_{\mathcal{L}_\fp} = \fp$ and therefore the map $\mathcal{L} \mapsto \fp_\mathcal{L}$ is also surjective.   
\end{proof}

We immediately obtain the following.

\begin{corollary}
The set of zero-dimensional symplectic leaves of $\cX_\bc(W)$ is naturally in bijection with the set $\PMax(\ZH_\bc(W))$ of Poisson maximal ideals of $\ZH_\bc(W)$.
\end{corollary}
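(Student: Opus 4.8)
The plan is to deduce this corollary directly from Lemma~\ref{poisson_primes_leaves} by restricting the bijection established there to the appropriate subsets on each side. First I would recall that Lemma~\ref{poisson_primes_leaves} gives a bijection $\mathcal{L} \mapsto \fp_\mathcal{L}$ between the set of symplectic leaves of $\cX_\bc(W)$ and the set $\PSpec(\ZH_\bc(W))$ of Poisson prime ideals, where $\fp_\mathcal{L} = \mathcal{P}(\fm_\chi)$ for any closed point $\chi \in \mathcal{L}$ and $\ol{\mathcal{L}} = \V(\fp_\mathcal{L})$. Since a bijection restricts to a bijection between any pair of distinguished subsets that correspond to one another, the entire content of the corollary is the identification of the zero-dimensional leaves with the Poisson \emph{maximal} ideals under this correspondence.

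The key step is therefore the dimension bookkeeping. Under the bijection, a leaf $\mathcal{L}$ corresponds to the prime $\fp_\mathcal{L}$ whose zero locus is exactly $\ol{\mathcal{L}}$. Because $\cX_\bc(W)$ is an affine variety and $\ol{\mathcal{L}}$ is an irreducible closed subvariety, we have the standard correspondence $\dim \mathcal{L} = \dim \ol{\mathcal{L}} = \dim \bigl( \ZH_\bc(W)/\fp_\mathcal{L} \bigr)$, the Krull dimension of the quotient. Thus $\mathcal{L}$ is zero-dimensional precisely when $\ZH_\bc(W)/\fp_\mathcal{L}$ has Krull dimension zero. Since $\fp_\mathcal{L}$ is a prime ideal, $\ZH_\bc(W)/\fp_\mathcal{L}$ is an affine domain over $\bbC$, and such a domain has Krull dimension zero if and only if it is a field, i.e.\ if and only if $\fp_\mathcal{L}$ is a maximal ideal. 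Combining these, $\dim \mathcal{L} = 0$ if and only if $\fp_\mathcal{L} \in \Max(\ZH_\bc(W))$, and since $\fp_\mathcal{L}$ is already a Poisson prime ideal, this is equivalent to $\fp_\mathcal{L} \in \PMax(\ZH_\bc(W))$.

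It remains to check that the correspondence is genuinely onto the Poisson maximal ideals, which follows by running the reverse direction of the lemma: given $\fp \in \PMax(\ZH_\bc(W))$, the leaf $\mathcal{L}_\fp$ produced in the proof of Lemma~\ref{poisson_primes_leaves} satisfies $\fp_{\mathcal{L}_\fp} = \fp$, and by the dimension argument above $\mathcal{L}_\fp$ is zero-dimensional because $\fp$ is maximal. Hence the restriction of the bijection is both well-defined and surjective onto $\PMax(\ZH_\bc(W))$, giving the claimed natural bijection between the zero-dimensional symplectic leaves and the Poisson maximal ideals.

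The main (and only mild) obstacle is ensuring the dimension identification $\dim \mathcal{L} = \dim \ZH_\bc(W)/\fp_\mathcal{L}$ is properly justified: one must note that $\dim \mathcal{L} = \dim \ol{\mathcal{L}}$ because a locally closed set has the same dimension as its closure, and that $\ol{\mathcal{L}} = \V(\fp_\mathcal{L})$ is exactly the closed subscheme cut out by the prime $\fp_\mathcal{L}$, so that its dimension is the Krull dimension of the coordinate ring $\ZH_\bc(W)/\fp_\mathcal{L}$. Everything else is a purely formal consequence of Lemma~\ref{poisson_primes_leaves} together with the Nullstellensatz-type fact that a maximal ideal of an affine algebra is exactly a prime ideal whose quotient is zero-dimensional.
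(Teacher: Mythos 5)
Your proof is correct and follows exactly the route the paper intends: the paper derives this corollary as an immediate restriction of the bijection in Lemma \ref{poisson_primes_leaves}, and your dimension bookkeeping ($\dim \mathcal{L} = \dim \ol{\mathcal{L}} = \dim \ZH_\bc(W)/\fp_\mathcal{L}$, together with the fact that a prime in an affine $\bbC$-algebra is maximal if and only if the quotient domain has Krull dimension zero) correctly fills in the details left implicit there.
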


Analogous to Lusztig–Spaltenstein induction for nilpotent adjoint orbits of a reductive group, one can show that symplectic leaves are induced from zero-dimensional leaves for parabolic subgroups of $W$. Before we discuss this we give a short recollection about parabolic subgroups.

\subsection{Parabolic subgroups}

Recall that a \word{parabolic subgroup} of $W$ is the pointwise stabiliser $W_{\fh'}$ of a subspace $\fh'$ of $\fh$. By a theorem of Steinberg \cite[Theorem 1.5]{SteinbergDifferential} the pair $(\fh',W_{\fh'})$ is itself a complex reflection group. Moreover, $W_{\fh'}$ is the stabiliser $W_b$ of a generic point $b$ of $\fh'$. Hence, parabolic subgroups of $W$ are in fact the stabilisers of points of $\fh$. 

Define the \word{rank} of a complex reflection group $W$ to be the dimension of a faithful reflection representation of $W$ of minimal dimension. Let $W'$ be a parabolic subgroup of $W$. We write
$$
(\mathfrak{h}^{*W'})^\perp \dopgleich \{ y \in \mathfrak{h} \, | \, x(y) = 0 \, \textrm{ for all } \, x \in \mathfrak{h}^{*W'} \}.
$$ 
Then $\mathfrak{h} = \mathfrak{h}^{W'} \oplus (\mathfrak{h}^{*W'})^\perp$ is a decomposition of $\mathfrak{h}$ as a $W'$-module and $(\mathfrak{h}^{*W'})^\perp$ is a faithful reflection representation of $W'$ of minimal rank. Hence, the rank of $W'$ is $\dim (\mathfrak{h}^{*W'})^\perp$. We will always consider parabolic subgroups with this minimal reflection representation. In particular, if $\bc:\Ref(W) \rarr \bbC$ is a $W$-equivariant function, then the restriction $\bc'$ of $\bc$ to $\Ref(W')$ is a $W'$-equivariant function and we understand the rational Cherednik algebra $\H_{\bc'}(W')$ to be defined with respect to this reflection representation of $W'$.

The group $W$ acts on its set of parabolic subgroups by conjugation. Given a parabolic subgroup $W'$ the corresponding conjugacy class will be denoted $(W')$. We also require the partial ordering on conjugacy classes of parabolic subgroups of $W$ defined by $(W_1) \ge (W_2)$ if and only if $W_1$ is conjugate to a subgroup of $W_2$. The ordering is chosen in this way so that it agrees with a geometric ordering to be introduced in the next paragraph.

Finally, for a given parabolic subgroup $W'$ of $W$, we denote by $\mathfrak{h}^{W'}_{\textrm{reg}}$ the subset of $\h^{W'}$ consisting of those points whose stabiliser in $W$ is equal to $W'$. This is a locally closed subset of $\h$. We denote by $\Xi(W')$ the quotient $N_W(W') / W'$, where $N_W(W')$ is the normaliser of $W'$ in~$W$. The group $\Xi(W')$ acts freely on $\mathfrak{h}^{W'}_{\textrm{reg}}$. 

\begin{remark}
Suppose that $(W,S)$ is a Coxeter group. In \S\ref{lusztig_fam_vs_cm_fam} we already used the standard parabolic subgroups $W_I$ of $W$ for subsets $I \subs S$. Let $\fh$ be the (complexified) geometric representation of $W$ so that $(\fh,W)$ is a complex reflection group. Then $W_I$ is a parabolic subgroup of $W$ in the sense just defined. Moreover, it follows from Steinberg's theorem and \cite[Theorem 3.1]{Barcelo-Ihrig-Parabolics} that, up to conjugacy, the parabolic subgroups of $W$ are precisely the standard parabolic subgroups $W_I$. 
\end{remark}

\subsection{Parabolic subgroup attached to a symplectic leaf}

If $W'$ is a parabolic subgroup of $W$ then $\h_{\reg}^{W'}/W$ denotes the image of $\h_{\reg}^{W'}$ in $\h / W$. The symplectic leaves of $\cX_{\bc}(W)$ are natural labeled by conjugacy classes of parabolic subgroups. 

\begin{thm}\label{thm:induceleaves}
The following holds:
\begin{enum_thm}
\item \label{thm:induceleaves:para} For any symplectic leaf $\mc{L} \subs \cX_{\bc}(W)$ there exists a unique conjugacy class $W_\mathcal{L} \dopgleich (W')$ of parabolic subgroups of $W$ such that $\pi_\bc(\mc{L}) \cap \h^{W'}_{\reg} / W$ is dense in $\pi_\bc(\mc{L})$. 

\item \label{thm:induceleaves:leq} If $\mathcal{L},\mathcal{L}' \subs X_\bc(W)$ are symplectic leaves with $\mc{L} \subs \overline{\mc{L}'}$, then $W_\mathcal{L} \le W_{\mathcal{L}'}$.
\end{enum_thm}
\end{thm}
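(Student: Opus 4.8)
The plan is to deduce both statements from the geometry of the finite surjective morphism $\pi_\bc \colon \cX_\bc(W) \to \h/W$ together with the orbit-type stratification of $\h/W$. The first step is to set up this stratification. Writing $q \colon \h \to \h/W$ for the (finite, hence closed) quotient map and, for a parabolic $W'$, recalling that $\h^{W'}_\reg$ is the locally closed set of points with stabiliser exactly $W'$, the images $\h^{W'}_\reg/W := q(\h^{W'}_\reg)$ are indexed by conjugacy classes $(W')$ and give a finite decomposition $\h/W = \bigsqcup_{(W')} \h^{W'}_\reg/W$, by Steinberg's theorem. The key additional input is the frontier condition together with its compatibility with the order on conjugacy classes of parabolics recalled above: since $\h^{W'}_\reg$ is open and dense in the linear subspace $\h^{W'} = \{ x \in \h : W' \text{ fixes } x \}$, one computes $\overline{\h^{W'}_\reg} = \h^{W'}$ upstairs, and pushing forward along the closed map $q$ yields $\overline{\h^{W'}_\reg/W} = \bigcup_{(W'') : (W') \ge (W'')} \h^{W''}_\reg/W$. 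Thus the closure of a stratum is a union of strata, and strata labelled by larger parabolics lie in the closure of those labelled by smaller ones.

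For part \ref{thm:induceleaves:para}, I would use that the closure of a symplectic leaf is irreducible (recalled above from \cite{PoissonOrders}); hence $\mc{L}$, and therefore its image $Z \dopgleich \pi_\bc(\mc{L})$, is irreducible. The claim then reduces to the topological fact that an irreducible subset of a space with a finite stratification by locally closed pieces meets exactly one stratum densely. Existence: $Z$ is the finite union of the relative closures of the sets $Z \cap (\h^{W'}_\reg/W)$, so irreducibility forces one of these closures to be all of $Z$. Uniqueness: if $Z$ is dense in $\overline{\h^{W'}_\reg/W}$ then $Z \cap (\h^{W'}_\reg/W)$ is open in $Z$, and two dense open subsets of an irreducible space meet, whereas distinct strata are disjoint. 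This produces the unique class $W_\mc{L} = (W')$.

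For part \ref{thm:induceleaves:leq}, suppose $\mc{L} \subs \overline{\mc{L}'}$. Since $\pi_\bc$ is finite, hence closed, it sends closures to closures, so $\pi_\bc(\mc{L}) \subs \pi_\bc(\overline{\mc{L}'}) = \overline{\pi_\bc(\mc{L}')}$. Let $W''$ and $W'$ represent $W_\mc{L}$ and $W_{\mc{L}'}$. By part (i), $\pi_\bc(\mc{L}')$ is dense in $\overline{\h^{W'}_\reg/W}$, so the right-hand side equals $\overline{\h^{W'}_\reg/W}$, while $\pi_\bc(\mc{L})$ meets $\h^{W''}_\reg/W$. Hence the stratum $\h^{W''}_\reg/W$ meets $\overline{\h^{W'}_\reg/W}$; as the latter is a union of strata and strata are disjoint, $\h^{W''}_\reg/W \subs \overline{\h^{W'}_\reg/W}$. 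By the frontier formula this is precisely $(W') \ge (W'')$, i.e.\ $W_\mc{L} \le W_{\mc{L}'}$.

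The main obstacle is not the symplectic-geometry input---irreducibility of leaf closures and the finiteness (hence closedness) of $\pi_\bc$ are already available---but the careful verification that the orbit-type decomposition of $\h/W$ is a stratification by locally closed pieces satisfying the frontier condition, and that the induced order on strata matches the combinatorial order $\ge$ on conjugacy classes of parabolic subgroups. The delicate point there is controlling images and closures under the finite quotient $q$; once this dictionary is in place, both parts are immediate.
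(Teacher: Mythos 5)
Your proof is correct, but it takes a genuinely different route from the one in the paper. The paper proves part \ref{thm:induceleaves:para} by invoking the dimension count from the proof of \cite[Proposition 4.8]{Cuspidal}: for each Poisson prime $\mf{p}$ there is a unique class $(W')$ with $2\dim\h = 2\,\mathrm{rk}(W') + \dim\mc{L}_{\mf{p}}$ and $\dim\bigl(\pi_\bc(\mc{L}_{\mf{p}}) \cap \h^{W'}_{\reg}/W\bigr) = \dim \h^{W'}_{\reg}/W$, and then deduces density from irreducibility of $\pi_\bc(\mc{L})$ together with the equality $\dim\pi_\bc(\mc{L}) = \dim\h^{W'}_{\reg}/W$; part \ref{thm:induceleaves:leq} is left essentially implicit. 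You instead derive both existence and uniqueness of the dense stratum purely topologically, from the finite orbit-type stratification of $\h/W$, the frontier condition, and the fact that an irreducible set meets exactly one stratum of a finite locally closed stratification densely; your argument for \ref{thm:induceleaves:leq} via closedness of $\pi_\bc$ and the frontier formula is then a clean, self-contained deduction. What your approach buys is independence from the completion/induction machinery behind \cite[Proposition 4.8]{Cuspidal} and an explicit proof of \ref{thm:induceleaves:leq}; what it loses is the dimension identity $2\dim\h = 2\,\mathrm{rk}(W_\mc{L}) + \dim\mc{L}$, which the paper uses elsewhere. Two small points to tidy up: in the uniqueness step the relevant fact is that a locally closed subset that is dense in an irreducible space is automatically open in it (so two such subsets must meet), which is what your phrasing is gesturing at; and in part \ref{thm:induceleaves:leq} you only know $\overline{\pi_\bc(\mc{L}')} \subs \overline{\h^{W'}_{\reg}/W}$, not equality --- but the containment is all the argument needs. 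Neither affects correctness.
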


\begin{proof}
The bijection of Lemma \ref{poisson_primes_leaves} is denoted $\mf{p} \mapsto \mc{L}_{\mf{p}}$. The proof of \cite[Proposition 4.8]{Cuspidal} shows that, for each Poisson prime $\mf{p}$, there is a unique conjugacy class $(W')$ with $2 \dim \h = 2 \ \mathrm{rk} (W') + \dim \mc{L}_{\mf{p}}$ such that 
$$
\dim \pi_\bc(\mc{L}_{\mf{p}}) \cap \h_{\reg}^{W'}/W = \dim \h_{\reg}^{W'}/W.
$$
Since $\pi_\bc(\mc{L})$ is irreducible and $\dim \pi_\bc(\mc{L})  = \dim  \h_{\reg}^{W'}/W$, this implies that $ \pi_\bc(\mc{L}_{\mf{p}}) \cap \h_{\reg}^{W'}/W $ is dense in $\pi_\bc(\mc{L})$.
\end{proof}

\subsection{Cuspidal reduction I}

We recall the main results from \cite{Cuspidal}. For a closed point $\chi$ of $\cX_\bc(W)$ with defining maximal ideal $\mf{m}_{\chi}$ of $\ZH_\bc(W)$ we set 
\[
\H_{\mbf{c},\chi}(W) := \H_{\mbf{c}}(W) / \mf{m}_{\chi} \cdot \H_{\mbf{c}}(W) \;.
\]
This is a finite-dimensional $\bbC$-algebra. We call it \word{cuspidal} if $\chi$ is a zero-dimensional symplectic leaf of $\cX_\bc(W)$. 

\begin{thm} \label{parabolic_cuspidal_induction}
Let $\mathcal{L}$ be a symplectic leaf of $\cX_{\mathbf{c}}(W)$ of dimension $2l$ and $\chi$ a point on $\mathcal{L}$. Then there exists a parabolic subgroup $W'$ of $W$ of rank $\dim \h - l$ and a cuspidal algebra $H_{\mathbf{c}',\chi'}(W')$ such that
$$
\H_{\mathbf{c},\chi}(W) \simeq \Mat_{|W / W'|}(\H_{\mathbf{c}',\chi'}(W')).
$$
Moreover, there exists a functor $\Phi_{\chi', \chi}  :  \Lmod{\H_{\bc',\chi'}(W')} \stackrel{\sim}{\longrightarrow} \Lmod{\H_{\bc,\chi}(W)}$ defining an equivalence of categories such that
$$
\Phi_{\chi',\chi}(M) \simeq \Ind_{W'}^{W} \,  M \quad \forall \, M \in \Lmod{\H_{\bc',\chi'}(W')}
$$
as $W$-modules.
\end{thm}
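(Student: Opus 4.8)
The plan is to realise the fibre algebra $\H_{\bc,\chi}(W)$ as a fibre of a suitable completion of $\H_\bc(W)$, and then to apply a completion isomorphism of Bezrukavnikov–Etingof type — adapted to $t=0$ as in \cite{Cuspidal} — relating $\H_\bc(W)$ to the rational Cherednik algebra of a parabolic subgroup. First I would invoke Theorem \ref{thm:induceleaves} to attach to the leaf $\mathcal{L}$ its conjugacy class of parabolic subgroups $(W')$, recording that $\mathrm{rk}(W') = \dim\h - l$, so that the asserted rank is forced. Since $\H_{\bc,\chi}(W) = \H_\bc(W)/\mf{m}_\chi\H_\bc(W)$ depends only on the completion of $\H_\bc(W)$ at the maximal ideal $\mf{m}_\chi$ of $\ZH_\bc(W)$, and since $\mf{m}_\chi$ lies over the point $\pi_\bc(\chi) = W\cdot b \in \h/W$, I would work with the completion of $\H_\bc(W)$ along $\C[\h]^W$ at the orbit $W\cdot b$, where $b$ is chosen with stabiliser $W_b = W'$; this is the natural choice compatible with the use of $\pi_\bc$ in Theorem \ref{thm:induceleaves}.

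The heart of the argument is the completion isomorphism. The Bezrukavnikov–Etingof reduction gives
\[
\widehat{\H_\bc(W)}_{W\cdot b} \;\simeq\; \Mat_{|W/W'|}\!\left( \widehat{\H_{\bc'}(W',\h)}_{0} \right),
\]
where $\bc'$ is the restriction of $\bc$ to $\Ref(W')$ and $\H_{\bc'}(W',\h)$ is the rational Cherednik algebra of $W'$ acting on the \emph{full} space $\h$. Because $W'$ acts trivially on $\h^{W'}$ and $t=0$, this algebra splits as a tensor product $\H_{\bc'}(W',\bar\h)\otimes \C[\h^{W'}\oplus(\h^{W'})^*]$, where $\bar\h = (\h^{*W'})^\perp$ is the minimal reflection representation used to define $\H_{\bc'}(W')$, and the second factor is the coordinate ring of a $2l$-dimensional symplectic vector space (a commutative Poisson algebra). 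Passing to the fibre over $\chi$ specialises the polynomial factor at a point and the first factor at a central character $\chi'$ of $\ZH_{\bc'}(W')$, yielding $\H_{\bc,\chi}(W)\simeq \Mat_{|W/W'|}(\H_{\bc',\chi'}(W'))$; the standard Morita equivalence between a matrix algebra and its base then produces the equivalence $\Phi_{\chi',\chi}$.

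Next I would verify cuspidality and identify the $W$-module structure. For the former I would track the symplectic leaf through the tensor splitting, using the compatibility of Poisson cores with tensor products (cf. Lemma \ref{poisson_primes_leaves}): the $2l$-dimensional leaf $\mathcal{L}$ becomes a product of the full $2l$-dimensional symplectic factor from $\h^{W'}\oplus(\h^{W'})^*$ with a leaf of $\cX_{\bc'}(W')$, which must therefore be zero-dimensional — exactly the statement that $\H_{\bc',\chi'}(W')$ is cuspidal. For the $W$-action, the matrix algebra $\Mat_{|W/W'|}$ arises from the left $W$-set $W/W'$, and $\Phi_{\chi',\chi}$ is the fibre-level shadow of the induction functor $\C W\otimes_{\C W'}(-)$; tracking a generator $1\otimes\lambda$ through the isomorphism gives $\Phi_{\chi',\chi}(M)\simeq \Ind_{W'}^W M$ as $W$-modules.

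The hard part will be the completion isomorphism itself: establishing the Bezrukavnikov–Etingof equivalence at $t=0$ and, crucially, controlling how the Poisson and symplectic-leaf structure behaves under completion and the tensor splitting, so that the numerical bookkeeping ($2l = 2\dim\h^{W'}$, $\mathrm{rk}(W')=\dim\h-l$) genuinely matches the geometric output of Theorem \ref{thm:induceleaves} and so that zero-dimensionality of the reduced leaf is precisely cuspidality of $\H_{\bc',\chi'}(W')$. A secondary subtlety is checking that the parabolic produced by the completion coincides, up to conjugacy, with the class $(W')$ assigned to $\mathcal{L}$ in Theorem \ref{thm:induceleaves}, and that the whole construction is independent of the auxiliary choices of $b$, of the splitting, and of the point in the symplectic factor.
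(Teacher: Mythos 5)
The paper gives no proof of this theorem: it is recalled verbatim from \cite{Cuspidal}, where it is established by essentially the route you outline --- the Bezrukavnikov--Etingof completion isomorphism adapted to $t=0$, the central tensor factor $\C[\h^{W'}\oplus(\h^{W'})^*]$ that splits off because $\h^{W'}$ and $\h^{*W'}$ become central at $t=0$, and the Morita equivalence with $\Mat_{|W/W'|}$ arising from the $W$-set $W/W'$, which yields the $\Ind_{W'}^W$ description on $W$-modules. Your sketch is therefore the same approach as the actual source; the steps you flag as hard (the $t=0$ completion isomorphism and the compatibility of symplectic leaves with completion and the tensor splitting) are precisely the content of \cite{Cuspidal} and of Theorem \ref{thm:generalcompletion} and Lemma \ref{lem:leavescomplete} in the present paper, so your proposal is an accurate roadmap rather than a self-contained argument.
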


Since there are only finitely many zero dimensional leaves in $\cX_{\mbf{c}}(W)$ the above result shows that to describe the $W$-module structure of all the simple modules for a particular rational Cherednik algebra one only needs to describe the $W'$-module structure of the cuspidal simple modules for each parabolic subgroup $W'$ of $W$. 

\subsection{Symplectic leaves and Calogero–Moser families}

As explained in \S\ref{sec:defnCalogeroMoser}, there is a natural bijection between the set $\Omega_\bc(W)$ of Calogero–Moser families and the points in $\Upsilon_\bc^{-1}(0)$. If $\fm_\mathcal{F}$ denotes the point of $\Upsilon_\bc^{-1}(0)$ corresponding to the family $\mathcal{F}$, then $\fm_\mathcal{F}$ lies on a unique symplectic leaf $\mathcal{L}_\mathcal{F}$ of $\cX_\bc(W)$. Using Theorem \ref{thm:induceleaves} we can attach a unique conjugacy class $W_\mathcal{F} \dopgleich W_{\mathcal{L}_\mathcal{F}}$ of parabolic subgroups of $W$ to $\mathcal{F}$. We define a partial ordering $\preceq$ on the Calogero–Moser families $\Omega_{\bc}(W)$ by 
$$
\mc{F} \preceq  \mc{F}' \quad \Longleftrightarrow \quad \mc{L}(\mc{F}) \subs \overline{\mc{L}(\mc{F}')}. 
$$

\begin{prop} \label{families_ordering}
The following holds for any $\mathcal{F},\mathcal{F}' \in \Omega_\bc(W)$:
\begin{enum_thm}
\item \label{families_ordering:antiref} $\mc{F} \preceq  \mc{F}' $ and $\mc{F}' \preceq  \mc{F}$ if and only if $\mc{F} = \mc{F}'$. 
\item $\mc{F} \preceq  \mc{F}' $ implies that $W_\mathcal{F} \le W_{\mathcal{F}'}$. 
\end{enum_thm}
\end{prop}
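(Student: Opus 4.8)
The plan is to prove the two parts separately, disposing of part~(ii) first since it is almost a restatement of Theorem~\ref{thm:induceleaves}. By the very definition of $\preceq$, the relation $\mc{F} \preceq \mc{F}'$ asserts that $\mc{L}_\mc{F} \subseteq \overline{\mc{L}_{\mc{F}'}}$. Applying Theorem~\ref{thm:induceleaves}\ref{thm:induceleaves:leq} to the pair of leaves $\mc{L}_\mc{F}$ and $\mc{L}_{\mc{F}'}$ then immediately yields $W_{\mc{L}_\mc{F}} \le W_{\mc{L}_{\mc{F}'}}$, which is exactly $W_\mc{F} \le W_{\mc{F}'}$. No further input is required for part~(ii).

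For part~(i), one implication is trivial: if $\mc{F} = \mc{F}'$ then $\mc{L}_\mc{F} = \mc{L}_{\mc{F}'}$ and both relations hold by reflexivity of $\subseteq$. The content is the converse. So I would assume $\mc{F} \preceq \mc{F}'$ and $\mc{F}' \preceq \mc{F}$, that is $\mc{L}_\mc{F} \subseteq \overline{\mc{L}_{\mc{F}'}}$ and $\mc{L}_{\mc{F}'} \subseteq \overline{\mc{L}_\mc{F}}$. Passing to closures in each inclusion gives $\overline{\mc{L}_\mc{F}} \subseteq \overline{\mc{L}_{\mc{F}'}}$ and $\overline{\mc{L}_{\mc{F}'}} \subseteq \overline{\mc{L}_\mc{F}}$, hence $\overline{\mc{L}_\mc{F}} = \overline{\mc{L}_{\mc{F}'}}$. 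Since the symplectic leaves form a stratification of $\cX_\bc(W)$ into locally closed subsets, each leaf is open and dense in its own (irreducible) closure; two leaves with the same closure are therefore each dense and open in that irreducible variety, so they intersect, and being blocks of a partition they must coincide. This establishes $\mc{L}_\mc{F} = \mc{L}_{\mc{F}'}$.

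It remains to pass from equality of leaves to equality of families, and this is the step I expect to be the main obstacle. Under the identification of \S\ref{sec:defnCalogeroMoser}, the families $\mc{F}, \mc{F}'$ correspond to $\bbC^*$-fixed points $\fm_\mc{F}, \fm_{\mc{F}'} \in \Upsilon_\bc^{-1}(0) = \cX_\bc(W)^{\bbC^*}$, so the desired conclusion is equivalent to the injectivity of $\mc{F} \mapsto \mc{L}_\mc{F}$, i.e.\ to the claim that a single symplectic leaf carries at most one $\bbC^*$-fixed point. Here I would first observe that the $\bbC^*$-action preserves the Poisson bracket up to a character and hence permutes the leaves; as $\mc{L}_\mc{F}$ contains the fixed point $\fm_\mc{F}$, it is itself $\bbC^*$-stable. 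I would then attempt to exploit the finite, $\bbC^*$-equivariant morphism $\Upsilon_\bc|_{\overline{\mc{L}_\mc{F}}}$ together with the Brown--Gordon description $\overline{\mc{L}_\mc{F}} = \V(\mathcal{P}(\fm_\mc{F}))$ to control the fixed locus of $\overline{\mc{L}_\mc{F}}$ over the origin. The genuine difficulty is that the grading on $\ZH_\bc(W)$ carries both positive and negative weights (since $\deg y = -1$ for $y \in \h$), so the $\bbC^*$-action is \emph{not} contracting and a naive weight comparison does not by itself force uniqueness; resolving this is the crux, and I expect it to require the cuspidal reduction of Theorem~\ref{parabolic_cuspidal_induction} to reduce the question to the zero-dimensional leaves of a parabolic subgroup. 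If this injectivity can only be secured at the level of leaves, then part~(i) should be read as the statement that $\preceq$ descends to a genuine partial order on the set $\{\mc{L}_\mc{F} : \mc{F} \in \Omega_\bc(W)\}$.
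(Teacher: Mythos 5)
Your part (ii) is exactly the paper's proof: it is an immediate application of Theorem \ref{thm:induceleaves}\ref{thm:induceleaves:leq} to the inclusion $\mc{L}_{\mc{F}} \subseteq \overline{\mc{L}_{\mc{F}'}}$, and nothing further is needed.

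For part (i) the paper's entire argument is the assertion that it ``follows directly from the definition of $\preceq$''; in effect the authors stop where you stop, at $\overline{\mc{L}_{\mc{F}}} = \overline{\mc{L}_{\mc{F}'}}$ and hence $\mc{L}_{\mc{F}} = \mc{L}_{\mc{F}'}$ (which you could also get at once from the injectivity of $\mc{L} \mapsto \fp_{\mc{L}} = \I(\overline{\mc{L}})$ in Lemma \ref{poisson_primes_leaves}), and then silently identify a family with its leaf. Your diagnosis that the remaining step is the injectivity of $\mc{F} \mapsto \mc{L}_{\mc{F}}$ --- equivalently, that a symplectic leaf carries at most one $\bbC^*$-fixed point --- is correct, but you should not attempt to prove it by cuspidal reduction or any other means: it is false. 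Whenever $\cX_{\bc}(W)$ is smooth (e.g.\ type $A$ with $\bc \neq 0$, as in \S\ref{type_A}), the whole space is a single symplectic leaf containing all $|\Irr W|$ fixed points, so any two of the distinct singleton families satisfy $\mc{F} \preceq \mc{F}'$ and $\mc{F}' \preceq \mc{F}$. Thus $\preceq$ is only a preorder on $\Omega_{\bc}(W)$, part (i) fails as literally stated, and the correct reading --- the one actually used later for the orderings $\mc{L}_k \prec \mc{L}_{k'}$ in types $B$ and $D$ --- is precisely your closing sentence: $\preceq$ descends to a genuine partial order on the set of leaves $\{ \mc{L}_{\mc{F}} \}$. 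Up to that reinterpretation, your argument (two locally closed leaves with the same irreducible closure are each open and dense in it, hence meet, hence coincide) is complete and is the fleshed-out version of everything the paper's proof contains.
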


\begin{proof}
Part \ref{families_ordering:antiref} follows from directly from the definition of $\preceq $ and part (b) is a consequence of Theorem \ref{thm:induceleaves}\ref{thm:induceleaves:leq}.  
\end{proof}

We say that a Calogero–Moser family $\mathcal{F}$ is \word{cuspidal} if $\mathcal{L}_\mathcal{F}$ is a zero-dimensional leaf. By $\Omega_\bc^{\mrm{cusp}}(W)$ we denote the set of cuspidal Calogero–Moser $\bc$-families. It follows from Theorem \ref{thm:induceleaves} that $\PMax(\ZH_\bc(W)) \subs \Upsilon_\bc^{-1}(0)$. Hence, the set of zero-dimensional symplectic leaves of $\cX_\bc(W)$ is in bijection with $\Omega_\bc^{\mrm{cusp}}(W)$.

\begin{lemma} \label{singleton_cm_noncuspidal}
A singleton Calogero–Moser family is not cuspidal.
\end{lemma}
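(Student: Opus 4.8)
The plan is to combine the smoothness criterion of Theorem~\ref{singleton_smooth} with the fact that the smooth locus of $\cX_\bc(W)$ carries a non-degenerate Poisson structure. Suppose $\mathcal{F}$ is a singleton Calogero--Moser family and let $\chi = \fm_\mathcal{F} \in \Upsilon_\bc^{-1}(0)$ be the corresponding $\bbC^*$-fixed closed point, lying on the symplectic leaf $\mathcal{L}_\mathcal{F}$. Since $\mathcal{F}$ consists of a single irreducible character of $W$, Theorem~\ref{singleton_smooth} tells us that $\chi$ is a \emph{smooth} point of $\cX_\bc(W)$. It therefore suffices to show that the leaf through a smooth point is positive-dimensional.

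To this end I would use that $\cX_\bc(W)$ is a symplectic singularity: it is a flat Poisson deformation of $(\fh \oplus \fh^*)/W$, whose smooth locus inherits the non-degenerate form induced by $\omega$, and this property is inherited by the deformation. In particular the Poisson bracket is non-degenerate at every point of $(\cX_\bc(W))_{\mrm{sm}}$. Hence, in the foliation recalled above, the symplectic leaf $\mathcal{L}_\mathcal{F}$ through the smooth point $\chi$ is exactly the connected component of $(\cX_\bc(W))_{\mrm{sm}}$ containing $\chi$. As $\ZH_\bc(W)$ is a domain, $\cX_\bc(W)$ is irreducible of pure dimension $2\dim\fh$, so this component has dimension $\dim\mathcal{L}_\mathcal{F} = 2\dim\fh$. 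Since $W$ is non-trivial we have $\dim\fh \ge 1$, whence $\dim\mathcal{L}_\mathcal{F} = 2\dim\fh > 0$; thus $\mathcal{L}_\mathcal{F}$ is not zero-dimensional and $\mathcal{F}$ is not cuspidal.

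The only non-formal input is the claim that a smooth point must lie on a top-dimensional leaf, i.e.\ that the Poisson structure cannot degenerate at a smooth point of $\cX_\bc(W)$; a priori a smooth point of an arbitrary Poisson variety can sit on a lower-dimensional leaf, and it is precisely the symplectic-singularity property that excludes this. Equivalently, one can package the argument as a direct contradiction via Poisson maximal ideals: cuspidality of $\mathcal{F}$ would mean that $\fm_\chi$ is a Poisson maximal ideal, so every Hamiltonian vector field vanishes at $\chi$ and the Poisson bivector vanishes there, contradicting its non-degeneracy at the smooth point $\chi$. Either phrasing reduces the statement to the non-degeneracy of the Poisson structure on the smooth locus, which is where I expect the essential content to reside.
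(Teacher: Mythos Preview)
Your argument is correct and follows the same route as the paper's proof: singleton $\Rightarrow$ smooth (Theorem~\ref{singleton_smooth}) $\Rightarrow$ lies on the open, full-dimensional leaf $\Rightarrow$ not zero-dimensional. The paper simply asserts that a smooth point lies on ``the unique open leaf'' of $\cX_\bc(W)$; you unpack this step by invoking non-degeneracy of the Poisson bracket on the smooth locus (the symplectic-singularity property), which is exactly the content behind that assertion and is indeed the only non-formal input.
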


\begin{proof}
If $\mathcal{F}$ is a singleton Calogero–Moser family, then by Theorem \ref{singleton_smooth} the corresponding point $\fm_\mathcal{F}$ of $\cX_\bc(W)$ is smooth. Therefore it is contained in the unique open leaf of $\cX_\bc(W)$. Since $\dim \cX_\bc(W) > 0$, the open leaf is not zero-dimensional and hence the family is not cuspidal.
\end{proof}

The following well-known lemma is analogous to Lemma \ref{lusztig_rescaling}.

\begin{lemma} \label{cm_scaling}
For any $\alpha \in \bbC^\times$ there is a canonical algebra isomorphism $\H_\bc(W) \overset{\simeq}{\longrightarrow} \H_{\alpha \bc}(W)$, which induces an algebra isomorphism $\ol{\H}_\bc(W) \overset{\simeq}{\longrightarrow} \ol{\H}_{\alpha \bc}(W)$ and a Poisson isomorphism $\cX_\bc(W) \overset{\simeq}{\longrightarrow} \cX_{\alpha \bc}(W)$. Moreover, $\Omega_\bc(W) = \Omega_{\alpha \bc}(W)$ and $\Omega_{\bc}^{\mrm{cusp}}(W) = \Omega_{\alpha \bc}^{\mrm{cusp}}(W)$.
\end{lemma}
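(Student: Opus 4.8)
The plan is to write down one explicit algebra isomorphism and then track it through each layer of structure. Recalling the observation made earlier that $\H_{\bc}(W)$ and $\H_{\alpha\bc}(W)$ are naturally isomorphic, I would take the map $\phi\colon\H_{\bc}(W)\to\H_{\alpha\bc}(W)$ defined on generators by $x\mapsto x$ for $x\in\h^*$, by $y\mapsto\alpha^{-1}y$ for $y\in\h$, and by $w\mapsto w$ for $w\in W$. The relations $[x,x']=0$, $[y,y']=0$ and the skew-group relations are immediately preserved, since $\phi$ acts by a scalar on each of $\h$ and $\h^*$ and fixes $W$ pointwise. In the relation (\ref{eq:rel}) at $t=0$ the left-hand side acquires a factor $\alpha^{-1}$ from $\phi(y)=\alpha^{-1}y$, while the structure constants $(y,\alpha_s)(\alpha_s^\vee,x)$ and the reflections $s$ are untouched; as the target has parameter $\alpha\bc$, this factor $\alpha^{-1}$ cancels the extra $\alpha$ and the two sides agree. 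Thus $\phi$ is a well-defined homomorphism with inverse $x\mapsto x$, $y\mapsto\alpha y$, $w\mapsto w$, hence an isomorphism, and it preserves the $\bbZ$-grading because it rescales each homogeneous generator without altering its degree.

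Next I would descend $\phi$ through the two quotients that produce $\ol{\H}$ and $\cX$. Since $\phi$ restricts to the identity on $\C[\h^*]$, preserves $\C[\h]$, and commutes with the $W$-action, it carries $D(W)=\C[\h]^W\otimes_{\bbC}\C[\h^*]^W$ onto itself and sends $D(W)_+$ to $D(W)_+$; hence $\phi$ induces an isomorphism $\ol{\H}_{\bc}(W)\to\ol{\H}_{\alpha\bc}(W)$ and restricts to a graded isomorphism $\ZH_{\bc}(W)\to\ZH_{\alpha\bc}(W)$, giving the $\bbC^*$-equivariant isomorphism $\cX_{\bc}(W)\to\cX_{\alpha\bc}(W)$. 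Because $\phi$ is the identity on the subalgebra $\C[\h^*]^{\mrm{co}W}\rtimes W$ and fixes $W$ pointwise, restriction of scalars along the induced map identifies $\Delta_{\alpha\bc}(\lambda)$ with $\Delta_{\bc}(\lambda)$, and therefore $L_{\alpha\bc}(\lambda)$ with $L_{\bc}(\lambda)$, for every $\lambda\in\Irr W$. An algebra isomorphism matches blocks, so $L_{\bc}(\lambda)$ and $L_{\bc}(\mu)$ lie in the same block exactly when $L_{\alpha\bc}(\lambda)$ and $L_{\alpha\bc}(\mu)$ do; this is precisely the equality of partitions $\Omega_{\bc}(W)=\Omega_{\alpha\bc}(W)$. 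The same compatibility shows that $\phi$ carries the distinguished maximal ideal $\fm_{\mathcal{F}}\subs\ZH_{\bc}(W)$ attached to a family $\mathcal{F}$ to the maximal ideal of $\ZH_{\alpha\bc}(W)$ attached to the same family, both being the central character of the matching block.

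For the Poisson statement, and hence for cuspidality, I would lift $\phi$ to the deformation at $t\neq 0$: the same formulae together with $\bt\mapsto\alpha^{-1}\bt$ define an isomorphism $\H_{\bt,\bc}(W)\to\H_{\bt,\alpha\bc}(W)$, the point being that the term $\bt(y,x)$ in (\ref{eq:rel}) now also acquires the factor $\alpha^{-1}$, exactly as required. Writing $[\widehat{z}_1,\widehat{z}_2]=\bt f$ for lifts of central elements and using $\phi(\bt)=\alpha^{-1}\bt$, reduction modulo $\bt$ gives $\phi(\{z_1,z_2\})=\alpha\{\phi(z_1),\phi(z_2)\}$. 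This is the one genuinely delicate point, and the main obstacle: $\phi$ is Poisson only up to the nonzero scalar $\alpha$. However, rescaling a Poisson bracket by a unit leaves the set of Poisson ideals unchanged, so $\phi$ restricts to a bijection $\PMax(\ZH_{\bc}(W))\to\PMax(\ZH_{\alpha\bc}(W))$. Since a family $\mathcal{F}$ is cuspidal precisely when $\fm_{\mathcal{F}}$ is a Poisson maximal ideal (its leaf being zero-dimensional exactly when $\mathcal{P}(\fm_{\mathcal{F}})=\fm_{\mathcal{F}}$), and since $\phi$ carries the $\bc$-version of $\fm_{\mathcal{F}}$ to the $\alpha\bc$-version by the previous paragraph, it follows that $\mathcal{F}$ is $\bc$-cuspidal if and only if it is $\alpha\bc$-cuspidal. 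This yields $\Omega_{\bc}^{\mrm{cusp}}(W)=\Omega_{\alpha\bc}^{\mrm{cusp}}(W)$ and completes the proof.
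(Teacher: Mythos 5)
The paper states this lemma as ``well-known'' and supplies no proof, so there is nothing internal to compare against; your argument is the standard one and is essentially correct. The map $x\mapsto x$, $y\mapsto\alpha^{-1}y$, $w\mapsto w$ does intertwine the defining relations of $\H_\bc(W)$ and $\H_{\alpha\bc}(W)$, descends to the restricted algebras and to the centres, and identifies $L_\bc(\lambda)$ with $L_{\alpha\bc}(\lambda)$ for each $\lambda$, whence $\Omega_\bc(W)=\Omega_{\alpha\bc}(W)$; and your observation that the induced map on centres rescales the bracket by $\alpha$ rather than preserving it on the nose --- so that one only gets a Poisson isomorphism up to a harmless global rescaling, which changes neither Poisson ideals nor symplectic leaves nor which leaves are zero-dimensional --- is exactly the right way to conclude $\Omega^{\mrm{cusp}}_\bc(W)=\Omega^{\mrm{cusp}}_{\alpha\bc}(W)$. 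Two small inaccuracies are worth repairing. First, with the paper's conventions $\phi$ is the identity on $\C[\h]=\mathrm{Sym}(\h^*)$ and on $W$, but it is \emph{not} the identity on $\C[\h^*]^{\mrm{co}W}$ (it rescales its generators by $\alpha^{-1}$); the identification of baby Verma modules, and hence of their heads, still goes through because $\phi$ preserves the subalgebra $\C[\h^*]^{\mrm{co}W}\rtimes W$ and its augmentation ideal $\C[\h^*]^{\mrm{co}W}_+$, which annihilates $\lambda$, while fixing $W$ pointwise. Second, any isomorphism of the form $x\mapsto Ax$, $y\mapsto By$, $w\mapsto w$ is forced to satisfy $AB=\alpha^{-1}$ and therefore always rescales the bracket by $\alpha$, so the phrase ``Poisson isomorphism'' in the statement must in any case be read up to this scalar --- as you in effect do.
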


We can now state the main theorem of this paper

\begin{customthm}{\ref{thm:mainresultintro}}
If $W$ is of type $A,B,D$ or $I_2(m)$, then for any parameter $\bc \geq 0$ the \textit{cuspidal} Lusztig $\bc$-families of $W$ equal the \textit{cuspidal} Calogero–Moser $\bc$-families of $W$.
\end{customthm}

\begin{proof}
The Weyl groups of type $A$ are dealt with in \S\ref{type_A}. For type $B$, see Corollary \ref{thm:cuspidaltypeB}, and type $D$ is dealt with in Theorem \ref{thm:typeDcusp}. Finally, for the dihedral groups $I_2(m)$, see \S\ref{dihedral_lusztig_families}. 
\end{proof}

Based on this theorem we make the following conjecture.

\begin{customconj}{\ref{cm_cusp_lusztigz_cusp_conjecture}}
For any finite Coxeter group and any real parameter $\bc$ the \textit{cuspidal} Lusztig $\bc$-families equal the \textit{cuspidal} Calogero–Moser $\bc$-families. 
\end{customconj}

The proof of Theorem \ref{thm:mainresultintro} follows from a case-by-case analysis in sections \S\ref{type_A} to \S\ref{dihedral_groups} using several theoretical methods we develop in the next section. We will deduce in Lemma \ref{cuspidals_for_c_eq_0} that Conjecture \ref{cm_cusp_lusztigz_cusp_conjecture} holds for the special case $\bc=0$ for any $W$. Note that because of Lemma \ref{lusztig_rescaling} and Lemma \ref{cm_scaling} it is sufficient to prove the conjecture only up to multiplication of the parameter by positive real numbers.

\section{Calculating cuspidal Calogero–Moser families} \label{calculating}

To determine the cuspidal Calogero–Moser families we develop several theoretical methods—both of representation theoretic and geometric nature. On the one hand, we introduce the concept of rigid modules here and show that these always lie in a cuspidal family. On the other hand, we develop a Clifford theory for symplectic leaves. This allows us to deal with Weyl groups of type $D$ later. All this is done for complex reflection groups in general.

\subsection{Rigid modules}

The key to figuring out which Calogero–Moser families are cuspidal for Coxeter groups is the notion of \textit{rigid} $\H_\bc(W)$-modules. We show in Theorem \ref{maintheorem:rigid_cuspidal} below that every rigid module belongs to a cuspidal family. In all examples we consider it turns out that there is at most one cuspidal family. These two facts allow us to find all cuspidal families. 

\begin{definition}
A simple $\H_{\bc}(W)$-module $L$ is said to be \textit{rigid} if it is irreducible as a $W$-module. 
\end{definition}

This notion has played an important role for rational Cherednik algebras at $t = 1$, see e.g. \cite{Finitedimreps}. At $t = 0$, the second author investigated rigid modules in \cite{Thiel.UOn-restricted-ration}. Recently, they also played a prominent role in the work \cite{Ciubotaru:2015aa} of Ciubotaru on Dirac cohomology where they were called \word{one-$W$-type} modules. The terminology we adopt comes from the theory of module varieties, where it is standard. Intuitively, a rigid module is one that cannot be deformed (for fixed parameter $\bc$) to a continuous family of representation; see Lemma \ref{lem:openrep}. On the other hand, if a simple $\H_{\bc}(W)$-module is supported on a symplectic leaf of dimension greater than zero then one can deform the representation along the leaf. Therefore it is intuitively clear that rigid modules should be supported at zero dimensional leaves. Showing the precise connection between rigidity and cuspidality depends on the following theorem.

\begin{theorem} \label{parabolic_branching}
Let $W$ be a complex reflection group. Then no irreducible $W$-module is induced from a proper parabolic subgroup of $W$, i.e., $\Ind_{W'}^W \lambda$ is reducible for all parabolic subgroups $W' \subsetneq W$.
\end{theorem}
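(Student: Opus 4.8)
My plan is to prove the stronger quantitative statement that $\langle \Ind_{W'}^W \lambda, \Ind_{W'}^W \lambda\rangle_W \geq 2$ for every proper parabolic $W' \subsetneq W$ and every $\lambda \in \Irr(W')$, by induction on $|W|$. Three reductions clear the ground. Replacing $\fh$ by its essential part changes neither the group nor its representations, so I may assume $\fh^W = 0$. If $W = W_1 \times W_2$ is a nontrivial product then $W' = W_1' \times W_2'$ with some $W_i' \subsetneq W_i$, $\lambda = \lambda_1 \boxtimes \lambda_2$, and $\Ind_{W'}^W \lambda = \Ind_{W_1'}^{W_1}\lambda_1 \boxtimes \Ind_{W_2'}^{W_2}\lambda_2$ is reducible by the inductive hypothesis applied to the relevant factor; so I may assume $W$ irreducible. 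Finally, if $W'$ is not a maximal parabolic, I choose a parabolic $P$ with $W' \subsetneq P \subsetneq W$ and use $\Ind_{W'}^W = \Ind_P^W \circ \Ind_{W'}^P$ together with semisimplicity: by induction $\Ind_{W'}^P \lambda$ splits, hence so does $\Ind_{W'}^W \lambda$. Thus it suffices to treat $W$ irreducible of rank $\geq 2$ and $W'$ a maximal parabolic. A short arrangement-theoretic observation (any flat of dimension $\geq 2$ is properly cut down by some reflection hyperplane) shows that the fixed space $\fh^{W'}$ of a maximal parabolic is a line; the rank-one base case, where $W$ is cyclic and $W' = 1$, is immediate since $\Ind_1^W \lambda$ is then the regular representation.

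The main tool is Mackey's formula
$$\langle \Ind_{W'}^W \lambda, \Ind_{W'}^W \lambda \rangle_W \;=\; \sum_{g \in W' \backslash W / W'} \big\langle \lambda|_{D_g}, \, {}^g\lambda|_{D_g}\big\rangle_{D_g}, \qquad D_g := W' \cap {}^gW',$$
whose identity double coset contributes $\langle \lambda, \lambda\rangle_{W'} = 1$. Since every summand is a nonnegative integer, it is enough to produce one $g \in W \setminus W'$ whose term is positive. Here the geometry enters: because $W$ is irreducible of rank $\geq 2$ its reflection representation is irreducible, so $W$ stabilises no line and therefore $W'$ is not normal in $W$; as $W$ is generated by its reflections, there exists $s \in \Ref(W)$ with $s \notin N_W(W')$, and in particular $s\,\fh^{W'} \neq \fh^{W'}$. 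For this $s$ the group $D := W' \cap {}^sW'$ is the pointwise stabiliser $W_U$ of the plane $U = \fh^{W'} + s\,\fh^{W'}$, and since $\fh^D \supseteq U \supsetneq \fh^{W'}$ it is a proper parabolic subgroup of $W'$.

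It remains to arrange that the $s$-term $\big\langle \lambda|_D, {}^s\lambda|_D\big\rangle_D$ is nonzero, and this is the heart of the matter. The cleanest sufficient condition is that $s$ centralise $D$: then $s^{-1}ds = d$ for all $d \in D$, so ${}^s\lambda|_D = \lambda|_D$ as $D$-modules and the term equals $\|\lambda|_D\|^2 \geq 1$, finishing the proof (this step is insensitive to the order of $s$, so it covers the genuinely complex case as well). The obstacle is thus to show that among the non-normalising reflections one can be chosen to commute with the pointwise stabiliser of $U$ — geometrically, a reflection transverse to the line $\fh^{W'}$ whose root line lies in $\fh^D$ and whose mirror contains the reflection representation of $D$. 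I would establish this by passing to a rank-two parabolic slice transverse to $\fh^D$, where the statement collapses to the dihedral or cyclic case and can be checked directly; the symmetric-group picture, in which one takes $s$ to be a transposition whose support is disjoint from that of $D$, is the model to keep in mind. Should a centralising reflection be unavailable for some $W'$, the fallback is Clifford theory for an appropriate extension of $D$ by $s$: either $\lambda|_D$ is stable under conjugation by $s$, in which case the term is again a positive norm, or one exchanges $s$ for another double-coset representative. I expect the verification that a suitable reflection always exists — that is, that the combinatorics of parabolic fixed spaces forces a reflection centralising $W' \cap {}^sW'$ — to be the principal technical difficulty.
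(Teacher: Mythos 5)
Your overall strategy is sound and, at its core, generalises the mechanism the paper actually uses: your Mackey-formula computation with a group element $g \notin W'$ satisfying ${}^g\lambda|_{D_g} \cong \lambda|_{D_g}$ specialises, when $g$ is taken to be a central element of $W$ not lying in $W'$, to exactly the paper's Lemma \ref{central_element_branching} (where $(\psi,{}^z\psi)=(\psi,\psi)=1$ forces $\psi^G$ reducible). Your reductions to $W$ irreducible, $W'$ maximal, and $\dim \fh^{W'}=1$ are all correct, as is the identification $W'\cap {}^sW' = W_U$ with $U = \fh^{W'}+s\,\fh^{W'}$.

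The genuine gap is the one you flag yourself: the existence, for every irreducible $W$ and every maximal parabolic $W'$, of a reflection $s \notin N_W(W')$ centralising $W'\cap {}^sW'$ is asserted but not proved, and everything rests on it. The proposed fallbacks do not close it. The ``rank-two transverse slice'' reduction is not set up precisely enough to be checkable, and the Clifford-theoretic fallback is problematic: for a reflection $s$ of order $>2$ (which occurs in genuinely complex groups), conjugation by $s$ carries $D = W'\cap {}^sW'$ to ${}^{s^{-1}}W'\cap W'$, which need not equal $D$, so ``$\lambda|_D$ is stable under conjugation by $s$'' does not even parse as a condition on a single group; and for involutive $s$ the dichotomy ``either $\lambda|_D$ is $c_s$-stable or exchange $s$'' is a hope, not an argument. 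The paper avoids this existence problem entirely by invoking the Shephard--Todd classification: if $Z(W)\neq 1$ the central element does the job (no reflection needed), and the remaining groups are exactly $G(m,m,n)$, which are handled by explicit combinatorics --- the quotient $G(m,m,n)\twoheadrightarrow \fS_n$ and a dimension count for the $\fS_k\subset G(m,m,k)$ step, the branching rule for $G(m,1,n)$ combined with a quantitative Clifford-theory lemma (Lemma \ref{normal_subgroup_branching}) for the $G(m,m,n-1)\subset G(m,m,n)$ step, and Littlewood--Richardson positivity (deduced from type $B$) for $\fS_k\times\fS_{n-k}\subset\fS_n$. Your centralising-reflection claim does appear to hold in the $G(m,m,n)$ cases one can check by hand (e.g.\ $s_{k,k+1}$ for $W'=\fS_k\times G(m,m,n-k)$ has disjoint support from $D$), so a classification-light proof along your lines may well exist, but as written the proposal replaces the paper's verified case analysis with an unverified uniform existence statement; until that statement is proved (or at least checked case by case, which would reintroduce the classification), the argument is incomplete.
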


In order to give the proof of Theorem \ref{parabolic_branching}, we first give some preparatory lemmata. Let $G$ be a finite group. Given a character $\chi$ of $G$, we denote by $\ell (\chi)$ the length of $\chi$, i.e. if $\chi = \sum_{i=1}^n n_i \chi_i$ with $\chi_i \in \Irr(G)$, then $\ell (\chi) = \sum_{i=1}^n n_i$. Note that $(\chi,\chi) = \sum_{i=1}^n n_i^2$ and therefore $\sqrt{(\chi,\chi)} \leq \ell (\chi) \leq (\chi,\chi)$, where $(\cdot,\cdot)$ is the scalar product of characters.

We define the \word{branching index} of a subgroup $P$ of $G$ as 
\[
b_P(G) \dopgleich \min \lbrace \ell (\psi^G) \mid \psi \in \Irr(P) \rbrace \;,
\]
where $\psi^G \dopgleich \Ind_P^G \psi$. We say that $P$ is \word{branching} in $G$ if $b_P(G) > 1$, i.e., $\psi^G$ is reducible for all $\psi \in \Irr(P)$. We can now reformulate Theorem \ref{parabolic_branching} as saying that all proper parabolic subgroups of $W$ are branching.

\begin{lemma} \label{central_element_branching}
If $G$ has a central element which is not contained in $P$, then $P$ is branching.
\end{lemma}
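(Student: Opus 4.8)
The plan is to show directly that for every $\psi \in \Irr(P)$ the induced character $\psi^G = \Ind_P^G \psi$ is reducible, i.e. $\ell(\psi^G) > 1$; since this holds for all such $\psi$, we get $b_P(G) > 1$, which is precisely the assertion that $P$ is branching. The whole argument rests on one standard feature of induced characters: $\Ind_P^G \psi$ vanishes on any element of $G$ that is not conjugate into $P$. The hypothesis supplies exactly such an element, namely the central $z \in G \setminus P$.

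First I would evaluate $\psi^G$ at the central element $z$. By the Frobenius formula for induced characters,
\[
\psi^G(z) = \frac{1}{|P|} \sum_{\substack{x \in G \\ x^{-1} z x \in P}} \psi(x^{-1} z x).
\]
Because $z$ is central, $x^{-1} z x = z$ for every $x \in G$, so the summation condition $x^{-1} z x \in P$ collapses to the single condition $z \in P$, which is false by hypothesis. Hence the index set is empty and $\psi^G(z) = 0$. Next I would bring in Schur's lemma on the other side. Suppose, for contradiction, that $\psi^G$ were irreducible, say $\psi^G = \chi_\rho$ for an irreducible representation $\rho$ of $G$. Since $z$ is central, $\rho(z)$ commutes with all of $\rho(G)$, and $z$ has finite order, so $\rho(z) = \zeta \cdot \mathrm{Id}$ for some root of unity $\zeta \neq 0$. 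Then $\psi^G(z) = \chi_\rho(z) = \zeta \cdot \dim \rho = \zeta \cdot [G:P]\,\psi(1) \neq 0$, contradicting the vanishing just established. Therefore $\psi^G$ must be reducible for every $\psi \in \Irr(P)$, so $P$ is branching.

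I do not expect a genuine obstacle here: the proof is essentially a two-line character computation. The only point meriting care is the justification that $z$ is not conjugate into $P$, but this is immediate because a central element is conjugate only to itself and $z \notin P$. One could equivalently phrase the whole argument as the observation that the support of $\psi^G$ is contained in the union of the $G$-conjugates of $P$, which misses the central element $z$, while no nonzero multiple of an irreducible character can vanish at a central element.
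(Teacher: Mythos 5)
Your proof is correct, but it takes a different route from the paper's. The paper applies Mackey's irreducibility criterion (\cite[10.25]{CurtisReiner}): since $z$ is central, ${}^zP = P$ and ${}^z\psi = \psi$, so the double coset $PzP \neq P$ contributes $(\psi, {}^z\psi) = 1$ to $(\psi^G, \psi^G)$ in addition to the contribution of the trivial double coset, forcing $(\psi^G,\psi^G) \ge 2$. You instead observe that $\psi^G$ vanishes at $z$ (as $z$ is conjugate only to itself and lies outside $P$), while any irreducible character of $G$ is nonzero at a central element by Schur's lemma; this is an equally valid and arguably more elementary argument, avoiding Mackey theory entirely at the cost of invoking the specific structure of characters at central elements. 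Both arguments are complete; the Mackey route has the mild advantage of giving the quantitative lower bound $(\psi^G,\psi^G)\ge 2$ directly, which is in the same spirit as the inner-product estimates used in the neighbouring Lemma \ref{normal_subgroup_branching}, but nothing in the paper depends on that extra information here.
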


\begin{proof}
Let $z \in Z(G) \setminus P$ and let $\psi \in \Irr(P)$. Note that $\,^zP = zPz^{-1} = P$ and therefore $P \cap \,^zP = P$. Similarly, we have $\,^z\psi = \psi$. Hence, $(\psi,\,^z \psi) = (\psi,\psi) = 1$ and therefore $\psi^G$ is not irreducible by \cite[10.25]{CurtisReiner}.
\end{proof}

\begin{lemma} \label{normal_subgroup_branching}
Let $N$ be a normal subgroup of $G$. Let $P$ be a subgroup of $G$ with branching index $b_P(G) > \lbrack G: N \rbrack$. Then $P \cap N$ is branching in $N$.
\end{lemma}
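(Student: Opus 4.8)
The plan is to fix an arbitrary $\theta \in \Irr(P \cap N)$ and to show that $\ell(\Ind_{P\cap N}^N \theta) > 1$; since $\theta$ is arbitrary this is exactly the assertion $b_{P\cap N}(N) > 1$, i.e. that $P \cap N$ is branching in $N$. The idea is to induce $\theta$ all the way up to $G$ and to bound the length $\ell(\Ind_{P\cap N}^G \theta)$ in two different ways — from below by factoring the induction through $P$, and from above by factoring it through $N$ — and then to compare. Throughout I would use the transitivity of induction, $\Ind_{P\cap N}^G = \Ind_P^G \circ \Ind_{P\cap N}^P = \Ind_N^G \circ \Ind_{P\cap N}^N$, together with the fact that both $\Ind$ and the length function $\ell$ are additive on (genuine) characters.

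The technical heart of the argument is the degree estimate $\ell(\Ind_N^G \eta) \le [G:N]$, valid for every $\eta \in \Irr(N)$. To establish it I would write $\Ind_N^G \eta = \sum_{\chi \in \Irr(G)} m_\chi \chi$; by Frobenius reciprocity $m_\chi = (\eta, \chi|_N)$, so every constituent $\chi$ with $m_\chi \neq 0$ contains $\eta$ in its restriction to $N$ and hence has degree $\chi(1) \ge \eta(1)$. Comparing degrees then yields
\[
\eta(1)\,\ell(\Ind_N^G \eta) = \eta(1)\sum_{\chi} m_\chi \le \sum_{\chi} m_\chi\,\chi(1) = (\Ind_N^G \eta)(1) = [G:N]\,\eta(1),
\]
and dividing by $\eta(1)$ gives the claim. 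Writing $\Ind_{P\cap N}^N \theta = \sum_j m_j \eta_j$ with $\eta_j \in \Irr(N)$ and applying this estimate constituent by constituent produces the upper bound $\ell(\Ind_{P\cap N}^G \theta) = \sum_j m_j\,\ell(\Ind_N^G \eta_j) \le [G:N]\sum_j m_j = [G:N]\,\ell(\Ind_{P\cap N}^N \theta)$.

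For the matching lower bound I would factor the induction through $P$ instead. Choosing any irreducible constituent $\psi \in \Irr(P)$ of $\Ind_{P\cap N}^P \theta$, the character $\Ind_P^G \psi$ is a subcharacter of $\Ind_{P\cap N}^G \theta = \Ind_P^G \Ind_{P\cap N}^P \theta$, so by monotonicity of $\ell$ under genuine subcharacters and the very definition of the branching index as a minimum over $\Irr(P)$ we get $\ell(\Ind_{P\cap N}^G \theta) \ge \ell(\Ind_P^G \psi) \ge b_P(G)$. Combining the two bounds gives
\[
b_P(G) \le \ell(\Ind_{P\cap N}^G \theta) \le [G:N]\,\ell(\Ind_{P\cap N}^N \theta),
\]
whence $\ell(\Ind_{P\cap N}^N \theta) \ge b_P(G)/[G:N] > 1$ by the hypothesis $b_P(G) > [G:N]$. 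This is precisely the reducibility of $\Ind_{P\cap N}^N \theta$ that we wanted.

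I expect the only real point of substance to be the degree estimate $\ell(\Ind_N^G \eta) \le [G:N]$; the remaining steps are bookkeeping with additivity and transitivity of induction. It is perhaps worth noting that neither this estimate nor any other part of the argument actually uses the normality of $N$ in $G$, so the hypothesis $N \lhd G$ seems to be recorded only with the intended application (to $D_n \lhd B_n$) in mind.
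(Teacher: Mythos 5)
Your proof is correct, but it takes a genuinely different route from the paper's. The paper argues by contradiction: assuming some $\eta \in \Irr(P \cap N)$ with $\eta^N$ irreducible, it applies Clifford theory twice --- to $N \unlhd G$ to get $(\eta^G,\eta^G) = [I_G(\eta^N):N]$ as an upper bound on $\ell(\eta^G)$, and to $P \cap N \unlhd P$ to get a lower bound $\ell(\eta^G) \ge b_P(G)\sqrt{[I_P(\eta):P\cap N]}$ --- and concludes $b_P(G) \le [G:N]$, a contradiction. You instead give a direct sandwich $b_P(G) \le \ell(\Ind_{P\cap N}^G\theta) \le [G:N]\,\ell(\Ind_{P\cap N}^N\theta)$ for every $\theta$, where the only nontrivial input is the elementary degree estimate $\ell(\Ind_N^G\eta) \le [G:N]$ for $\eta \in \Irr(N)$, proved by Frobenius reciprocity plus a degree comparison. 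Your version buys several things: it avoids Clifford theory entirely, it yields the quantitative conclusion $b_{P\cap N}(N) \ge b_P(G)/[G:N]$ rather than mere reducibility, and --- as you correctly observe --- it never uses $N \unlhd G$, so it establishes the lemma for an arbitrary subgroup $N$; the paper's argument genuinely needs normality at both Clifford-theoretic steps. The paper's approach, for its part, keeps the computation entirely in terms of inner products $(\chi,\chi)$ and inertia groups, which fits the surrounding lemmas (the inequality $\sqrt{(\chi,\chi)} \le \ell(\chi) \le (\chi,\chi)$ is set up just before), but your argument is the more economical and more general of the two.
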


\begin{proof}
Suppose that $P \cap N$ is not branching in $N$. Then there exists some $\eta \in \Irr(P \cap N)$ with $\psi \dopgleich \eta^N \in \Irr(N)$. By Clifford theory for $N \unlhd G$, see \cite[Theorem 19.3]{Huppert-character-theory}, we have $(\eta^G,\eta^G) = (\psi^G,\psi^G) = \lbrack I_G(\psi) : N \rbrack$, where $I_G(\psi)$ is the inertia subgroup of $\psi$ in $G$. Hence, $\ell (\eta^G) \leq \lbrack I_G(\psi) : N \rbrack$. On the other hand, by Clifford theory for $N \cap P \unlhd P$ we have $(\eta^P,\eta^P) = \lbrack I_P(\eta):N \cap P \rbrack$. Hence, $\ell (\eta^P) \geq \sqrt{\lbrack I_P(\eta):N \cap P \rbrack}$ and therefore $\ell (\eta^G) \geq b_P(G) \cdot \sqrt{\lbrack I_P(\eta):N \cap P \rbrack}$. In total, we must have
\[
b_P(G) \leq \frac{\lbrack I_G(\psi) : N \rbrack}{\sqrt{\lbrack I_P(\eta):N \cap P \rbrack} } \leq \lbrack G:N \rbrack \;.
\]
Because of our assumption on $b_P(G)$ this is a contradiction.
\end{proof}

\begin{lemma} \label{P_branching_conjugates_branching}
Suppose that $N \unlhd G$. Then a subgroup $Q$ of $N$ is branching in $N$ if and only if all its $G$-conjugates are branching in $N$. 
\end{lemma}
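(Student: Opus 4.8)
The plan is to prove the sharper statement that the branching index is $G$-conjugation invariant, namely $b_Q(N) = b_{\,^g Q}(N)$ for every $g \in G$. Both directions of the lemma follow at once from this equality: if $Q$ is branching then $b_{\,^g Q}(N) = b_Q(N) > 1$, so each $\,^g Q$ is branching; and conversely $Q = \,^e Q$ is itself among its $G$-conjugates, so the other implication is immediate. Thus everything reduces to establishing the invariance of $b_{(-)}(N)$.

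The starting observation is that, because $N \unlhd G$, conjugation $c_g : n \mapsto g n g^{-1}$ by any $g \in G$ restricts to an automorphism of $N$ (not merely of $Q$), carrying the subgroup $Q$ onto $\,^g Q$. Composing with $c_g$ therefore yields a bijection $\Irr(Q) \stackrel{\sim}{\longrightarrow} \Irr(\,^g Q)$, $\chi \mapsto \,^g\chi$. I would then invoke the standard compatibility of induction with group automorphisms: since $c_g$ is an isomorphism $N \to N$ restricting to an isomorphism $Q \to \,^g Q$, one has an isomorphism of $N$-modules $(\,^g\chi)^N \simeq \,^g(\chi^N)$, where on the right $\,^g(-)$ denotes the $N$-module obtained from $\chi^N$ by twisting along the automorphism $c_g$ of $N$.

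It remains only to note that $c_g$, being an automorphism of $N$, permutes $\Irr(N)$ and hence preserves all multiplicities in a virtual character; in particular $\ell(\,^g(\chi^N)) = \ell(\chi^N)$. Combined with the isomorphism above this gives $\ell((\,^g\chi)^N) = \ell(\chi^N)$, and as $\chi$ runs over $\Irr(Q)$ the character $\,^g\chi$ runs over all of $\Irr(\,^g Q)$; taking minima over these characters yields $b_{\,^g Q}(N) = b_Q(N)$, as required. The only point demanding a little care is that $c_g$ need \emph{not} be an inner automorphism of $N$ when $g \notin N$, so one cannot simply assert that conjugate characters of $N$ coincide; what one does use is the weaker, and for our purposes sufficient, fact that an arbitrary automorphism of $N$ preserves character lengths. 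This is the single mild obstacle in the argument, and it is dispatched precisely by the compatibility of induction with automorphisms.
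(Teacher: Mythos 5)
Your argument is correct and is essentially the paper's own proof: both rest on the compatibility $\Ind_{\,^g Q}^N \circ \Con_{g,Q} = \Con_{g,N} \circ \Ind_Q^N$ together with the fact that conjugation by $g$ gives a bijection $\Irr(Q) \to \Irr(\,^g Q)$ and, as an automorphism of $N$, permutes $\Irr(N)$ and hence preserves lengths. Your explicit remark that $c_g$ need not be inner on $N$ is a sensible clarification of a point the paper leaves implicit.
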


\begin{proof}
This simply follows from the fact that $\Ind_{\,^g Q}^N \circ \Con_{g, Q} = \Con_{g,N} \circ \Ind_Q^N$ and that conjugation $\Con_{g,Q}$ with $g$ defines a bijection between $\Irr(Q)$ and $\Irr(\! \,^g Q)$ for all $g \in G$.
\end{proof}

For the proof of Theorem \ref{parabolic_branching} we will need the classification of complex reflection groups due to Shephard and Todd \cite{ST}, and in particular a description of the parabolic subgroups in the infinite series $G(m,m,n)$. We quickly recall the definition of these groups. Let $m,p,n \in \bbN_{>0}$ with $p$ dividing $m$ and let $\zeta \in \bbC$ be a primitive $m$-th root of unity. Then $G(m,p,n)$ is the subgroup of $\GL_n(\bbC)$ consisting of the generalised permutation matrices with entries in $\mu_m \dopgleich \langle \zeta \rangle$ such that the product of all non-zero entries is an $(m/p)$-th root of unity. The group $G(m,p,n)$ is a normal subgroup of index $p$ in $G(m,1,n)$. For a partition $\lambda$ of an integer $|\lambda| \leq n$ let $\fS_\lambda$ be the corresponding Young subgroup of the symmetric group $\fS_{|\lambda|}$. We have an obvious embedding $\fS_\lambda \times G(m,m,n-|\lambda|) \hookrightarrow G(m,m,n)$. The following lemma can be deduced from \cite[3.11]{Taylor-reflection-subgroups}.

\begin{lemma} \label{gmmn_parabolics}
Up to $G(m,1,n)$-conjugacy the parabolic subgroups of $G(m,m,n)$ are the standard parabolic subgroups $\fS_\lambda \times G(m,m,n-|\lambda|)$ for partitions $\lambda$ of $n$. 
\end{lemma}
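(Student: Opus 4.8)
The plan is to build directly on the fact, recalled above via Steinberg's theorem, that every parabolic subgroup of $W := G(m,m,n)$ is the stabiliser $W_b$ of a point $b = (b_1, \ldots, b_n) \in \fh = \bbC^n$, and that $W_b$ is generated by exactly those reflections whose reflecting hyperplane contains $b$. So I would first record the reflections of $G(m,m,n)$: for each pair $i < j$ and each $0 \le k < m$ there is an order-two reflection $s_{i,j,k}$ with reflecting hyperplane $\{x \in \bbC^n : x_i = \zeta^k x_j\}$, and these exhaust $\Ref(W)$ (the ``diagonal'' reflections present in $G(m,1,n)$ are absent here, since $p = m$ forces the product of the nonzero entries to be $1$). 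Consequently $s_{i,j,k}$ fixes $b$ precisely when $b_i = \zeta^k b_j$.

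The next step is a combinatorial analysis of $W_b$. Partition $\{1, \ldots, n\}$ according to $b$: let $Z$ be the set of indices $i$ with $b_i = 0$, and split the remaining indices into classes by placing $i, j$ in the same class iff $b_i = \zeta^k b_j$ for some (then unique) $k$. Because $s_{i,j,k}$ fixes $b$ only when $b_i = \zeta^k b_j$, the generating reflections of $W_b$ relate indices only within a single class or within $Z$. On $Z$ (of size $r := |Z|$) every $s_{i,j,k}$ fixes $b$, and these generate a copy of $G(m,m,r)$; on a nonzero class $C$ of size $t$ the reflections fixing $b$ are those $s_{i,j,k}$ with $b_i = \zeta^k b_j$, which generate a twisted symmetric group isomorphic to $\fS_t$. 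Hence $W_b \cong G(m,m,r) \times \prod_C \fS_{|C|}$, and taking $\lambda$ to be the partition whose parts are the sizes of the nonzero classes gives $|\lambda| = n - r$ and $W_b \cong \fS_\lambda \times G(m,m,n-|\lambda|)$.

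Finally I would pass to standard form by $G(m,1,n)$-conjugacy. Conjugate by a permutation matrix in $\fS_n \subseteq G(m,1,n)$ so that each nonzero class occupies a consecutive block and $Z$ occupies the last block; then conjugate by a diagonal matrix $d \in G(m,1,n)$ with entries in $\mu_m$ chosen so that each $b_i = \zeta^{k} c$ becomes $c$, i.e.\ to untwist each class. Since conjugation by $g \in G(m,1,n)$ carries $W_b$ to $W_{gb}$ and keeps it inside the normal subgroup $G(m,m,n)$, this realises $W_b$ as the standard parabolic $\fS_\lambda \times G(m,m,n-|\lambda|)$; conversely each such standard subgroup is manifestly a point stabiliser, and the unordered multiset of block sizes together with $r$ is a conjugacy invariant, so the list is complete and irredundant. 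This is exactly the content of \cite[3.11]{Taylor-reflection-subgroups}, which I would cite to abbreviate the argument.

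The main obstacle is the conjugacy step, not the stabiliser computation. It is essential that we may conjugate by the larger group $G(m,1,n)$ rather than only by $G(m,m,n)$, since the untwisting matrix $d$ generally does not lie in $G(m,m,n)$ (its entries have product $\prod_i \zeta^{-k_i}$, which need not equal $1$). Keeping careful track of which point stabilisers are fused by this extra diagonal freedom, and checking that after fusion the only surviving invariant is the unordered partition $\lambda$ (with the distinguished $G(m,m,n-|\lambda|)$ factor coming from $Z$), is precisely where the hypothesis ``up to $G(m,1,n)$-conjugacy'' does the work.
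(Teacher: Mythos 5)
Your argument is correct. The paper does not actually prove this lemma: it simply states that it ``can be deduced from \cite[3.11]{Taylor-reflection-subgroups}'' and moves on, so there is no internal proof to compare against. What you have written is a self-contained derivation of exactly the fact being cited: you use Steinberg's theorem (which the paper has already recalled) to reduce to point stabilisers, observe that for $p=m$ the reflections are precisely the order-two elements $s_{i,j,k}$ with hyperplane $x_i=\zeta^k x_j$, read off the stabiliser $W_b\cong G(m,m,|Z|)\times\prod_C\fS_{|C|}$ from the partition of the coordinates of $b$ into the zero set and the $\mu_m$-twisted equality classes, and then normalise by a permutation followed by a diagonal untwisting matrix. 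Your closing emphasis that the untwisting matrix lies in $G(m,1,n)$ but generally not in $G(m,m,n)$ is exactly the point the authors themselves flag in the remark immediately after the lemma (some $G(m,1,n)$-conjugates must be kept when working with $G(m,m,n)$-conjugacy), so you have correctly located where the hypothesis does its work. The only nitpick is the claim that the list is ``irredundant'': in degenerate cases the labelling collapses (e.g.\ $\fS_{(2,1)}\times G(m,m,0)$ and $\fS_{(2)}\times G(m,m,1)$ name the same subgroup of $G(m,m,3)$ since $G(m,m,1)$ and $\fS_1$ are trivial), but the lemma only asserts completeness of the list, so this does not affect the result.
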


We note that for the $G(m,m,n)$-conjugacy classes of parabolic subgroups of $G(m,m,n)$ some $G(m,1,n)$-conjugates of the above standard parabolic subgroups have to be taken into account (see \cite[3.11]{Taylor-reflection-subgroups}). For us, however, it is sufficient to know the $G(m,1,n)$-conjugacy classes because of Lemma \ref{P_branching_conjugates_branching}. By
Lemma \ref{gmmn_parabolics} the maximal parabolic subgroups of $G(m,m,n)$ are up to $G(m,1,n)$-conjugacy of the form $\fS_k \times G(m,m,n-k)$ for $1 \leq k \leq n$.

\begin{proof}[Proof of Theorem \ref{parabolic_branching}]

Clearly, we can assume that $W$ acts irreducibly on $\fh$ and that $P$ is a maximal parabolic subgroup. It is well-known (see \cite[Corollary 3.24]{LehrerTaylor}) that the centre $Z(W)$ of $W$ is a cyclic group $\Z_{\ell} = \langle \sigma \rangle$. If $\sigma \neq 1$, then $\sigma$ fixes only the origin and so $\sigma \notin W'$ for any proper parabolic subgroup of $W$. Hence, if $|Z(W)|>1$, then the claim holds by Lemma \ref{central_element_branching}. The classification of irreducible complex reflection groups shows that $|Z(W) | = 1$ implies that $W \simeq G(m,m,n)$ for some $m,n$. By Lemma \ref{gmmn_parabolics} and Lemma \ref{P_branching_conjugates_branching} we can assume that $P = \fS_k \times G(m,m,n-k)$, where $1 \leq k \leq n$. Let $\lambda \in \Irr(P)$.

We assume first that $m > 1$. The module $\pi_\lambda$ is isomorphic to $\pi_\lambda’ \boxtimes \pi_\mu$ for some $\pi_\lambda’ \in \Irr (\s_k)$ and $\pi_\mu \in \Irr (G(m,m,n-k))$. Note that $P \subset G(m,m,k) \times G(m,m,n-k) \subset W$. 

If $k > 1$ then it suffices to show that $\Ind_{\fS_k}^{G(m,m,k)} \pi_\lambda'$ is not irreducible. That is, we may assume $k = n$. The symmetric group $\s_n$ is a quotient of $G(m,m,n)$, the morphism given by sending an element to the underlying permutation. Then we may consider $\pi_\lambda'$ as an irreducible $G(m,m,n)$-module $\pi_\lambda''$. Clearly $\pi_\lambda'' |_{\s_n} = \pi_\lambda'$. Hence $\left[ \Ind_{\s_n}^{G(m,m,n)} \pi_\lambda' : \pi_\lambda'' \right] \ge 1$. On the other hand, $\dim \Ind_{\s_n}^{G(m,m,n)} \pi_\lambda' = m^{n-1} \dim \pi_\lambda'$. Hence it is not irreducible.
 
In the case $k = 1$, we have $P = G(m,m,n-1) \subset G(m,m,n)$. Let $Q \dopgleich G(m,1,n-1) \subs G(m,1,n)$ and note that $P = Q \cap G(m,m,n)$. If we can show that $b_Q(G(m,1,n)) \geq m+1$, then Lemma \ref{normal_subgroup_branching} shows that $P$ is branching in $G(m,m,n)$. But this follows from the branching rule (\cite[Theorem 10]{Pushkarev:wreath}) which shows that, when viewing $\lambda$ as an $m$-multipartition, we have at least $m+1$ constituents in $\Ind_{G(m,1,n-1)}^{G(m,1,n)}\pi_\lambda$ obtained by adding boxes to~$\lambda$.
 
Finally, we need to deal with the case $m = 1$, i.e. $W = \s_n$. In this case we have $P = \fS_k \times \fS_{n-k}$ and it is known that $\Ind_{\s_k \times \s_{n-k}}^{\s_n} \pi_\lambda \boxtimes \pi_\mu = \sum_{\nu} c^{\nu}_{\lambda,\mu} \pi_\nu$, where $c^{\nu}_{\lambda,\mu}$ are the Littlewood-Richardson coefficients. We need to show that $\sum_{\nu} c^{\nu}_{\lambda,\mu} > 1$. Presumably, this is well-known. We will deduce it from the fact that for the Weyl group of Type  $B_n$ we have
\begin{equation}\label{eq:typeBind}
\Ind_{B_k \times B_{n-k}}^{B_n} \pi_{(\lambda^{(1)},\lambda^{(2)})} \boxtimes \pi_{(\mu^{(1)},\mu^{(2)})} = \sum_{(\nu^{(1)},\nu^{(2)})} c^{\nu^{(1)}}_{\lambda^{(1)},\mu^{(1)}} c^{\nu^{(2)}}_{\lambda^{(2)},\mu^{(2)}} \pi_{(\nu^{(1)},\nu^{(2)})} \;.
\end{equation}
Take $\lambda^{(1)} = \lambda, \lambda^{(2)} = \emptyset, \mu^{(1)} = \mu$ and $\mu^{(2)} = \emptyset$. Then (\ref{eq:typeBind}) implies that it suffices to show that $\Ind_{B_k \times B_{n-k}}^{B_n} \pi_{(\lambda^{(1)},\lambda^{(2)})} \boxtimes \pi_{(\mu^{(1)},\mu^{(2)})}$ is not an irreducible $B_n$-module. But $B_n$ contains a non-trivial central element that does not belong to either $B_{n-k}$ or $B_k$. This implies by Lemma \ref{central_element_branching} that the induced module is not irreducible.
\end{proof}

\begin{prop}\label{prop:rigidxykill}
If $L$ is rigid, then $L \simeq L_\bc(\lambda)$ is a $\overline{\H}_{\bc}(W)$-module (isomorphic to $\lambda$ as a $W$-module), for some $\lambda \in \Irr W$. 
\end{prop}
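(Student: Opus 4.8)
The plan is to show that the central character of $L$ is supported at the origin of $\h/W \times \h^*/W$, so that $L$ descends to the restricted algebra, and only afterwards to identify it among the $L_\bc(\lambda)$. Since $L$ is rigid it is irreducible as a $W$-module, hence finite-dimensional, and $L \cong \lambda$ as a $W$-module for some $\lambda \in \Irr W$; this already supplies the ``$\cong \lambda$'' part of the claim. As $L$ is a finite-dimensional simple module over the $\C$-algebra $\H_\bc(W)$, Schur's lemma forces $\ZH_\bc(W)$ to act by scalars, so $L$ has a well-defined central character $\chi \in \cX_\bc(W)$ with defining maximal ideal $\fm_\chi$. Factoring $L$ through $\ol{\H}_\bc(W) = \H_\bc(W)/D(W)_+\H_\bc(W)$ amounts to showing $D(W)_+ \subseteq \fm_\chi$, equivalently $\pi_\bc(\chi) = 0$ in $\h/W$ and $\varpi_\bc(\chi) = 0$ in $\h^*/W$. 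It is worth stressing that finite-dimensional simple modules occur over \emph{every} central character, since $\H_\bc(W)$ is a finite module over its centre; thus it is rigidity, and not finite-dimensionality, that will pin $\chi$ to the origin, and the engine for this is the branching statement of Theorem \ref{parabolic_branching}. We work, as throughout, with the essential reflection representation, so that $\h^W = 0$.

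To prove $\pi_\bc(\chi) = 0$ I would restrict $L$ to the subalgebra $\C[\h] \rtimes W \subseteq \H_\bc(W)$ and decompose it into generalized eigenspaces $L = \bigoplus_a L_a$ for the commutative algebra $\C[\h]$, indexed by points $a$ of $\Spec \C[\h] = \h$; the group $W$ permutes these via $w L_a = L_{wa}$. Since $\C[\h]^W \subseteq \ZH_\bc(W)$ acts by the scalars $\chi$, every $a$ occurring lies in the single fibre of $\h \to \h/W$ over $\pi_\bc(\chi)$, and as this fibre is one $W$-orbit while the set of occurring $a$ is non-empty and $W$-stable, the support is exactly an orbit $W a_0$. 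The usual decomposition of a module supported on a single orbit then gives $L \cong \Ind_{W_{a_0}}^W L_{a_0}$ as $W$-modules, where $W_{a_0}$ is the stabiliser of $a_0$ in $W$, a parabolic subgroup (point stabilisers are parabolic). If $W_{a_0} \subsetneq W$ were proper, choosing an irreducible constituent $\mu$ of the $W_{a_0}$-module $L_{a_0}$ would embed $\Ind_{W_{a_0}}^W \mu$ as a non-zero summand of the irreducible module $L$, forcing $\Ind_{W_{a_0}}^W \mu \cong L$ to be irreducible and contradicting Theorem \ref{parabolic_branching}. Hence $W_{a_0} = W$, so $a_0 \in \h^W = 0$, the support is $\{0\}$, and each $x \in \h^*$ acts nilpotently; in particular any homogeneous $f \in \C[\h]^W_+$ acts by the scalar $\chi(f)$ and by its only eigenvalue $f(0) = 0$, so $\chi(\C[\h]^W_+) = 0$ and $\pi_\bc(\chi) = 0$. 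The verbatim argument applied to $\C[\h^*] \rtimes W$, using $(\h^*)^W = 0$, yields $\varpi_\bc(\chi) = 0$.

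Combining the two conclusions, $D(W)_+$ is generated by $\C[\h]^W_+$ and $\C[\h^*]^W_+$, both of which lie in $\fm_\chi$, so $D(W)_+ \subseteq \fm_\chi$; since $\fm_\chi L = 0$ we get $D(W)_+ L = 0$ and $L$ is a module over $\ol{\H}_\bc(W)$, still simple there. By Proposition \ref{prop:simplebaby} we therefore have $L \cong L_\bc(\mu)$ for a unique $\mu \in \Irr W$. The degree-zero part of $L_\bc(\mu)$ is $\mu$ as a $W$-module; being a non-zero $W$-submodule of the irreducible $L|_W \cong \lambda$, it forces $\mu = \lambda$ and shows $L$ is concentrated in degree zero, whence $L \cong L_\bc(\lambda) \cong \lambda$, as required. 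The main obstacle is the step invoking Theorem \ref{parabolic_branching}: everything hinges on knowing that \emph{no} irreducible $W$-module is induced from a proper parabolic, since it is precisely this that converts the imprimitivity description of $L|_W$ into $W_{a_0} = W$ and hence into support at the origin. I note that one can reach the same point more quickly through the cuspidal reduction of Theorem \ref{parabolic_cuspidal_induction}, whose equivalence realises $L|_W$ as $\Ind_{W'}^W M$; branching then forces $W' = W$, so that $\chi$ is a zero-dimensional leaf and hence lies in $\Upsilon_\bc^{-1}(0)$, giving $D(W)_+ \subseteq \fm_\chi$ again.
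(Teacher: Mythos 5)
Your proof is correct, and its skeleton coincides with the paper's: both arguments reduce everything to Theorem \ref{parabolic_branching} by showing that if the support of $L$ over $\C[\h]$ (or $\C[\h^*]$) escapes the origin, then $L$ is induced, as a $W$-module, from a proper parabolic subgroup. The one genuine difference is how that induction is produced. The paper picks a point $b \neq 0$ in the support and invokes the Bezrukavnikov--Etingof isomorphism (\cite[Theorem 4.3]{Cuspidal}) to conclude $L \simeq \Ind_{W_b}^W L'$ for some $\H_{\bc'}(W_b)$-module $L'$; you instead restrict $L$ to $\C[\h] \rtimes W$, observe via the central character that the support is a single $W$-orbit $Wa_0$, and use the elementary imprimitivity decomposition $L \simeq \Ind_{W_{a_0}}^W L_{a_0}$ of $W$-modules. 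Since only the $W$-module structure is needed to contradict rigidity, your route is self-contained and avoids the completion machinery entirely, at the price of a slightly longer write-up; the paper's route is shorter on the page but leans on a substantially deeper external theorem. Both arguments silently or explicitly require $\h^W = 0$ (so that $a_0 \in \h^W$ forces $a_0 = 0$, respectively so that $b \neq 0$ forces $W_b \subsetneq W$); you flag this, the paper does not. Your closing identification of $\mu$ with $\lambda$ via the degree-zero piece of $L_\bc(\mu)$ is a little more than is needed --- the paper simply uses that $\lambda$ occurs in $L_\bc(\lambda)|_W$ with non-zero multiplicity --- but it is correct.
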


\begin{proof}
If $L$ is not a simple $\overline{\H}_{\bc}(W)$-module, then either the set-theoretic support of $L$ as a $\C[\h]$-module is not contained in $\{ 0 \}$, or the set-theoretic support of $L$ as a $\C[\h^*]$-module is not contained in $\{ 0 \}$. Without loss of generality, we assume that the set-theoretic support of $L$ as a $\C[\h]$-module is not contained in $\{ 0 \}$. Thus, there exists some $b \neq 0$ in $\h$ such that $\mf{m}_b \cdot L \neq 0$, where $\mf{m}_b$ is the maximal ideal defining $b$ in $\h$. The stabiliser $W_b$ of $b$ is a proper subgroup of $W$. Thus, the Bezrukavnikov–Etingof isomorphism, see \cite[Theorem 4.3]{Cuspidal}, implies that $L \simeq \Ind_{W_b}^W L'$ for some $\H_{\bc'}(W_b)$-module $L'$. By Theorem \ref{parabolic_branching}, this implies that $L$ is not rigid. Thus, $L$ is a simple $\overline{\H}_{\bc}(W)$-module. The simple $\overline{\H}_{\bc}(W)$-modules are of the form $L_\bc(\lambda)$ and $\lambda$ always appears in the restriction to $W$ of $L_\bc(\lambda)$ with non-zero multiplicity. The result follows. 
\end{proof}

\begin{remark} \label{remark:rigidxykill}
Proposition \ref{prop:rigidxykill} implies that if $L$ is a rigid module then $\h \cdot L = 0 = \h^* \cdot L$. In particular, every rigid module is of ``one-$W$-type'', as recently defined in \cite{Ciubotaru:2015aa}.
\end{remark}
 
 The following lemma explains our choice of terminology since it is standard in finite-dimensional representation theory to say that a simple module $L$ of dimension $d$ for a finite-dimensional algebra $A$ is \textit{rigid} if the set of points $M$ in the representation scheme $\Rep_{d}(A)$ satisfying $M \simeq L$ is open. 

\begin{lemma}\label{lem:openrep}
Let $L$ be a rigid $\H_\bc(W)$-module and set $d := \dim L$. Let $\Rep_d(\H_\bc(W))$ be the scheme parameterizing all $d$-dimensional representations of $\H_{\bc}(W)$. Let $X$ be the set of points $M$ in $\Rep_d(\H_{\bc}(W))$ such that $M \simeq L$. Then $X$ is a connected component of $\Rep_d(\H_{\bc}(W))$.
\end{lemma}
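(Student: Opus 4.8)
The plan is to realise $X$ as the preimage of a connected component under the restriction-to-$W$ morphism, and then to use the rigidity characterisation of Proposition~\ref{prop:rigidxykill} to identify this preimage with the single orbit $X$. In particular, the argument will avoid any attempt to compute $\Ext^1_{\H_\bc(W)}(L,L)$ directly (which need not vanish); instead it exploits the fact that rigidity is cut out by a condition on the restriction to $W$.

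First I would set $d = \dim L = \dim \lambda$, where $L \simeq L_\bc(\lambda)$ by Proposition~\ref{prop:rigidxykill}, and consider the inclusion of algebras $\bbC W \hookrightarrow \H_\bc(W)$. Precomposition with this inclusion defines a morphism of affine schemes of finite type
\[
r \colon \Rep_d(\H_\bc(W)) \longrightarrow \Rep_d(\bbC W),
\]
sending a representation $M$ to its restriction $M|_W$. The key structural input I would invoke is that $\bbC W$ is semisimple: every $d$-dimensional $W$-module then has vanishing self-extensions, so each $\GL_d$-orbit in $\Rep_d(\bbC W)$ is simultaneously open and closed, and $\Rep_d(\bbC W)$ is the disjoint union of finitely many such orbits, one for each isomorphism class of $d$-dimensional $W$-module. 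In particular, the orbit $\mathcal{O}_\lambda$ corresponding to the isomorphism class of $\lambda$ is a connected component of $\Rep_d(\bbC W)$.

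Next I would pull this component back. Since $r$ is a morphism, $U \dopgleich r^{-1}(\mathcal{O}_\lambda)$ is an open and closed subset of $\Rep_d(\H_\bc(W))$, consisting precisely of those $M$ with $M|_W \simeq \lambda$. For any such $M$ the restriction $M|_W$ is irreducible, so $M$ has no proper nonzero $W$-submodule and is therefore a simple $\H_\bc(W)$-module that is irreducible over $W$; that is, every point of $U$ is a rigid module. Applying Proposition~\ref{prop:rigidxykill} together with Proposition~\ref{prop:simplebaby}, each such $M$ satisfies $M \simeq L_\bc(M|_W) \simeq L_\bc(\lambda) \simeq L$. Hence $U$ coincides with the set $X$ of points isomorphic to $L$.

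It remains to note that $X$ is connected: it is the image of the irreducible group $\GL_d$ under the orbit map $g \mapsto g \cdot L$, hence irreducible and in particular connected. Being at once connected and, by the previous paragraph, open and closed, $X$ is a connected component of $\Rep_d(\H_\bc(W))$, as claimed. The one point requiring care --- and the only place the argument uses anything beyond formal manipulation --- is the claim that the $\GL_d$-orbits in $\Rep_d(\bbC W)$ are its connected components; this is exactly where the semisimplicity of $\bbC W$ (equivalently, the vanishing of $\Ext^1_{\bbC W}$ on all $W$-modules) is essential, and it is what upgrades rigidity from a merely closed condition to an open-and-closed one.
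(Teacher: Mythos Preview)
Your argument is correct and genuinely different from the paper's. The paper proceeds by taking an arbitrary reduced irreducible affine variety $S$ carrying a flat family $\mc{F}$ with one fiber isomorphic to $L$, and shows directly that $\h$ and $\h^*$ act by zero on every fiber: first $\mc{F}_s|_W \simeq L|_W$ for all $s$ by connectedness of $S$ (via \cite{GabrielOpen}), so each $\mc{F}_s$ is a simple $\C[\h]\rtimes W$-module; then the classification of simple $\C[\h]\rtimes W$-modules together with Theorem~\ref{parabolic_branching} forces $\h^*$ (and symmetrically $\h$) to act by zero. Applying this with $S$ an irreducible component of $\Rep_d(\H_\bc(W))$ containing $L$ gives the conclusion.

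You instead factor through the restriction morphism $r:\Rep_d(\H_\bc(W))\to\Rep_d(\bbC W)$ and use semisimplicity of $\bbC W$ to see that $\mathcal{O}_\lambda$ is clopen, so $U=r^{-1}(\mathcal{O}_\lambda)$ is clopen; then Proposition~\ref{prop:rigidxykill} identifies $U$ with $X$. This is cleaner in that it avoids the flat-family language and isolates the essential topological fact (clopen orbits for a semisimple algebra) upstream. The paper's route, on the other hand, makes explicit the mechanism---$\h$ and $\h^*$ acting by zero on every fiber---and reproves the content of Proposition~\ref{prop:rigidxykill} in situ rather than citing it. Both ultimately rest on Theorem~\ref{parabolic_branching}; you invoke it through Proposition~\ref{prop:rigidxykill}, the paper invokes it directly.
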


\begin{proof}
Let $S$ be a reduced, irreducible affine $\C$-variety and $\mc{F}$ a flat family of $\H_{\bc}(W)$-modules over $S$ such that the fiber $\mc{F}_{s_0}$ is isomorphic to $L$, for some $s_0 \in S$. Then it suffices to prove that $\h$ and $\h^*$ act identically by zero on $\mc{F}$. Since $\C[S]$ is a domain, its radical is zero, and hence it suffices to show that $\h$ and $\h^*$ act as zero on every fiber $\mc{F}_s$ of $\mc{F}$ for $s \in \MaxSpec(\C[S])$. We may consider $\mc{F}$ as a flat family of $\C[\h] \rtimes W$-modules instead and prove the claim in this setting (repeating the argument for $\C[\h^*] \rtimes W$). Then the claim is a consequence of Theorem \ref{parabolic_branching} together with the (easy) classification of simple $\C[\h] \rtimes W$-modules. Firstly, since $S$ is connected $\mc{F}_s \simeq L$ as a $W$-module for all $s$. This is well-known and follows for instance from \cite[Corollary 1.4]{GabrielOpen}. Therefore, it suffices to show that if $M$ is any simple $\C[\h] \rtimes W$-module such that $\h^* \subset \C[\h]$ does not act identically zero, then $M \not\simeq L$. If $\h^*$ does not act identically zero then there exists a non-zero character $\chi : \C[\h] \rightarrow \C$ such that $M_{\chi} = \{ m \in M \ |  \ x \cdot m = \chi(x) m, \ \forall x \in \h^* \}$ is non-zero. Let $W' \subsetneq W$ be the stabilizer of $\chi \in \h$. Since $M$ is simple, $M_{\chi}$ is a simple $W'$-module and $M \simeq \Ind_{\C[\h] \rtimes W'}^{\C[\h] \rtimes W} M_{\chi}$. By Theorem \ref{parabolic_branching}, $M$ is not irreducible. Hence $M \not\simeq L$ as required. 

To deduce the statement of the lemma, take $S$ to be any irreducible component (with reduced scheme structure) of $\Rep_d(\H_{\bc}(W))$ containing $L$.  
\end{proof}

Notice that Lemma \ref{lem:openrep} shows that the set of all rigid modules in $\Rep_d(\H_{\bc}(W))$ is open. In general, the connected component $X$ has a very non-trivial scheme structure. This can be seen from Voigt's Lemma \cite{GabrielOpen} which implies that 
\begin{equation}\label{eq:Voigt}
\dim X - \dim X_{\mathrm{red}} = \dim \Ext^1_{\H_{\bc}(W)}(L,L) \;. 
\end{equation}
One can compute, using the projective resolution (2.5) of \cite[page 259]{EG}, that for a rigid module $L$ we have
$$
\Ext^{\bullet}_{\H_{\bc}(W)}(L,L) \simeq \wedge^{\bullet} \ V \otimes_W \End_{\C}(L) \;,
$$
where $V = \h \oplus \h^*$. In particular, it is easy to construct examples of rigid modules where the right hand side of (\ref{eq:Voigt}) is strictly positive. Also, the variety $\Rep_d(\H_{\bc}(W))$ can have many connected components. This can be seen, for instance, by considering the case $\bc = 0$. \\

Via the bijection $\Irr W \rarr \Irr \ol{\H}_\bc(W)$ given by Proposition \ref{prop:simplebaby}, the element $\lambda \in \Irr W$ is said to be \word{$\bc$-rigid} if $L_\bc(\lambda)$ is a rigid $\H_\bc(W)$-module. The following is the main theorem of this section. 

\begin{customthm}{\ref{maintheorem:rigid_cuspidal}} 
Let $W$ be a complex reflection group. If $\lambda \in \Irr W$ is $\bc$-rigid, then $\lambda$ lies in a cuspidal Calogero--Moser $\bc$-family.
\end{customthm}

\begin{proof}
Let $\mathcal{F}$ be the Calogero–Moser $\bc$-family of $L_\bc(\lambda)$ and let $\chi$ be the corresponding point of $\cX_\bc(W)$. Suppose that $\mathcal{F}$ is not cuspidal. Then by Theorem \ref{parabolic_cuspidal_induction} there is a parabolic subgroup $W'$ of $W$, a cuspidal symplectic leaf $\chi'$ of $\cX_{\bc'}(W')$, and an equivalence $\Phi_{\chi', \chi}  :  \Lmod{\H_{\bc',\chi'}(W')} \stackrel{\sim}{\longrightarrow} \Lmod{\H_{\bc,\chi}(W)}$ such that $\Phi_{\chi',\chi}(M) \simeq \Ind_{W'}^{W} \,  M$ as $W$-modules for all $M \in \Lmod{\H_{\bc',\chi'}(W')}$. In particular, there must exist a $W'$-module $M$ with $\Ind_{W'}^W M \simeq L_\bc(\lambda) \simeq \lambda$. But this is not possible by Theorem \ref{parabolic_branching}.
\end{proof}

Of course, the major advantage of rigid modules is that they are easily detected.

\begin{lemma} \label{rigidity_equation_lemma}
Let $\lambda:G \rarr \GL_r(\bbC)$ be an irreducible representation of $W$. Then $L_\bc(\lambda)$ is a rigid module for $\H_\bc(W)$ if and only if
\begin{equation} \label{rigidity_equation}
\sum_{s \in \Ref(W)} \bc(s)(y,\alpha_s)(\alpha_s^\vee,x) \lambda(s) = 0
\end{equation}
for all $y \in \fh$ and $x \in \fh^*$.
\end{lemma}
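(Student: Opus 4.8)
The plan is to characterize rigidity of $L_\bc(\lambda)$ directly in terms of the action of the generators of $\H_\bc(W)$ on the module. By Proposition \ref{prop:rigidxykill} and Remark \ref{remark:rigidxykill}, a rigid module is a $\ol{\H}_\bc(W)$-module isomorphic to $\lambda$ as a $W$-module, on which both $\fh$ and $\fh^*$ act by zero. Conversely, if $\lambda \in \Irr W$ carries an $\H_\bc(W)$-module structure extending the $W$-action, with $\fh$ and $\fh^*$ acting as zero, then it is automatically irreducible as a $W$-module, hence rigid. So the task reduces to determining precisely when the $W$-representation $\lambda$, extended by declaring $\fh \cdot \lambda = \fh^* \cdot \lambda = 0$, satisfies the defining relations of $\H_\bc(W)$.

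First I would check which relations are automatic. The relations $[x,x']=0$ for $x,x' \in \fh^*$ and $[y,y']=0$ for $y,y' \in \fh$ hold trivially because $\fh^*$ and $\fh$ both act as zero, so any commutator of two such elements already acts as zero. The $W$-equivariance relations $w x w^{-1} = w(x)$ and $w y w^{-1} = w(y)$ (the semidirect-product relations) also hold automatically: both sides act as zero on $\lambda$ since $\fh$ and $\fh^*$ act as zero and $w(x) \in \fh^*$, $w(y) \in \fh$. Thus the only relation that imposes a genuine constraint is the commutation relation \eqref{eq:rel} at $t=0$, namely
\[
[y,x] = - \sum_{s \in \Ref(W)} \bc(s)(y,\alpha_s)(\alpha_s^\vee,x)\, s, \qquad \forall\, y \in \fh,\ x \in \fh^*.
\]

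The key step is to evaluate both sides of this relation on the module $\lambda$. The left-hand side $[y,x] = yx - xy$ acts as zero, since each of $x$ and $y$ acts as zero. The right-hand side acts on $\lambda$ as the operator $-\sum_{s \in \Ref(W)} \bc(s)(y,\alpha_s)(\alpha_s^\vee,x)\, \lambda(s)$. Hence the relation holds if and only if this operator vanishes for all $y \in \fh$ and $x \in \fh^*$, which is exactly equation \eqref{rigidity_equation}. This establishes that $\lambda$ (with $\fh, \fh^*$ acting as zero) defines a genuine $\H_\bc(W)$-module precisely when \eqref{rigidity_equation} holds, giving the ``if'' direction: if \eqref{rigidity_equation} holds then the prescribed action makes $\lambda$ into a simple $\H_\bc(W)$-module that is irreducible as a $W$-module, so it is rigid, and by Proposition \ref{prop:simplebaby} it must be $L_\bc(\lambda)$.

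For the ``only if'' direction, if $L_\bc(\lambda)$ is rigid, then by Proposition \ref{prop:rigidxykill} and Remark \ref{remark:rigidxykill} it is the module $\lambda$ with $\fh$ and $\fh^*$ acting as zero; applying the relation \eqref{eq:rel} to this module forces \eqref{rigidity_equation} exactly as above. The main subtlety to get right is ensuring that no constraint is hidden in the relations I claimed were automatic — this requires a careful check that the semidirect product and the two vanishing-commutator relations really are satisfied identically, which they are because $\fh$ and $\fh^*$ act as zero. With that verification in place, equation \eqref{rigidity_equation} is the sole nontrivial condition, and it is both necessary and sufficient, completing the equivalence.
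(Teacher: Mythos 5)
Your proof is correct and follows essentially the same route as the paper's: both use Proposition \ref{prop:rigidxykill} and Remark \ref{remark:rigidxykill} to reduce rigidity to whether the representation of $T(\fh \oplus \fh^*) \rtimes W$ with $\fh$ and $\fh^*$ acting as zero descends to $\H_\bc(W)$, which comes down to the single commutation relation \eqref{eq:rel} and hence to equation \eqref{rigidity_equation}. The paper's argument is simply a terser version of yours, leaving implicit the verification that the remaining defining relations are automatic.
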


\begin{proof}
The module $L_\bc(\lambda)$ is rigid if and only if it is as a $W$-module isomorphic to $\lambda$. Moreover, by Remark \ref{remark:rigidxykill} both $\fh$ and $\fh^*$ act trivially on $L_\bc(\lambda)$. Hence, $L_\bc(\lambda)$ is rigid if and only if the representation $\wh{\lambda}: T(\fh \oplus \fh^*) \rtimes W \rarr \Mat_r(\bbC)$ with $\fh$ and $\fh^*$ acting trivially and $W$ acting by $\lambda$ descends to $\H_\bc(W)$. This is the case if and only if $\wh{\lambda}(\lbrack y,x \rbrack) = 0$, and this is equivalent to the asserted equation.
\end{proof}

\begin{lemma} \label{cuspidals_for_c_eq_0}
For any $W$ we have $\Omega_0^{\mrm{cusp}}(W) = \lus_0^{\mrm{cusp}}(W)$, i.e. Conjecture \ref{cm_cusp_lusztigz_cusp_conjecture} holds for $\bc = 0$. 
\end{lemma}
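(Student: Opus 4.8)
The plan is to reduce the claim $\Omega_0^{\mrm{cusp}}(W) = \lus_0^{\mrm{cusp}}(W)$ to the already-established facts about the $\bc = 0$ (resp. $L = 0$) case, namely Examples \ref{cherednik_0} and \ref{lusztig_0}, together with Lemma \ref{lus_eq_cm_at_0}. The strategy is to show that at the origin of parameter space \emph{neither} theory has any cuspidal families at all, so that the equality $\Omega_0^{\mrm{cusp}}(W) = \lus_0^{\mrm{cusp}}(W)$ holds trivially as an equality of empty sets. Thus the whole proof amounts to verifying two separate emptiness statements.

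For the Calogero--Moser side, I would argue as follows. By Example \ref{cherednik_0}, when $\bc = 0$ the space $\cX_0(W) = (\fh \oplus \fh^*)/W$ has a single $\bbC^*$-fixed closed point (the origin), so there is exactly one Calogero--Moser family $\mc{F}$. Since $\dim \cX_0(W) = 2 \dim \fh > 0$, the origin cannot itself be a zero-dimensional symplectic leaf once we observe that the unique family corresponds to the origin lying on the open (top-dimensional) leaf of $\cX_0(W)$. Concretely, one invokes Lemma \ref{singleton_cm_noncuspidal}: by Lemma \ref{lus_eq_cm_at_0} the families at $\bc = 0$ coincide and there is only one, so the unique family being a singleton would force non-cuspidality; but even without singleton-ness, the point of $\Upsilon_0^{-1}(0)$ lies on the open leaf, which is not zero-dimensional since $\dim \cX_0(W) > 0$. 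Hence $\Omega_0^{\mrm{cusp}}(W) = \emptyset$.

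For the Lusztig side, Example \ref{lusztig_0} shows that when $L = 0$ the $\ba$-invariants all vanish, $\bj$-induction reduces to ordinary induction, and there is exactly one constructible representation (the regular representation), so there is a single Lusztig family $\mc{F}$ consisting of all of $\Irr W$. This family is the $\bj$-induction $\bj_1^W \mathbf{1} = \Ind_1^W \mathbf{1}$ of the trivial family of the trivial parabolic subgroup; alternatively, since every $\lambda \in \Irr W$ occurs in the regular representation, and there is only one family, the key fact (\ref{key_fact}) applies. More directly, because $\bj_{W_I}^W = \Ind_{W_I}^W$ at $L = 0$, the regular representation is manifestly $\bj$-induced from any proper parabolic subgroup (e.g. from the trivial subgroup), so the unique family is \emph{not} cuspidal. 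Therefore $\lus_0^{\mrm{cusp}}(W) = \emptyset$ as well, and the two sets agree.

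The proof is essentially routine bookkeeping; the only point requiring a little care is confirming on the Calogero--Moser side that the unique family at $\bc = 0$ genuinely sits on the open leaf rather than a zero-dimensional one. This follows because $(\fh \oplus \fh^*)/W$ is an irreducible variety of dimension $2\dim\fh > 0$ whose smooth locus is a single dense open symplectic leaf containing the image of the origin, so there is no zero-dimensional leaf to support a cuspidal family. I do not anticipate any genuine obstacle here, as all the structural inputs have been assembled in the preceding examples and lemmas.
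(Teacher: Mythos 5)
Your proposal reaches the set-theoretic equality by claiming both cuspidal sets are empty, but this is the opposite of what actually happens: at $\bc=0$ the unique family (which is all of $\Irr W$, not a singleton, so neither Lemma \ref{singleton_cm_noncuspidal} nor Lemma \ref{singleton_lusztig_not_cuspidal} applies) is cuspidal on \emph{both} sides. On the Calogero--Moser side your key geometric claim is false: the origin of $\cX_0(W)=(\fh\oplus\fh^*)/W$ does not lie on the open leaf. It is the point with stabiliser all of $W$, hence the most singular point of the quotient, and $\{0\}$ is itself a zero-dimensional symplectic leaf (the leaf labelled by the parabolic subgroup $W$ itself). The paper instead argues that every $\lambda\in\Irr W$ is rigid when $\bc=0$, since the rigidity equation of Lemma \ref{rigidity_equation_lemma} is vacuously satisfied, and then applies Theorem \ref{maintheorem:rigid_cuspidal} to conclude the unique Calogero--Moser family is cuspidal.

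On the Lusztig side you misapply the definition of cuspidality. A family $\mathcal{F}$ is $\bj$-induced from $W_I$ only if $\bj_{W_I}^W$ gives a \emph{bijection} from a family of $W_I$ onto $\mathcal{F}$ (or $\mathcal{F}\otimes\sgn_W$); in particular each $\bj_{W_I}^W\mu$ must be an irreducible character of $W$. The fact that the regular representation, as a constructible representation, is induced from the trivial subgroup is irrelevant. Since $\bj_{W_I}^W=\Ind_{W_I}^W$ at $L=0$ and Theorem \ref{parabolic_branching} says $\Ind_{W_I}^W\mu$ is never irreducible for a proper parabolic $W_I$, the unique Lusztig family cannot be $\bj$-induced and is therefore cuspidal. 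So the lemma is true, but both of your emptiness arguments fail, and the correct proof runs through Theorem \ref{parabolic_branching} and the rigidity mechanism rather than through the singleton lemmas or the geometry of the open leaf.
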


\begin{proof}
Recall from Lemma \ref{lus_eq_cm_at_0} that $\Omega_0(W) = \lus_0(W) = \lbrace \Irr W \rbrace$. Furthermore, recall from Example \ref{lusztig_0} that truncated induction is for $\bc=0$ just usual induction, i.e. $\bj_{W_I}^W = \Ind_{W_I}^W$ for a parabolic subgroup $W_I$ of $W$. Now, if the unique Lusztig family were not cuspidal, then the irreducible characters of $W$ would all be induced from a proper parabolic subgroup of $W$, but this is not possible by Theorem \ref{parabolic_branching}. Hence, the unique Lusztig family is cuspidal. On the other hand, all $\lambda \in \Irr W$ are rigid for $\bc=0$ by Lemma \ref{rigidity_equation_lemma}. Hence, each $\lambda \in \Irr W$ lies in a cuspidal Calogero–Moser family by Theorem \ref{maintheorem:rigid_cuspidal}. As there is just one Calogero–Moser family, this one is cuspidal. 
\end{proof}

\begin{remark} \label{conjectureB_remark}
Ciubotaru \cite{Ciubotaru:2015aa} has recently classified the rigid $\H_\bc(W)$-modules for all Weyl groups and all parameters. We will independently obtain this classification for non-exceptional Coxeter groups from sections \S\ref{type_A} to \S\ref{dihedral_groups}. Ciubotaru  furthermore shows for all Weyl groups at equal parameters—except $E_7$—that the rigid modules always lie in a single Calogero–Moser family, and that this family contains the (unique) cuspidal Lusztig family; for $F_4$ and $E_6$ this is in fact an equality. Using Theorem \ref{maintheorem:rigid_cuspidal}, this shows that one direction of Conjecture \ref{cm_cusp_lusztigz_cusp_conjecture} also holds for $F_4$ and $E_6$ for equal parameters. However, a classification of the cuspidal symplectic leaves is still open in all cases not covered by our Theorem \ref{thm:mainresultintro}.
\end{remark}

\subsection{Cuspidal reduction II}

For a conjugacy class $(W')$ of parabolic subgroups of $W$ we denote by $\PSpec_{(W')}(\ZH_\bc(W))$ the subset of $\PSpec (\ZH_\bc(W))$ of Poisson prime ideals $\mf{p}$ with $W_{\mathcal{L}_{\mf{p}}} = (W')$. This set might be empty. \\

If $W \subs G \subs N_{\GL(\fh)}(W)$ is a finite subgroup such that $\bc \!:\! \Ref(W) \rarr \bbC$ is $G$-invariant, then $G$ acts on $\H_\bc(W)$ by algebra automorphisms. This induces an action of $G$ by Poisson algebra automorphisms on $\ZH_\bc(W)$ and, since $W$ acts trivially, this action factors through $G/W$. Applied to a parabolic subgroup $W'$ of $W$ and an arbitrary $W$-invariant function $\bc:\Ref(W) \rarr \bbC$ this shows that $\Xi(W')$ acts on $\ZH_{\bc'}(W')$. Here, and below, $\bc'$ denotes the restriction of $\bc$ to $\Ref(W) \cap W' = \Ref(W')$. \\

The following was shown by Losev \cite[Theorem 1.3.2]{LosevSRAComplete}.

\begin{thm} \label{pmax_parabolic_orbit}
Let $W'$ be a parabolic subgroup of $W$. The group $\Xi(W')$ acts on the set $\PMax(\ZH_{\bc'}(W'))$ such that there is a bijection 
$$
 \PSpec_{(W')}(\ZH_{\bc}(W)) \stackrel{1:1}{\longleftrightarrow} \PMax(\ZH_{\bc'}(W')) / \Xi(W')\;.
$$ 
\end{thm}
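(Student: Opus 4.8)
The plan is to translate both sides of the asserted bijection into the language of symplectic leaves via Lemma~\ref{poisson_primes_leaves} and then extract the statement from the local product structure underlying the cuspidal reduction Theorem~\ref{parabolic_cuspidal_induction}. Under Lemma~\ref{poisson_primes_leaves}, the set $\PSpec_{(W')}(\ZH_\bc(W))$ is identified with the set of symplectic leaves $\mathcal{L}$ of $\cX_\bc(W)$ whose attached parabolic class $W_\mathcal{L}$ equals $(W')$, while $\PMax(\ZH_{\bc'}(W'))$ is the set of zero-dimensional, i.e. cuspidal, leaves of $\cX_{\bc'}(W')$. The $\Xi(W')$-action on the latter is the one constructed in the paragraph preceding the statement: since $\Xi(W')$ acts on $\ZH_{\bc'}(W')$ by Poisson algebra automorphisms, it permutes the Poisson prime ideals, and as a symplectomorphism it preserves the dimension of leaves, hence restricts to an action on $\PMax(\ZH_{\bc'}(W'))$. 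The goal is therefore to produce a natural bijection between leaves of type $(W')$ in $\cX_\bc(W)$ and $\Xi(W')$-orbits of cuspidal points of $\cX_{\bc'}(W')$.

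For the forward map, let $\mathcal{L}$ be a leaf with $W_\mathcal{L} = (W')$. By Theorem~\ref{thm:induceleaves} the image $\pi_\bc(\mathcal{L})$ is dense in $\h^{W'}_{\reg}/W$, and the dimension count in its proof gives $\dim\mathcal{L} = 2(\dim\h - \mathrm{rk}(W')) = 2\dim\h^{W'}$. I would choose a point $\chi \in \mathcal{L}$ lying over a generic point $b$ of $\h^{W'}_{\reg}$, so that $W_b = W'$ for the chosen representative, and apply Theorem~\ref{parabolic_cuspidal_induction}: it reduces $\H_{\bc,\chi}(W)$ to the cuspidal algebra $\H_{\bc',\chi'}(W')$ for this very $W'$, producing a cuspidal point $\chi'$ of $\cX_{\bc'}(W')$, that is, an element of $\PMax(\ZH_{\bc'}(W'))$. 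I would then define the image of $\mathcal{L}$ to be the orbit $\Xi(W')\cdot\chi'$.

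The substance is to check that this orbit is independent of the choices and that the resulting map is a bijection. The choices are the generic point $\chi$ on $\mathcal{L}$, the representative $W'$ of the class $(W')$, and the reduction datum of Theorem~\ref{parabolic_cuspidal_induction}; all are governed by $N_W(W')$. Indeed, two points of $\h^{W'}_{\reg}$ that are $W$-conjugate differ by an element of $N_W(W')$, so moving $\chi$ within a fibre or changing the representative moves $\chi'$ by an element of $N_W(W')$ acting on $\cX_{\bc'}(W')$, an action that factors through $\Xi(W') = N_W(W')/W'$ exactly as in the construction of the $\Xi(W')$-action; hence $\Xi(W')\cdot\chi'$ is well-defined. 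For the inverse, a cuspidal point $\chi'$ of $\cX_{\bc'}(W')$ sweeps out, over the base $\h^{W'}_{\reg}/W$ and through the equivalence of Theorem~\ref{parabolic_cuspidal_induction}, a unique leaf of type $(W')$, and two cuspidal points give the same leaf precisely when they lie in one $\Xi(W')$-orbit, since the covering $\h^{W'}_{\reg} \to \h^{W'}_{\reg}/W \cong \h^{W'}_{\reg}/\Xi(W')$ has deck group $\Xi(W')$, which acts freely. Verifying that the two assignments are mutually inverse then completes the proof.

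The hard part will be the geometric input rather than the bookkeeping of orbits: one must identify, compatibly over all of $\h^{W'}_{\reg}/W$, the \'etale-local product structure $\cX_\bc(W) \simeq T^*\h^{W'} \times \cX_{\bc'}(W')$ near $\pi_\bc^{-1}(b)$ for generic $b \in \h^{W'}$, and then match the monodromy of the covering $\h^{W'}_{\reg} \to \h^{W'}_{\reg}/\Xi(W')$ with the algebraically defined $\Xi(W')$-action on $\ZH_{\bc'}(W')$. This local-to-global comparison is precisely the content of \cite[Theorem~1.3.2]{LosevSRAComplete}, which also underpins Theorem~\ref{parabolic_cuspidal_induction}; the remaining work is to phrase it through the leaf-to-parabolic assignment of Theorem~\ref{thm:induceleaves}.
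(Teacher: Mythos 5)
Your outline correctly identifies where the difficulty lies, but the proof as written does not close the gap it names. The pointwise statement of Theorem \ref{parabolic_cuspidal_induction} attaches a cuspidal point $\chi'$ to a \emph{single} closed point $\chi$ of the leaf $\mathcal{L}$; to get a well-defined map on leaves you must show that $\chi'$ is locally constant (up to the $\Xi(W')$-action) as $\chi$ varies over all of $\mathcal{L}$, and your well-definedness check only treats variation of $\chi$ within one fibre of $\pi_\bc$ over a fixed base point $b$, not variation of $b$ along the positive-dimensional image $\pi_\bc(\mathcal{L}) \subset \h^{W'}_{\reg}/W$. Likewise the inverse map (``a cuspidal point sweeps out a unique leaf'') and the injectivity claim both presuppose a product decomposition of $\cX_\bc(W)$ over the whole of $\h^{W'}_{\reg}/W$, with the deck action of $\Xi(W')$ identified with the algebraic action on $\ZH_{\bc'}(W')$. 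That family statement is exactly what is missing, and it cannot be extracted from the pointwise reduction theorem.

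You defer this input to Losev's Theorem 1.3.2, but that is essentially the theorem being proved, so the citation is circular; moreover the paper explicitly notes that Losev works with a different completion of $\H_\bc(W)$ than the Bezrukavnikov--Etingof one underlying Theorems \ref{thm:induceleaves} and \ref{parabolic_cuspidal_induction}, which is precisely why a separate argument is needed. The paper supplies it by completing $\ZH_\bc(W)$ along the ideal of $\h^{W'}_{\reg}/W$, invoking the Poisson isomorphism $\widehat{\cX}_\bc(W) \simeq \widehat{\cX}_{\bc'}(W',V)/\Xi(W')$ of Theorem \ref{thm:generalcompletion} (a formal-neighbourhood analogue of the Bezrukavnikov--Etingof isomorphism), and then proving in Lemma \ref{lem:leavescomplete} that completion induces a bijection between $\PSpec_{(W')}(\ZH_\bc(W))$ and the Poisson primes of height $2\dim\mf{k}$ in the completed algebra; the non-degeneracy of the symplectic form on $T^*V_{W'}$ then forces such primes to come from $\PMax(\ZH_{\bc'}(W'))$, and freeness of the $\Xi(W')$-action on $V_{W'}$ gives the orbit count. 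To repair your argument you would need to either prove this completed product structure (which is the real content, deferred by the paper to \cite{CMMotives}) or carry out the whole proof inside Losev's framework and separately verify its compatibility with the leaf-to-parabolic assignment of Theorem \ref{thm:induceleaves}.
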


Losev considers in \cite{LosevSRAComplete} a different completion of the rational Cherednik algebra than the one used in \cite{Cuspidal} (which is based on a construction by Bezrukavnikov and Etingof). Therefore we will now show that Theorem \ref{pmax_parabolic_orbit} still holds in the context of Bezrukavnikov–Etingof completions. 

Fix a parabolic subgroup $W'$ of $W$ and let $N := N_{W}(W')$. Let $U$ be an affine open subset of $\h / W$ such that $U \cap \h_{\reg}^{W'}/W$ is closed, but non-empty, in $U$. Let $V$ denote the preimage of $U$ in $\h$. Then $V$ is $W$-stable and $V_{W'} = \h_{\reg}^{W'} \cap V$ is closed in $V$. Let $\mf{k}$ denote the $W'$-module complement to $\h^{W'}$ in $\h$. It is an $N$-module.  

Let $A \dopgleich \C[U]$ and set $Z \dopgleich A \otimes_{\C[\h]^W} \ZH_{\bc}(W)$. The prime ideal of $A$ defining $U \cap \h_{\reg}^{W'}/W$ is denoted $\mf{q}$. Let $\widehat{A}_{\mf{q}}$ be the completion of $A$ along $\mf{q}$ and set $\widehat{\cX}_{\bc}(W) := \Spec (\widehat{A}_{\mf{q}} \otimes_A Z)$. Morally speaking, $\widehat{\cX}_{\bc}(W)$ should be thought of as the formal neighbourhood of $\pi^{-1}(\h_{\reg}^{W'}/W)$ in $\cX_{\bc}(W)$. However, since $Z$ is not a finite $A$-module, this is not strictly true. 

Let $A' = \C[ \mf{k}/{W'} \times V_{W'}]$ and $\mf{q}'$ the prime ideal defining $\{ 0 \} \times V_{W'}$ in $\mf{k}/{W'} \times V_{W'}$. Then
$$
\widehat{\cX}_{\bc'}(W',V) \dopgleich \Spec \left( \widehat{A}_{\mf{q}'}' \otimes_{A'} \ZH_{\bc'}(W') \otimes \C [ T^* V_{W'} ] \right),
$$
where $T^* V_{W'}$ is the cotangent bundle of $V_{W'}$. The group $\Xi(W')$ acts on $\widehat{\cX}_{\bc'}(W',V)$. The following is an analogue of the isomorphism $\Theta$ in section 3.7 of  \cite{BE}; a complete proof is given in  \cite{CMMotives}.

\begin{thm}\label{thm:generalcompletion}
There is an isomorphism of affine Poisson varieties
$$
\Phi :  \widehat{\cX}_{\bc}(W) \stackrel{\sim}{\longrightarrow} \widehat{\cX}_{\bc'}(W',V ) / \ \Xi(W'). 
$$
\end{thm}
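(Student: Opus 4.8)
The plan is to deduce the theorem from the $t = 0$ analogue of the Bezrukavnikov--Etingof completion isomorphism $\Theta$ of \cite[\S3.7]{BE}, carrying the construction out over the base ring $\C[\bt]$ (that is, for the flat family $\H_{\bt,\bc}(W)$) and only specialising $\bt = 0$ at the very end. The reason for the detour through $\bt$ is that it makes the Poisson assertion automatic: since the bracket on $\ZH_\bc(W)$ was defined by lifting central elements to $\H_{\bt,\bc}(W)$, taking commutators, and reducing modulo $\bt$, any isomorphism induced by a $\C[\bt]$-linear isomorphism of the deformed algebras automatically preserves it. It therefore suffices to construct a $\C[\bt]$-algebra isomorphism of the completed algebras, read off its effect on centres, and keep track of the action of $\Xi(W')$.

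The heart of the matter is the completion isomorphism. Completing $A = \C[U]$ along $\mf{q}$ localises everything to the formal neighbourhood of $\h_{\reg}^{W'}/W$; after pulling back to $\h$, the $W$-orbits meeting $\h_{\reg}^{W'}$ are governed by $N := N_W(W')$, and each point of $\h_{\reg}^{W'}/W$ has exactly $|\Xi(W')|$ preimages in $\h_{\reg}^{W'}$, permuted simply transitively by $\Xi(W') = N/W'$. Consequently the projection $V_{W'} \rarr U \cap \h_{\reg}^{W'}/W$ is a $\Xi(W')$-torsor. The decomposition of $\C W$ as a $(\C W', \C W')$-bimodule together with the triangular PBW decomposition \eqref{eq:PBW} then produces, exactly as in \cite{BE} but now over $\C[\bt]$, a $\Xi(W')$-equivariant algebra isomorphism between the pullback of the completed family algebra for $W$ to the cover $V_{W'}$ and a matrix algebra $\Mat_{|W/W'|}$ over $\widehat{A}'_{\mf{q}'} \otimes_{A'} \H_{\bt,\bc'}(W') \otimes \mathcal{A}_{\bt}$, where the transverse factor $\mathcal{A}_{\bt}$ is the Rees deformation of the differential operators along $V_{W'}$ and specialises at $\bt = 0$ to $\C[T^* V_{W'}]$; here $\Xi(W')$ acts freely on $V_{W'}$ and by its automorphisms on $\H_{\bt,\bc'}(W')$.

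Passing to centres finishes the proof. Since the centre of a matrix algebra $\Mat_n(R)$ equals the centre of $R$, the matrix factor disappears, and at $\bt = 0$ the centre of the $W'$-side algebra is $\widehat{A}'_{\mf{q}'} \otimes_{A'} \ZH_{\bc'}(W') \otimes \C[T^* V_{W'}]$, whose spectrum is $\widehat{\cX}_{\bc'}(W',V)$. Descending along the $\Xi(W')$-torsor $V_{W'} \rarr U \cap \h_{\reg}^{W'}/W$, the $\Xi(W')$-equivariance becomes, at the level of rings, the identity
$$
\widehat{A}_{\mf{q}} \otimes_A Z \;\simeq\; \Big( \widehat{A}'_{\mf{q}'} \otimes_{A'} \ZH_{\bc'}(W') \otimes \C[T^* V_{W'}] \Big)^{\Xi(W')}.
$$
Applying $\Spec$ and using $\Spec(R^{\Xi(W')}) = \Spec(R) / \Xi(W')$ for the finite group $\Xi(W')$ yields the desired Poisson isomorphism
$$
\Phi : \widehat{\cX}_{\bc}(W) \stackrel{\sim}{\longrightarrow} \widehat{\cX}_{\bc'}(W',V) / \Xi(W').
$$

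The main obstacle is the completion isomorphism itself, which is why the complete argument is relegated to \cite{CMMotives}. Two points demand genuine care. First, because $Z = A \otimes_{\C[\h]^W} \ZH_\bc(W)$ is \emph{not} a finite $A$-module, $\widehat{\cX}_\bc(W)$ is only morally a formal neighbourhood; one must check that completion along $\mf{q}$ commutes with the base changes in play and with the passage to centres, and that the idempotent/Morita construction of \cite{BE} really goes through in this non-finite setting at $t = 0$, rather than merely quoting the $t \neq 0$ theory. Second, one must verify that the permutation action of $\Xi(W')$ produced by the completion picture coincides with the geometric $\Xi(W')$-action on $\widehat{\cX}_{\bc'}(W',V)$ fixed before the statement; this reduces to matching the action of $N/W'$ on the $W'$-fixed points of a $W$-orbit with its action by reflection-group automorphisms on $\H_{\bc'}(W')$ and on $V_{W'}$, after which the bookkeeping of invariants and quotients is forced.
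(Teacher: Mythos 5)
The paper gives no proof of this theorem at all: it only remarks that the statement is the analogue of the isomorphism $\Theta$ of \cite[\S 3.7]{BE} and defers the complete proof to \cite{CMMotives}. Your outline follows exactly the route the paper points to --- the Bezrukavnikov--Etingof-style completion isomorphism over the $\Xi(W')$-torsor $V_{W'} \rarr U \cap \h_{\reg}^{W'}/W$, passage to centres of the matrix algebra, descent by $\Xi(W')$-invariants, and the $\C[\bt]$-lift to handle the Poisson structure --- and, like the paper, you correctly identify and defer the genuinely hard step (the completion isomorphism for the non-finite $A$-module $Z$) to \cite{CMMotives}, so your proposal is consistent with, and more detailed than, what the paper itself records.
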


In order to deduce Theorem \ref{pmax_parabolic_orbit} from Theorem \ref{thm:generalcompletion}, we require the following lemma. 

\begin{lem}\label{lem:leavescomplete}
The map $\mf{p} \mapsto \widehat{A}_{\mf{q}} \otimes_A  \mf{p}$ defines a bijection between $\PSpec_{(W')}(\cX_{\bc}(W))$ and the set of Poisson prime ideals of $\widehat{A}_{\mf{q}} \otimes_A Z$ of height $2 \dim \mf{k}$.
\end{lem}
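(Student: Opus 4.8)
The plan is to read off both sides as parametrising symplectic leaves and to transport the leaf--Poisson-prime dictionary of Lemma \ref{poisson_primes_leaves} across the flat base change $A \to \widehat{A}_{\mf q}$. First I would record the intrinsic description of the source. Since $\dim \mf k = \mathrm{rk}(W')$, Theorem \ref{thm:induceleaves} together with the formula $\dim \mc{L}_{\mf p} = 2(\dim \h - \mathrm{rk}(W_{\mc{L}_{\mf p}}))$ of Theorem \ref{parabolic_cuspidal_induction} shows that a Poisson prime $\mf p$ of $Z$ lies in $\PSpec_{(W')}$ precisely when $\mf p \cap A = \mf q$ (equivalently $\pi_\bc(\mc{L}_{\mf p})$ is dense in $\h^{W'}_{\reg}/W = \V(\mf q)$) and $\mf p$ has height $2 \dim \mf k = 2\,\mathrm{rk}(W')$. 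I would also note at the outset that every $\mf p \in \PSpec_{(W')}(\cX_\bc(W))$ survives to $Z$, since $U \cap \h^{W'}_{\reg}/W$ is a non-empty open subset of the irreducible variety $\h^{W'}_{\reg}/W$ and therefore meets the dense image $\pi_\bc(\mc{L}_{\mf p})$.

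For the forward direction, fix $\mf p \in \PSpec_{(W')}(Z)$ and set $B = Z/\mf p$, the coordinate ring of $\overline{\mc{L}_{\mf p}}$ over $U$. Completion along $\mf q$ is flat, so $\mf p \mapsto \widehat{A}_{\mf q} \otimes_A \mf p = \mf p \widehat{Z}$ is injective (local flat maps are faithfully flat, whence $\mf p\widehat{Z} \cap Z = \mf p$), its quotient is $\widehat{A}_{\mf q} \otimes_A B$, and the generic formal fibre $\widehat{A}_{\mf q} \otimes_A \kappa(\mf q) = \kappa(\mf q)$ is zero-dimensional, so the height is preserved at $2 \dim \mf k$; the bracket is $\mf q$-adically continuous (by the Leibniz rule $\{\mf q^n, Z\} \subs \mf q^{n-1} Z$), hence descends to $\widehat{Z}$ and makes $\mf p \widehat{Z}$ a Poisson ideal. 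The only substantive point is that $\widehat{A}_{\mf q} \otimes_A B$ is a domain. Here I would use that every boundary leaf $\mc{L}' \subsetneq \overline{\mc{L}_{\mf p}}$ has $W_{\mc{L}'} < (W')$ by Theorem \ref{thm:induceleaves}\ref{thm:induceleaves:leq} and so projects into strictly deeper strata, which are disjoint from $\h^{W'}_{\reg}/W$; consequently $\overline{\mc{L}_{\mf p}} \cap \pi_\bc^{-1}(\h^{W'}_{\reg}/W)$ is exactly the \emph{smooth} leaf $\mc{L}_{\mf p}$ there. Thus $B$ is regular along the closed fibre $\V(\mf q B)$, so $\widehat{A}_{\mf q} \otimes_A B$ is regular, hence normal; since its idempotents lift from the closed fibre $B/\mf q B$, which is the connected regular variety $\mc{L}_{\mf p} \cap \pi_\bc^{-1}(\h^{W'}_{\reg}/W)$ and therefore a domain, it has no non-trivial idempotents and is itself a domain.

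For the backward direction, let $\widehat{\mf p}$ be a Poisson prime of $\widehat{Z}$ of height $2 \dim \mf k$ and put $\mf p = \widehat{\mf p} \cap Z$, a Poisson prime by contraction along the Poisson map $Z \to \widehat{Z}$. Flatness gives $\mathrm{ht}\, \mf p \le \mathrm{ht}\, \widehat{\mf p} = 2 \dim \mf k$, and properness of $\widehat{\mf p}$ forces $\V(\mf q B) \neq \emptyset$, i.e. $\overline{\mc{L}_{\mf p}}$ meets $\pi_\bc^{-1}(\h^{W'}_{\reg}/W)$, so $\h^{W'}_{\reg}/W \subs \overline{\pi_\bc(\mc{L}_{\mf p})}$. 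Once $W_{\mc{L}_{\mf p}} = (W')$ is known the argument closes: then $\mf p \in \PSpec_{(W')}(Z)$, and by the forward direction $\mf p \widehat{Z}$ is a prime of height $2 \dim \mf k$ contained in $\widehat{\mf p}$, whence $\mf p \widehat{Z} = \widehat{\mf p}$ and the map is surjective.

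The hard part is exactly the identification $W_{\mc{L}_{\mf p}} = (W')$ in this last step. The height bound only yields $\mathrm{rk}(W_{\mc{L}_{\mf p}}) \le \mathrm{rk}(W')$, and completion can raise heights through positive-dimensional formal fibres, so a priori $\widehat{\mf p}$ could contract to a leaf with a strictly larger parabolic (strictly smaller rank) that gets ``fattened'' transversally to $\V(\mf q)$. Ruling this out is where the symplectic geometry must enter: one needs that the formal leaves of $\widehat{\cX}_\bc(W)$ are restrictions of genuine leaves of $\cX_\bc(W)$ with codimension preserved. The clean way to see this is via the Poisson isomorphism $\Phi$ of Theorem \ref{thm:generalcompletion}, under which $\widehat{\cX}_\bc(W)$ is ($\Xi(W')$-invariantly) a product of the formal neighbourhood of the central fibre of $\cX_{\bc'}(W')$ with the symplectic variety $T^* V_{W'}$; every leaf then splits as $\ell_0 \times T^* V_{W'}$, and since $\dim T^* V_{W'} = 2(\dim \h - \dim \mf k)$ while the central-fibre factor has dimension $2 \dim \mf k$, a leaf of codimension $2 \dim \mf k$ must have $\ell_0$ zero-dimensional. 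Such an $\ell_0$ is a point of $\PMax(\ZH_{\bc'}(W'))$ lying over $0$, which by the induction of Theorem \ref{parabolic_cuspidal_induction} forces $W_{\mc{L}_{\mf p}} = (W')$ exactly, pinning down the parabolic and completing the bijection.
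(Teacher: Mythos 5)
Your overall skeleton (extension is a Poisson prime of the right height, contraction gives the inverse) matches the paper's, but both halves diverge from it in ways that leave gaps. In the forward direction you correctly record that $\mf{p} \cap A = \mf{q}$, but you do not exploit the consequence the paper's proof rests on: $\mf{q}$ then annihilates $B = Z/\mf{p}$, so $\widehat{A}_{\mf{q}} \otimes_A (Z/\mf{p}) = Z/\mf{p}$ on the nose (the paper checks this carefully by writing $Z$ as a colimit of coherent $A$-modules via the order filtration, since $Z$ is \emph{not} finite over $A$), and primality, injectivity and height preservation all drop out at once. Your substitute — ``$B$ is regular along $\V(\mf{q}B)$, hence $\widehat{A}_{\mf{q}} \otimes_A B$ is regular, normal, and its idempotents lift from the closed fibre'' — quietly treats $\widehat{A}_{\mf{q}} \otimes_A B$ as the $\mf{q}B$-adic completion of $B$, which is not justified for a non-finite $A$-algebra; the argument only closes because $\mf{q}B = 0$ makes the whole completion trivial, a fact you never invoke.

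The more serious gap is in surjectivity, and you name it yourself: showing that the contraction $\mf{p} = \widehat{\mf{p}} \cap Z$ has height exactly $2\dim\mf{k}$ and satisfies $W_{\mc{L}_{\mf{p}}} = (W')$. Your proposed fix routes through Theorem \ref{thm:generalcompletion}, describes the leaves of $\widehat{\cX}_{\bc'}(W',V)$ as products $\ell_0 \times T^*V_{W'}$, and then asserts that a zero-dimensional $\ell_0$ ``forces $W_{\mc{L}_{\mf{p}}} = (W')$.'' That last step is exactly the content that needs proving: the parabolic $W_{\mc{L}_{\mf{p}}}$ is defined by the behaviour of the \emph{global} leaf under $\pi_\bc$, and nothing you have said connects the formal leaf containing $\widehat{\mf{p}}$ back to the stratum in which $\pi_\bc(\mc{L}_{\mf{p}})$ is dense; moreover this route essentially smuggles in Theorem \ref{pmax_parabolic_orbit}, which is what the lemma is being used to prove. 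The paper's argument avoids this entirely: it quotes \cite[Lemmata 3.3 and 3.5]{Cuspidal} to get that $\mf{p}$ is a Poisson prime of height exactly $2\dim\mf{k}$ (so $W_{\mc{L}_{\mf{p}}} = (W'')$ for some $W''$ of the same rank as $W'$), and then rules out $W'' \notin (W')$ by a disjointness argument: the strata $U \cap \h^{W''}_{\reg}/W$ and $Y$ are then disjoint, so the ideal of $U \cap \h^{W''}_{\reg}/W$ becomes the unit ideal in $\widehat{A}_{\mf{q}}$, forcing $\widehat{A}_{\mf{q}} \otimes_A \mf{p}$ to be the unit ideal, which contradicts $\widehat{A}_{\mf{q}} \otimes_A \mf{p} \subseteq \widehat{\mf{p}}$. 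Some such concrete argument is needed where you currently have an assertion.
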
 

\begin{proof}
First, we must show that $\widehat{A}_{\mf{q}} \otimes_A  \mf{p}$ is prime in $ \widehat{A}_{\mf{q}} \otimes_A Z$. Let $Y = U \cap \h_{\reg}^{W'}/W$ and denote by $\overline{\pi(\mc{L}_{\mf{p}})}$ the closure of $\pi(\mc{L}_{\mf{p}}) \cap U$ in $U$. Recall that $ \pi(\mc{L}_{\mf{p}}) \cap \h_{\reg}^{W'}/W $ is dense in $\h_{\reg}^{W'}/W$. This implies that $\overline{\pi(\mc{L}_{\mf{p}})} \cap Y$ is dense in the closed, irreducible set $Y$, i.e. $\overline{\pi(\mc{L})} \cap Y = Y$. Similarly, $\overline{\pi(\mc{L}_{\mf{p}})} \cap Y$ is dense in $\overline{\pi(\mc{L}_{\mf{p}})}$. Thus, $\overline{\pi(\mc{L}_{\mf{p}})} = Y$. This implies that $\mf{q} \subset \mf{p} \cap A$ and hence $Z \cdot \mf{q} \subset \mf{p}$. Since $\widehat{A}_{\mf{q}}$ is flat over $A$, we have a short exact sequence 
$$
0 \rightarrow \widehat{A}_{\mf{q}} \otimes_A \mf{p} \rightarrow \widehat{A}_{\mf{q}} \otimes_A Z \rightarrow \widehat{A}_{\mf{q}} \otimes_A (Z / \mf{p}) \rightarrow 0.
$$
The order filtration on $\H_{\bc}(W)$ defines an increasing filtration $\mc{F}_i Z$ on $Z$ such that each piece is a coherent $A$-module. This restricts to a filtration on $\mf{p}$ and we have a short exact sequence $0 \rightarrow \mc{F}_i \mf{p} \rightarrow \mc{F}_i Z \rightarrow \mc{F}_i Z /  \mc{F}_i \mf{p} \rightarrow 0$ of coherent $A$-modules. Since tensor products commute with colimits, 
$$
\widehat{A}_{\mf{q}} \otimes_A (Z / \mf{p}) = \lim_{i \rightarrow \infty} \widehat{A}_{\mf{q}} \otimes_A (\mc{F}_i Z /  \mc{F}_i \mf{p}). 
$$
But $Z \cdot \mf{q} \subset \mf{p}$ implies that 
$$
\widehat{A}_{\mf{q}} \otimes_A (\mc{F}_i Z /  \mc{F}_i \mf{p}) = \lim_{\infty \leftarrow m} \frac{(\mc{F}_i Z /  \mc{F}_i \mf{p})}{\mf{q}^m (\mc{F}_i Z /  \mc{F}_i \mf{p})} = \mc{F}_i Z /  \mc{F}_i \mf{p}. 
$$
Thus, $\widehat{A}_{\mf{q}} \otimes_A (Z / \mf{p}) = Z / \mf{p}$ is a domain and $\widehat{A}_{\mf{q}} \otimes_A  \mf{p}$ is prime. It is clearly Poisson; see \cite[Lemma 3.5]{Cuspidal}. Moreover, the fact that $ \widehat{A}_{\mf{q}} \otimes_A (Z / \mf{p}) = Z/ \mf{p}$ shows that $\widehat{A}_{\mf{q}} \otimes_A  \mf{p}_1 = \widehat{A}_{\mf{q}} \otimes_A  \mf{p}_2$ if and only if $\mf{p}_1 = \mf{p}_2$. Lemma 3.3 of \textit{loc.\ cit.} says that $\mathrm{ht}(\widehat{A}_{\mf{q}} \otimes_A  \mf{p}) = \mathrm{ht}(\mf{p})$, which equals $2 \mathrm{rk} (W')$. Thus, the map we have written down is injective.   

On the other hand, if $\mf{p}'$ is a Poisson prime in $\widehat{A}_{\mf{q}} \otimes_A Z$ of height $2 \dim \mf{k}$, then Lemmata 3.3 and 3.5 of \textit{loc.\ cit.} say that $\mf{p} := \mf{p}' \cap Z$ is a Poisson prime of height $2 \dim \mf{k}$. Therefore, we just need to show that $Y \cap \pi(\mc{L}_{\mf{p}})$ is dense in $Y$. The prime $\mf{p}$ belongs to $\PSpec_{(W'')}(\cX_{\bc}(W))$ for some parabolic $W''$ of $W$ of the same rank as $W'$. The sets $U \cap \h^{W''}_{\reg}/W$ and $Y$ are disjoint if $W'' \notin (W')$, which implies that the image in $\widehat{A}_{\mf{q}}$ of the ideal defining $U \cap \h^{W''}_{\reg}/W$ is the whole of $\widehat{A}_{\mf{q}}$. Therefore the image of $\mf{p} \cap A$ in $\widehat{A}_{\mf{q}}$ would also be the whole of $ \widehat{A}_{\mf{q}}$ if $W'' \notin (W')$. But since $ \widehat{A}_{\mf{q}} \otimes_A  \mf{p}$ is contained in $\mf{p}'$, this cannot happen and thus $W'' \in (W')$ as required.  
\end{proof}

\begin{proof}[Proof of Theorem \ref{pmax_parabolic_orbit}]
Since the symplectic structure on $T^* V_{W'}$ is non-degenerate, the only Poisson prime in $\C [ {T}^* V_{W'} ]$ is the zero ideal. Therefore, every Poisson prime in $\ZH_{\bc'}(W') \otimes \C [ T^* V_{W'} ]$ has height at most $2 \dim \mf{k}$ and the Poisson primes of height $2 \dim \mf{k}$ are in bijection with the Poisson maximal ideals of $\ZH_{\bc'}(W')$. Repeating the arguments of Lemma \ref{lem:leavescomplete}, there is a bijection between the Poisson primes in $\ZH_{\bc'}(W') \otimes \C [ T^* V_{W'} ]$ of height $2 \dim \mf{k}$ and the Poisson primes of height $2 \dim \mf{k}$ in  $\C \left[ \widehat{\cX}_{\bc'}(W',V) \right]$.

By Lemma \ref{lem:leavescomplete} and Theorem \ref{thm:generalcompletion}, the set $\PSpec_{(W')}(\cX_{\bc}(W)) $ is in bijection with the symplectic leaves in $\widehat{\cX}_{\bc'}(W',V) / \ \Xi(W')$  of dimension $2 (\dim \h - \dim \mf{k})$. Since $\Xi(W')$ acts freely on $V_{W'}$ it also acts freely on $\widehat{\cX}_{\bc'}(W',V)$. Therefore the symplectic leaves in $\widehat{\cX}_{\bc'}(W',V) / \ \Xi(W')$  of dimension $2 (\dim \h - \dim \mf{k})$ are in bijection with the $\Xi(W')$-orbits of symplectic leaves in $\widehat{\cX}_{\bc'}(W',V)$ of dimension $2 (\dim \h - \dim \mf{k})$. But, as explained above, this is the same as the $\Xi(W')$-orbits of Poisson maximal ideals $\ZH_{\bc'}(W')$. 
\end{proof}

\subsection{Clifford theory} \label{clifford_theory}

Throughout this section we fix an irreducible complex reflection group $(\fh,W)$. Moreover, we assume that there exists a normal subgroup $K \lhd W$ such that $K$ acts, via inclusion in $W$, on $\h$ as a complex reflection group (though $\h$ need not be irreducible as a $K$-module). Since $K$ is normal in $W$, the group $W$ acts on $\Ref(K)$ by conjugation. Let us fix  a $W$-equivariant function $\bc : \Ref(K) \rightarrow \C$. We extend this to a $W$-equivariant function $\bc : \Ref(W) \rightarrow \C$ by setting $\bc(s) = 0$ for all $s \in \Ref(W) \smallsetminus \Ref(K)$. A $K$-equivariant function on $\Ref(K)$ is not always $W$-equivariant.
 For our choice of parameter $\bc$, the inclusion $K \hookrightarrow W$ extends to an algebra embedding $\H_{\bc}(K) \hookrightarrow \H_{\bc}(W)$, which is the identity on $\h$ and $\h^*$.  Let $\Gamma = W / K$. As explained in \cite[Section 4.1]{CMPartitions}, the group $W$ acts on $\H_{\bc}(K)$ by conjugation. Thus, it acts on $\ZH_{\bc}(K)$. This action factors through $\Gamma$. 
 
 We will require the following lemma. 

\begin{lem}\label{lem:stupidremark}
Under the graded $W$-module identification $\H_{\bc}(W) = \bigoplus_{w \in W} \C[\h] \otimes \C[\h^*] \otimes w$, every non-zero element $z = \sum_{w \in W} z_w \cdot w \in \ZH_{\bc}(W)$ satisfies $z_1 \neq 0$. 
\end{lem}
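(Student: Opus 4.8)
The plan is to pass to the associated graded algebra of $\H_\bc(W)$ with respect to the order filtration and to exploit that this associated graded is an ordinary skew group algebra, whose centre is transparent.

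First I would equip $\H_\bc(W)$ with the order filtration $F_\bullet$ in which $\h$ and $\h^*$ sit in filtered degree one and $W$ in degree zero (the filtration already used in the proof of Lemma~\ref{lem:leavescomplete}). The right-hand side of the defining relation (\ref{eq:rel}) lies in filtered degree zero, whereas $yx$ and $xy$ lie in degree two, so all three families of commutators vanish in the associated graded. Together with the PBW isomorphism (\ref{eq:PBW}) this yields a canonical identification $\mathrm{gr}\,\H_\bc(W) \cong (\C[\h]\otimes_\C \C[\h^*])\rtimes W = \C[\h\oplus\h^*]\rtimes W$, under which the symbol of $z_w\cdot w$ is the top filtered-degree component of $z_w$, multiplied by $w$.

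Next I would determine the centre of the skew group algebra $R\rtimes W$ for the commutative domain $R=\C[\h\oplus\h^*]$. Writing a central element as $\sum_{w\in W} a_w\, w$ with $a_w\in R$ and comparing the coefficient of $w$ on both sides of $(\sum_w a_w w)\,r = r\,(\sum_w a_w w)$ for $r\in R$ gives $a_w\,({}^w r - r)=0$ for every $r$. For $w\neq 1$ the faithfulness of the $W$-action on $\h$ supplies a linear form $r$ with ${}^w r\neq r$; since $R$ is a domain this forces $a_w=0$. Hence every central element of $R\rtimes W$ is supported on $1\in W$, that is, $Z(R\rtimes W)=R^W\cdot 1$.

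Finally, let $z=\sum_{w} z_w\, w\in\ZH_\bc(W)$ be non-zero of filtered degree $d$, and let $\sigma(z)$ be its symbol, which is non-zero. The symbol of a central element is central in $\mathrm{gr}\,\H_\bc(W)$: for each homogeneous generator $\bar a$ the identity $za=az$ forces $\sigma(z)\,\bar a=\bar a\,\sigma(z)$, and such $\bar a$ generate the associated graded. By the centre computation, $\sigma(z)$ is then supported on $1\in W$, which says exactly that the degree-$d$ component of $z_w$ vanishes for every $w\neq 1$ while the degree-$d$ component of $z_1$ equals $\sigma(z)\neq 0$. In particular $z_1\neq 0$. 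I expect the only genuinely delicate point to be this last interface—verifying carefully that passing to symbols sends a central element to a central one, and that ``supported on $1$ in the associated graded'' translates back into the non-vanishing of the genuine coefficient $z_1$ rather than merely of the sum $\sum_w z_w$.
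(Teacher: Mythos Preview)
Your proposal is correct and follows essentially the same route as the paper's proof: both pass to the associated graded for the order filtration, identify it with $\C[\h\oplus\h^*]\rtimes W$ via PBW, use that the centre of this skew group algebra is $\C[\h\oplus\h^*]^W\cdot 1$, and read off $z_1\neq 0$ from the symbol. The paper is terser---it simply asserts that the image of $\ZH_\bc(W)$ in the associated graded is $\C[\h\times\h^*]^W$ and records the $W$-module identification $\mc{F}_k = (\C[\h]\otimes\C[\h^*])_{\le k}\otimes\C W$---whereas you spell out both the centre computation and the ``symbol of a central element is central'' step; but the argument is the same.
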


\begin{proof}
A reformulation of the PBW property is that, under the filtration $\mc{F}_i \H_{\bc}(W)$ putting $\h$ and $\h^*$ in degree one, $W$ in degree zero and $\mc{F}_{-1}:= 0$, the associated graded $\mathrm{gr}_{\mc{F}} \H_{\bc}(W)$ equals $\C[\h \oplus \h^*] \rtimes W$. An easy induction on $k$ shows that $\mc{F}_k \H_{\bc}(W) = (\C[\h] \otimes \C[\h^*])_{\le k} \otimes \C W$ as a $W$-module, where $(\C[\h] \otimes \C[\h^*])_{\le k}$ is the sum of all graded pieces of degree at most $k$. Then the short exact sequences $0 \rightarrow \mc{F}_{k-1} \rightarrow \mc{F}_k \rightarrow \mc{F}_{k} / \mc{F}_{k-1} \rightarrow 0$ can be identified, as short exact sequences of $W$-modules, with
$$
0 \rightarrow  (\C[\h] \otimes \C[\h^*])_{\le k-1} \otimes \C W \rightarrow  (\C[\h] \otimes \C[\h^*])_{\le k} \otimes \C W \rightarrow  (\C[\h] \otimes \C[\h^*])_{k} \otimes \C W \rightarrow 0.
$$
The image of $\ZH_{\bc}(W)$ under $\gr_{\mc{F}}$ equals $\C[\h \times \h^*]^W$. Therefore, $z = \sum_{w \in W} z_w \cdot w \in \mc{F}_k \smallsetminus \mc{F}_{k-1}$ then its (non-zero!) image in $ (\C[\h] \otimes \C[\h^*])_{k} \otimes \C W$ belongs to $ (\C[\h] \otimes \C[\h^*])_{k}^W$. In particular, $z_1 \neq 0$. 
\end{proof}

\begin{prop}\label{prop:ZWinZK}
The centre $\ZH_{\bc}(W)$ of $\H_{\bc}(W)$ equals the subalgebra $\ZH_{\bc}(K)^{\Gamma}$ of $\ZH_{\bc}(K)$. Moreover, the embedding $\ZH_{\bc}(W) \hookrightarrow \ZH_{\bc}(K)$ is as Poisson algebras. 
\end{prop}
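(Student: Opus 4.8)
The plan is to prove the two inclusions $\ZH_{\bc}(K)^{\Gamma} \subseteq \ZH_{\bc}(W)$ and $\ZH_{\bc}(W) \subseteq \ZH_{\bc}(K)^{\Gamma}$ separately, and then to deduce the Poisson compatibility by lifting the embedding to the level $\bt \neq 0$. The first inclusion is immediate: if $z \in \ZH_{\bc}(K)^{\Gamma}$ then $z$ commutes with $\C[\h]$, $\C[\h^*]$ and $K$, which generate $\H_{\bc}(K)$, while $\Gamma$-invariance means exactly that $w z w^{-1} = z$, i.e. $z$ commutes with every $w \in W$. Since $\C[\h]$, $\C[\h^*]$ and $W$ generate $\H_{\bc}(W)$, this gives $z \in \ZH_{\bc}(W)$.

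For the reverse inclusion the real content is to show $\ZH_{\bc}(W) \subseteq \H_{\bc}(K)$; once this is known, a central $z$ lying in $\H_{\bc}(K)$ automatically commutes with all of $\H_{\bc}(K)$ and is fixed by conjugation by $W$, hence lands in $\ZH_{\bc}(K)^{\Gamma}$. To prove $\ZH_{\bc}(W) \subseteq \H_{\bc}(K)$ I would first observe that, because $\bc$ is supported on $\Ref(K) \subseteq K$, every reflection appearing on the right-hand side of \eqref{eq:rel} lies in $K$, so the relation is homogeneous for the grading of $\H_{\bc}(W)$ by the group $\Gamma = W/K$ in which $\h$ and $\h^*$ have degree $0$ and each $w \in W$ has degree $wK$. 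Thus $\H_{\bc}(W) = \bigoplus_{\gamma \in \Gamma} \H_{\bc}(W)_{\gamma}$ is a $\Gamma$-graded algebra with $\H_{\bc}(W)_0 = \H_{\bc}(K)$. Writing a central element as $z = \sum_{\gamma} z^{(\gamma)}$ and comparing graded components of $za = az$ for $a \in \H_{\bc}(K)$ shows each $z^{(\gamma)}$ commutes with $\H_{\bc}(K)$; comparing components of $zg = gz$ for $g \in W$ (with $\delta = gK$) gives $z^{(\gamma)} g = g\, z^{(\delta^{-1}\gamma\delta)}$. Summing the latter over a conjugacy class $C$ of $\Gamma$ then yields that $z^{(C)} := \sum_{\gamma \in C} z^{(\gamma)}$ commutes with every $g \in W$ and with $\H_{\bc}(K)$, hence is central in $\H_{\bc}(W)$. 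For any class $C \neq \{0\}$ the element $z^{(C)}$ is supported on cosets different from $K$, so its coefficient at $1 \in W$ vanishes; by Lemma \ref{lem:stupidremark} a nonzero central element cannot have vanishing coefficient at $1$, forcing $z^{(C)} = 0$. Therefore $z = z^{(0)} \in \H_{\bc}(K)$. (When $\Gamma$ is abelian the conjugacy-class grouping is superfluous and each $z^{(\gamma)}$ is already central.)

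I expect this step, $\ZH_{\bc}(W) \subseteq \H_{\bc}(K)$, to be the main obstacle: it is not enough to know that the leading term of a central element is supported on $K$ (which is all the associated-graded picture gives directly); one must control the whole element, and the cleanest route I see is to encode the coset-support as the $\Gamma$-grading and then extract vanishing from Lemma \ref{lem:stupidremark} one conjugacy class at a time, so that the argument survives even when $\Gamma$ is non-abelian.

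Finally, for the Poisson statement I would lift the embedding to the deformations. Since $\bc$ vanishes off $\Ref(K)$, the defining relation of $\H_{\bt,\bc}(K)$ agrees with the restriction to $K,\h,\h^*$ of that of $\H_{\bt,\bc}(W)$ (the term $\bt(y,x)$ being identical), so $K \hookrightarrow W$ extends to an algebra embedding $\H_{\bt,\bc}(K) \hookrightarrow \H_{\bt,\bc}(W)$ which reduces modulo $\bt$ to $\H_{\bc}(K) \hookrightarrow \H_{\bc}(W)$. Given $z_1,z_2 \in \ZH_{\bc}(W) = \ZH_{\bc}(K)^{\Gamma}$, I choose lifts $\widehat{z}_i \in \H_{\bt,\bc}(K)$; these are simultaneously lifts of $z_i$ viewed in $\ZH_{\bc}(W)$. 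Because the embedding is a ring homomorphism, the commutator $[\widehat{z}_1,\widehat{z}_2]$ is the same element computed in $\H_{\bt,\bc}(K)$ or in $\H_{\bt,\bc}(W)$, and reduction modulo $\bt$ commutes with the embedding; hence $\{z_1,z_2\}$ computed in $\ZH_{\bc}(W)$ coincides with the image of $\{z_1,z_2\}$ computed in $\ZH_{\bc}(K)$. This shows that $\ZH_{\bc}(W) \hookrightarrow \ZH_{\bc}(K)$ is a morphism of Poisson algebras, completing the proof.
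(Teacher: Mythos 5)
Your proof is correct and follows essentially the same route as the paper: decompose a central element over the cosets of $K$ (your $\Gamma$-grading is exactly the paper's decomposition $\H_{\bc}(W) = \bigoplus_i \H_{\bc}(K)w_i$), show that the part supported off $K$ is itself central, and kill it with Lemma \ref{lem:stupidremark}; the lift to $\H_{\bt,\bc}(K) \hookrightarrow \H_{\bt,\bc}(W)$ is what the paper means by seeing the Poisson compatibility ``directly from the construction''. Your summation over conjugacy classes of $\Gamma$ is a slightly more careful way of establishing centrality of the off-$K$ part (the paper instead observes that the identity-coset component $z_1$ already lies in $\ZH_{\bc}(W)$ and subtracts it), but the two arguments are interchangeable.
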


\begin{proof}
Clearly, $\ZH_{\bc}(W) \cap \ZH_{\bc}(K) \subseteq  \ZH_{\bc}(K)^W$. Therefore, we just need to show that $\ZH_{\bc}(W) \subset \ZH_{\bc}(K)$. Fix coset representatives $1 = w_1, \ds, w_{\ell}$ of $K$ in $W$. Then $\H_{\bc}(W) = \bigoplus_{i = 1}^{\ell} \H_{\bc}(K) w_i$ as a left $\H_{\bc}(K)$-module. Let $z = \sum_{i = 1}^{\ell} z_i w_i$ denote an element in $\ZH_{\bc}(W)$ with $z_i \in \H_{\bc}(K)$ for all $i$. We wish to show that $z_i = 0$ for $i \neq 1$. Let $f \in \H_{\bc}(K)$. Then 
$$
[f,z] = \sum_{i = 1}^{\ell} \left( [f,z_i] + z_i (f - w_i(f)) \right) w_i. 
$$
Since $[f,z_i] + z_i (f - w_i(f)) \in \H_{\bc}(K)$ for all $i$, we must have $[f,z_i] + z_i (f - w_i(f)) = 0$. In particular, this implies that $z_1 \in \ZH_{\bc}(K) \cap \ZH_{\bc}(W)$. Without loss of generality, $z_1 = 0$. But now it follows from Lemma \ref{lem:stupidremark} that $z = 0$. Thus, $\ZH_{\bc}(W) = \ZH_{\bc}(K)^W$. 

It is clear that the embedding is as Poisson algebras; one can see this directly from the construction or simply by noting that the bracket is $\Gamma$-invariant and hence restricts to $\ZH_{\bc}(K)^{\Gamma}$.
\end{proof}

Thus, geometrically we have a Poisson morphism $\eta : \cX_{\bc}(K) \rightarrow \cX_{\bc}(W)$ identifying $\cX_{\bc}(W)$ with $\cX_{\bc}(K) / \Gamma$. It is a finite, surjective map which is generically a $\Gamma$-covering. This fits into a commutative diagram
\[
\begin{tikzcd}
\cX_\bc(K) \arrow[twoheadrightarrow]{r}{\eta} \arrow[twoheadrightarrow]{d}{\Upsilon_{\bc,K}} & \cX_\bc(W) \arrow[twoheadrightarrow]{d}{\Upsilon_{\bc,W}} \\
\fh/K \times \fh^*/K \arrow[twoheadrightarrow]{r} & \fh/W \times \fh^*/W
\end{tikzcd}
\]

\begin{lem}\label{lem:etaleaf}
If $\mc{L}$ is a leaf of $\cX_{\bc}(K)$, then $\eta(\mc{L})$ is a finite union of leaves of $\cX_{\bc}(W)$. 
\end{lem}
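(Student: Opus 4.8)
The plan is to prove the slightly stronger assertion that $\eta(\mc{L})$ is a \emph{union} of symplectic leaves of $\cX_{\bc}(W)$; since $\cX_{\bc}(W)$ has only finitely many symplectic leaves by Brown--Gordon \cite{PoissonOrders}, such a union is automatically finite and the lemma follows. To establish that $\eta(\mc{L})$ is a union of leaves it suffices to verify that it is \emph{saturated} for the leaf partition of $\cX_{\bc}(W)$: I would show that for every point $y \in \eta(\mc{L})$ the entire symplectic leaf $\mc{M}$ of $\cX_{\bc}(W)$ through $y$ is already contained in $\eta(\mc{L})$.

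The mechanism I would exploit is the Poisson-geometric structure of $\eta$. By Proposition \ref{prop:ZWinZK} the morphism $\eta$ is induced by the Poisson inclusion $\ZH_{\bc}(W)=\ZH_{\bc}(K)^{\Gamma}\hookrightarrow\ZH_{\bc}(K)$ and realises $\cX_{\bc}(W)$ as the quotient $\cX_{\bc}(K)/\Gamma$, with $\Gamma$ acting by Poisson automorphisms. The crucial point is that, given a function $f$ on $\cX_{\bc}(W)$, its pullback $\eta^{*}f$ lies in the \emph{invariant} subalgebra $\ZH_{\bc}(K)^{\Gamma}$, so the Hamiltonian derivation $\{\eta^{*}f,-\}$ on $\cX_{\bc}(K)$ is $\Gamma$-equivariant and descends to the Hamiltonian derivation $\{f,-\}$ on $\cX_{\bc}(W)$. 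Consequently the (analytic) Hamiltonian flow $\phi_{t}$ of $\eta^{*}f$ on the relevant smooth Poisson locus is $\Gamma$-equivariant and satisfies $\eta\circ\phi_{t}=\psi_{t}\circ\eta$, where $\psi_{t}$ is the Hamiltonian flow of $f$ downstairs; note that this identity holds with no condition at the ramification locus of $\eta$, precisely because $\eta^{*}f$ is $\Gamma$-invariant.

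With this in hand the argument runs as follows. Fix $x\in\mc{L}$ with $\eta(x)=y$, and let $\mc{M}$ be the leaf through $y$. By Weinstein's description of the symplectic foliation \cite{Weinstein}, as used by Brown and Gordon to describe the leaves of Lemma \ref{poisson_primes_leaves}, every point of $\mc{M}$ is reached from $y$ by successively flowing along finitely many Hamiltonian vector fields $\{f_{i},-\}$ with $f_{i}\in\ZH_{\bc}(W)$. Lifting these flows as above, every point of $\mc{M}$ is the $\eta$-image of a point obtained from $x$ by flowing along $\{\eta^{*}f_{i},-\}$. Since each $\eta^{*}f_{i}\in\ZH_{\bc}(K)$, every such flow preserves the symplectic leaves of $\cX_{\bc}(K)$, so starting from $x\in\mc{L}$ we never leave $\mc{L}$. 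Hence every point of $\mc{M}$ lies in $\eta(\mc{L})$, i.e. $\mc{M}\subseteq\eta(\mc{L})$, as required.

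The step that requires care --- and which I expect to be the main obstacle --- is the compatibility of this flow argument with the \emph{inductive} Brown--Gordon stratification: a leaf $\mc{M}$ downstairs may lie in a singular Poisson stratum of $\cX_{\bc}(W)$ whose preimage sits in a \emph{shallower} stratum of $\cX_{\bc}(K)$ (quotient singularities of $\cX_{\bc}(K)/\Gamma$ appear even where $\cX_{\bc}(K)$ is smooth, so the two stratifications do not correspond under $\eta$ over the ramification locus). I would resolve this by checking that $\{\eta^{*}f,-\}$ preserves each closed Poisson subvariety appearing in the upstairs construction and restricts to the correct Hamiltonian vector field on its smooth locus, so that the flow of $x$ stays within the leaf $\mc{L}$ while its image tracks $\psi_{t}$ on the singular stratum containing $y$; one then runs the lifting stratum by stratum. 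As an independent consistency check --- and to supply the finiteness bound purely algebraically --- I would note that if $\mc{L}$ corresponds to the Poisson prime $\fp$ of $\ZH_{\bc}(K)$, then, because $\ZH_{\bc}(W)\hookrightarrow\ZH_{\bc}(K)$ is an integral Poisson extension, $\fp\cap\ZH_{\bc}(W)$ is again a Poisson prime and $\overline{\eta(\mc{L})}=\V(\fp\cap\ZH_{\bc}(W))$ is the closure of a single leaf of $\cX_{\bc}(W)$; thus $\eta(\mc{L})$ is in any case contained in a finite union of leaves, consistent with the conclusion.
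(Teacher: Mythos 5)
Your reduction to showing that $\eta(\mc{L})$ is a union of leaves, and your identification of the key mechanism --- the $\Gamma$-invariance of the Poisson bracket on $\ZH_{\bc}(K)$, so that Hamiltonian data for $\ZH_{\bc}(W)=\ZH_{\bc}(K)^{\Gamma}$ interacts well with $\eta$ --- are exactly what the paper uses. The difference is in execution: you run the argument through analytic Hamiltonian flows and leaf-saturation, and the step you yourself flag as ``the main obstacle'' is a genuine gap that you do not close. Over the ramification locus the Brown--Gordon stratifications of $\cX_{\bc}(K)$ and $\cX_{\bc}(W)$ do not correspond under $\eta$ (a leaf downstairs can sit in a deep singular Poisson stratum whose preimage meets the smooth locus upstairs), so the identity $\eta\circ\phi_{t}=\psi_{t}\circ\eta$ has to be interpreted with $\phi_{t}$ and $\psi_{t}$ living on strata of different depths, and ``reaching every point of $\mc{M}$ by flows of regular functions'' on a singular stratum already presupposes the full algebraic-leaf machinery. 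Your proposed fix is a promissory note (``I would resolve this by checking\ldots''), so as written the proof is incomplete at precisely the delicate point.

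The paper avoids all of this by staying algebraic. After localizing so that $\mc{L}$ is closed (leaves are locally closed, and $\eta$ is finite, hence closed, so $\eta(\mc{L})$ is then closed too), it observes that $I(\eta(\mc{L}))=I(\mc{L})\cap\ZH_{\bc}(K)^{\Gamma}$ and that this is a Poisson ideal of $\ZH_{\bc}(W)$, because $I(\mc{L})$ is Poisson and the bracket is $\Gamma$-invariant. By the Brown--Gordon description of leaves via Poisson cores, a closed subset cut out by a Poisson ideal is automatically a union of leaves: if a Poisson ideal $I$ satisfies $I\subseteq\mf{m}_{\chi}$, then $I\subseteq\mathcal{P}(\mf{m}_{\chi})$, so the whole leaf through $\chi$ lies in $\V(I)$. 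No flows are needed. Note that your closing ``consistency check'' already contains most of this: you know that $\fp\cap\ZH_{\bc}(W)$ is a Poisson prime and that $\V(\fp\cap\ZH_{\bc}(W))=\overline{\eta(\mc{L})}$ is the closure of a single leaf; what is missing is only the localization step that upgrades ``contained in a finite union of leaves'' to ``equal to a union of leaves''. I would promote that paragraph to the main argument and discard the flow argument.
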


\begin{proof}
Since the stratification of $\cX_{\bc}(W)$ by symplectic leaves is finite, it suffices to show that  $\eta(\mc{L})$ is a union of leaves, i.e. invariant under Hamiltonian flows. After a suitable localization, we may assume that $\mc{L}$ is closed in $\cX_{\bc}(K)$. Then $\eta(\mc{L})$ is closed. It is invariant under Hamiltonian flows if and only if the semi-prime ideal $I(\eta(\mc{L}))$ is Poisson. But $ I(\eta(\mc{L})) = I(\mc{L}) \cap \ZH_{\bc}(K)^{\Gamma}$. Since $I(\mc{L})$ is Poisson and the bracket is invariant under $\Gamma$, if $z \in I(\eta(\mc{L})) \cap \ZH_{\bc}(K)^{\Gamma}$ and $h \in \ZH_{\bc}(K)^{\Gamma}$, then $\{ z, h \} 	\in  I(\eta(\mc{L})) \cap \ZH_{\bc}(K)^{\Gamma}$, as required. 
\end{proof}

Note that, in general, the preimage of a leaf of  $\cX_{\bc}(W)$ is not a leaf.  Let $\cX_{\bc}(K)^{\mathrm{sing}}$ be the singular locus of $\cX_{\bc}(K)$, let $\cX_{\bc}(K)^{\mathrm{sm}}$ be the smooth locus and let $\cX_{\bc}(K)^{\mathrm{free}}$ be the locus where $\Gamma$ acts freely. The following is the geometric counterpart of \cite[Lemma 4.12]{CMPartitions}.

\begin{prop}
The preimage $\eta^{-1}(\cX_{\bc}(W)^{\mathrm{sm}})$ equals $\cX_{\bc}(K)^{\mathrm{sm}} \cap \cX_{\bc}(K)^{\mathrm{free}}$. 
\end{prop}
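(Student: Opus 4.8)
The plan is to prove the two inclusions separately; writing $X = \cX_{\bc}(K)$ and $Y = \cX_{\bc}(W) = X/\Gamma$, and $\Gamma_x$ for the stabiliser of $x \in X$, the inclusion $\supseteq$ is elementary while $\subseteq$ is where the symplectic structure enters essentially. I would first record that, by Proposition \ref{prop:ZWinZK}, $\Gamma$ acts on $X$ by Poisson automorphisms, hence preserves $X^{\mathrm{sm}}$ and acts there by symplectomorphisms; and that, $X$ being a (normal) symplectic singularity, $X^{\mathrm{sing}}$ has codimension at least two. For $\supseteq$, if $x \in X^{\mathrm{sm}} \cap X^{\mathrm{free}}$ then $\Gamma$ acts freely on a neighbourhood of $x$ (the free locus is open) on which $X$ is smooth, so $\eta$ is \'etale there and $Y$ is smooth at $\eta(x)$.

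For $\subseteq$ the key local input would be the following standard consequence of the Chevalley--Shephard--Todd theorem: for a symplectic vector space $V$ and a finite subgroup $G \subseteq \mathrm{Sp}(V)$, the quotient $V/G$ is smooth if and only if $G = 1$. The point is that $V/G$ is smooth iff $G$ is generated by pseudo-reflections, whereas a finite-order $g \in \mathrm{Sp}(V)$ fixing a hyperplane $H = \ker(g-1)$ satisfies $\mathrm{im}(g-1) = H^{\perp} \subseteq H$, so $(g-1)^2 = 0$ and $g$ is a unipotent element of finite order, i.e. $g = 1$; thus $\mathrm{Sp}(V)$ contains no non-trivial pseudo-reflections.

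Granting this, I would argue as follows. At a smooth point $p \in X^{\mathrm{sm}}$ one may linearise the $\Gamma_p$-action at the fixed point $p$, so that the completed local ring of $Y$ at $\eta(p)$ is $\bbC[[T_pX]]^{\Gamma_p}$ with $\Gamma_p \subseteq \mathrm{Sp}(T_pX)$; if $\eta(p)$ is smooth the key lemma forces $\Gamma_p = 1$. Hence $\Gamma$ acts freely on $V^{\mathrm{sm}} := X^{\mathrm{sm}} \cap \eta^{-1}(Y^{\mathrm{sm}})$, and $\eta$ is \'etale there. The remaining --- and hardest --- step is to exclude a singular point $x$ of $X$ mapping to a smooth point $y$ of $Y$. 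Setting $U = Y^{\mathrm{sm}}$ and $V = \eta^{-1}(U)$, the map $\eta : V \to U$ is finite with $U$ regular and $V$ normal, and is \'etale off $V \cap X^{\mathrm{sing}}$, a closed set of codimension $\ge 2$; hence its branch locus in $U$ has codimension $\ge 2$, and the Zariski--Nagata purity of the branch locus forces this locus to be empty. Thus $\eta : V \to U$ is finite \'etale of degree $|\Gamma|$ with $V/\Gamma = U$ and $\Gamma$ acting faithfully (it acts generically freely), so it is a $\Gamma$-torsor: every fibre is a free orbit, $\Gamma$ acts freely on all of $V$, and $V$ is smooth since $U$ is. In particular $x \in X^{\mathrm{sm}}$ with $\Gamma_x = 1$.

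I expect this purity step to be the main obstacle: a priori nothing prevents a singular point of $X$ from lying over the smooth locus of $Y$, and the symplectic hypothesis enters precisely to guarantee --- via the key lemma --- that the only possible ramification sits over $X^{\mathrm{sing}}$, whose codimension $\ge 2$ then lets purity of the branch locus close the argument.
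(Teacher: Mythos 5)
Your proof is correct, and for the harder inclusion it takes a genuinely different route from the paper. The first half coincides: the paper also linearises the $\Gamma_p$-action at a smooth fixed point and uses that a quotient of a symplectic vector space by a non-trivial finite group of symplectomorphisms is singular (you supply the Chevalley--Shephard--Todd/no-symplectic-pseudo-reflections argument that the paper only asserts), obtaining $\eta^{-1}(\cX_{\bc}(W)^{\mathrm{sm}}) \cap \cX_{\bc}(K)^{\mathrm{sm}} = \cX_{\bc}(K)^{\mathrm{sm}} \cap \cX_{\bc}(K)^{\mathrm{free}}$. Where you diverge is in excluding a singular point of $\cX_{\bc}(K)$ over a smooth point of $\cX_{\bc}(W)$: you run Zariski--Nagata purity of the branch locus on $\eta^{-1}(\cX_{\bc}(W)^{\mathrm{sm}}) \to \cX_{\bc}(W)^{\mathrm{sm}}$, using normality of $\cX_{\bc}(K)$ (true by Etingof--Ginzburg, though not stated explicitly in this paper) to force the ramification locus, a priori contained in the codimension-$\geq 2$ singular locus, to be empty. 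The paper instead observes that $\cX_{\bc}(K)^{\mathrm{sing}}$ is a union of symplectic leaves of positive codimension, so by Lemma \ref{lem:etaleaf} its image is a finite union of positive-codimension leaves of $\cX_{\bc}(W)$, hence lands in $\cX_{\bc}(W)^{\mathrm{sing}}$. Your purity argument is more self-contained and more general (it needs only normality, a generically nondegenerate $\Gamma$-invariant Poisson structure, and codimension $\geq 2$ of the singular locus, not the finiteness of the leaf stratification), and it yields the stronger conclusion that $\eta$ is a $\Gamma$-torsor over the entire smooth locus of $\cX_{\bc}(W)$; the paper's argument is shorter because it reuses machinery already established for the symplectic leaves. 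Either way the two inclusions assemble into the stated equality.
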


\begin{proof}
Since $\Gamma$ preserves the Poisson structure on $\cX_{\bc}(K)$, for each $p \in \cX_{\bc}(K)^{\mathrm{sm}}$, the group $\Gamma_p$ acts symplectically on the tangent space $T_p \cX_{\bc}(K)^{\mathrm{sm}}$. Thus, $(T_p \cX_{\bc}(K)^{\mathrm{sm}}) / \Gamma_p$ is smooth if and only if $\Gamma_p = 1$. Using the fact that one can linearize the action of a finite group in the formal neighborhood of any fixed point, this implies that the smooth locus of $\cX_{\bc}(K)^{\mathrm{sm}} / \Gamma$ equals $(\cX_{\bc}(K)^{\mathrm{sm}} \cap \cX_{\bc}(K)^{\mathrm{free}}) / \Gamma$. Hence 
$$
\eta^{-1}(\cX_{\bc}(W)^{\mathrm{sm}}) \cap \cX_{\bc}(K)^{\mathrm{sm}}  = \cX_{\bc}(K)^{\mathrm{sm}} \cap \cX_{\bc}(K)^{\mathrm{free}}.
$$
On the other hand, $\cX_{\bc}(K)^{\mathrm{sing}}$ is a union of symplectic leaves $\mc{L}$ with $\dim \mc{L} < \dim \cX_{\bc}(K)$. Therefore Lemma \ref{lem:etaleaf} implies that $\eta(\cX_{\bc}(K)^{\mathrm{sing}}) \subset \cX_{\bc}(W)^{\mathrm{sing}}$. 
\end{proof}

The following was stated in \cite{CMPartitions} in the case $\Gamma$ is a cyclic group. We give a simple geometric proof.

\begin{thm}\label{thm:WtoK}
Let $\bc : \Ref(K) \rightarrow \C$ be $W$-equivariant. 
\begin{enum_thm}
\item The group $\Gamma$ acts on $\Omega_{\bc}(K)$ such that ${}^{\sigma} \mc{F} = \{ {}^{\sigma} \lambda \ | \ \lambda \in \mc{F} \}$ for $\sigma \in \Gamma$ and $\mc{F} \in \Omega_{\bc}(K)$.
\item There is a natural bijection between $\Omega_{\bc}(W)$ and $\Omega_{\bc}(K) / \Gamma$ given by 
$$
\Omega_{\bc}(W) \ni \mc{F} \longleftrightarrow \{ \lambda \in \Irr (K) \ | \  \lambda \subset \Res_{K}^W \mu \textrm{ for some $\mu \in \mc{F}$ } \} \in \Omega_{\bc}(K) / \Gamma. 
$$
\end{enum_thm}
\end{thm}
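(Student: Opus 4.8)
The plan is to reduce everything to the geometric identification of Calogero--Moser families with $\bbC^\times$-fixed points and to exploit the quotient presentation $\cX_\bc(W) = \cX_\bc(K)/\Gamma$ afforded by Proposition~\ref{prop:ZWinZK}. Recall from \S\ref{sec:defnCalogeroMoser} that $\Omega_\bc(K) \cong \Upsilon_{\bc,K}^{-1}(0) = \cX_\bc(K)^{\bbC^\times}$, the point attached to a family $\mathcal{F}$ being $\fm_{\mathcal{F}} = \Supp L_\bc(\lambda)$ for any $\lambda \in \mathcal{F}$, and similarly for $W$. For part (a), the $\Gamma$-action on $\ZH_\bc(K)$ is by graded Poisson automorphisms, hence $\bbC^\times$-equivariant, so $\Gamma$ permutes $\cX_\bc(K)^{\bbC^\times}$ and therefore the set of families. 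To identify this action with twisting, I would lift a coset $\sigma = wK$ to the conjugation automorphism $\phi_w$ of $\ol{\H}_\bc(K)$ (well defined since $K \lhd W$ and $\bc$ is $W$-equivariant); because $\phi_w$ carries the subalgebra $\C[\fh^*]^{\mathrm{co}K} \rtimes K$ to itself and acts on $K$ by conjugation, it sends the baby Verma module $\Delta_\bc(\lambda)$ to $\Delta_\bc({}^w\lambda)$ and hence its head $L_\bc(\lambda)$ to $L_\bc({}^w\lambda)$. Thus $\sigma$ sends $\fm_{\mathcal{F}}$ to $\fm_{{}^\sigma\mathcal{F}}$ with ${}^\sigma\mathcal{F} = \{{}^\sigma\lambda \mid \lambda \in \mathcal{F}\}$, which is part (a).

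For the bijection in part (b), the key point is to locate the two fibres over the origin inside the commutative diagram. The bottom horizontal map $q : \fh/K \times \fh^*/K \to \fh/W \times \fh^*/W$ is the quotient by $\Gamma$, and since $v \mapsto Wv$ is trivial only for $v = 0$, the fibre $q^{-1}(0)$ is the single point $0$. Applying $\Upsilon_{\bc,W} \circ \eta = q \circ \Upsilon_{\bc,K}$ then gives $\eta^{-1}(\Upsilon_{\bc,W}^{-1}(0)) = \Upsilon_{\bc,K}^{-1}(0)$. As $\eta$ is the geometric quotient by the finite group $\Gamma$, its fibres on closed points are exactly the $\Gamma$-orbits; restricting to these finite fixed-point sets yields a bijection $\Omega_\bc(W) \cong \Upsilon_{\bc,W}^{-1}(0) = \Upsilon_{\bc,K}^{-1}(0)/\Gamma \cong \Omega_\bc(K)/\Gamma$, compatible with the $\Gamma$-action of part (a).

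It remains to match this bijection with the restriction formula; write $S_{\mathcal{F}} = \{\lambda \in \Irr(K) \mid \lambda \subset \Res_K^W \mu \text{ for some } \mu \in \mathcal{F}\}$ and let $\Phi(\mathcal{F})$ be the $\Gamma$-orbit of $K$-families cut out by $\eta^{-1}(\fm_{\mathcal{F}})$. For the inclusion $S_{\mathcal{F}} \subseteq \bigsqcup_{\mathcal{G}\in\Phi(\mathcal{F})}\mathcal{G}$ I would restrict $L_\bc(\mu)$ along $\ol{\H}_\bc(K) \hookrightarrow \ol{\H}_\bc(W)$: by Clifford theory this restriction is semisimple, so it is a sum of modules $L_\bc(\lambda)$, and comparing $W$- and $K$-module structures (both $L_\bc(\mu)\cong\mu$ and $L_\bc(\lambda)\cong\lambda$) shows these $\lambda$ are precisely the constituents of $\Res_K^W\mu$. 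Since $\ZH_\bc(W) = \ZH_\bc(K)^\Gamma$ acts on each such summand through the character $\fm_{\mathcal{F}}$, the support of every $L_\bc(\lambda)$ occurring lies in $\eta^{-1}(\fm_{\mathcal{F}})$, which gives the inclusion.

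Finally, the reverse inclusion follows from a clean counting argument rather than a delicate block calculation. Each $\lambda \in \Irr(K)$ occurs in $\Res_K^W\mu$ for some $\mu$ (by Frobenius reciprocity, as $\Ind_K^W\lambda \neq 0$), so $\bigcup_{\mathcal{F}} S_{\mathcal{F}} = \Irr(K)$; and since $\Phi$ is the bijection just established, the sets $\bigsqcup_{\mathcal{G}\in\Phi(\mathcal{F})}\mathcal{G}$ partition $\Irr(K)$. Given $A_{\mathcal{F}} := S_{\mathcal{F}} \subseteq B_{\mathcal{F}} := \bigsqcup_{\mathcal{G}\in\Phi(\mathcal{F})}\mathcal{G}$ with $\bigcup A_{\mathcal{F}} = \bigcup B_{\mathcal{F}}$ and the $B_{\mathcal{F}}$ pairwise disjoint, one forces $A_{\mathcal{F}} = B_{\mathcal{F}}$ for every $\mathcal{F}$, which is exactly the formula in (b). The main obstacle is the forward inclusion: one must be sure that the semisimple restriction of $L_\bc(\mu)$ to $\ol{\H}_\bc(K)$ has the $L_\bc(\lambda)$ with $\lambda \subset \Res_K^W\mu$ as its summands and that all of them share the central character $\fm_{\mathcal{F}}$; everything else is then formal.
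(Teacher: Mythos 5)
Your overall architecture is attractive and genuinely different from the paper's: the paper works entirely on the algebra side, comparing primitive idempotents of $\overline{\ZH}_{\bc}(W) = \widetilde{\ZH}_{\bc}(K)^{\Gamma}$ with those of $\overline{\ZH}_{\bc}(K)$ via the intermediate quotient $\widetilde{\ZH}_{\bc}(K)$ and then invoking a Clifford-theoretic restriction formula, whereas you obtain the abstract bijection $\Omega_\bc(W) \cong \Omega_\bc(K)/\Gamma$ purely geometrically from $\eta^{-1}(\Upsilon_{\bc,W}^{-1}(0)) = \Upsilon_{\bc,K}^{-1}(0)$ and the fact that fibres of a finite quotient are orbits, and your disjointness-plus-covering counting trick to upgrade the forward inclusion $S_{\mathcal{F}} \subseteq \bigsqcup_{\mathcal{G}\in\Phi(\mathcal{F})}\mathcal{G}$ to an equality is clean and correct. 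Part (a) is also fine.

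However, the step you yourself flag as the main obstacle contains a genuine error. You justify that the $\overline{\H}_\bc(K)$-constituents of $\Res L_\bc(\mu)$ are exactly the $L_\bc(\lambda)$ with $\lambda \subset \Res_K^W \mu$ by ``comparing $W$- and $K$-module structures (both $L_\bc(\mu)\cong\mu$ and $L_\bc(\lambda)\cong\lambda$)''. These isomorphisms are false in general: $L_\bc(\mu)$ is the head of the baby Verma module $\Delta_\bc(\mu)$ and typically has many $W$-constituents besides $\mu$; the identity $L_\bc(\mu)\cong\mu$ holds precisely for \emph{rigid} modules, which are the exception, not the rule. The correct argument must use the graded (lowest-weight) structure: $L_\bc(\mu)$ is graded with lowest piece $\mu$, each summand ${}^{\sigma}\widetilde{L}_\bc(\nu)$ of the (semisimple, by Clifford theory for the crossed product) restriction is graded with lowest piece ${}^{\sigma}\nu$, and matching degree-zero pieces identifies the labels --- this is exactly the content of \cite[Proposition 4.7]{CMPartitions}, which the paper cites for this purpose. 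A secondary, fixable imprecision: there is no embedding $\overline{\H}_\bc(K) \hookrightarrow \overline{\H}_\bc(W)$, since $\overline{\H}_\bc(K)$ is the quotient by the ideal generated by $D(K)_+ \supsetneq D(W)_+$; the actual subalgebra of $\overline{\H}_\bc(W)$ is $\widetilde{\H}_\bc(K) = \H_\bc(K)/\langle D(W)_+\rangle$, and one must observe that the kernel of $\widetilde{\H}_\bc(K) \twoheadrightarrow \overline{\H}_\bc(K)$ is nilpotent (hence kills no simples) before transporting simple modules between the two. With these two points repaired, the rest of your argument goes through.
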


\begin{proof}
Recall the notation from \S\ref{sec:RRCA}. We will use the notation and results from \cite[\S3, \S4]{CMPartitions}. Let $\overline{\ZH}_{\bc}(W)$ denote the quotient of $\ZH_{\bc}(W)$ by the ideal generated by $D(W)_+$, $\overline{\ZH}_{\bc}(K)$ the quotient of  $\ZH_{\bc}(K)$ by the ideal generated by $D(K)_+$ and $\widetilde{\ZH}_{\bc}(K)$ the quotient of $\ZH_{\bc}(K)$ by the ideal generated by $D(W)_+$. We also let $\wt{\H}_\bc(K)$ denote the quotient of $\H_\bc(K)$ by the ideal generated by $D(W)_+$. The Satake isomorphism \cite[Theorem 3.1]{EG} implies that the natural map $\overline{\ZH}_{\bc}(W) \rightarrow \widetilde{\ZH}_{\bc}(K)$ is an embedding. The group $\Gamma$ acts on $ \widetilde{\ZH}_{\bc}(K)$ and Proposition  \ref{prop:ZWinZK} now implies that $\overline{\ZH}_{\bc}(W) = \widetilde{\ZH}_{\bc}(K)^{\Gamma}$. Thus, 
$$
\overline{\ZH}_{\bc}(W) = \widetilde{\ZH}_{\bc}(K)^{\Gamma} \hookrightarrow \widetilde{\ZH}_{\bc}(K) \twoheadrightarrow \overline{\ZH}_{\bc}(K). 
$$
The kernel of the surjection $\widetilde{\ZH}_{\bc}(K) \twoheadrightarrow \overline{\ZH}_{\bc}(K)$ is nilpotent. Therefore it identifies the primitive idempotents in both algebras. 

Let 
$$
\{ d_i \}_{i \in \Omega_{\bc}(W)}, \quad \{ b_j' \}_{j \in \Omega_{\bc}(K)}, \quad \{ b_j \}_{j \in \Omega_{\bc}(K)},
$$
denote the primitive idempotents in $\overline{\ZH}_{\bc}(W)$, resp. $\widetilde{\ZH}_{\bc}(K)$ and $\overline{\ZH}_{\bc}(K)$. Then $\Gamma$ acts on $\{ b_j' \}_{j \in \Omega_{\bc}(K)}$ and the rule
$$
b_j' \mapsto \sum_{\sigma \in \Gamma / \mathrm{Stab}_{\Gamma}(b_j') } {}^{\sigma} b_j'
$$
defines a bijection 
$$
\{ d_i \}_{i \in \Omega_{\bc}(W)} \stackrel{1:1}{\longleftrightarrow} \{ b_j' \} / \Gamma.
$$
There is a natural surjective map $\wt{\H}_\bc(K) \twoheadrightarrow \ol{\H}_\bc(K)$ and the kernel of this map is generated by certain central nilpotent elements in $\wt{\H}_\bc(K)$. In particular, the kernel is contained in the radical of $\wt{\H}_\bc(K)$ and so the map induces a bijection between the simple modules. We can thus consider any simple $\ol{\H}_\bc(K)$-module $L_\bc(\lambda)$ as a simple $\ol{\H}_\bc(K)$-module, and to be precise we denote this as $\wt{L}_\bc(\lambda)$. 

Now, $b_i' \cdot \tilde{L}_\bc(\lambda) \neq 0$ if and only if $({}^{\sigma} b_i' ) \cdot ({}^{\sigma} \tilde{L}_\bc(\lambda)) \neq 0$. The statements of the theorem then follow from the Clifford theoretic fact, compare \cite[Proposition 4.7]{CMPartitions}, that
$$
\Res^{A_W}_{A_K}  L_\bc(\lambda) = \bigoplus_{\sigma \in \Gamma / \mathrm{Stab}_{\Gamma}(\mu) } {}^{\sigma}  \tilde{L}_\bc(\mu),
$$
for some (any) simple summand $\mu$ of $\Res_K^W \lambda$, where $A_W \dopgleich \ol{\H}_\bc(W)/\Rad \ol{\H}_\bc(W)$ and $A_K = \wt{\H}_\bc(K)/\Rad \wt{\H}_\bc(K)$ are the maximal semisimple quotients of $\ol{\H}_\bc(W)$ and $\wt{\H}_\bc(K)$, respectively.
\end{proof}

\begin{remark}
Geometrically, Theorem \ref{thm:WtoK} is simply saying that $\Upsilon^{-1}_{\bc,K}(0) = \eta^{-1}(\Upsilon^{-1}_{\bc,W}(0))$ is a union of $\Gamma$-orbits. 
\end{remark}

Let $\Omega_{\bc}(W)^{\mathrm{rigid}}$ denote the set of Calogero–Moser $\bc$-families containing a rigid module. 

\begin{prop}\label{prop:rigidKcusp}
Let $\bc : \Ref(K) \rightarrow \C$ be $W$-equivariant. 
\begin{enum_thm}
\item The set $\Omega_{\bc}(K)^{\mathrm{cusp}}$ is $\Gamma$-stable and the bijection of Theorem \ref{thm:WtoK}(a) restricts to an embedding $\Omega_{\bc}(K)^{\mathrm{cusp}} / \Gamma \hookrightarrow \Omega_{\bc}(W)^{\mathrm{cusp}}$. 
\item The set $\Omega_{\bc}(K)^{\mathrm{rigid}}$ is $\Gamma$-stable and the bijection of Theorem \ref{thm:WtoK}(b) restricts to a bijection $\Omega_{\bc}(W)^{\mathrm{rigid}} \stackrel{1:1}{\longleftrightarrow} \Omega_{\bc}(K)^{\mathrm{rigid}} / \Gamma$. 
\end{enum_thm}
\end{prop}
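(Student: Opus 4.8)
The plan is to handle (a) geometrically, through the finite quotient morphism $\eta \colon \cX_\bc(K) \to \cX_\bc(W)$ together with Lemma \ref{lem:etaleaf}, and to handle (b) representation-theoretically, through the rigidity equation of Lemma \ref{rigidity_equation_lemma}; in both cases the passage between families and geometry is supplied by Theorem \ref{thm:WtoK}. For (a), I would first observe that $\Gamma$ acts on $\cX_\bc(K)$ by Poisson automorphisms, hence permutes the symplectic leaves while preserving their dimension. Since a family in $\Omega_\bc(K)$ is cuspidal exactly when its associated leaf is zero-dimensional, it follows at once that $\Omega_\bc(K)^{\mathrm{cusp}}$ is $\Gamma$-stable. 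For the embedding I would use Lemma \ref{lem:etaleaf}: if $\mc{L}$ is a zero-dimensional leaf of $\cX_\bc(K)$, then $\eta(\mc{L})$ is a single closed point which is a finite union of leaves, and is therefore itself a single zero-dimensional leaf of $\cX_\bc(W)$. Passing through the bijection $\Omega_\bc(W) \leftrightarrow \Omega_\bc(K)/\Gamma$ of Theorem \ref{thm:WtoK}\,(b), this shows that the $\Gamma$-orbit of a cuspidal $K$-family is carried to a cuspidal $W$-family; as the ambient map is already a bijection, its restriction $\Omega_\bc(K)^{\mathrm{cusp}}/\Gamma \to \Omega_\bc(W)^{\mathrm{cusp}}$ is injective, which is the asserted embedding. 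One should not expect surjectivity, because the $\eta$-preimage of a leaf need not be a union of leaves, so a zero-dimensional leaf of $\cX_\bc(W)$ may meet positive-dimensional leaves of $\cX_\bc(K)$.

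For (b), the decisive input is a rigidity criterion relating $W$ and $K$. Since $\bc$ vanishes on $\Ref(W) \smallsetminus \Ref(K)$, Lemma \ref{rigidity_equation_lemma} says that $\mu \in \Irr W$ is rigid precisely when $\sum_{s \in \Ref(K)} \bc(s)(y,\alpha_s)(\alpha_s^\vee,x)\,\mu(s) = 0$ for all $y \in \fh$ and $x \in \fh^*$. By Clifford theory $\Res_K^W \mu$ is the sum of a single $\Gamma$-orbit $\{\,{}^{\sigma}\lambda\,\}$ in $\Irr K$, each constituent occurring with the same multiplicity, and for $s \in \Ref(K) \subseteq K$ the matrix $\mu(s)$ is block diagonal along this decomposition. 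Hence the displayed equation holds if and only if the analogous $K$-rigidity equation holds on each block, i.e. if and only if each constituent $\lambda$ is $\bc$-rigid for $K$; and since rigidity is $\Gamma$-stable (by $W$-equivariance of $\bc$) this is equivalent to some one constituent being rigid. This yields the key statement: $\mu$ is rigid for $W$ if and only if the $K$-constituents of $\Res_K^W \mu$ are rigid for $K$.

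With this in hand I would complete (b) as follows. The $\Gamma$-stability of $\Omega_\bc(K)^{\mathrm{rigid}}$ is immediate, since ${}^{\sigma}L_\bc(\lambda) \simeq L_\bc({}^{\sigma}\lambda)$ and the $\Gamma$-action on $\H_\bc(K)$ is by algebra automorphisms, hence carries rigid modules to rigid modules. If $\mc{F} \in \Omega_\bc(W)$ contains a rigid $\mu$, the key statement makes every $K$-constituent $\lambda$ of $\Res_K^W\mu$ rigid, and such $\lambda$ lie in the $K$-family $\mc{F}_K$ attached to $\mc{F}$ by Theorem \ref{thm:WtoK}\,(b); thus $\mc{F}_K \in \Omega_\bc(K)^{\mathrm{rigid}}$. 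Conversely, if $\mc{F}_K$ contains a rigid $\lambda$, then by the very description of the bijection there is some $\mu \in \mc{F}$ with $\lambda \subseteq \Res_K^W\mu$, and the key statement forces this $\mu$ to be rigid, so $\mc{F} \in \Omega_\bc(W)^{\mathrm{rigid}}$. Therefore the bijection of Theorem \ref{thm:WtoK}\,(b) restricts to the asserted bijection $\Omega_\bc(W)^{\mathrm{rigid}} \leftrightarrow \Omega_\bc(K)^{\mathrm{rigid}}/\Gamma$.

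I expect the main obstacle to be the rigidity criterion of the second paragraph: one must verify carefully that the block-diagonal form of $\mu(s)$ on $\Ref(K)$, combined with the vanishing of $\bc$ off $\Ref(K)$, genuinely decouples the rigidity equation across the $\Gamma$-orbit, and that $W$-equivariance of $\bc$ makes ${}^{\sigma}\lambda$ rigid exactly when $\lambda$ is. The geometric argument in (a) is comparatively soft, the only delicate point being to recognize in advance that only an embedding, and not a bijection, can be expected there.
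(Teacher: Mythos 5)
Your proposal is correct and follows essentially the same route as the paper: part (a) rests on Lemma \ref{lem:etaleaf} (the image of a zero-dimensional leaf under $\eta$ is a zero-dimensional leaf) together with the geometric reading of Theorem \ref{thm:WtoK}, and part (b) on the fact that rigidity passes between a $W$-representation and the $K$-constituents of its restriction. The paper merely asserts the latter restriction/induction facts, whereas you justify them by decoupling the rigidity equation of Lemma \ref{rigidity_equation_lemma} block-by-block along the Clifford decomposition of $\Res_K^W\mu$ and using $W$-equivariance of $\bc$ to get $\Gamma$-stability; this is a valid and welcome filling-in of the omitted details, not a different method.
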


\begin{proof}
Part (a) follows from Lemma \ref{lem:etaleaf} which implies that the image of a zero-dimensional leaf is a zero-dimensional leaf. If $L_\bc(\lambda)$ is a rigid $\H_{\bc}(W)$-module and $\lambda'$ an irreducible summand of $\Res^W_K \lambda$, then $L_{\bc}(\lambda')$ is a rigid $\H_{\bc}(K)$-module. Conversely, if $L_{\bc}(\mu)$ is a rigid $\H_{\bc}(K)$-module and $\mu'$ an irreducible summand of $\Ind^W_K \mu$, then $L_\bc(\mu')$ is a rigid $\H_{\bc}(W)$-module. This implies part (b). 
\end{proof}

\begin{remark}
The embedding of Proposition \ref{prop:rigidKcusp} (1) is not generally a bijection since the preimage of a zero-dimensional leaf under $\eta$ is not always a union of zero-dimensional leaves. 
\end{remark}

\section{Type $A$} \label{type_A}

Let $W$ be the Weyl group of type $A_n$. This is simply the symmetric group $\fS_{n+1}$. It has an $n$-dimensional irreducible reflection representation. There is just one conjugacy class of reflections so that our parameter $\bc$ for rational Cherednik algebras is just a complex number. By Lemma \ref{cuspidals_for_c_eq_0} we know that Conjecture \ref{cm_cusp_lusztigz_cusp_conjecture} holds for $\bc=0$, so we can assume that $\bc > 0$. 

Etingof and Ginzburg \cite[Proposition 16.4]{EG} have shown that the Calogero–Moser space $\cX_\bc(W)$ is smooth. Theorem \ref{singleton_smooth} now implies that the Calogero–Moser $\bc$-families are singletons and Lemma \ref{singleton_cm_noncuspidal} shows that none of the Calogero–Moser $\bc$-families is cuspidal.

Lusztig \cite[Lemma 22.5]{LusztigUnequalparameters} on the other hand has shown that for integral $\bc>0$ we have $\Con_\bc(W) = \Irr(W)$. Using Lemma \ref{lusztig_rescaling} we conclude that $\Con_\bc(W) = \Irr(W)$ for arbitrary real $\bc > 0$. It then follows that the Lusztig $\bc$-families are singletons and using Lemma \ref{singleton_lusztig_not_cuspidal} we furthermore see that no Lusztig $\bc$-family is cuspidal. 

Comparing both results proves Theorem \ref{thm:cm_equal_lusztig} and Theorem \ref{thm:mainresultintro} for $W$ of type $A$. 

\section{Type $B$} \label{type_B}

Weyl groups of type $B$ are much more difficult to handle than those of type $A$, in particular as we now have to deal with a two-dimensional parameter space. We have split the discussion into several parts, some just dealing with the Calogero–Moser families, some just dealing with the Lusztig families. At the very end we combine these results to obtain the proof of Theorem~\ref{thm:mainresultintro}. \\

\noindent \begin{longtable}{L{0.9cm}R{12.3cm}}
 \S\ref{typeB_the_group}. & The group\dotfill\pageref{typeB_the_group} \\

 \S\ref{typeB_parabolics}. & Reflections and parabolic subgroups\dotfill\pageref{typeB_parabolics} \\
 
 \S\ref{typeB_reps}. & Representations\dotfill\pageref{typeB_reps} \\
 
 \S\ref{typeB_RCA}. & The rational Cherednik algebra\dotfill\pageref{typeB_RCA} \\
 
 \S\ref{typeB_isomorphisms}. & Isomorphisms\dotfill\pageref{typeB_isomorphisms} \\
 
 \S\ref{typeB_symp_leaves}. & Symplectic leaves\dotfill\pageref{typeB_symp_leaves} \\
 
 \S\ref{typeB_leaf_parabolic}. & Parabolic subgroups attached to symplectic leaves\dotfill\pageref{typeB_leaf_parabolic} \\
 
 \S\ref{typeB_cm_fams}. & Calogero–Moser families\dotfill\pageref{typeB_cm_fams} \\
 
 \S\ref{typeB_nondeg_simples}. & Simple $\ol{\H}_\bc(W)$-modules in the degenerate case\dotfill\pageref{typeB_nondeg_simples} \\
 
 \S\ref{typeB_lusztig_families_section}. & Lusztig families in the non-degenerate case\dotfill\pageref{typeB_lusztig_families_section} \\
 
 \S\ref{typeB_lusztig_fam_deg}. & Lusztig families in the degenerate case\dotfill\pageref{typeB_lusztig_fam_deg} \\
  
 \S\ref{typeB_cm_vs_lus}. & Calogero–Moser families vs. Lusztig families\dotfill\pageref{typeB_lusztig_fam_deg} \\  
 
 \S\ref{typeB_cuspidal_lusztig_section}. & Cuspidal Lusztig families in the non-degenerate case\dotfill\pageref{typeB_cuspidal_lusztig}  \\
 
 \S\ref{typeB_cuspidal_lusztig_deg_sect}. & Cuspidal Lusztig families in the degenerate case\dotfill\pageref{typeB_cuspidal_lusztig_deg_sect} \\
 
 \S\ref{typeB_rigid_modules}. & Rigid modules\dotfill\pageref{typeB_rigid_modules} \\

\S\ref{typeB_final_sect}. & Cuspidal Lusztig families vs. cuspidal Calogero–Moser families\dotfill\pageref{typeB_final_sect} \\
  
\end{longtable}

\subsection{The group} \label{typeB_the_group}

Let $W$ be the Weyl group of type $B_n$. This group is isomorphic to the group $G(2,1,n)$ of generalized permutation matrices in $\GL_n(\bbC)$ with entries in $\mu_2 \dopgleich \lbrace 1, -1 \rbrace \subs \bbC$, and this defines at the same time an irreducible reflection representation of $B_n$. Note that $W = \mu_2^n \rtimes \fS_n$, where $\fS_n$ acts on $\mu_2^n$ by coordinate permutation. For each $1 \leq i \leq n$ we have a natural embedding $\eps_i$ of $\mu_2$ into $W$, sending $u \in \mu_2$ to the diagonal matrix $(1,\ldots,u,\ldots,1)$ with $u$ in the $i$-th place. For $1 \leq i < j \leq n$ let $s_{ij}$ be the transposition $(i,j) \in \fS_n$. For $u \in \mu_2$ set $s_{ij,u} \dopgleich s_{ij}\eps_i(u)^{-1}\eps_j(u)$. Note that $s_{ij,1} = s_{ij}$. The group $W$ is generated by $\eps_1(-1)$ and the transpositions $s_{ij}$.

\subsection{Reflections and parabolic subgroups} \label{typeB_parabolics}

Let $(y_1,\ldots,y_n)$ be the standard basis of $\fh \dopgleich \bbC^n$ with dual basis $(x_1,\ldots,x_n)$. For any $1 \leq j \leq n$ the element $\eps_j(-1)$ is a reflection with coroot $\alpha_j^\vee \dopgleich y_j$ and root $\alpha_j \dopgleich 2 x_j$. Also, for any $u \in \mu_2$ and $1 \leq i<j \leq n$ the element $s_{ij,u}$ is a reflection with coroot $\alpha_{ij,u}^\vee \dopgleich u y_i - y_j $ and root $\alpha_{ij,u} \dopgleich u^{-1}x_i - x_j = ux_i-x_j$. These elements are precisely the reflections in $W$. We can now easily compute that
\begin{equation} \label{typeB_cherednik_coeff_1}
(y_k,\alpha_j) (\alpha_j^\vee,x_l) = \left\lbrace \begin{array}{ll} 2 & \tn{if } k=j=l \\ 0 & \tn{else} \end{array} \right.
\end{equation}
and
\begin{equation} \label{typeB_cherednik_coeff_2}
(y_k,\alpha_{ij,u}) (\alpha_{ij,u}^\vee,x_l) = \left\lbrace \begin{array}{ll} 1 & \tn{if } k,l \in \lbrace i,j \rbrace \tn{ with } k = l \\ -u & \tn{if } k,l \in \lbrace i,j \rbrace \tn{ with } k \neq l \\ 0 & \tn{else.} \end{array} \right.
\end{equation}
The conjugacy classes of reflections in $W$ are 
\[
\mathcal{S}_0 \dopgleich \lbrace s_{ij,u} \mid u \in \mu_2, 1 \leq i < j \leq n \rbrace \quad \tn{and} \quad \mathcal{S}_1 \dopgleich \lbrace \eps_j(-1) \mid 1 \leq j \leq n \rbrace \;.
\]
We have $|\mathcal{S}_0| = n^2-n$ and $\mathcal{S}_1 = n$. The parabolic subgroups of $W$ are, up to conjugacy, of the form $\fS_\lambda \times B_{n-|\lambda|}$ for partitions $\lambda$ of integers $\leq n$.

\subsection{Representations} \label{typeB_reps}

Since $W = \mu_2^n \rtimes \fS_n = \mu_2 \wr \fS_n$, the irreducible representations of $W$ are labeled by bipartitions $\blambda = (\lambda^{(0)},\lambda^{(1)})$ of $n$. Let $\pi_{\blambda}$ denote the representation labeled by $\blambda$. The trivial representation of $W$ is $\pi_{(n,\emptyset)}$. The representation $\gamma \dopgleich \pi_{(\emptyset, n)}$ is a linear character of $W$ with $\gamma(s) = 1$ for all $s \in \mc{S}_0$ and $\gamma(s) = -1$ for $s \in \mc{S}_1$. We denote by $\gamma \pi_{\blambda}$ the $\gamma$-twist of $\pi_{\blambda}$.

The symmetric group $\s_n$ is a quotient of $B_n$ by sending $\eps_j(-1)$ to $1$. We can thus consider (irreducible) $\s_n$-modules $\pi_\lambda$ for partitions $\lambda$ of $n$ as (irreducible) $W$-modules. If $\blambda = (\lambda^{(0)},\lambda^{(1)})$ is a bipartition of $n$ and $r \dopgleich |\lambda^{(0)}|$, then $\pi_{\lambda^{(0)}} \boxtimes \gamma \pi_{\lambda^{(1)}}$ is an irreducible $(B_r \times B_{n-r})$-subrepresentation of $\pi_{\blambda}$ with 
\begin{equation}\label{eq:Bnrep}
\pi_{\blambda} = \Ind_{B_r \times B_{n-r}}^{B_n} \pi_{\lambda^{(0)}} \boxtimes \gamma \pi_{\lambda^{(1)}} \;.
\end{equation}

\subsection{The rational Cherednik algebra} \label{typeB_RCA}

Fix a $W$-equivariant function $\bc:\Ref(W) \rarr \bbC$ and define 
\[
c_1 \dopgleich \bc(\mathcal{S}_1) \quad \tn{and} \quad \kappa \dopgleich \bc(\mathcal{S}_0) \;.
\]
In terms of the Coxeter diagram of type $B_n$ the weight function $\bc$ is determined as follows:
\[
\begin{tikzpicture}
        \draw (0,0.1) -- (1,0.1);
	\draw (0,-0.1) -- (1,-0.1);
	\draw (1, 0) -- (2,0);
	\draw (2, 0) -- (2.5,0);
	\draw (3.5, 0) -- (4,0);
	\draw (4,0) -- (5,0);

	\draw[fill=black] (0,0) circle (.1);
	\draw[fill=black] (1,0) circle (.1);
	\draw[fill=black] (2,0) circle (.1);
	\draw[fill=black] (4,0) circle (.1);
	\draw[fill=black] (5,0) circle (.1);
	
	\node at (3,0) {$\ldots$};
	
	\node at (0,0.5) {\footnotesize $c_1$};
	\node at (1,0.5) {\footnotesize $\kappa$};
	\node at (2,0.5) {\footnotesize $\kappa$};
	\node at (4,0.5) {\footnotesize $\kappa$};
	\node at (5,0.5) {\footnotesize $\kappa$};	
\end{tikzpicture} 
\]
Using equations (\ref{typeB_cherednik_coeff_1}) and (\ref{typeB_cherednik_coeff_2}) we see that the defining relation (\ref{eq:rel}) for $\H_\bc(W)$ becomes 
\begin{equation}\label{eq:Brel1}
\lbrack y_i,x_i \rbrack = - 2 c_1 \eps_i(-1) - \kappa \sum_{u \in \mu_2} \sum_{\substack{j=1 \\ j \neq i}}^n  s_{ij,u} 
\end{equation}
and
\begin{equation}\label{eq:Brel2}
\lbrack y_i,x_j \rbrack = \kappa \sum_{u \in \mu_2} u s_{ij,u} \;.
\end{equation}
for $i \neq j$. These are the same relations and parameters as in \cite{Martino-blocks-gmpn}. 

Recall from Lemma \ref{cuspidals_for_c_eq_0} that Conjecture \ref{cm_cusp_lusztigz_cusp_conjecture} holds for $\bc=0$.

\begin{leftbar}
\vspace{6pt}
We assume from now on that $\bc \neq 0$, i.e. $c_1 \neq 0$ or $\kappa \neq 0$.
\vspace{4pt}
\end{leftbar}

\subsection{Isomorphisms} \label{typeB_isomorphisms}

Recall that for any $\alpha \in \bbC^*$, the algebras $\H_{\bc}(W)$ and $\H_{\alpha \bc}(W)$ are  isomorphic. Given a bipartition $\blambda = (\lambda^{(0)},\lambda^{(1)})$, we define $\blambda^{\tau}$ to be $(\lambda^{(1)}, \lambda^{(0)})$. The following proposition follows from \cite[4.6B]{BonnafeRouquier}. 

\begin{prop}\label{prop:someauto}
The linear character $\gamma$ of $W$ defined in \S\ref{typeB_reps} extends to an isomorphism 
\[
\tau : \H_{(c_1,\kappa)}(B_n) \stackrel{\sim}{\longrightarrow} \H_{(-c_1,\kappa)}(B_n)
\]
with $\tau(x) = x$, $\tau(y) =y$ and $\tau(w) = \gamma(w) w$ for all $x \in \h^*, y \in \h$ and $w \in W$. Moreover, 
\begin{enum_thm}
\item ${}^{\tau^{-1}} L_{(c_1,\kappa)}(\blambda) \simeq L_{(-c_1,\kappa)}(\blambda^{\tau})$.
\item $\blambda$ and $\boldsymbol{\mu}$ belong to the same Calogero--Moser $(c_1,\kappa)$-family  if and only if $\blambda^{\tau}$ and $\boldsymbol{\mu}^{\tau}$ belong to the same Calogero--Moser $(-c_1,\kappa)$-family.
\item $\blambda$ is cuspidal, resp. rigid, for $\overline{\H}_{(c_1,\kappa)}(W)$ if and only if $\blambda^{\tau}$ is cuspidal, resp. rigid, for $\overline{\H}_{(-c_1,\kappa)}(W)$.
\end{enum_thm}
\end{prop}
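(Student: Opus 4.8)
The plan is to construct $\tau$ directly on generators, verify it respects the defining relations, and then propagate it through the restricted algebra, the baby Verma modules, and the Poisson/graded structure on the centre to obtain (a)--(c).

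First I would check that the assignment $\tau(x)=x$, $\tau(y)=y$, $\tau(w)=\gamma(w)w$ on $T(\h\oplus\h^*)\rtimes W$ descends to an algebra map $\H_{(c_1,\kappa)}(B_n)\to\H_{(-c_1,\kappa)}(B_n)$. The commuting relations among the $x$'s and among the $y$'s are obviously preserved, and $\tau(ww')=\tau(w)\tau(w')$ since $\gamma$ is a character. For the key relations (\ref{eq:Brel1}) and (\ref{eq:Brel2}) one applies $\tau$ to the right-hand sides using $\gamma(\eps_i(-1))=-1$ and $\gamma(s_{ij,u})=1$: the factor $\gamma(\eps_i(-1))=-1$ turns $-2c_1\eps_i(-1)$ into $2c_1\eps_i(-1)=-2(-c_1)\eps_i(-1)$, which is exactly the coefficient in $\H_{(-c_1,\kappa)}(B_n)$, while every $s_{ij,u}$-term is unchanged because $\gamma(s_{ij,u})=1$ and $\kappa$ is common to both algebras. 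Replacing $c_1$ by $-c_1$ produces the inverse, so $\tau$ is an isomorphism; this is the content of \cite[4.6B]{BonnafeRouquier}.

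Since $\tau$ fixes $x$ and $y$ it is the identity on $D(W)=\C[\h]^W\otimes\C[\h^*]^W$ and is homogeneous of degree zero, so it descends to an isomorphism $\overline{\H}_{(c_1,\kappa)}(W)\overset{\sim}{\longrightarrow}\overline{\H}_{(-c_1,\kappa)}(W)$ of restricted algebras; the same formulas, keeping the scalar term $\bt(y,x)$ fixed, extend $\tau$ to $\H_{\bt,(c_1,\kappa)}(W)\overset{\sim}{\longrightarrow}\H_{\bt,(-c_1,\kappa)}(W)$, and restricting to centres yields a graded Poisson isomorphism $\cX_{(c_1,\kappa)}(W)\overset{\sim}{\longrightarrow}\cX_{(-c_1,\kappa)}(W)$. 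For (a) I would use the exact twisting functor ${}^{\tau^{-1}}(-)$, which is an equivalence of module categories and hence sends simple heads to simple heads; being compatible with $\C[\h^*]^{\mrm{co}W}$ (fixed by $\tau$) and with the grading, it carries $\Delta_{(c_1,\kappa)}(\blambda)$ to $\Delta_{(-c_1,\kappa)}(\gamma\otimes\blambda)$, because on the degree-zero piece $w\in W$ now acts as $\tau^{-1}(w)=\gamma(w)w$. The key representation-theoretic input is the standard fact for the wreath product $\mu_2\wr\fS_n$ that tensoring with $\gamma=\pi_{(\emptyset,n)}$ swaps the two components of a bipartition, i.e. $\gamma\otimes\pi_{\blambda}=\pi_{\blambda^\tau}$; taking heads gives ${}^{\tau^{-1}}L_{(c_1,\kappa)}(\blambda)\simeq L_{(-c_1,\kappa)}(\blambda^\tau)$, which is (a).

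Parts (b) and (c) then follow formally. For (b), the algebra isomorphism $\overline{\H}_{(c_1,\kappa)}(W)\overset{\sim}{\longrightarrow}\overline{\H}_{(-c_1,\kappa)}(W)$ induces a bijection of blocks, and by (a) this bijection matches the block of $L_{(c_1,\kappa)}(\blambda)$ with that of $L_{(-c_1,\kappa)}(\blambda^\tau)$, so $\blambda,\boldsymbol{\mu}$ lie in a common $(c_1,\kappa)$-family if and only if $\blambda^\tau,\boldsymbol{\mu}^\tau$ lie in a common $(-c_1,\kappa)$-family. For the rigidity half of (c), $L$ is rigid exactly when it is $W$-irreducible, and by (a) the $W$-module underlying ${}^{\tau^{-1}}L_{(c_1,\kappa)}(\blambda)$ is $\gamma\otimes\pi_{\blambda}=\pi_{\blambda^\tau}$; tensoring with the linear character $\gamma$ preserves irreducibility, so rigidity is transported by $\blambda\mapsto\blambda^\tau$. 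For the cuspidality half, the graded Poisson isomorphism $\cX_{(c_1,\kappa)}(W)\overset{\sim}{\longrightarrow}\cX_{(-c_1,\kappa)}(W)$ preserves the symplectic-leaf stratification and the dimensions of leaves, and being $\C^*$-equivariant it identifies the fixed-point sets $\Upsilon^{-1}_{(c_1,\kappa)}(0)$ and $\Upsilon^{-1}_{(-c_1,\kappa)}(0)$ compatibly with the family bijection of (b); hence the leaf of $\blambda$ is zero-dimensional if and only if that of $\blambda^\tau$ is. The only genuinely non-formal ingredient is the compatibility of the twisting functor with the triangular/baby-Verma structure together with the combinatorial identity $\gamma\otimes\pi_{\blambda}=\pi_{\blambda^\tau}$; everything else is bookkeeping with the graded and Poisson structures.
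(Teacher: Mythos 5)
Your proof is correct and follows exactly the route the paper intends: the paper simply cites \cite[4.6B]{BonnafeRouquier} for this twisting-by-a-linear-character construction, and your argument is the explicit verification of that construction (checking the relations (\ref{eq:Brel1})--(\ref{eq:Brel2}) via $\gamma(\eps_i(-1))=-1$, $\gamma(s_{ij,u})=1$, descending to $\overline{\H}_{\bc}(W)$ and the centre, and using $\gamma\otimes\pi_{\blambda}\simeq\pi_{\blambda^{\tau}}$ to track baby Vermas and simple heads). The only non-trivial inputs you invoke --- compatibility of the twist with the triangular decomposition and the swap of bipartition components under tensoring with $\gamma$ --- are both correct and suffice for (a)--(c).
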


In the case $\kappa = 0$ the defining relations (\ref{eq:Brel1}) and (\ref{eq:Brel2}) of $\H_\bc(W)$ show that we have an algebra isomorphism $\H_\bc(W) \simeq \H_{c_1}(\bbZ_2)^{\otimes n} \rtimes \fS_n$, where $\fS_n$ naturally acts on the $n$-fold tensor product of the rational Cherednik algebra at $c_1$ for the cyclic group of order $2$. From this we get an isomorphism of Poisson varieties $\cX_{\bc}(W) \simeq S^n(\cX_{c_1}(\Z_2))$, where $S^n$ denotes the $n$-th symmetric power. Since $c_1 \neq 0$, the Calogero–Moser space $\cX_{c_1}(\Z_2)$ is a smooth symplectic surface by \cite[16.2]{EG}.

\subsection{Symplectic leaves} \label{typeB_symp_leaves}

It was shown by Etingof and Ginzburg \cite[16.2]{EG} that the Calogero–Moser space of type $B$ is smooth for generic parameters. In this case the Calogero–Moser families are singletons by Theorem \ref{singleton_smooth} and none of them is cuspidal by Lemma \ref{singleton_cm_noncuspidal}. Using the relation between Calogero–Moser spaces and representation varieties of deformed preprojective algebras, Martino has determined in his Ph.D thesis \cite[Section 5]{MoThesis} for precisely which parameters the Calogero–Moser space is smooth and gave a parametrization of the symplectic leaves.\footnote{In \cite{MoThesis} the parameters are named $(c_\gamma,c_1)$ instead of $(c_1,\kappa)$.} To simplify notations we set $\lbrack a,b \rbrack \dopgleich \lbrace a,\ldots,b\rbrace$ and denote by $\pm \lbrack a,b \rbrack$ the set $\lbrack -b,-a \rbrack \cup \lbrack a,b \rbrack$ for integers $a \leq b$. Note that $\pm \lbrack 0,b \rbrack = \lbrack -b,b \rbrack$.

\begin{thm}[Martino] \label{thm:BSympleaves} 
Let $\bc = (\kappa,c_1)$.
\begin{enum_thm}
\item \label{thm:BSympleaves:singular}  $\cX_{\bc}(W)$ is singular if and only if $\kappa = 0$ or $c_1 = m \kappa$ for some $m \in \pm \lbrack 0,n-1 \rbrack$.  
\item If $\kappa = 0$, then the symplectic leaves of $\cX_{\bc}(W)$ are parameterised by the set $\mc{P}(n)$ of partitions of $n$. For $\lambda \in \mc{P}(n)$, the corresponding leaf $\mc{L}_{\lambda}$ has dimension $2 \ell (\lambda)$, where $\ell(\lambda)$ is the length of $\lambda$.
\item If $c_1 = m \kappa$, with $\kappa \neq 0$, then there is a bijection $k \mapsto \mc{L}_k$,
$$
\{ \textrm{symplectic leaves $\mc{L}$ of $\cX_{\bc}(W)$} \ \} \stackrel{1 : 1}{\longleftrightarrow} \{ k \in \bbN_{\ge 0} \ | \ k(k+m) \le n \} \;.
$$
Moreover, $\dim \mc{L}_{k} = 2(n - k(k+m))$.
\end{enum_thm}
\end{thm}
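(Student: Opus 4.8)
The plan is to realise $\cX_\bc(B_n)$ as a quiver variety and to read off its symplectic leaves from the general structure theory of deformed preprojective algebras. By the work of Crawley--Boevey--Holland, Etingof--Ginzburg \cite{EG} and Gordon (the realisation underlying Martino's computation in \cite{MoThesis}), the Calogero--Moser space $\cX_\bc(G(2,1,n))$ is isomorphic, as a Poisson variety, to the Marsden--Weinstein reduction $\mc{N}_\lambda(\alpha)$ attached to the framed cyclic quiver of affine type $\tilde{A}_1$ --- two cyclic vertices $0,1$ joined by a double arrow (the McKay quiver of $\mu_2\subset \mathrm{SL}_2$), together with a one-dimensional framing vertex $\infty$ attached to vertex $0$ --- with dimension vector $\alpha=(1;n,n)$ and parameter $\lambda=(\lambda_\infty;\lambda_0,\lambda_1)$ an explicit affine-linear function of $(\kappa,c_1)$ subject to the moment-map constraint $\lambda\cdot\alpha=0$. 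Over this dictionary the two theorems of Crawley--Boevey apply directly: his smoothness criterion says that $\mc{N}_\lambda(\alpha)$ is smooth precisely when $\alpha$ admits no nontrivial decomposition $\alpha=\beta_1+\dots+\beta_r$ ($r\ge 2$) into positive roots $\beta_i$ in the set $\Sigma_\lambda$ of roots $\beta$ with $\lambda\cdot\beta=0$ supporting a simple representation; and his decomposition theorem indexes the symplectic leaves by the \emph{representation types} of such decompositions, the leaf of a given type having dimension $2\sum_i p(\beta_i)$, where $p(\beta)=1-\tfrac12(\beta,\beta)$ for the symmetric Euler form of the quiver.

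Part (b) does not require the full machinery: when $\kappa=0$ (and $c_1\neq 0$, as assumed) the isomorphism of \S\ref{typeB_isomorphisms} identifies $\cX_\bc(W)$ with the symmetric power $S^n(\cX_{c_1}(\bbZ_2))$ of the \emph{smooth} symplectic surface $\cX_{c_1}(\bbZ_2)$. The symplectic leaves of the $n$-th symmetric power of a smooth symplectic surface are the constant-multiplicity strata: a $0$-cycle $\sum_i m_i[p_i]$ with distinct $p_i$ has type the partition $\lambda\vdash n$ given by its multiplicities $m_i$, and the stratum of type $\lambda$ is the configuration space of the $\ell(\lambda)$ distinct points, hence of dimension $2\,\ell(\lambda)$. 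This yields the bijection with $\mc{P}(n)$ and the dimension formula of (b).

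For parts (a) and (c) I would argue through the quiver picture. A direct evaluation of the Euler form gives $p(\alpha)=n$, so $\dim\cX_\bc(W)=2p(\alpha)=2n$, which is the open leaf (the case $k=0$). The crux is to pin down $\Sigma_\lambda$ and to enumerate the decompositions of $\alpha$. Under the standing assumption $\bc\geq 0$ one has $m\geq 0$, and the key computation is that when $c_1=m\kappa$ the real root $\gamma_m=(0;m,m-1)$ (for $m\geq 1$; $\gamma_0=(0;0,1)$) lies in $\Sigma_\lambda$, while splitting off $k$ copies of it leaves the framing-carrying root $\sigma_k=\alpha-k\gamma_m$, for which
\[
p(\sigma_k)=n-k(k+m).
\]
Since $\gamma_m$ is a real root it contributes $p(\gamma_m)=0$, so the leaf of type $(1,\sigma_k;k,\gamma_m)$ has dimension $2p(\sigma_k)=2(n-k(k+m))$; the requirement that $\sigma_k$ be a genuine dimension vector, equivalently $p(\sigma_k)\geq 0$, forces $k(k+m)\leq n$, and checking that these types are exhaustive yields the bijection $k\mapsto\mc{L}_k$ of (c). For (a), $\cX_\bc(W)$ is singular exactly when some nontrivial decomposition exists; the minimal one splits off a single copy ($k=1$) of some $\gamma_m$, possible precisely when $c_1=m\kappa$ and $1\cdot(1+m)\leq n$, i.e.\ $0\leq m\leq n-1$, whereas $\kappa=0$ is exactly the case $\lambda\cdot\delta=0$ in which the imaginary root $\delta=(0;1,1)$ may be split off. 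The symmetry $\tau$ of Proposition \ref{prop:someauto} (exchanging $c_1\leftrightarrow -c_1$) then accounts for the negative values of $m$, producing the range $m\in\pm\lbrack 0,n-1\rbrack$ in (a).

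The main obstacle, and the part demanding genuine care, is the parameter dictionary together with the bookkeeping of roots. First one must make the affine-linear map $(\kappa,c_1)\mapsto\lambda$ arising from the Fourier transform on $\mu_2$ completely explicit and verify that $\lambda\cdot\gamma_m=0$ is equivalent to $c_1=m\kappa$ with the correct indexing --- in particular matching the two families of real roots of $\tilde{A}_1$ to the $\pm$ in $\pm\lbrack 0,n-1\rbrack$ and correctly handling the boundary value $m=0$. Second, one must show that Crawley--Boevey's abstract stratification is matched \emph{exactly} by the explicit list above: that every representation type of $\alpha$ with $\lambda\cdot\beta_i=0$ is of the form $(1,\sigma_k)+(k,\gamma_m)$ for the single distinguished root $\gamma_m$, with no other roots of $\Sigma_\lambda$ available to split off. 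Establishing this exhaustiveness --- hence that $k\mapsto\mc{L}_k$ neither overcounts nor misses leaves --- is the delicate combinatorial point; once the decomposition is identified, the dimension formula is the one-line computation displayed above.
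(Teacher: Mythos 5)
The paper gives no proof of this theorem: it is quoted verbatim from Martino's thesis \cite[Section 5]{MoThesis}, and the route the paper attributes to Martino — realising $\cX_\bc(B_n)$ as a Marsden--Weinstein reduction for the framed McKay quiver of $\mu_2$ and reading off smoothness and the stratification from Crawley--Boevey's two theorems — is exactly the route you take. Your numerical checks are correct: with the symmetric Euler form of the framed $\tilde A_1$ quiver one indeed gets $p(1;n,n)=n$, $p(0;m,m-1)=0$ and $p\bigl(\alpha-k\gamma_m\bigr)=n-k(k+m)$, and your treatment of part (b) via $\cX_\bc(B_n)\simeq S^n(\cX_{c_1}(\Z_2))$ agrees with what the paper itself does in \S\ref{typeB_isomorphisms} and \S\ref{typeB_leaf_parabolic}. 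However, what you have written is a scaffold rather than a proof: the two steps you explicitly defer — making the affine-linear dictionary $(\kappa,c_1)\mapsto\lambda$ precise, and showing that \emph{every} decomposition of $\alpha$ into roots of $\Sigma_\lambda$ has the form $(1,\sigma_k;k,\gamma_m)$ for a single distinguished real root $\gamma_m$ — are precisely the content of Martino's argument, and without them neither the bijection in (c) nor the ``only if'' direction of (a) is established. One smaller slip: being a ``genuine dimension vector'' (nonnegative entries, i.e.\ $km\le n$) is not equivalent to $p(\sigma_k)\ge 0$; the condition you actually need is that $\sigma_k$ be a positive root lying in $\Sigma_\lambda$, for which $p(\sigma_k)\ge 0$, i.e.\ $k(k+m)\le n$, is the relevant constraint. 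As it stands the proposal should be read as a correct and well-aimed reduction of the theorem to Crawley--Boevey's machinery plus two unverified combinatorial claims, not as an independent proof.
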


We say that $\bc$ is \word{singular} if $\cX_\bc(W)$ is singular. Moreover, we call singular parameters with $\kappa \neq 0$ \word{non-degenerate} and those with $\kappa = 0$ \word{degenerate}. 
By the formulas for the dimensions of the symplectic leaves we can immediately deduce when zero-dimensional leaves (and thus cuspidal Calogero–Moser families) exist. 

\begin{cor} \label{typeB_existence_cuspidals}
The space $\cX_\bc(W)$ has a zero-dimensional symplectic leaf if and only if $c_1 = m \kappa$ for some $m \in \pm \lbrack 0,n-1 \rbrack$ such that $n=k(k+m)$ for some $k > 0$. In this case there is a unique zero-dimensional leaf and thus a unique cuspidal Calogero–Moser family.
\end{cor}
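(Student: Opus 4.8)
The plan is to deduce everything from Martino's classification of the symplectic leaves (Theorem \ref{thm:BSympleaves}), since by the discussion in \S\ref{sec:defnCalogeroMoser} (together with the identification of zero-dimensional leaves with $\Omega_\bc^{\mrm{cusp}}(W)$) the cuspidal Calogero--Moser families are in bijection with the zero-dimensional symplectic leaves of $\cX_\bc(W)$. Thus it suffices to determine exactly when $\cX_\bc(W)$ has a zero-dimensional leaf, and to count them. First I would split into the three regimes of Theorem \ref{thm:BSympleaves}(a): the smooth case, the degenerate case $\kappa = 0$, and the non-degenerate singular case $c_1 = m\kappa$ with $\kappa \neq 0$.

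The first two regimes produce no zero-dimensional leaf. If $\bc$ is a smooth parameter (that is, $\kappa \neq 0$ and $c_1 \neq m\kappa$ for every $m \in \pm[0,n-1]$), then $\cX_\bc(W)$ is a single symplectic leaf of dimension $2n > 0$. If $\kappa = 0$, then the standing assumption $\bc \neq 0$ forces $c_1 \neq 0$, and Theorem \ref{thm:BSympleaves}(b) gives $\dim \mc{L}_\lambda = 2\ell(\lambda) \geq 2$, since a partition of $n \geq 1$ has length at least one. Hence no zero-dimensional leaf occurs here, and I note that the corollary's condition $c_1 = m\kappa$ cannot hold with $\kappa = 0$ precisely because $\bc \neq 0$; so whenever that condition holds we are automatically in the non-degenerate regime.

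It remains to treat $c_1 = m\kappa$ with $\kappa \neq 0$, where $m = c_1/\kappa \in \pm[0,n-1]$ by Theorem \ref{thm:BSympleaves}(a). Here Theorem \ref{thm:BSympleaves}(c) parameterises the leaves by $\{k \in \bbN_{\ge 0} \mid k(k+m) \le n\}$ with $\dim \mc{L}_k = 2(n - k(k+m))$, so $\mc{L}_k$ is zero-dimensional if and only if $k(k+m) = n$; since $n \geq 1$ excludes $k = 0$, this is the condition $n = k(k+m)$ for some $k > 0$. Combining this with the previous paragraph yields the asserted equivalence. For uniqueness I would observe that $k(k+m) = n$ is the quadratic $k^2 + mk - n = 0$, whose two real roots have product $-n < 0$ and hence opposite signs; therefore exactly one value of $k$ is nonnegative (in fact positive), so at most one leaf is zero-dimensional. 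The unique zero-dimensional leaf then corresponds, under the bijection with $\Omega_\bc^{\mrm{cusp}}(W)$, to a unique cuspidal Calogero--Moser family.

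The argument is essentially bookkeeping on top of Theorem \ref{thm:BSympleaves}, so I do not expect a serious obstacle; the only point requiring a genuine (but very short) argument rather than a direct quotation is the uniqueness step, and there the clean observation is simply that the constant term $-n$ of the quadratic is negative, forcing a single positive root.
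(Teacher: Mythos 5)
Your proposal is correct and follows exactly the route the paper intends: the paper gives no written proof beyond the remark that the corollary is ``immediately deduced'' from the dimension formulas of Theorem \ref{thm:BSympleaves}, together with Remark \ref{typeB_cuspidal_param_unique} asserting that at most one $k \geq 0$ satisfies $n = k(k+m)$, which is precisely your quadratic/sign-of-the-constant-term observation. Your case analysis (smooth, degenerate, non-degenerate singular) and the handling of the standing assumption $\bc \neq 0$ are the straightforward bookkeeping the authors left implicit.
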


If our parameter $\bc$ is as in Corollary \ref{typeB_existence_cuspidals} we say that it is  \word{cuspidal}. %

\begin{remark} \label{typeB_cuspidal_param_unique}
For a  given $n$ and $\pm m \in \lbrack 0,n-1 \rbrack$ there is at most one $k \geq 0$ with $n = k(k+m)$.
\end{remark}

\subsection{Parabolic subgroups attached to symplectic leaves} \label{typeB_leaf_parabolic}
 
We would like to parameterise the symplectic leaves of $\cX_{\bc}(B_n)$ by conjugacy classes of parabolic subgroups and work out the geometric ordering. 

\begin{lem}
If $c_1=m\kappa$ for some $m \in \pm \lbrack 0,n-1 \rbrack$, then the leaf $\mc{L}_k$ is labeled by the conjugacy class of the parabolic $B_{k(k+m)}$ and
$$
\mc{L}_k \prec \mc{L}_{k'} \quad \Longleftrightarrow \quad (B_{k(k+m)}) \le (B_{k'(k'+m)})  \quad \Longleftrightarrow \quad k \ge k'.  
$$
\end{lem}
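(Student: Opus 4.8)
The plan is to break the statement into three tasks: identify the parabolic attached to $\mc{L}_k$ as $B_{k(k+m)}$, translate the algebraic order on these parabolics into the numerical condition $k \ge k'$, and finally show this matches the geometric order $\prec$.

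First I would pin down the parabolic. By Theorem \ref{thm:BSympleaves} we have $\dim \mc{L}_k = 2(n - k(k+m))$, so by the proof of Theorem \ref{thm:induceleaves} (which gives $2\dim \fh = 2\,\mathrm{rk}(W') + \dim \mc{L}$) the attached class $W_{\mc{L}_k} = (W')$ satisfies $\mathrm{rk}(W') = k(k+m)$. By \S\ref{typeB_parabolics} every parabolic of $B_n$ is conjugate to some $\fS_\lambda \times B_{n-|\lambda|}$, which has rank $(|\lambda| - \ell(\lambda)) + (n - |\lambda|) = n - \ell(\lambda)$; hence $\ell(\lambda) = n - k(k+m)$, but this alone does not determine $\lambda$. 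To force $\fS_\lambda$ to be trivial, I would invoke the cuspidal reduction of Theorem \ref{parabolic_cuspidal_induction}, which supplies a cuspidal, i.e.\ zero-dimensional, symplectic leaf of $\cX_{\bc'}(W')$. Since the reflection representation of $W' = \fS_\lambda \times B_{n-|\lambda|}$ is a direct sum, $\cX_{\bc'}(W')$ is a product of the Calogero--Moser spaces of the factors, and a zero-dimensional leaf in a product forces a zero-dimensional leaf in each factor. But each factor $\fS_{\lambda_i}$ is of type $A$, whose Calogero--Moser space is smooth (Etingof--Ginzburg, \S\ref{type_A}) and hence a single symplectic leaf of dimension $2(\lambda_i - 1)$; this is zero-dimensional only when $\lambda_i = 1$. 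Therefore $\fS_\lambda$ is trivial, $\lambda = (1^{n-k(k+m)})$, and $W' = B_{n-\ell(\lambda)} = B_{k(k+m)}$.

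For the algebraic ordering, recall the convention that $(W_1) \ge (W_2)$ exactly when $W_1$ is conjugate to a subgroup of $W_2$; thus $(B_a) \le (B_b)$ iff $B_b$ embeds (up to conjugacy) in $B_a$, i.e.\ iff $b \le a$. Hence $(B_{k(k+m)}) \le (B_{k'(k'+m)})$ is equivalent to $k'(k'+m) \le k(k+m)$. We may assume $m \ge 0$: the standing hypothesis $\bc \ge 0$ forces $\kappa > 0$ and $c_1 \ge 0$, whence $m = c_1/\kappa \ge 0$, and the sign $m < 0$ reduces to this via the isomorphism $\tau$ of Proposition \ref{prop:someauto}, which fixes $\C[\fh]^W$ and $\C[\fh^*]^W$ and therefore is compatible with $\pi_\bc$, preserving leaves, their closures, and the attached parabolic classes. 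For $m \ge 0$ the map $k \mapsto k(k+m)$ is strictly increasing on $k \ge 0$, so $k'(k'+m) \le k(k+m) \iff k' \le k$, giving the second equivalence.

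Finally, the geometric ordering. One direction is immediate: $\mc{L}_k \prec \mc{L}_{k'}$ means $\mc{L}_k \subseteq \overline{\mc{L}_{k'}}$, so Theorem \ref{thm:induceleaves}\ref{thm:induceleaves:leq} gives $W_{\mc{L}_k} \le W_{\mc{L}_{k'}}$, hence $k \ge k'$ by the previous step. The reverse inclusion---that the leaves genuinely form a chain, so $k \ge k'$ implies $\mc{L}_k \prec \mc{L}_{k'}$---is the main obstacle, since Theorem \ref{thm:induceleaves} only shows $\prec$ refines the algebraic order. Here I would argue by induction on $n$ via the transverse local model: near a generic point of $\mc{L}_{k'}$, Theorem \ref{thm:generalcompletion} (equivalently the cuspidal reduction, Theorem \ref{parabolic_cuspidal_induction}) identifies $\cX_\bc(B_n)$ with the product of a symplectic affine space and a formal neighbourhood of the cuspidal point of $\cX_{\bc'}(B_{k'(k'+m)})$, matching the leaf $\mc{L}_i$ of $B_n$ (for $i \le k'$) with the leaf of index $i$ of the smaller space $\cX_{\bc'}(B_{k'(k'+m)})$, whose dimensions differ only by the constant $2(n-k'(k'+m))$. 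By the inductive hypothesis applied to the strictly smaller group $B_{k'(k'+m)}$, its leaves form a chain with the zero-dimensional cuspidal leaf (of index $k'$) as the minimum, so the cuspidal point lies in the closure of every leaf of the slice; translating back yields $\mc{L}_{k'} \subseteq \overline{\mc{L}_i}$ for all $i \le k'$. Together with the observation that $\mc{L}_0$ is the unique open dense leaf of the irreducible variety $\cX_\bc(B_n)$ (furnishing the maximum and the base of the induction), this gives $\mc{L}_k \prec \mc{L}_{k'} \iff k \ge k'$. I expect the bookkeeping in this last step---checking that the local model matches leaf indices and closure relations precisely, and that the $\Xi(W')$-quotient in Theorem \ref{thm:generalcompletion} causes no identification since $\Xi(W')$ acts freely on the regular locus---to be the delicate point.
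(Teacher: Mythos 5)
Your identification of the parabolic attached to $\mc{L}_k$ and your analysis of the algebraic ordering are correct and essentially coincide with the paper's argument: the rank of the label is forced by the dimension formula of Theorem \ref{thm:BSympleaves} together with Theorem \ref{thm:induceleaves}, and the type-$A$ factors are excluded because their Calogero--Moser spaces are smooth of positive dimension (the paper compresses this to ``since $\kappa \neq 0$, the parabolic must be of the form $B_m$''). The forward implication $\mc{L}_k \prec \mc{L}_{k'} \Rightarrow k \ge k'$ via Theorem \ref{thm:induceleaves}\ref{thm:induceleaves:leq} is also fine.

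The gap is in the converse, i.e.\ in showing that $k \ge k'$ actually forces $\mc{L}_k \subseteq \overline{\mc{L}_{k'}}$. Your induction via the transverse slice is circular at exactly the point that carries all the content. Unwinding it: to show $\mc{L}_k \subseteq \overline{\mc{L}_{k'}}$ you pass to the slice at a point of the deeper leaf, which is the Calogero--Moser space of $B_{k(k+m)}$ near its \emph{zero-dimensional} leaf, and you then invoke the inductive hypothesis in the form ``the cuspidal point of $\cX_{\bc'}(B_{k(k+m)})$ lies in the closure of every leaf of the slice''. So the statement you need at level $k(k+m)$ is precisely the instance of the lemma in which the deeper leaf is the zero-dimensional one. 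But that is the one instance the slice argument can never reach: when the deeper leaf is itself zero-dimensional the attached parabolic is the whole group, Theorem \ref{thm:generalcompletion} degenerates to the identity, and no reduction takes place. Nothing in your argument (nor in Theorems \ref{thm:induceleaves}, \ref{parabolic_cuspidal_induction} or \ref{pmax_parabolic_orbit}, which control which leaves exist and how they are labeled, but not their closures) shows that the unique zero-dimensional leaf lies in the closure of any positive-dimensional leaf; a priori $\overline{\mc{L}_{k'}} \smallsetminus \mc{L}_{k'}$ could consist of only some of the deeper strata, or a leaf could even be closed. This is why the paper does not argue this way: it imports the closure order from Martino's thesis (the proof of Proposition 5.7 there), where $\cX_\bc(B_n)$ is realised as a representation variety of a deformed preprojective algebra and the closure relations between strata are computed in that model. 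To complete your proof you would need an independent argument for the minimal stratum, after which your inductive bookkeeping would indeed assemble the full chain.
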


\begin{proof}
If $\mc{L}_k$ is labeled by the parabolic $W'$ then $\cX_{\bc'}(W')$ contains at least one zero-dimensional leaf and $W'$ must have rank $n - \frac{1}{2} \dim \mc{L}_k = k(k+m)$. Since $\kappa \neq 0$, the parabolic must be of the form $B_m$ for some $m$. Hence $W' = B_{k(k+m)}$. It is a consequence of the proof of \cite[Proposition 5.7]{MoThesis} that $\mc{L}_k \prec \mc{L}_{k'}$ if and only if $k \ge k'$.
\end{proof}

In the degenerate case $\kappa = 0$, recall from \S\ref{typeB_isomorphisms} that there is an isomorphism of Poisson varieties $\cX_{\bc}(B_n) \simeq S^n(\cX_{c_1}(\Z_2))$. Then $\mc{L}_{\lambda} = S^{\lambda}(\cX_{c_1}(\Z_2))$, where $S^{\lambda}(X)$ is the image in $S^n (X)$ of the set $\left\{ \sum_{i = 1}^{\ell(\lambda)} \lambda_i \cdot x_i \ | \ x_i \neq x_j \in X \right\}$. This implies that $\mc{L}_{\lambda}$ is labeled by the class of the parabolic subgroup $\s_{\lambda} = \s_{\lambda_1} \times \cdots \times \s_{\lambda_{\ell(\lambda)}}$ and 
$$
\mc{L}_{\lambda} \prec \mc{L}_{\mu} \quad \Leftrightarrow \quad (\s_{\lambda}) \le (\s_{\mu}). 
$$
Moreover, in this case, if $\Upsilon_\bc^{-1}(0) = \{ p , q \}$ for $\H_{c_1}(\Z_2)$, where $p = \Supp L_{c_1}(1_{\bbZ_2})$ and $q = \Supp L_{c_1}(\sgn_{\bbZ_2})$, then in $\cX_{\bc}(B_n)$ we have
\begin{equation}\label{eq:UpsilonZ2}
\Upsilon_\bc^{-1}(0) = \{ n_1 \cdot p + n_2 \cdot q \ | \ n_1 + n_2 = n, n_i \ge 0 \}. 
\end{equation}
The point $n_1 \cdot p + n_2 \cdot q$ belongs to the leaf $\mc{L}_{(n_1,n_2)}$.

\subsection{Calogero–Moser families} \label{typeB_cm_fams}

The Calogero–Moser families in type $B_n$ have been first described by Gordon and Martino \cite{GordonMartinoCM} using the notion of $J$-hearts, and later by Martino \cite{Martino-blocks-gmpn} using the notion of \textit{residues}. We recall the description given in \cite{Martino-blocks-gmpn} now.

Let $\lambda = (\lambda_1,\lambda_2,\ldots)$ be a partition. We think of $\lambda$ as a stack of boxes, left justified, with the bottom row containing  $\lambda_1$ boxes, the next row containing $\lambda_2$ boxes and so forth. The \textit{content} $\mathrm{ct}(\Box)$ of a box $\Box = (i,j) \in \lambda$ is defined to be $j - i$. We consider the group ring $\bbZ \lbrack \bbC \rbrack$ of the additive group $\C$ and write $x^\alpha$ for the element corresponding to $\alpha \in \bbC$. The \word{residue} of $\lambda$ is the element 
\[
\Res_{\lambda}(x) \dopgleich \sum_{\Box \in \lambda} x^{\mathrm{ct}(\Box)} \in \bbZ \lbrack \bbZ \rbrack \subs \bbZ \lbrack \bbC \rbrack \;.
\]
Just as in \cite[\S3A]{BroueKim}, we define for a triple $\bm = (m_0,m_1,m')$  of complex numbers (the \word{charge}), and a bipartition $\blambda = (\lambda^{(0)}, \lambda^{(1)})$, the \word{charged residue} as
\[
\Res_{\blambda}^\bm(x) \dopgleich x^{m_0} \Res_{\lambda^{(0)}}(x^{m'}) + x^{m_1} \Res_{\lambda^{(1)}}(x^{m'}) \in \bbZ \lbrack \bbC \rbrack \;.
\] 
The following theorem is \cite[Theorem 5.5]{Martino-blocks-gmpn}. The additional parameters $(h,H_0,H_1)$ used in \textit{loc.\ cit.} are given by $h = - \kappa$, $H_0 = -c_1$, and $H_1 = c_1$.

\begin{theorem}[Martino] \label{typeB_cm_families}
Two bipartitions $\blambda$ and $\bmu$ lie in the same Calogero–Moser $\bc$-family if and only if $\Res_{\blambda}^{\hat{\bc}}(x) = \Res_{\bmu}^{\hat{\bc}}(x)$ with respect to the charge $\hat{\bc} \dopgleich (0,c_1,-\kappa)$.
\end{theorem}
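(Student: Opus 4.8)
This is Martino's theorem \cite[Theorem 5.5]{Martino-blocks-gmpn}, and the route I would take to establish it rests on the fact that the Calogero--Moser families are exactly the blocks of $\ol{\H}_\bc(W)$ and hence, by the M\"uller-type result recalled in \S\ref{sec:defnCalogeroMoser}, are governed by the central characters of the image of $\ZH_\bc(W)$ in $\ol{\H}_\bc(W)$. Because a block is an indecomposable algebra, its centre is connected, so every central element acts on all simples lying in a single block through one and the same scalar; consequently two characters in the same family \emph{must} share the central character of any commutative subalgebra of $\ZH_\bc(W)$. The proof therefore splits into a necessity part, extracted from central characters, and a sufficiency part, which requires genuine linking information and is where the work lies.

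For the necessity direction I would exploit the Dunkl--Opdam commutative subalgebra $\mathfrak{t} \subset \H_\bc(W)$ attached to $G(2,1,n)$. Its symmetric invariants lie in $\ZH_\bc(W)$, and the associated Jucys--Murphy/Dunkl elements act diagonally on $\Delta_\bc(\blambda)$; on the lowest-weight space $1 \otimes \blambda$ the symmetric functions of these elements act by the corresponding symmetric functions of the \emph{charged contents} $m_j - \kappa\,\mathrm{ct}(\Box)$ of the boxes of $\blambda$, with charge $\hat{\bc} = (0,c_1,-\kappa)$. Passing to power sums, the eigenvalue of the $k$-th such central element on $L_\bc(\blambda)$ is $\sum_{\Box} (m_j - \kappa\,\mathrm{ct}(\Box))^k$, i.e. the $k$-th moment of the multiset recorded by $\Res_{\blambda}^{\hat{\bc}}(x)$. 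Over $\bbC$ the collection of all power sums determines the multiset, so the equality of central characters forced by membership in a common family yields $\Res_{\blambda}^{\hat{\bc}} = \Res_{\bmu}^{\hat{\bc}}$.

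The sufficiency direction---equal charged residue implies the same family---is the main obstacle, since equality of central characters is in general only a necessary condition and does not by itself force two simples into the same block. Here I would argue combinatorially: equality of $\Res^{\hat{\bc}}$ means $\blambda$ and $\bmu$ have the same multiset of charged contents, and I would show that any two such bipartitions are connected by a chain of elementary content-preserving moves (relocating a single box, or exchanging boxes between the two components, so as to leave the multiset unchanged). It then suffices to prove that each elementary move keeps one inside a single block, e.g. by exhibiting a nonzero homomorphism between the relevant baby Verma modules, or a shared composition factor, so that $L_\bc(\blambda)$ and $L_\bc(\bmu)$ become linked. Equivalently, and this is essentially Martino's strategy, one matches this combinatorics with the block theory of the associated cyclotomic Hecke (or $q$-Schur) algebra, where the ``same residue'' criterion is exactly the Lyle--Mathas description of blocks, and transports the resulting linking back to $\ol{\H}_\bc(W)$. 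Carrying out either version---controlling the radical of the baby Vermas precisely enough to certify the linking, uniformly as one crosses a wall in the parameter space---is the technical heart of \cite[\S5]{Martino-blocks-gmpn}.
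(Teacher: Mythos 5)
The paper does not actually prove this statement: it is imported verbatim from \cite[Theorem 5.5]{Martino-blocks-gmpn}, the only content supplied here being the dictionary of parameters $h=-\kappa$, $H_0=-c_1$, $H_1=c_1$. So your proposal has to be measured against Martino's argument rather than anything internal to this paper. Your necessity direction is essentially right and is indeed how that half is done: the $W$-symmetric functions of the Cherednik--Dunkl (Jucys--Murphy type) elements are central, survive in the image of $\ZH_\bc(W)$ in $\ol{\H}_\bc(W)$, and act on $1\otimes\blambda\subset\Delta_\bc(\blambda)$ through symmetric functions of the charged contents, so equal blocks force equal charged residues.

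The gap is in your sufficiency direction, and the specific routes you offer would not go through. First, producing a nonzero homomorphism or a shared composition factor between baby Vermas for each elementary content-preserving move is not available technology at $t=0$; controlling the radicals of the $\Delta_\bc(\blambda)$ to that extent is precisely what nobody knows how to do, which is why no proof along these lines exists. Second, one cannot ``transport the linking back from the cyclotomic Hecke algebra'': there is no Knizhnik--Zamolodchikov functor in the quasi-classical limit (this is stressed in the introduction of the present paper), and the agreement of the Calogero--Moser partition with the Hecke-theoretic (Rouquier/residue) partition is itself the theorem to be proved, not a tool one may invoke. What Martino (following Gordon--Martino) actually does for this half is a geometric counting argument: identify $\cX_\bc(G(2,1,n))$ with a quiver variety, enumerate its $\bbC^*$-fixed points combinatorially, and show that their number equals the number of distinct charged residues. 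Since the families are in bijection with the $\bbC^*$-fixed points (as recalled in \S\ref{sec:defnCalogeroMoser}) and, by the necessity half, each residue class is a union of families, equality of the two counts forces the two partitions to coincide. That counting step is the missing idea in your sketch.
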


\subsection{Simple $\ol{\H}_\bc(W)$-modules in the degenerate case} \label{typeB_nondeg_simples}

In the degenerate case $\kappa = 0$ it is possible to determine the structure of the simple $\ol{\H}_\bc(W)$-modules $L_\bc(\blambda)$ as $W$-modules.

\begin{lemma} \label{cm_families_typeB_degenerate}
If $\kappa = 0$, then $\blambda = (\lambda^{(0)}, \lambda^{(1)}) \in \mathcal{P}_2(n)$ and $\bmu =(\mu^{(0)},\mu^{(1)}) \in \mathcal{P}_2(n)$ lie in the same Calogero–Moser $\bc$-family if and only if $| \lambda^{(1)} | = |\mu^{(1)}|$. In particular, there are $n+1$ Calogero–Moser families $\mathcal{F}_{0;n}^{\mrm{deg}},\ldots,\mathcal{F}_{n;n}^{\mrm{deg}}$ with
\[
\mathcal{F}_{i;n}^{\mrm{deg}} = \lbrace \blambda \in \mathcal{P}_2(n) \mid |\lambda^{(0)}| = i \rbrace \;.
\]
\end{lemma}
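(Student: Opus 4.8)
The plan is to read off the families directly from Martino's residue criterion (Theorem \ref{typeB_cm_families}) after specialising to $\kappa = 0$. In the degenerate case the charge $\hat{\bc} = (0,c_1,-\kappa)$ becomes $(0,c_1,0)$, so the parameter $m' = -\kappa$ controlling the internal variable vanishes; this is the one point where degeneracy genuinely simplifies the combinatorics.

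First I would compute the effect of $m' = 0$ on a single residue. Since $\Res_\lambda(x) = \sum_{\Box \in \lambda} x^{\mathrm{ct}(\Box)}$, substituting $x \mapsto x^{0}$ replaces each exponent $\mathrm{ct}(\Box)$ by $0$, so every box contributes $x^0$ and $\Res_\lambda(x^0) = |\lambda|\, x^0$. Feeding this into the charged residue gives
\[
\Res_{\blambda}^{\hat{\bc}}(x) = x^{0}\,\Res_{\lambda^{(0)}}(x^0) + x^{c_1}\,\Res_{\lambda^{(1)}}(x^0) = |\lambda^{(0)}|\, x^0 + |\lambda^{(1)}|\, x^{c_1},
\]
and likewise for $\bmu$. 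Thus all content information is lost and only the two sizes survive.

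Next I would compare coefficients. By Theorem \ref{typeB_cm_families}, $\blambda$ and $\bmu$ lie in the same family precisely when $|\lambda^{(0)}|\,x^0 + |\lambda^{(1)}|\,x^{c_1} = |\mu^{(0)}|\,x^0 + |\mu^{(1)}|\,x^{c_1}$ in $\bbZ[\bbC]$. Here I use the standing assumption $\bc \neq 0$: in the degenerate case $\kappa = 0$ forces $c_1 \neq 0$, so the group elements $0, c_1 \in \bbC$ are distinct and the monomials $x^0, x^{c_1}$ are $\bbZ$-linearly independent in $\bbZ[\bbC]$. Comparing coefficients therefore yields $|\lambda^{(0)}| = |\mu^{(0)}|$ and $|\lambda^{(1)}| = |\mu^{(1)}|$; since both bipartitions have total size $n$, each equality implies the other, giving the criterion $|\lambda^{(1)}| = |\mu^{(1)}|$. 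For the final assertion, $|\lambda^{(0)}|$ takes each value $i \in \{0,1,\dots,n\}$ (for instance via $((i),(n-i))$), so there are exactly $n+1$ families, indexed as $\mathcal{F}_{i;n}^{\mrm{deg}} = \{\blambda \in \mathcal{P}_2(n) \mid |\lambda^{(0)}| = i\}$.

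There is no substantial obstacle here; the only things to be careful about are the collapse $\Res_\lambda(x^0) = |\lambda|\,x^0$ coming from $m' = 0$, and the invocation of $c_1 \neq 0$ (guaranteed by $\bc \neq 0$ together with $\kappa = 0$) to ensure the two monomials are independent, so that comparison of coefficients in $\bbZ[\bbC]$ is valid.
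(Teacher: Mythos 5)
Your proposal is correct and follows essentially the same route as the paper: specialise Martino's residue criterion to $\kappa = 0$, observe that the charged residue collapses to $|\lambda^{(0)}| + |\lambda^{(1)}|\,x^{c_1}$, and compare coefficients. Your explicit remark that $c_1 \neq 0$ (from the standing assumption $\bc \neq 0$) guarantees the linear independence of $x^0$ and $x^{c_1}$ in $\bbZ[\bbC]$ is a detail the paper leaves implicit, but the argument is the same.
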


\begin{proof}
In the case $\kappa=0$ we have 
\[
\Res_{\blambda}^{\hat{\bc}}(x) = \sum_{\Box \in \lambda^{(0)}} 1^{\mrm{ct} \Box} + x^{c_1} \sum_{\Box \in \lambda^{(1)}} 1^{\mrm{ct} \Box} = |\lambda^{(0)}| + x^{c_1} |\lambda^{(1)}| = n-|\lambda^{(1)}| + x^{c_1} |\lambda^{(1)}| \;.
\]
The claim follows directly from Theorem \ref{typeB_cm_families}.
\end{proof}

\begin{prop}
Assume that $\kappa = 0$. Then the family $\mc{F}_{i;n}^{\mrm{deg}}$ is labeled by the class of the parabolic $\s_i \times \s_{n-i} \subset B_n$ and we have a bijection $\Irr (\s_i \times \s_{n-i}) \stackrel{\sim}{\longrightarrow} \mc{F}_{i;n}^{\mrm{deg}}$, sending the pair of partitions $(\lambda^{(0)},\lambda^{(1)})$ to itself (thought of as a bipartition) such that 
$$
L_\bc(\lambda^{(0)},\lambda^{(1)}) \simeq \Ind_{\s_i \times \s_{n-i}}^{B_n} \pi_{\lambda^{(0)}} \boxtimes \pi_{\lambda^{(1)}} 
$$
as $W$-modules.
\end{prop}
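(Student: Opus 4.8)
The plan is to exploit the degenerate‑case factorisation recorded in \S\ref{typeB_isomorphisms}. Write $A \dopgleich \H_{c_1}(\Z_2)$, so that $\H_\bc(B_n) \simeq A^{\otimes n} \rtimes \fS_n$ as graded algebras (the isomorphism sends $x_j,y_j,w$ to $x_j,y_j,w$) and $\cX_\bc(B_n) \simeq S^n \cX_{c_1}(\Z_2)$. Since $\bc \neq 0$ and $\kappa = 0$ force $c_1 \neq 0$, the surface $\cX_{c_1}(\Z_2)$ is smooth with $\Upsilon^{-1}(0) = \{p,q\}$, where $p = \Supp L_{c_1}(1_{\bbZ_2})$ and $q = \Supp L_{c_1}(\sgn_{\bbZ_2})$. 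First I would record the two simple $\ol{A}$‑modules explicitly: an easy baby‑Verma computation for $\Z_2$ shows that $T_0 \dopgleich L_{c_1}(1_{\bbZ_2})$ and $T_1 \dopgleich L_{c_1}(\sgn_{\bbZ_2})$ are both $2$‑dimensional and isomorphic to the regular module $\C\Z_2 \simeq \mathbf{1} \oplus \sgn$ as $\Z_2$‑modules; as graded modules the trivial constituent of $T_0$ sits in degree $0$ and that of $T_1$ in degree $1$. As they have distinct central characters ($p \neq q$), we have $T_0 \not\simeq T_1$ as $A$‑modules.

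Next I would build the candidate simple modules by Clifford theory for the crossed product $A^{\otimes n} \rtimes \fS_n$. For a bipartition $\blambda = (\lambda^{(0)},\lambda^{(1)})$ with $i \dopgleich |\lambda^{(0)}|$, set $M \dopgleich T_0^{\otimes i} \otimes T_1^{\otimes (n-i)}$. This is a simple $A^{\otimes n}$‑module whose stabiliser in $\fS_n$ is exactly $\fS_i \times \fS_{n-i}$, and it extends canonically to $A^{\otimes n} \rtimes (\fS_i \times \fS_{n-i})$ (permuting like tensor factors). Writing $\rho \dopgleich \pi_{\lambda^{(0)}} \boxtimes \pi_{\lambda^{(1)}} \in \Irr(\fS_i \times \fS_{n-i})$, put
\[
N(\blambda) \dopgleich \Ind_{A^{\otimes n} \rtimes (\fS_i \times \fS_{n-i})}^{A^{\otimes n} \rtimes \fS_n} \left( M \otimes \rho \right).
\]
Because $T_0 \not\simeq T_1$, standard Clifford theory shows $N(\blambda)$ is simple, and it is supported at the single point $i \cdot p + (n-i) \cdot q$ of $S^n \cX_{c_1}(\Z_2)$; in particular $D(B_n)_+$ annihilates it, so it is a simple $\ol{\H}_\bc(B_n)$‑module.

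The $W$‑module structure I would compute by restricting along $W = \Z_2 \wr \fS_n \subset A^{\otimes n} \rtimes \fS_n$, under which $A^{\otimes n}$ restricts to $\Z_2^n$ and $W \cap (A^{\otimes n}\rtimes(\fS_i\times\fS_{n-i})) = B_i \times B_{n-i}$. Choosing coset representatives inside $\fS_n \subset W$ identifies $N(\blambda)|_W$ with $\Ind_{B_i \times B_{n-i}}^{B_n}$ of $(M\otimes\rho)|_{B_i\times B_{n-i}} = (T_0^{\otimes i}\otimes\pi_{\lambda^{(0)}}) \boxtimes (T_1^{\otimes (n-i)}\otimes\pi_{\lambda^{(1)}})$. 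The key observation is that, as a $\Z_2$‑module, $T_0 \simeq T_1 \simeq \C\Z_2$, so $T_0^{\otimes i} \simeq \C[\Z_2^i] \simeq \Ind_{\fS_i}^{B_i}\mathbf{1}$ as $B_i$‑modules (and likewise $T_1^{\otimes(n-i)} \simeq \Ind_{\fS_{n-i}}^{B_{n-i}}\mathbf 1$). The projection formula then gives $T_0^{\otimes i}\otimes\pi_{\lambda^{(0)}} \simeq \Ind_{\fS_i}^{B_i}\pi_{\lambda^{(0)}}$ and $T_1^{\otimes(n-i)}\otimes\pi_{\lambda^{(1)}} \simeq \Ind_{\fS_{n-i}}^{B_{n-i}}\pi_{\lambda^{(1)}}$, whence by transitivity of induction $N(\blambda)|_W \simeq \Ind_{\fS_i\times\fS_{n-i}}^{B_n}\pi_{\lambda^{(0)}}\boxtimes\pi_{\lambda^{(1)}}$.

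Finally I would match the labelling and read off the parabolic. As a graded module $N(\blambda)$ lives in non‑negative degrees and is generated by its degree‑$0$ part, which is annihilated by $\fh$; computing this part gives, over $B_i\times B_{n-i}$, the module $(\mathbf 1^{\otimes i}\otimes\pi_{\lambda^{(0)}})\boxtimes(\sgn^{\otimes(n-i)}\otimes\pi_{\lambda^{(1)}}) = \pi_{(\lambda^{(0)},\emptyset)}\boxtimes\pi_{(\emptyset,\lambda^{(1)})}$, which induces to $\pi_\blambda$ by (\ref{eq:Bnrep}) (using $\gamma\pi_{\lambda^{(1)}} = \pi_{(\emptyset,\lambda^{(1)})}$). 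Thus $N(\blambda)$ is a simple quotient of the baby Verma $\Delta_\bc(\blambda)$, so $N(\blambda)\simeq L_\bc(\blambda)$ by Proposition \ref{prop:simplebaby}; this yields the displayed $W$‑module formula and shows the indicated map is a bijection onto $\mc{F}_{i;n}^{\mrm{deg}}$. Since $L_\bc(\blambda)$ is then supported at $i\cdot p + (n-i)\cdot q$, which by (\ref{eq:UpsilonZ2}) lies on the leaf $\mc{L}_{(i,n-i)}$ labelled by $\fS_i \times \fS_{n-i}$ in \S\ref{typeB_leaf_parabolic}, the family $\mc{F}_{i;n}^{\mrm{deg}}$ is labelled by $(\fS_i \times \fS_{n-i})$. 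The main obstacle is the bookkeeping in the Clifford‑theoretic construction together with the graded lowest‑weight analysis needed to pin down the identification $N(\blambda) \simeq L_\bc(\blambda)$; the remaining steps are routine.
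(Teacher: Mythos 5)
Your proposal is correct and follows essentially the same route as the paper: both rest on the degenerate factorisation $\H_\bc(B_n)\simeq \H_{c_1}(\Z_2)^{\otimes n}\rtimes\fS_n$, the identification of $L_\bc(\blambda)$ as the Clifford-theoretic induction of $L_{c_1}(1_{\Z_2})^{\otimes i}\otimes L_{c_1}(\sgn_{\Z_2})^{\otimes(n-i)}\otimes(\pi_{\lambda^{(0)}}\boxtimes\pi_{\lambda^{(1)}})$, the fact that both $\Z_2$-simples are regular as $\Z_2$-modules, and the support computation via (\ref{eq:UpsilonZ2}). You merely supply more detail than the paper does, in particular the graded lowest-weight argument pinning down $N(\blambda)\simeq L_\bc(\blambda)$, which the paper leaves implicit.
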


\begin{proof}
Since $\H_{\bc}(B_n) \simeq (\H_{c_1}(\Z_2)^{\otimes 2}) \rtimes \s_n$ in this case (see \S\ref{typeB_isomorphisms}), we have 
$$
L_\bc(\lambda^{(0)},\lambda^{(1)}) \simeq \Ind_{(\H_{c_1}(\Z_2)^{\otimes 2}) \rtimes (\s_i \times \s_{n-i})}^{B_n} L_{c_1}(1_{\bbZ_2}) \otimes L_{c_1}(\sgn_{\bbZ_2}) \otimes (\pi_{\lambda^{(0)}} \boxtimes \pi_{\lambda^{(1)}}) \;,
$$
Since $c_1 \neq 0$, both $L_{c_1}(1_{\bbZ_2})$ and $L_{c_1}(\sgn_{\bbZ_2})$ are isomorphic to the regular representation as $\Z_2$-modules. 
Recall that we have described $\Upsilon_\bc^{-1}(0)$ in (\ref{eq:UpsilonZ2}). If $i = |\lambda^{(0)}|$, so that $n - i = |\lambda^{(1)}|$, then the support of $L_\bc(\lambda^{(0)},\lambda^{(1)})$ is $i \cdot p + (n-i) \cdot q$, which lies on the the leaf labeled by the parabolic $\s_{i} \times \s_{n-i}$. The result follows. 
\end{proof}

\subsection{Lusztig families in the non-degenerate case} \label{typeB_lusztig_families_section}

For the description of the Lusztig families in the non-degenerate case we first argue that we can restrict to the so-called \word{integral} case where $c_1$ is an integral multiple of $\kappa$. 

\begin{prop}\label{lusztig_fam_typeB_non_multiple}
Suppose that $\kappa > 0$. If there is no $m \in \bbN$ with $c_1 = m\kappa$, then $\Con_\bc W = \Irr W$ and so the Lusztig $\bc$-families are singletons.
\end{prop}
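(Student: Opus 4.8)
The plan is to normalise the parameter and then extract the statement from the symbol combinatorics of Lusztig families in type $B_n$. First I would use Lemma \ref{lusztig_rescaling}: since $\Con_\bc(W)$ and $\lus_\bc(W)$ are invariant under scaling $\bc$ by a positive real and $\kappa > 0$, I may divide by $\kappa$ and assume $\kappa = 1$, writing $r := c_1$. The hypothesis becomes $r \notin \bbN$; as $c_1 \geq 0$ this forces $r > 0$ and in particular $r \notin \bbZ$.

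Next I would recall the combinatorial description of $\lus_\bc(B_n)$ through symbols (Lusztig \cite{LusztigUnequalparameters}, or Geck--Jacon \cite{Geck.M;Jacon.N11Representations-of-H}). To a bipartition $\blambda = (\lambda^{(0)},\lambda^{(1)})$ of $n$ one associates, after padding both partitions to a common length $N$, its \emph{charged $\beta$-numbers}: the numbers arising from $\lambda^{(0)}$ are ordinary integers, whereas those arising from $\lambda^{(1)}$ are shifted by $r$ and hence lie in $\bbZ + r$. The relative order of these $2N$ numbers — and thus the combinatorics of the associated symbol — only changes as $r$ crosses an integer, which is precisely why integral $r$ is the distinguished case. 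Two bipartitions lie in the same $\bc$-family exactly when their charged $\beta$-numbers agree as multisets; this is the analogue on the Lusztig side of Martino's residue criterion (Theorem \ref{typeB_cm_families}).

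The heart of the argument — and the step I expect to be the main obstacle — is to show that for $r \notin \bbZ$ this multiset of charged $\beta$-numbers determines $\blambda$. The key observation is that the two kinds of entries are disjoint: the contributions of $\lambda^{(0)}$ lie in $\bbZ$, while those of $\lambda^{(1)}$ lie in the coset $\bbZ + r \neq \bbZ$. Consequently one reads off the two charged $\beta$-sets separately from the multiset, and hence recovers both $\lambda^{(0)}$ and $\lambda^{(1)}$. Therefore the assignment sending $\blambda$ to its charged content multiset is injective, so every $\bc$-family is a singleton. (For integral $r$ the two cosets coincide and genuine merging of bipartitions occurs, which is exactly the case excluded by hypothesis.)

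Finally I would promote singleton families to the stronger conclusion $\Con_\bc(W) = \Irr(W)$. By definition of the constructible graph, any two distinct constituents of a single constructible representation are adjacent, so all constituents of a given $E \in \Con_\bc(W)$ lie in one family; as the families are singletons, each such $E$ is a multiple $m \cdot \blambda$ of a single irreducible. Invoking the covering property (\ref{key_fact}) — every $\blambda$ occurs in some constructible representation — together with Lusztig's normalisation that the unique constructible representation attached to a singleton family is the irreducible itself (with multiplicity one), one obtains $\Con_\bc(W) = \Irr(W)$, and the statement on families follows at once. The delicate point in this last step is pinning down the multiplicity $m = 1$, ruling out a proper multiple of a single irreducible; this is exactly where the fine structure of constructible representations inside a family is required.
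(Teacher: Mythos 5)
There is a genuine gap at the step you yourself identify as the heart of the argument. The claim that two bipartitions lie in the same Lusztig $\bc$-family exactly when their charged $\beta$-numbers agree as multisets is not available for non-integral $r$: Lusztig families are \emph{defined} via constructible representations and $\bj$-induction, and their combinatorial description by contents of charged symbols is a theorem which the paper (Theorem \ref{typeB_lusztig_fams}, following \cite[\S 22]{LusztigUnequalparameters}) records only for $c_1 = m\kappa$ with $m$ a non-negative integer --- precisely the case excluded here. Martino's residue criterion (Theorem \ref{typeB_cm_families}) does hold for all parameters, but it describes the \emph{Calogero--Moser} families; transporting it to the Lusztig side at these parameters via Corollary \ref{cor:LusztigCMtypeB} would be circular, since that corollary is itself deduced from the present Proposition in the non-integral case. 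So using the charged-content description to prove that the families are singletons assumes the conclusion: the implication ``same family $\Rightarrow$ same charged content'' is the actual work, and establishing it requires analysing the constructible representations directly. Your disjoint-coset observation correctly explains \emph{why} integral $r$ is the degenerate case, but it does not substitute for that analysis. A secondary gap, which you flag yourself, is the final promotion from singleton families to $\Con_\bc(W)=\Irr(W)$: knowing that every constructible representation is a multiple $m\cdot\lambda$ of a single irreducible gives neither $m=1$ nor that $\lambda$ itself is constructible.

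The paper's proof takes a different route that avoids all of this. It quotes \cite[Proposition 22.25]{LusztigUnequalparameters}, which gives $\Con_\bc W = \Irr W$ whenever $c_1/\kappa$ is rational and not a non-negative integer, and reduces the irrational case to the rational one by a perturbation argument: the explicit form of the Schur elements shows that for $\kappa=1$ and $c_1$ irrational no cancellation occurs among the exponents $r_i^\lambda + s_i^\lambda c_1$, so $\ba_\lambda = \min_i \bigl( r_i^\lambda + s_i^\lambda c_1 \bigr)$; one then chooses a nearby rational non-integral $c_1'$ preserving every equality and inequality among the $\ba$-invariants of all irreducibles of all parabolic subgroups, whence $\Con_{\bc}W = \Con_{\bc'}W = \Irr W$. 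To salvage your approach you would in effect have to reprove Proposition 22.25; citing it (and then handling irrational parameters as above) is the intended argument.
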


\begin{proof}
By Lemma \ref{lusztig_rescaling}, we may assume that $\kappa = 1$. The statement of the proposition has been shown by Lusztig \cite[Proposition 22.25]{LusztigUnequalparameters} when $c_1$ is rational. We reduce the general case to the rational case. Let $\lambda$ be an irreducible representation of a parabolic subgroup $W'$ of $W$. The explicit formula given for the Schur element $\mathbf{s}_{\lambda}$, see \cite[Theorem 10.5.2]{GeckPfeiffer} and \cite[Lemma 22.12]{LusztigUnequalparameters}, shows that there exist finitely many integers $r^{\lambda}_1, r^{\lambda}_2, \dots $, $s^{\lambda}_1, s^{\lambda}_2, \dots$, with $(r_i,s_i) \neq (r_j,s_j)$ for $i \neq j$, and rational numbers $f^{\lambda}_1,f^{\lambda}_2, \dots$ such that 
$$
\mathbf{s}_{\lambda} = \sum_i f^{\lambda}_i q^{2 \left( r^{\lambda}_i \kappa + s^{\lambda}_i c_1 \right)}. 
$$
Recall that $\kappa = 1$ and $c_1 \notin \Q$. We claim that $\mathbf{a}_{\lambda} = \min \{ r^{\lambda}_i \kappa + s^{\lambda}_i c_1 \ | \ i = 1, \dots \}$. Note that this is not the case in general since there might be some cancellation between the $f^{\lambda}_i$ when $r^{\lambda}_i \kappa + s^{\lambda}_i c_1 = r^{\lambda}_j \kappa + s^{\lambda}_j c_1$ for some $i \neq j$. However, in our case the fact that $\kappa = 1$ and $c_1$ is irrational implies that $r^{\lambda}_i \kappa + s^{\lambda}_i c_1 = r^{\lambda}_j \kappa + s^{\lambda}_j c_1$ if and only if $r^{\lambda}_i = r^{\lambda}_j$ and $s^{\lambda}_i = s^{\lambda}_j$, i.e. $i = j$. The claim follows. 

The definition of $\mathbf{j}$-induction and constructible representations makes it clear that if we are given two parameters $\bc$ and $\bc'$ such that 
\begin{equation}\label{eq:aeqaprime}
\mathbf{a}_{\lambda} =\mathbf{a}_{\mu} \quad \Leftrightarrow \quad \mathbf{a}_{\lambda}' =\mathbf{a}_{\mu}'
\end{equation}
for all irreducible representations $\lambda$ and $\mu$ of all parabolic subgroups of $W$, then $\Con_{\bc} W = \Con_{\bc'} W$. Since there are only finitely many $r^{\lambda}_i$ and $s^{\lambda}_j$ as $\lambda$ ranges over all irreducible representations of all parabolic subgroups of $W$, one can easily choose a rational number $c_1' > 0$ with $|c_1 - c_1' |$ very small and $c_1'$ not an integer such that 
$$
r^{\lambda}_i + s^{\lambda}_i c_1 < r^{\mu}_j + s^{\mu}_j c_1 \quad \Leftrightarrow \quad  r^{\lambda}_i + s^{\lambda}_i c_1' < r^{\mu}_j + s^{\mu}_j c_1'
$$
for all $\lambda$, $\mu$ and $i,j$. In particular, for $\bc' = (c_1',1)$ equation (\ref{eq:aeqaprime}) holds. Moreover, since $c_1'$ is rational, every constructible representation in $\Con_{\bc'} W$ is irreducible by \cite[Proposition 22.25]{LusztigUnequalparameters}. Hence $\Con_\bc W = \Irr W$, too. 
\end{proof}

We can thus restrict to the case $c_1 = m \kappa$ for some $m \in \bbN$, which by Lemma \ref{lusztig_rescaling} is the same as $\bc = (m,1)$. The Lusztig families in this case have been described by Lusztig \cite[\S22]{LusztigUnequalparameters} using the notion of symbols. We review the notion of symbols for general integral parameters.

\begin{leftbar}
\vspace{6pt}
We assume that $\kappa > 0$ and that $\bc = (c_1,\kappa) \geq 0$ is \textit{integral}.
\vspace{4pt}
\end{leftbar}

We can uniquely write $c_1 = m \kappa + r$ for some $m,r \in \bbN_{\geq 0}$ with $r < \kappa$. Fix an arbitrary integer $N > 0$. A \word{symbol} for $B_n$ with respect to $N$ at parameter $\bc$ is a list of the form
\begin{equation} \label{symbol_absract}
S = \left( \begin{array}{ccccccc}
\beta_1 & \beta_2 & \cdots & \cdots & \cdots & \beta_{N+m-1} & \beta_{N+m} \\
 \gamma_1 & \gamma_2 & \cdots & \gamma_{N} & 
\end{array} \right) \;,
\end{equation}
where $0 \le \beta_1 < \cdots < \beta_{N+m}$ are congruent to $r$ modulo $\kappa$ and $0 \le \gamma_1 < \cdots < \gamma_{N}$ are divisible by $\kappa$, such that
\begin{equation} \label{symbol_equation}
\sum_{i} \beta_i + \sum_j \gamma_j = n\kappa + \kappa N^2 + N(c_1-\kappa) + \kappa {m \choose 2} + rm \;.
\end{equation} 
Let $\Sy_{\bc;n}^N$ denote the set of all such symbols. We have an embedding $\Sy_{\bc;n}^N \hookrightarrow \Sy_{\bc;n}^{N+1}$ sending a symbol $S$ as above to the symbol
\begin{equation}
S[1] = \left( \begin{array}{cccccc}
r & \beta_1 + \kappa & \beta_2 + \kappa & \cdots & \cdots & \beta_{N+m} + \kappa \\
0 &  \gamma_1  + \kappa & \gamma_2 + \kappa & \cdots & \gamma_{N} + \kappa & 
\end{array} \right) \;.
\end{equation}
For $i \in \bbN$ we denote by $S \lbrack i \rbrack$ the $i$-fold composition of the above map applied to $S$ and call this the \word{shift} of $S$ by $i$. 
Let $\Sy_{\bc;n}$ be the direct limit of the $\Sy_{\bc;n}^N$ with respect to the above maps. We say that $N$ is \word{large enough} for a bipartition $\blambda = (\lambda^{(0)},\lambda^{(1)})$ of $n$ if $\lambda^{(0)}_{N+m+1} = 0 = \lambda^{(1)}_{N+1}$. We then define the corresponding symbol $\Sy_{\bc;n}^N(\blambda) = {\beta \choose \gamma} \in \Sy_{\bc;n}^N$ via 
\begin{equation}
\begin{array}{ll}
\beta_i \dopgleich \kappa\left( \lambda^{(0)}_{N+m-i+1}+i-1 \right) + r & \tn{for } i \in \lbrack 1,N+m \rbrack \\
\gamma_j \dopgleich \kappa \left( \lambda^{(1)}_{N-j+1}+j-1 \right) & \tn{for } j \in \lbrack 1,N \rbrack \;.
\end{array}
\end{equation}
If $N$ is large enough for all bipartitions of $n$, e.g., $N \geq n$, the map $\blambda \mapsto \Sy_{\bc;n}^N(\blambda)$ defines a bijection between the set $\mathcal{P}_2(n)$ of bipartitions of $n$ and $\Sy_{\bc;n}^N$. For a symbol $S$ we then denote by $\pi_S$ the representation of $W$ labeled by the bipartition corresponding to $S$. 
The \word{content} $\ct(S)$ of a symbol $S \in \Sy_{\bc;n}^N$ is the multiset of its entries, i.e., the list of entries with repetitions but ignoring positions. We can, and will, equally well write the content as a polynomial $\sum_{i \geq 0} n_i x^i$, where $n_i$ denotes the multiplicity of the entry $i$ in $S$. It is clear from the definition of a symbol that it has at least $N+m$ distinct entries and the multiplicity of an entry in a symbol is at most~$2$.

\begin{example} \label{typeB_symbol_example}
Let $\blambda = \left( \ \ydiagram{2,1} \ , \ \ydiagram{1} \ \right)$ and $(c_1,\kappa) = (1,1)$. Then
\[
\Sy_{(1,1);4}^3({\blambda}) = \begin{pmatrix} 0 & 1 &  3 & 5 \\ 0 & 1 & 3 \end{pmatrix} \in \Sy_{(1,1);4}^3 \;.
\]
This symbol is in fact the shift of $\begin{pmatrix} 1 & 3 \\ 1 \end{pmatrix} \in \Sy_{(1,1);4}^1$ by $2$.
\end{example}

\begin{theorem}[Lusztig] \label{typeB_lusztig_fams}
Let $\bc = (c_1,\kappa) \geq 0$ with $\kappa > 0$ and $c_1 = m\kappa$ for some $m \in \bbN$. Then two bipartitions $\blambda$ and $\bmu$ lie in the same Lusztig $\bc$-family if and only if $\ct(\Sy_{(m,1);n}^N({\blambda})) = \ct(\Sy_{(m,1);n}^N({\bmu}))$ for $N$ sufficiently large. 
\end{theorem}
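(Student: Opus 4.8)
The plan is to treat this statement as a translation of Lusztig's classification of families for $W(B_n)$ at unequal parameters, \cite[\S22]{LusztigUnequalparameters}, into the symbol conventions fixed in this subsection; the theorem is \emph{his}, and the real work is matching normalizations. First I would reduce to $\kappa = 1$. Since $\bc = (c_1,\kappa) = (m\kappa,\kappa) = \kappa\cdot(m,1)$ and $\kappa > 0$, Lemma \ref{lusztig_rescaling} gives $\lus_\bc(W) = \lus_{(m,1)}(W)$, so we may assume $\kappa = 1$, whence $r = 0$ and $c_1 = m$. In this normalization the entries of $\Sy_{(m,1);n}^N(\blambda)$ read $\beta_i = \lambda^{(0)}_{N+m-i+1} + i - 1$ and $\gamma_j = \lambda^{(1)}_{N-j+1} + j - 1$, which are exactly the two strictly increasing $\beta$-sets attached to $\lambda^{(0)}$ and $\lambda^{(1)}$, assembled into a symbol of defect $m$.

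Next I would identify the bijection $\blambda \mapsto \Sy_{(m,1);n}^N(\blambda)$ with Lusztig's parametrization of $\Irr W$ by symbols. Here one checks that the normalizing identity (\ref{symbol_equation}), which for $\kappa = 1$, $r = 0$ reads $\sum_i \beta_i + \sum_j \gamma_j = n + N^2 + N(m-1) + {m \choose 2}$, agrees with the total attached to a symbol of rank $n$ and defect $m$ in \emph{loc.\ cit.} This constant is precisely $\binom{N+m}{2} + \binom{N}{2}$, the value of $\sum\beta + \sum\gamma$ on the symbol of the empty bipartition, and adding a box to a row raises one entry by $1$; so the identity merely records that the symbol of $\blambda$ has total shifted up by $|\lambda^{(0)}| + |\lambda^{(1)}| = n$, and is automatically satisfied. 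Lusztig then shows (see also the account via charged residues in \cite[\S3A]{BroueKim}, from which the charge $(0,c_1,-\kappa)$ here is taken) that two bipartitions lie in the same $\bc$-family if and only if the associated symbols are \emph{similar}, i.e.\ carry the same multiset of entries. This is exactly the equality of contents $\ct(\Sy_{(m,1);n}^N(\blambda)) = \ct(\Sy_{(m,1);n}^N(\bmu))$.

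Finally I would verify that the criterion does not depend on the chosen $N$, so that ``$N$ sufficiently large'' is unambiguous. For $\kappa = 1$, $r = 0$ the shift $S \mapsto S[1]$ inserts a new minimal entry $0$ into each row and translates all remaining entries by $1$; writing the content as a polynomial one gets $\ct(S[1]) = x\cdot\ct(S) + 2$. The map $P \mapsto xP + 2$ on $\bbN$-polynomials is injective, so $\ct(S) = \ct(S')$ holds if and only if $\ct(S[1]) = \ct(S'[1])$. Hence the content-equality relation is preserved along the direct limit defining $\Sy_{(m,1);n}$ and coincides with the one computed for any single $N \geq n$. Combined with the previous paragraph, the theorem follows from \cite[\S22]{LusztigUnequalparameters}.

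I expect the main obstacle to be purely book-keeping rather than conceptual: reconciling the symbol conventions adopted here (those of \cite{BroueKim}, with the specific shift map and charge) against Lusztig's conventions for defect, rank and similarity, and in particular confirming that ``same content'' reproduces \emph{exactly} Lusztig's similarity classes, neither coarser nor finer. Once this dictionary between the two families of symbols is pinned down, everything reduces to Lusztig's theorem, with only the elementary verification of (\ref{symbol_equation}) and of shift-invariance to supply.
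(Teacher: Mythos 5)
Your proposal is correct and follows essentially the same route as the paper: reduce to $\bc = (m,1)$ via Lemma \ref{lusztig_rescaling} and then invoke Lusztig's description of the families via symbols (the paper cites \cite[Proposition 22.24]{LusztigUnequalparameters} together with \cite[Lemma 22.22]{LusztigUnequalparameters}). The additional book-keeping you supply—matching the normalization (\ref{symbol_equation}) and checking that content-equality is stable under the shift $S \mapsto S[1]$—is left implicit in the paper but is consistent with its argument.
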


\begin{proof}
Because of Lemma \ref{lusztig_rescaling} we can assume that $\bc=(m,1)$. Then the description of the $\bc$-constructible characters in \cite[Proposition 22.24]{LusztigUnequalparameters} along with \cite[Lemma 22.22]{LusztigUnequalparameters} proves the claim.
\end{proof}

\subsection{Lusztig families in the degenerate case} \label{typeB_lusztig_fam_deg}

The description of the Lusztig families in the degenerate case is given in \cite[Example 7.13]{Geck-Iancu-ordering} and follows from the general theory in \cite[\S2.4.3]{Geck.M;Jacon.N11Representations-of-H}.

\begin{lemma} \label{lusztig_fam_typeB_degenerate}
If $\kappa = 0$, then $\blambda = (\lambda^{(0)}, \lambda^{(1)}) \in \mathcal{P}_2(n)$ and $\bmu =(\mu^{(0)},\mu^{(1)}) \in \mathcal{P}_2(n)$ lie in the same Lusztig $\bc$-family if and only if $| \lambda^{(1)} | = |\mu^{(1)}|$. In particular, there are precisely $n+1$ Lusztig families $\mathcal{F}_{0;n}^{\mrm{deg}},\ldots,\mathcal{F}_{n;n}^{\mrm{deg}}$ with
\[
\mathcal{F}_{i;n}^{\mrm{deg}} = \lbrace \blambda \in \mathcal{P}_2(n) \mid |\lambda^{(0)}| = i \rbrace \;.
\]
\end{lemma}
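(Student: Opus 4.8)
The plan is to read the families off Lusztig's $\ba$-function, which is completely explicit in this degenerate regime, and then to establish connectedness inside each $\ba$-fibre by a $\bj$-induction argument. Since $\kappa = 0$ and, under the standing hypotheses $\bc \geq 0$ and $\bc \neq 0$, I have $c_1 > 0$. The first step is the structural observation, parallel to \S\ref{typeB_isomorphisms}: at $\kappa = 0$ every transposition $s$ has $q_s = q^{0} = 1$, so $T_s^2 = 1$ and the transpositions span the group algebra of $\fS_n$, while $T_{\eps_1(-1)}$ generates the rank-one Hecke algebra $\mathcal{H}_{c_1}(\Z_2)$ with parameter $q^{c_1}$. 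Comparing dimensions this gives $\mathcal{H}_{(c_1,0)}(B_n) \simeq \mathcal{H}_{c_1}(\Z_2)^{\otimes n} \rtimes \fS_n$, with $\fS_n$ acting through the group algebra; the simple module labelled by $\blambda$ places the non-trivial $\mathcal{H}_{c_1}(\Z_2)$-module on $|\lambda^{(1)}|$ of the tensor factors, decorated by $\pi_{\lambda^{(0)}}$ and $\pi_{\lambda^{(1)}}$ on the two multiplicity spaces.

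From here I would compute $\ba_{\blambda} = c_1\,|\lambda^{(1)}|$: the $\ba$-invariant is additive along such induced-product decompositions (multiplicativity of Schur elements, cf. \cite[\S7]{GeckPfeiffer}), the two simple $\mathcal{H}_{c_1}(\Z_2)$-modules have $\ba$-values $0$ and $c_1$ by a rank-one check, and the symmetric-group factors carry parameter $q^{\kappa}=1$ and hence contribute $\ba = 0$ as in Example \ref{lusztig_0}. To see that members of one family share $|\lambda^{(1)}|$ I would then argue that every $\bc$-constructible representation of $B_n$ is homogeneous for $|\lambda^{(1)}|$: a $\bj$-induced module has constant $\ba$, hence constant $|\lambda^{(1)}|$, and tensoring by $\mrm{sgn}_W$ interchanges the two components of $\blambda$ (up to conjugation of partitions) and so sends $|\lambda^{(1)}|$ uniformly to $n-|\lambda^{(1)}|$; an induction on the rank over the parabolics $\fS_\mu \times B_m$ then yields homogeneity in general. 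Two irreducibles joined by an edge of the constructible graph therefore have equal $|\lambda^{(1)}|$, which gives one direction and already shows that the families refine the $n+1$ fibres $\mathcal{F}_{i;n}^{\mrm{deg}}$.

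The hard part will be the converse --- that each nonempty fibre $\mathcal{F}_{i;n}^{\mrm{deg}}$ is a \emph{single} family. Since tensoring by $\mrm{sgn}_W$ is an automorphism of the constructible graph interchanging the fibres $j$ and $n-j$, it suffices to connect the fibres with $j = |\lambda^{(1)}| < n$, where the parabolic $\fS_{n-j} \times B_j$ is proper. I would do this by induction on $n$: using the branching rule (\ref{eq:typeBind}) and the $\ba$-truncation, $\bj$-induction from $\fS_{n-j} \times B_j$ sends $\pi_{\lambda^{(0)}} \boxtimes \pi_{(\emptyset,\mu)}$ to the single irreducible $\pi_{(\lambda^{(0)},\mu)}$; feeding in the regular representation of $\fS_{n-j}$ (which has $\ba = 0$ and contains every $\pi_{\lambda^{(0)}}$) and, on the second factor, the top family of $B_j$ (connected by the inductive hypothesis and meeting every $\pi_{(\emptyset,\mu)}$ with $|\mu|=j$ by the key fact (\ref{key_fact})), the resulting $\bc$-constructible representations of $B_n$ jointly connect all $\pi_{(\lambda^{(0)},\mu)}$ with $|\mu| = j$. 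Together with the second step this pins down the families as exactly $\mathcal{F}_{0;n}^{\mrm{deg}},\dots,\mathcal{F}_{n;n}^{\mrm{deg}}$. The delicate point throughout is that the symbol machinery of \S\ref{typeB_lusztig_families_section} is unavailable at $\kappa = 0$, so this boundary parameter genuinely requires the separate analysis; the same families are obtained by the explicit computation of \cite[Example 7.13]{Geck-Iancu-ordering}, built on \cite[\S2.4.3]{Geck.M;Jacon.N11Representations-of-H}. Finally I would record the agreement with the Calogero--Moser families of Lemma \ref{cm_families_typeB_degenerate}.
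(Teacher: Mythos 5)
Your argument is correct, but it is genuinely different from what the paper does: the paper offers no proof of this lemma at all, simply quoting the result from \cite[Example 7.13]{Geck-Iancu-ordering} and the general theory of \cite[\S 2.4.3]{Geck.M;Jacon.N11Representations-of-H}, whereas you reconstruct a self-contained argument. Your two main ingredients are both sound. The formula $\ba_{\blambda}=c_1|\lambda^{(1)}|$ is exactly the value the paper later quotes from Geck--Iancu in \S\ref{typeB_cuspidal_lusztig_deg_sect}; your derivation via the decomposition $\mathcal{H}_{(c_1,0)}(B_n)\simeq\mathcal{H}_{c_1}(\Z_2)^{\otimes n}\rtimes\fS_n$ is essentially the route taken in \cite[\S 2.4.3]{Geck.M;Jacon.N11Representations-of-H}, though you should note that the additivity of $\ba$ across this decomposition requires the isomorphism to be compatible with the symmetrizing forms (this is what the cited reference actually checks), and your homogeneity induction tacitly uses that constructible representations of a product of Coxeter groups are the outer products of constructibles of the factors. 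The key identity in your connectedness step, $\bj_{\fS_{n-j}\times B_j}^{B_n}\,\pi_{\lambda^{(0)}}\boxtimes\pi_{(\emptyset,\mu)}=\pi_{(\lambda^{(0)},\mu)}$, is precisely equation (\ref{typeB_deg_no_cusp_claim}), which the paper proves (via Littlewood--Richardson coefficients) only afterwards, in showing that no degenerate Lusztig family is cuspidal; so your proof in effect front-loads that computation and combines it with the regular representation of $\fS_{n-j}$ (Example \ref{lusztig_0}), the key fact (\ref{key_fact}), and the sign twist to connect each fibre $\mathcal{F}_{i;n}^{\mrm{deg}}$. What your approach buys is independence from the external references at the cost of redoing a computation the paper performs anyway; what the paper's citation buys is brevity and the guarantee that the form-compatibility and product-of-constructibles points are handled in the literature.
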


\subsection{Calogero–Moser families vs. Lusztig families} \label{typeB_cm_vs_lus}

We can now prove Theorem \ref{thm:cm_equal_lusztig} for type $B_n$.

\begin{corollary} \label{cor:LusztigCMtypeB}
For type $B_n$ and any parameter $\bc \geq 0$ the Lusztig $\bc$-families are equal to the Calogero–Moser $\bc$-families.
\end{corollary}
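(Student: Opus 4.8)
The plan is to split into the three cases dictated by Theorem \ref{thm:BSympleaves}\ref{thm:BSympleaves:singular} and, in the essential case, to reduce to a purely combinatorial comparison between the residue description of the Calogero--Moser families (Theorem \ref{typeB_cm_families}) and the symbol description of the Lusztig families (Theorem \ref{typeB_lusztig_fams}). Since $\bc = (c_1,\kappa) \geq 0$, the easy two cases are as follows. If $\kappa = 0$ (degenerate), then Lemma \ref{cm_families_typeB_degenerate} and Lemma \ref{lusztig_fam_typeB_degenerate} describe both partitions of $\mathcal{P}_2(n)$ by the \emph{same} condition $|\lambda^{(1)}| = |\mu^{(1)}|$, so they agree. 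If $\kappa > 0$ but $c_1/\kappa \notin \bbN$, then by Theorem \ref{thm:BSympleaves}\ref{thm:BSympleaves:singular} the space $\cX_\bc(W)$ is smooth, so every $\bbC^*$-fixed point is smooth and hence every Calogero--Moser family is a singleton by Theorem \ref{singleton_smooth}; on the Lusztig side Proposition \ref{lusztig_fam_typeB_non_multiple} shows the families are singletons too. Both partitions are then the partition into singletons, and again agree.

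The remaining case is $\kappa > 0$ with $c_1 = m\kappa$ for some $m \in \bbN$, which I handle uniformly (covering both the singular subcase $0 \le m \le n-1$ and the smooth subcase $m \ge n$). Using Lemma \ref{cm_scaling} and Lemma \ref{lusztig_rescaling} to rescale by $1/\kappa > 0$, I may assume $\bc = (m,1)$, so $\kappa = 1$, $r = 0$, and the charge of Theorem \ref{typeB_cm_families} is $\hat{\bc} = (0,m,-1)$. Writing $\mathcal{B}_L(\lambda) = \{\lambda_i + L - i : 1 \le i \le L\}$ for the beta-set of a partition $\lambda$ with $\ell(\lambda) \le L$, a direct reindexing of the definition of $\Sy_{\bc;n}^N(\blambda)$ identifies its content, written as a polynomial, with
\[
\ct\big(\Sy_{\bc;n}^N(\blambda)\big)(x) = \sum_{b \in \mathcal{B}_{N+m}(\lambda^{(0)})} x^b + \sum_{b \in \mathcal{B}_{N}(\lambda^{(1)})} x^b .
\]
The key step is the beta-set identity
\[
\sum_{b \in \mathcal{B}_L(\lambda)} x^b = \sum_{k=0}^{L-1} x^k + (x-1)\,x^{L-1}\,\Res_\lambda(x),
\]
valid for any partition $\lambda$ and any $L \ge \ell(\lambda)$, which I would prove by induction on the number of boxes of $\lambda$ (equivalently, by the standard bead-moving computation on the Maya diagram). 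Applying it to $\lambda^{(0)}$ with $L = N+m$ and to $\lambda^{(1)}$ with $L = N$, and using that $x^m\,\Res_{\blambda}^{\hat{\bc}}(x^{-1}) = x^m\Res_{\lambda^{(0)}}(x) + \Res_{\lambda^{(1)}}(x)$ follows at once from the definition of the charged residue at charge $(0,m,-1)$, I obtain
\[
\ct\big(\Sy_{\bc;n}^N(\blambda)\big)(x) = C_N(x) + (x-1)\,x^{N+m-1}\,\Res_{\blambda}^{\hat{\bc}}(x^{-1}),
\]
where $C_N(x) = \sum_{k=0}^{N+m-1} x^k + \sum_{k=0}^{N-1} x^k$ is independent of $\blambda$.

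Since the factor $(x-1)x^{N+m-1}$ is nonzero in the domain $\bbZ[x,x^{-1}]$ and $C_N(x)$ does not depend on $\blambda$, the last display shows that $\ct(\Sy_{\bc;n}^N(\blambda)) = \ct(\Sy_{\bc;n}^N(\bmu))$ if and only if $\Res_{\blambda}^{\hat{\bc}}(x^{-1}) = \Res_{\bmu}^{\hat{\bc}}(x^{-1})$, which by the substitution $x \mapsto x^{-1}$ is equivalent to $\Res_{\blambda}^{\hat{\bc}}(x) = \Res_{\bmu}^{\hat{\bc}}(x)$. Comparing with Theorem \ref{typeB_cm_families} and Theorem \ref{typeB_lusztig_fams}, this is exactly the assertion that the Lusztig and Calogero--Moser $\bc$-families coincide. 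I expect the main obstacle to be the bookkeeping in setting up this correspondence correctly -- in particular reconciling the variable flip $x \leftrightarrow x^{-1}$ and keeping track of the offset $m$, which shifts the top row of the symbol, consistently between the beta-set and content descriptions. Once the two identities above are in place, the conclusion is a one-line cancellation.
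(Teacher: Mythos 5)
Your proof is correct and follows essentially the same route as the paper: the identical three-way case split (degenerate $\kappa=0$ via the two lemmas, non-integral $c_1/\kappa$ via singletons on both sides, and integral $c_1=m\kappa$ reduced after rescaling to matching the symbol-content description of Theorem \ref{typeB_lusztig_fams} against the charged-residue description of Theorem \ref{typeB_cm_families}). The only difference is that the paper cites \cite[Proposition 3.4]{BroueKim} for the equivalence $\ct(\Sy_{\bc;n}^N(\blambda)) = \ct(\Sy_{\bc;n}^N(\bmu)) \Leftrightarrow \Res_{\blambda}^{\hat{\bc}}(x) = \Res_{\bmu}^{\hat{\bc}}(x)$, whereas you prove it directly via the (correct) beta-set identity — a self-contained verification of the same combinatorial fact rather than a different argument.
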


\begin{proof}
If $\kappa = 0$, the claim follows from Lemma \ref{cm_families_typeB_degenerate} and Lemma \ref{lusztig_fam_typeB_degenerate}. Now, assume that $\kappa \neq 0$. If $c_1 \neq m\kappa$ for all $m \in \bbN_{\geq 0}$, then we know from Theorem \ref{thm:BSympleaves}\ref{thm:BSympleaves:singular} and Proposition \ref{lusztig_fam_typeB_non_multiple} that both the Calogero–Moser $\bc$-families and the Lusztig $\bc$-families are singletons. So, suppose that $c_1 = m \kappa$ for some $m \in \bbN_{\geq 0}$. Because of Lemma \ref{lusztig_rescaling} and Lemma \ref{cm_scaling} we can assume that $\kappa = 1$. It follows from \cite[Proposition 3.4]{BroueKim} that $\ct(\Sy_{\bc;n}^N(\blambda)) = \ct(\Sy_{\bc;n}^N(\bmu))$ for $N$ large enough if and only if $\Res_{\blambda}^{\hat{\bc}}(x) = \Res_{\bmu}^{\hat{\bc}}(x)$. By Theorem \ref{typeB_cm_families} and Theorem \ref{typeB_lusztig_fams} this shows that $\Omega_\bc(W) = \lus_\bc(W)$.
\end{proof}

\subsection{Cuspidal Lusztig families in the non-degenerate case} \label{typeB_cuspidal_lusztig_section}

In the non-singular case, Proposition \ref{lusztig_fam_typeB_non_multiple} and Lemma \ref{singleton_lusztig_not_cuspidal} immediately imply the following result.

\begin{lemma}
If $\kappa > 0$ and there is no $m \in \bbN$ with $c_1 = m \kappa$, then there are no cuspidal Lusztig $\bc$-families.
\end{lemma}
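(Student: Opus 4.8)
The plan is to combine the two results cited immediately before the statement—Proposition \ref{lusztig_fam_typeB_non_multiple} and Lemma \ref{singleton_lusztig_not_cuspidal}—in a purely formal way; no new computation should be required, since the substantive work has already been done upstream.

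First I would invoke Proposition \ref{lusztig_fam_typeB_non_multiple}. Under the standing hypotheses $\kappa > 0$ and $c_1 \neq m\kappa$ for every $m \in \bbN$, that proposition yields the equality $\Con_\bc(W) = \Irr(W)$. In particular each Lusztig $\bc$-family is forced to be a singleton $\{\lambda\}$ with $\lambda \in \Irr(W)$, since distinct irreducible constructible characters can share no edge in the constructible graph.

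The second and decisive step is to observe that every such singleton fails the cuspidality criterion. Because $\Con_\bc(W) = \Irr(W)$, the unique element $\lambda$ of any family $\mc{F} = \{\lambda\}$ automatically lies in $\Con_\bc(W)$. Lemma \ref{singleton_lusztig_not_cuspidal} then applies verbatim and shows that $\mc{F}$ is not cuspidal. As this argument is uniform over all families, no cuspidal Lusztig $\bc$-family can exist, which is exactly the assertion.

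I expect no genuine obstacle here: the real content has already been absorbed into Proposition \ref{lusztig_fam_typeB_non_multiple}, whose proof handled the delicate point (the reduction from an arbitrary irrational $c_1$ to a nearby rational parameter, controlled through the comparison of $\ba$-invariants and Schur elements). Once that proposition is in hand, the present lemma is a one-line formal consequence. The only thing I would double-check is that the hypothesis ``$c_1 \neq m\kappa$ for all $m \in \bbN$'' matches precisely the hypothesis under which Proposition \ref{lusztig_fam_typeB_non_multiple} delivers $\Con_\bc(W) = \Irr(W)$—which it does—so that the chain of implications closes without gap.
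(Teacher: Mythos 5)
Your argument is exactly the paper's: the authors state that the lemma follows immediately from Proposition \ref{lusztig_fam_typeB_non_multiple} (which gives $\Con_\bc(W)=\Irr(W)$, hence singleton families) combined with Lemma \ref{singleton_lusztig_not_cuspidal}. The proposal is correct and requires no changes.
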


Lemma \ref{lusztig_rescaling} implies that we can restrict to the following situation.

\begin{leftbar}
\vspace{6pt}
We assume that $\kappa = 1$ and that $c_1 = m \kappa = m$ for some $m \in \bbN_{\geq 0}$. \vspace{4pt}
\end{leftbar}

At equal parameters, i.e. $m=1$, the cuspidal families are described by Lusztig in \cite[Section 8.1]{Lusztig-characters-reductive-groups}. It seems difficult to find an explicit description of the cuspidal families for unequal parameters. Therefore we derive the classification here in Theorem \ref{typeB_cuspidal_lusztig} using the results of \cite{LusztigUnequalparameters}. 

We choose $N$ sufficiently large for all bipartitions of $n$ (see \S\ref{typeB_lusztig_families_section}). For a Lusztig $\bc$-family $\mathcal{F}$ we denote by $\Sy_{\bc;n}^N(\mathcal{F})$ the set of symbols $\Sy_{\bc;n}^N(\blambda)$ with $\blambda \in \mathcal{F}$. Using the combinatorics of symbols, we can explicitly determine the size of $\mathcal{F}$.

\begin{lemma} \label{typeB_lusztig_family_size}
Let $\mathcal{F} \in \lus_\bc(W)$. Let $\sum_{i \geq 0} n_i x^i$ be the content of one (any) $S \in \Sy_{\bc;n}^N(\mathcal{F})$. Set $k_\mathcal{F} \dopgleich N - |\lbrace i \mid n_i = 2 \rbrace|$. Then $k_\mathcal{F} \geq 0$, $k_\mathcal{F}(k_\mathcal{F}+m) \leq n$, and $|\mathcal{F}| = {2k_\mathcal{F}+m \choose k_\mathcal{F}}$.
\end{lemma}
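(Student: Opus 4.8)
The plan is to reduce the entire statement to a combinatorial analysis of which symbols share a fixed content. By definition $\mathcal{F}$ corresponds, under the bijection $\blambda \mapsto \Sy_{\bc;n}^N(\blambda)$ between $\mathcal{P}_2(n)$ and $\Sy_{\bc;n}^N$, to the set of \emph{all} symbols in $\Sy_{\bc;n}^N$ whose content equals the fixed polynomial $\sum_i n_i x^i$; in particular $|\mathcal{F}|$ is the number of such symbols. Since here $\kappa = 1$ and $r = 0$, a symbol is simply a pair of strictly increasing sequences of nonnegative integers of lengths $N+m$ (top row) and $N$ (bottom row), and each value occurs with multiplicity $n_i \in \{0,1,2\}$. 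I will write $D \dopgleich |\{i \mid n_i = 2\}|$ for the number of doubled values, so that $k_\mathcal{F} = N - D$, and $U \dopgleich |\{i \mid n_i = 1\}|$ for the number of single values.

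First I would obtain $k_\mathcal{F} \geq 0$ together with the cardinality formula by describing how a content splits into two admissible rows. A doubled value must occur exactly once in each row (entries within a row are distinct), contributing $D$ entries to each; since the bottom row has only $N$ entries this already forces $D \leq N$, i.e. $k_\mathcal{F} \geq 0$. Each single value may be placed freely in either row, and once its destination is fixed the increasing order within rows is determined. Counting entries gives $2D + U = (N+m) + N$, hence $U = 2(N-D) + m = 2k_\mathcal{F} + m$; moreover the top row must receive $N+m-D = k_\mathcal{F}+m$ and the bottom row $N-D = k_\mathcal{F}$ of the single values. The number of admissible splittings is therefore the number of ways to select the $k_\mathcal{F}$ single values sent to the bottom row, giving $|\mathcal{F}| = {U \choose k_\mathcal{F}} = {2k_\mathcal{F}+m \choose k_\mathcal{F}}$.

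For the inequality I would combine the defining relation (\ref{symbol_equation}) with a lower bound on the total sum of the content. With $\kappa = 1$, $c_1 = m$ and $r = 0$, equation (\ref{symbol_equation}) reads $\Sigma \dopgleich \sum_i n_i\, i = n + N^2 + N(m-1) + {m \choose 2}$, expressing $n$ in terms of the weighted sum $\Sigma$. Independently, any multiset of nonnegative integers having $D$ values of multiplicity $2$ and $U$ values of multiplicity $1$ (all distinct) has weighted sum at least that of the cheapest such configuration, namely the one using the values $0,1,\dots,U+D-1$ and assigning multiplicity $2$ to the smallest $D$ of them. This minimal sum equals ${D \choose 2} + {U+D \choose 2}$; substituting $D = N-k_\mathcal{F}$ and $U+D = N+m+k_\mathcal{F}$ and simplifying gives ${N-k_\mathcal{F} \choose 2} + {N+m+k_\mathcal{F} \choose 2} = N^2 + N(m-1) + {m \choose 2} + k_\mathcal{F}(k_\mathcal{F}+m)$. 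Comparing this lower bound with the expression for $\Sigma$ yields $n \geq k_\mathcal{F}(k_\mathcal{F}+m)$.

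The point I would treat most carefully, and the genuine obstacle, is the independence of $k_\mathcal{F}$, $U$ and $|\mathcal{F}|$ from the choice of sufficiently large $N$ (this is what licenses the phrase ``one (any) $S$''). Passing from $N$ to $N+1$ applies the shift $S \mapsto S[1]$, which adds $1$ to every entry and prepends $0$ to both rows; the content thus acquires exactly one new doubled value (the pair of prepended zeros) while all other multiplicities are merely translated. Hence $D$ and $N$ both increase by $1$ and $U$ is unchanged, so $k_\mathcal{F} = N-D$, $U = 2k_\mathcal{F}+m$ and the binomial coefficient are all stable. The remaining verifications are routine: the cheapest configuration above is realizable as a genuine symbol of the correct row lengths (by construction), and the collapse of ${N-k_\mathcal{F} \choose 2} + {N+m+k_\mathcal{F} \choose 2}$ to the asserted form is an elementary computation I would record once.
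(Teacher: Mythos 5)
Your proof is correct and follows essentially the same route as the paper's: the same count of symbols sharing a fixed content (choosing which single entries go to the bottom row, equivalently an $N$-element subset containing the doubled entries), and the same minimization of the entry sum against equation (\ref{symbol_equation}) to get $k_\mathcal{F}(k_\mathcal{F}+m) \leq n$. The only addition is your explicit check that $k_\mathcal{F}$ is stable under the shift $N \mapsto N+1$, which the paper leaves implicit.
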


\begin{proof}
Let $S \in \Sy_{\bc;n}^N(\mathcal{F})$ and set $k \dopgleich k_\mathcal{F}$. The multiplicity of an entry in $S$ is at most equal to $2$ and $S$ has at least $N+m$ distinct entries. 
Since $S$ has exactly $2N+m$ entries with multiplicity, this immediately shows that $k \geq 0$. Let $E$ be the set (not multiset) of entries of $S$. By definition of $k$ we have $|E| = N+k+m$. Any $N$-element subset of $E$ containing the set $\lbrace i \mid n_i = 2 \rbrace$ defines a unique symbol in $\Sy_{\bc;n}^N(\mathcal{F})$, and in this way all symbols of $\Sy_{\bc;n}^N(\mathcal{F})$ are obtained. The number of such sets is equal to ${ N+k+m - (N-k) \choose N-(N-k)} = {2k+m \choose k }$. 

It remains to show that $k(k+m) \leq n$. Since $S \in \Sy_{\bc;n}^N$, equation (\ref{symbol_equation}) says that
\begin{equation} \label{lusztig_family_size_equation}
\sum_i \beta_i + \sum_j \gamma_j - N^2 - N(m-1) - {m \choose 2} = n \;, 
\end{equation}
where the $\beta_i$ and $\gamma_j$ are the entries of $S$. Hence, it suffices to show that the left  hand side is at least as big as $k(k+m)$. Recall that $N-k$ is equal to the number of pairs $(i,j)$ such that $\beta_i = \gamma_j$. Since $\beta_i,\gamma_j \geq 0$, the expression on the left is  minimal if $\beta_i=\gamma_i = i-1$ for $i=1,\ldots,N-k$ and the remaining $2k+m$ entries are in $\lbrace N-k,\ldots,N+k+m -1 \rbrace$. Then the left hand side of equation (\ref{lusztig_family_size_equation})  becomes
\[
\sum_{i=1}^{N-k} 2(i-1) + \sum_{i=N-k}^{N+k+m-1} i - N^2 - N(m-1) - {m \choose 2} = k(k+m)   \;.
\]
\end{proof}

\begin{definition}
A symbol $S \in \Sy_{\bc;n}^N$ with content $\sum_{i \geq 0} n_i x^i$ is called \word{cuspidal} if $n_i \ge n_{i+1}$ for all $i = 0,1, \dots$
\end{definition}

If $S$ is a cuspidal symbol then $S[1]$ is also cuspidal. 

Suppose that $n = k(k+m)$ for some $k \in \bbN_{>0}$. Then we have the box partition $(k^{k+m})$ of $n$. If $\lambda$ is any partition such that $\ell(\lambda) \le k+m$ and $\lambda_1 \le k$, then $\lambda \subs (k^{k+m})$. Adding zeros, we may assume that $\ell(\lambda) = k+m$. Define the partition $\lambda^\dagger$ by
\begin{align*}
\lambda^\dagger_i & \dopgleich \# \lbrace j \in \lbrack 1,k+m \rbrack \mid k-\lambda_{k+m+1-j} \geq i \rbrace \\ & \  = k+m+1 - \min \lbrace j \in \lbrack 1,k+m \rbrack \mid k-i \geq \lambda_j \rbrace \;.
\end{align*}
This is simply the transpose of the reverse of the complement $k-\lambda$ of $\lambda$ in the box $(k^{k+m})$. Since $|\lambda| + |\lambda^\dagger| = k(k+m) = n$, we get in this way a bipartition $(\lambda,\lambda^\dagger)$ of $n$. Let
\[
\mathcal{F}_{k,m}^{\mrm{cusp}} \dopgleich \lbrace (\lambda,\lambda^\dagger) \mid \ell(\lambda) \leq k+m, \lambda_1 \leq k \rbrace \;.
\]

\begin{example}
If $n=6$ and $m=1$, we can write $n=k(k+m)$ with $k=2$ and get
\begin{align*}
\mathcal{F}_{2,1}^{\mrm{cusp}} & = \left\lbrace \left( \  \ydiagram{1} \ , \ \ydiagram{3,2} \ \right), \left( \emptyset \ , \ \ydiagram{3,3} \ \right), \left( \ \ydiagram{2,2,2} \ , \ \emptyset \right), \left( \ \ydiagram{2} \ , \  \ydiagram{2,2} \ \right), \left( \ \ydiagram{2,1,1} \ , \ \ydiagram{2} \ \right), \right. \\ & \quad\quad \left.  \left( \ \ydiagram{2,2} \ , \ \ydiagram{1,1} \ \right), \left( \  \ydiagram{1,1} \ , \ \ydiagram{3,1} \ \right), \left( \ \ydiagram{2,2,1} \ , \ \ydiagram{1} \ \right), \left( \ \ydiagram{1,1,1} \ ,\  \ydiagram{3} \ \right), \left( \ \ydiagram{2,1} \ , \ \ydiagram{2,1} \ \right) \right\rbrace \;.
\end{align*}
If $n=3$ and $m=2$, we can write $n = k(k+m)$ with $k=1$ and get
\[
\mathcal{F}_{1,2}^{\mrm{cusp}} = \left\lbrace \left(  \emptyset \ , \ \ydiagram{3} \ \right), \left( \ \ydiagram{1} \ , \ \ydiagram{2} \ \right), \left( \ \ydiagram{1,1} \ , \ \ydiagram{1} \ \right), \left( \ \ydiagram{1,1,1} \ , \  \emptyset  \right) \right\rbrace \;.
\]
\end{example}

\begin{lemma} \label{typeB_cuspidal_family_minimal_reps}
Suppose that $n=k(k+m)$. The content of the symbol $\Sy_{\bc;n}^k(\lambda,\lambda^\dagger)$ is equal to $\sum_{i=0}^{2k+m-1} x^i$ for any $(\lambda,\lambda^\dagger) \in \mathcal{F}_{k,m}^{\mrm{cusp}}$. In particular, $\Sy_{\bc;n}^k(\lambda,\lambda^\dagger)$ is cuspidal and $\mathcal{F}_{k,m}^{\mrm{cusp}}$ is a Lusztig $\bc$-family with $|\mathcal{F}_{k,m}^{\mrm{cusp}}| = {2k+m \choose k}$.
\end{lemma}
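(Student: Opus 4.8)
The plan is to work entirely on the level of symbols, using the bijection $\blambda \mapsto \Sy_{\bc;n}^N(\blambda)$. Throughout we are under the standing assumptions $\kappa = 1$ and $c_1 = m$, so that $r = 0$. First I would verify that $N = k$ is large enough for each $(\lambda, \lambda^\dagger) \in \mathcal{F}_{k,m}^{\mrm{cusp}}$: by definition $\ell(\lambda) \le k+m$, while $\lambda^\dagger$, being the conjugate of a partition contained in the box $(k^{k+m})$, has at most $k$ parts, so $\lambda_{k+m+1} = 0 = \lambda^\dagger_{k+1}$. For $N = k$, $\kappa = 1$, $r = 0$ the two rows of $\Sy_{\bc;n}^k(\lambda, \lambda^\dagger)$ are, after reindexing, precisely the beta-sets
$$B = \{ \lambda_j + (k+m) - j \mid 1 \le j \le k+m \}, \qquad C = \{ \lambda^\dagger_l + k - l \mid 1 \le l \le k \}$$
of $\lambda$ (on $k+m$ parts) and of $\lambda^\dagger$ (on $k$ parts). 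Since $0 \le \lambda_j \le k$ and $0 \le \lambda^\dagger_l \le k+m$, both $B$ and $C$ lie in $\{0, 1, \dots, 2k+m-1\}$, and $|B| + |C| = 2k+m$; hence the asserted content $\sum_{i=0}^{2k+m-1} x^i$ is equivalent to the disjointness $B \cap C = \emptyset$.

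Establishing this complementation is the heart of the argument and the main obstacle. I would first unwind the definition of $\lambda^\dagger$ to the form $\lambda^\dagger_i = \#\{ l \in [1,k+m] \mid \lambda_l \le k-i \}$, which for $1 \le i \le k$ becomes $\lambda^\dagger_i = (k+m) - \lambda'_{k-i+1}$, where $\lambda'$ denotes the conjugate of $\lambda$. Feeding this into $C$ (and setting $b = k-i+1$) gives $C = \{ (k+m) + (b - \lambda'_b - 1) \mid 1 \le b \le k \}$, while directly $B = \{ (k+m) + (\lambda_j - j) \mid 1 \le j \le k+m \}$. Now invoke the elementary particle–hole duality $\bbZ = A \sqcup \bar{A}$ for partitions, where $A = \{ \lambda_i - i \mid i \ge 1 \}$ and $\bar{A} = \{ j - \lambda'_j - 1 \mid j \ge 1 \}$. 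Because $\lambda$ has at most $k+m$ parts and $\lambda_1 \le k$, one checks that inside the window $[-(k+m), k-1]$ the set $A$ contributes exactly $\{\lambda_j - j \mid 1 \le j \le k+m\}$ (the remaining elements of $A$ lying strictly below $-(k+m)$), and $\bar{A}$ contributes exactly $\{ j - \lambda'_j - 1 \mid 1 \le j \le k \}$ (the remaining elements, indexed by $j > k$ where $\lambda'_j = 0$, being $\ge k$). Intersecting the disjoint union with this window and translating by $k+m$ yields $B \sqcup C = \{0, 1, \dots, 2k+m-1\}$, proving the content formula. As the resulting multiplicities satisfy $n_i = 1$ for $i < 2k+m$ and $n_i = 0$ otherwise, the sequence $(n_i)$ is weakly decreasing and the symbol is cuspidal.

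It remains to identify $\mathcal{F}_{k,m}^{\mrm{cusp}}$ as a full Lusztig family. Since all its members share the common symbol content just computed, Theorem \ref{typeB_lusztig_fams} places them in a single Lusztig family $\mathcal{F} \supseteq \mathcal{F}_{k,m}^{\mrm{cusp}}$, and I would finish by a cardinality comparison. Because $\lambda \mapsto (\lambda, \lambda^\dagger)$ is injective with $\lambda$ ranging over all partitions in the $(k+m) \times k$ box $(k^{k+m})$, we get $|\mathcal{F}_{k,m}^{\mrm{cusp}}| = {2k+m \choose k}$. On the other hand, applying Lemma \ref{typeB_lusztig_family_size} to $\mathcal{F}$ with the above content gives $k_\mathcal{F} = N - |\{i \mid n_i = 2\}| = k$ — a value one confirms is unchanged under the shift $S \mapsto S[1]$, hence independent of the level at which the content is read — so that $|\mathcal{F}| = {2k_\mathcal{F}+m \choose k_\mathcal{F}} = {2k+m \choose k}$. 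Being a finite subset of $\mathcal{F}$ of the same cardinality, $\mathcal{F}_{k,m}^{\mrm{cusp}}$ equals $\mathcal{F}$, which is therefore a Lusztig $\bc$-family of size ${2k+m \choose k}$.
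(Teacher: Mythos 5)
Your proof is correct, and it reaches the paper's conclusion by a genuinely different route for the core combinatorial step. The endgame is the same as the paper's: place $\mathcal{F}_{k,m}^{\mrm{cusp}}$ inside a single Lusztig family $\mathcal{F}$ via Theorem \ref{typeB_lusztig_fams}, count $|\mathcal{F}_{k,m}^{\mrm{cusp}}| = {2k+m \choose k}$ as the number of subpartitions of the box $(k^{k+m})$, and force $\mathcal{F} = \mathcal{F}_{k,m}^{\mrm{cusp}}$ by the size formula of Lemma \ref{typeB_lusztig_family_size}. Where you diverge is the content computation. The paper verifies directly that the symbol contains the entry $0$, that all entries are bounded by $2k+m-1$, and that $\beta_i \neq \gamma_j$ for all $i,j$, the last point by assuming an equality $\beta_{k+m-i+1} = \gamma_{k-j+1}$, unwinding the definition of $\lambda^\dagger$, and deriving contradictory inequalities in the two cases $k-j \geq \lambda_i$ and $k-j \leq \lambda_i$. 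You instead identify the two rows as the beta-sets of $\lambda$ on $k+m$ parts and of $\lambda^\dagger$ on $k$ parts, rewrite $\lambda^\dagger$ in terms of the conjugate $\lambda'$, and invoke the complementation $\bbZ = \{\lambda_i - i\} \sqcup \{j - \lambda'_j - 1\}$ restricted to the window $\lbrack -(k+m), k-1 \rbrack$. This makes the disjointness of the two rows transparent rather than the outcome of a case analysis, at the price of relying on the (standard, but here unproved) Maya-diagram complementation identity. Your explicit remark that $k_{\mathcal{F}}$ is invariant under the shift $S \mapsto S\lbrack 1 \rbrack$ — needed to reconcile the content computed at level $N = k$ with the ``$N$ sufficiently large'' hypothesis in Theorem \ref{typeB_lusztig_fams} and Lemma \ref{typeB_lusztig_family_size} — addresses a point the paper leaves implicit.
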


\begin{proof}
First, note that $N=k$ is large enough for any $(\lambda, \lambda^\dagger) \in \mathcal{F}_{k,m}^{\mrm{cusp}}$. The symbol $\Sy_{\bc;n}^k(\lambda,\lambda^\dagger) ={\beta \choose \gamma}$ is then given by
\[
\begin{array}{ll}
\beta_i \dopgleich  \lambda_{k+m-i+1}+i-1 & \tn{for } i \in \lbrack 1,k+m \rbrack \\
\gamma_j \dopgleich \lambda^\dagger_{k-j+1}+j-1 & \tn{for } j \in \lbrack 1,k \rbrack \;.
\end{array}
\] 
Our assertion about the content of this symbol is equivalent to showing that the symbol contains the entry $0$, all entries are bounded above by $2k+m-1$, and that $\beta_i \neq \gamma_j$ for all $i,j$. First, we have
\[
\beta_1 = \lambda_{k+m} \quad \tn{and} \quad \gamma_1 = \lambda_{k}^\dagger = k+m+1 - \min \lbrace j \in \lbrack k+m \mid 0 \geq \lambda_j \rbrace = k+m-\ell(\lambda) \;.
\]
We immediately see that either $\beta_1 = 0$ or $\gamma_1 = 0$. On the other hand, we have
\[
\beta_{k+m} = \lambda_1 + k+m-1 \quad \tn{and} \quad \gamma_k = \lambda_1^\dagger + k - 1 = 2k+m - \min \lbrace j \in \lbrack 1,k+m \rbrack \mid k > \lambda_j \rbrace \;.
\]
This shows that the entries of the symbol are at most equal to $2k+m-1$. Showing that $\beta_i \neq \gamma_j$ for all $i \in \lbrack 1,k+m \rbrack$ and $j \in \lbrack 1,m \rbrack$ is equivalent to showing that $\beta_{k+m-i+1} \neq \gamma_{k-j+1}$ for all $i \in \lbrack 1,k+m \rbrack$ and $j \in \lbrack 1,m \rbrack$. Now,
\begin{align*}
\beta_{k+m-i+1} = \gamma_{k-j+1} & \Leftrightarrow \lambda_i+m-i = \lambda_j^\dagger + k - j \\ & \Leftrightarrow \lambda_i - i = k+1 - \min \lbrace l \in \lbrack 1,k+m \rbrack \mid k-j \geq \lambda_l \rbrace - j \numberthis \label{typeB_cuspidal_symbol_ct} \;.
\end{align*}
Suppose that $k-j \geq \lambda_i$. Then $\min \lbrace l \mid k-j \geq \lambda_l \rbrace \leq i$ and we get from equation (\ref{typeB_cuspidal_symbol_ct}) the estimate $\lambda_i - i \geq k+1 - i - j$. This implies $\lambda_i > k-j$, contradicting the assumption. On the other hand, if $k-j \leq \lambda_i$, we similarly deduce the estimate $\lambda_i < k-j$, again a contradiction. Hence, $\beta_i \neq \gamma_j$ for all $i,j$. 

The number of elements in $\mathcal{F}_{k,m}^{\mrm{cusp}}$ equals the number of sub-partitions $\lambda$ of $(k^{k+m})$, and this number is equal to ${2k+m \choose k}$. We have just seen that $\mathcal{F}_{k,m}^{\mrm{cusp}}$ is contained in a single Lusztig family $\mathcal{F}$. Since the multiplicity of each entry in the content we have just computed is equal to $1$, it follows from Lemma \ref{typeB_lusztig_family_size} that $|\mathcal{F}| = {2k+m \choose k}$. Hence, $\mathcal{F}_{k,m}^{\mrm{cusp}} = \mathcal{F}$ is a Lusztig family.
\end{proof}

The symbols in Lemma \ref{typeB_cuspidal_family_minimal_reps} are in fact the minimal representatives of cuspidal symbols.

\begin{lemma} \label{typeB_cuspidal_symbols}
Suppose that $S \in \Sy_{\bc;n}^N$  is a cuspidal symbol. Then $n=k(k+m)$ for some $k$ and $S \in \mathcal{F}_{k,m}^{\mrm{cusp}}$.
\end{lemma}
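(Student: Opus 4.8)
The plan is to read off all possible cuspidal contents directly from the definition and then match them against the minimal representatives produced in Lemma \ref{typeB_cuspidal_family_minimal_reps}. Throughout we are in the normalised situation $\kappa = 1$, $c_1 = m$, so that $r = 0$ and every entry of a symbol occurs with multiplicity at most $2$. Hence the content sequence $(n_0, n_1, n_2, \dots)$ of any $S \in \Sy_{\bc;n}^N$ takes values in $\lbrace 0,1,2 \rbrace$, and the cuspidality condition $n_i \ge n_{i+1}$ forces it to be non-increasing. Therefore a cuspidal content must have the shape $2^a 1^b 0^{\infty}$ for some $a,b \ge 0$, i.e. $n_i = 2$ for $i < a$, $n_i = 1$ for $a \le i < a+b$, and $n_i = 0$ otherwise.

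First I would pin down $a$ and $b$. A symbol in $\Sy_{\bc;n}^N$ has $2N+m$ entries counted with multiplicity and, as noted in \S\ref{typeB_lusztig_families_section}, at least $N+m$ distinct entries; thus $2a+b = 2N+m$ while $a+b \ge N+m$, and subtracting gives $a \le N$. Setting $k \dopgleich N-a$ (which is exactly the quantity $k_{\mathcal{F}}$ of Lemma \ref{typeB_lusztig_family_size}) we obtain $k \ge 0$ and $b = 2N+m-2a = 2k+m$, so the cuspidal content is precisely $2^{N-k} 1^{2k+m} 0^{\infty}$.

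Next I would compute $n$ from the defining relation (\ref{symbol_equation}). Its left-hand side $\sum_i \beta_i + \sum_j \gamma_j$ equals $\sum_v n_v v$, which for the content just found is
\[
\sum_{v=0}^{N-k-1} 2v + \sum_{v=N-k}^{N+k+m-1} v .
\]
Carrying out this elementary summation and substituting into (\ref{symbol_equation}) with $\kappa = 1$, $c_1 = m$, $r = 0$, all terms involving $N$ and $\binom{m}{2}$ cancel, leaving $n = k(k+m)$. Since $n \ge 1$ this forces $k > 0$, so the family $\mathcal{F}_{k,m}^{\mrm{cusp}}$ is defined. This cancellation is the only genuine bookkeeping in the argument, and is the step I expect to be the main (albeit routine) obstacle.

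Finally I would identify the family. By Lemma \ref{typeB_cuspidal_family_minimal_reps} every $(\lambda,\lambda^\dagger) \in \mathcal{F}_{k,m}^{\mrm{cusp}}$ has content $\sum_{i=0}^{2k+m-1} x^i$ as an element of $\Sy_{\bc;n}^k$. Since $\kappa = 1$ and $r = 0$, the shift $S \mapsto S[1]$ embedding $\Sy_{\bc;n}^N \hookrightarrow \Sy_{\bc;n}^{N+1}$ prepends a single $0$ to each row, which on contents is the operation $\ct \mapsto 2 + x \cdot \ct$, i.e. it prepends a $2$ to the content sequence. Applying $N-k$ shifts to $\Sy_{\bc;n}^k(\lambda,\lambda^\dagger)$ therefore yields a symbol in $\Sy_{\bc;n}^N$ with content $2^{N-k} 1^{2k+m} 0^{\infty}$, which is exactly the content of $S$. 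By Theorem \ref{typeB_lusztig_fams} (equality of contents for a common, sufficiently large $N$ characterises Lusztig families) the symbol $S$ lies in the same family as these shifted minimal representatives, and by Lemma \ref{typeB_cuspidal_family_minimal_reps} that family is precisely $\mathcal{F}_{k,m}^{\mrm{cusp}}$. Hence $S \in \mathcal{F}_{k,m}^{\mrm{cusp}}$, as claimed.
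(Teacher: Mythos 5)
Your proof is correct and follows essentially the same route as the paper: cuspidality pins the content down to $2^{N-k}1^{2k+m}0^{\infty}$, the defining equation (\ref{symbol_equation}) then yields $n=k(k+m)$, and the family is identified by matching contents with Lemma \ref{typeB_cuspidal_family_minimal_reps} via Theorem \ref{typeB_lusztig_fams}. The only cosmetic difference is that the paper first unshifts $S$ to $S[-(N-k)]\in\Sy_{\bc;n}^{k}$ before applying (\ref{symbol_equation}), which shortens the arithmetic, whereas you carry out the (equivalent) summation at level $N$ — a computation already verified in the proof of Lemma \ref{typeB_lusztig_family_size}.
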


\begin{proof}
Recall that $S$ cuspidal means that $n_{i} \ge n_{i+1}$ for $i = 0,1, \dots$. Since there is at least one $i$ such that $n_i \neq 0$, we have $n_0 \neq 0$. As in Lemma \ref{typeB_lusztig_family_size}, let $k \dopgleich N- |\lbrace i \mid n_i =2 \rbrace|$. Because $S$ is cuspidal, the symbol $S' \dopgleich S[-(N-k)] \in \Sy_{\bc;n}^k$ is well-defined. By definition of shift, the content of this symbol is equal to $\sum_{i=0}^{2k+m-1} x^i$. Equation (\ref{symbol_equation}) for $S'$ says that
\[
n = \sum_{i=0}^{2k+m-1} i -k^2 - k(m-1) - {m \choose 2} = k(k+m) \;.
\]
Hence, $S \in \mathcal{F}_{k,m}^{\mrm{cusp}}$ by Lemma \ref{typeB_cuspidal_family_minimal_reps} and Theorem \ref{typeB_lusztig_fams}.
\end{proof}

For a symbol $S \in \Sy_{\bc;n}^N$ as in (\ref{symbol_absract}) we define the symbol $\ol{S} \in \Sy_{\bc;n}^{N'}$ for certain $N'$ as follows. Choose $t \geq \max \{ \beta_{N+m}, \gamma_N \}$. Note that $t \geq m$ since $S$ has at least $N+m$ distinct entries. Now, the first row of $\overline{S}$ is the set $\{ 0, 1, \ds, t \} \smallsetminus \{ t - \gamma_1, \ds, t - \gamma_{N} \}$ and the second row is  $\{ 0, 1, \ds, t \} \smallsetminus \{ t - \beta_1, \ds, t - \beta_{N+m} \}$. By \cite[22.8]{LusztigUnequalparameters}, the symbol $\ol{S}$ belongs to $\Sy_{\bc;n}^{t+1-N-m}$ and by \cite[Lemma 22.18]{LusztigUnequalparameters} we have  $\pi_S \otimes \sgn_W = \pi_{\overline{S}}$.

\begin{example}
Let $\blambda = \left( \ \ydiagram{2,1} \ , \ \ydiagram{1} \ \right)$ and $(c_1,\kappa) = (1,1)$. Recall from Example \ref{typeB_symbol_example} that 
\[
\Sy_{(1,1);4}^3({\blambda}) = \begin{pmatrix} 0 & 1 &  3 & 5 \\ 0 & 1 & 3 \end{pmatrix} \in \Sy_{(1,1);4}^3 \;.
\]
Choosing $t=5$ we get
\[
\ol{\Sy_{(1,1);4}^3({\blambda}) } = \begin{pmatrix} 0 & 1 & 3 \\ 1 & 3 \end{pmatrix} = \Sy_{(1,1);4}^2\left( \ \ydiagram{1} \ , \ \ydiagram{2,1} \ \right) \;.
\]
Indeed,
\[
\left( \ \ydiagram{2,1} \ , \ \ydiagram{1} \ \right) \otimes  \underbrace{\left( \emptyset \ , \ \ydiagram{1,1,1,1} \ \right)}_{= \ \sgn_W} = \left( \ \ydiagram{1} \ , \ \ydiagram{2,1} \ \right) \;.
\]
\end{example}

\begin{thm} \label{typeB_cuspidal_lusztig}
There exists a cuspidal Lusztig family if and only if there is a $k > 0$ such that $n = k(k+m)$. In this case there is a unique cuspidal family, and it is equal to $\mc{F}_{k,m}^{\mrm{cusp}}$.
\end{thm}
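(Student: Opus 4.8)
The plan is to reduce the whole statement to a single combinatorial equivalence $(\star)$: \emph{a Lusztig $\bc$-family $\mc{F}$ is cuspidal in Lusztig's sense (not $\bj$-induced from a proper parabolic) if and only if the symbol $\Sy_{\bc;n}^N(\blambda)$ attached to any $\blambda \in \mc{F}$ is a cuspidal symbol, i.e.\ its content $\sum_i n_i x^i$ satisfies $n_i \ge n_{i+1}$ for all $i$.} Granting $(\star)$ the theorem is immediate: by Lemma \ref{typeB_cuspidal_symbols} a cuspidal symbol exists only when $n = k(k+m)$ for some $k$, and then every cuspidal symbol lies in $\mc{F}_{k,m}^{\mrm{cusp}}$; conversely Lemma \ref{typeB_cuspidal_family_minimal_reps} shows that when $n = k(k+m)$ the set $\mc{F}_{k,m}^{\mrm{cusp}}$ is a genuine Lusztig $\bc$-family, all of whose symbols have the cuspidal content $\sum_{i=0}^{2k+m-1} x^i$. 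Combined with $(\star)$ this yields the existence criterion, the uniqueness, and the identification of the cuspidal family as $\mc{F}_{k,m}^{\mrm{cusp}}$. Throughout I keep the normalisation $\kappa = 1$, $c_1 = m$ permitted by Lemma \ref{lusztig_rescaling}.

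To prove $(\star)$ I would first make two standard reductions. Since $\bj$-induction is transitive and sends families to families, $\mc{F}$ is $\bj$-induced from some proper parabolic if and only if it is $\bj$-induced from a maximal one, so I may restrict attention to the maximal parabolic subgroups $B_a \times \fS_{n-a}$ with $0 \le a \le n-1$. Moreover the definition of $\Con_\bc(W)$ is stable under $- \otimes \sgn_W$, so $- \otimes \sgn_W$ permutes the $\bc$-families and preserves cuspidality; on symbols this twist is the involution $S \mapsto \overline{S}$ recalled before the theorem, which one checks directly also preserves the decreasing-multiplicity condition. This lets me move any ``defect'' $n_i < n_{i+1}$ to a convenient end of the symbol.

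The heart of the matter, and the main obstacle, is the explicit translation of $\bj$-induction from $B_a \times \fS_{n-a}$ into an operation on symbols, extracted from Lusztig's computation of the $\bc$-constructible characters in \cite[\S22]{LusztigUnequalparameters} (as already used for Theorem \ref{typeB_lusztig_fams}). I expect to show that this translation is a content-raising rule: the content of any symbol in the image of such a $\bj$-induction necessarily exhibits a strict increase $n_i < n_{i+1}$, and conversely any symbol whose content has such a strict increase arises as the image of $\bj$-induction from a suitable $B_a \times \fS_{n-a}$. Granting this, both directions of $(\star)$ follow at once: a family with a strictly increasing defect is $\bj$-induced, hence not cuspidal, while the staircase content $\sum_{i=0}^{2k+m-1} x^i$ of $\mc{F}_{k,m}^{\mrm{cusp}}$ has no strict increase and so cannot occur in any such image. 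The size count $|\mc{F}_{k,m}^{\mrm{cusp}}| = \binom{2k+m}{k}$ from Lemma \ref{typeB_lusztig_family_size} gives an independent consistency check on the induced families.

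Two technical points will demand the most care. First, the symmetric-group factor $\fS_{n-a}$ forces Littlewood--Richardson combinatorics into the induction, exactly as in the branching rule (\ref{eq:typeBind}), so the content-raising rule is not a naive ``add one box'' and must be read off from the precise description of $\bj$-induced constructible characters. Second, the content of a symbol depends on the shift parameter $N$, so I must perform all comparisons at a common, sufficiently large $N$ (as in Theorem \ref{typeB_lusztig_fams}) and verify that cuspidality of the symbol is genuinely an invariant of the family rather than of the chosen representative; the stability of the cuspidal condition under the shift $S \mapsto S[1]$, already noted after its definition, is what makes this well posed.
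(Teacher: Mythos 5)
Your overall architecture coincides with the paper's: reduce to maximal parabolics $B_l \times \fS_{n-l}$ by transitivity, translate $\bj$-induction into the operation of adding $1$ to the $n-l$ largest entries of the symbol, use the sign twist $S \mapsto \overline{S}$ to handle the defect pattern $(n_{i-1},n_i)=(1,2)$, and invoke Lemmas \ref{typeB_cuspidal_family_minimal_reps} and \ref{typeB_cuspidal_symbols} to identify the resulting family. For the direction ``content has a strict increase $\Rightarrow$ the family (or its sign twist) is $\bj$-induced'' your plan is exactly the paper's argument and it is sound: when $n_{i-1}=0$ one removes $1$ from the $n-l$ largest entries to exhibit the inducing symbol, and when $(n_{i-1},n_i)=(1,2)$ one passes to $\overline{S}$, whose content $\sum_i (2-n_{t-i})x^i$ then shows a $(0,1)$ pattern. (Note only that a symbol with a $(1,2)$ defect need not itself lie in the image of a $\bj$-induction---it is its sign twist that does; this is harmless because cuspidality is defined up to $\otimes\,\sgn_W$, but your ``content-raising rule'' stated as a clean two-sided equivalence is slightly too strong.)

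The genuine gap is in the converse direction of your $(\star)$: that a family with weakly decreasing content is not $\bj$-induced. You propose to prove this by showing that \emph{every} symbol in the image of \emph{every} $\bj$-induction from a maximal parabolic exhibits a strict increase, and you correctly observe that this forces you to control $\bj_{B_l \times \fS_{n-l}}^{B_n}(\pi_S \boxtimes \pi_\lambda)$ for arbitrary $\lambda \in \mathcal{P}(n-l)$, hence Littlewood--Richardson combinatorics---but you leave this, the hardest step of your plan, unproven, and it is not clear that the ``add one to the largest entries'' description extends to such a clean rule for general $\lambda$. The paper avoids this entirely with a cardinality argument: by Lemma \ref{typeB_lusztig_family_size} any Lusztig family $\mc{F}'$ of $B_l \times \fS_{n-l}$ with $l < n$ has size ${2k'+m \choose k'}$ where $k'(k'+m) \le l < n = k(k+m)$, so $k' < k$ and $|\mc{F}'| < {2k+m \choose k} = |\mc{F}_{k,m}^{\mrm{cusp}}|$; hence no $\bj$-induction can biject onto $\mc{F}_{k,m}^{\mrm{cusp}}$ or its sign twist. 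You cite exactly this count but demote it to an ``independent consistency check''; it is in fact the entire proof of the direction you have left open, and you should deploy it in place of the general content-raising rule.
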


\begin{proof}
Because of the transitivity of $\bj$-induction, Lusztig families of $B_n$ induced from some parabolic subgroup are also induced from some maximal parabolic subgroup. These subgroups are all of the form $B_l \times \s_{n-l}$ for some $0 \le l \le n-1$.  
The restriction of the parameter $\bc$ to $\fS_{n-l}$ is equal to $\kappa > 0$ and so the Lusztig families of $\fS_{n-l}$ are singletons by \S\ref{type_A}. A Lusztig family of $B_l \times \s_{n-l}$ is thus of the form $\lbrace \pi_{S} \boxtimes \pi_\lambda \mid \pi_{S} \in \mathcal{F} \rbrace$ for some Lusztig family $\mathcal{F}$ of $B_{l}$ and some fixed $\lambda \in \mathcal{P}(n-l)$. Since any given number can only appear at most twice in $S$, either the set of $n-l$ largest entries in $S$ is well-defined or there is a choice of two possible "largest $n-l$-entries". Notice that this depends only on the content of $S$. By adding $1$ to the $n-l$ largest entries, we get either a new symbol $S'$ or two new symbols $S^I$ and $S^{II}$. Then, as explained in \cite[Section 22.15]{LusztigUnequalparameters}, $\bj_{  B_{l} \times \s_{n-l}}^{B_n} \pi_{S} \boxtimes \sgn_W$ equals $\pi_{S'}$ or $\pi_{S^I} \oplus \pi_{S^{II}}$. In the latter case, the $\bj$-induction of $\pi_S \boxtimes \pi_\lambda$ is not irreducible and so $\bj$-induction does not induce a Lusztig family of $B_n$. We thus assume we are in the former case. Let $\sum_{i \ge 0} n_i x^i$ be the content of $S'$. Here $n_i \in \{ 0, 1, 2\}$ and $n_i = 0$ for $i \gg 0$. Assume that there exists some $i$ such that $n_i > n_{i-1}$. There are two possibilities, either $n_{i-1} = 0$ or $(n_{i-1},n_i) = (1,2)$. Consider first the former. We let $l$ be defined such that the $n-l$ largest numbers in $S$ are $\{ n_i \cdot i, n_{i+1} \cdot (i+1), \ds \}$. Here, $n_i \cdot i$ means that $i$ occurs with multiplicity $n_i$. Since $n_{i-1} = 0$, we can remove $1$ from each of the $n-l$ largest numbers and still have a well-defined symbol $S''$. Moreover, $\bj_{  B_{l} \times \s_{n-l}}^{B_n} \pi_{S''} \otimes \sgn_W = \pi_{S'}$. This applies to all symbols in the family to which $S$ belongs. Hence this family is not cuspidal. The other case is where $(n_{i-1},n_i) = (1,2)$. In this case it suffices to show that $\pi_{\overline{S}} = \pi_{S} \otimes \sgn_W$ is not cuspidal. If the content of $S$ is $\sum_{i \ge 0} n_i x^i$, then the content of $\overline{S}$ equals $\sum_{i \ge 0} (2 - n_{t-i}) x^i$ for some $t \gg 0$. Thus, there exists some $j$ such that $(n_{j-1},n_j)$ equals $(0,1)$ in the content of $\overline{S}$. By our previous argument, the family to which $\overline{S}$ belongs is induced from some parabolic subgroup. 

Above, we assumed that there exists some $i$ such that $n_i > n_{i-1}$. When $n_{i} \le n_{i-1}$ for all $i$, Lemma \ref{typeB_cuspidal_symbols} implies that $\pi_{S}$ belongs to the family $\mc{F}_{k,m}^{\mrm{cusp}}$ for some $k$ with $n=k(k+m)$. If $\mc{F}'$ is a family in $\Irr (B_l \times \s_{n-l})$ for some $l < n$, then Lemma \ref{typeB_lusztig_family_size} implies that $|\mc{F}'| < |\mc{F}|$. Hence $\mc{F}$ cannot be induced and must be cuspidal.  
\end{proof} 

\subsection{Cuspidal Lusztig families in the degenerate case} \label{typeB_cuspidal_lusztig_deg_sect} In this section we consider the case $\kappa = 0$. Recall from Lemma \ref{lusztig_fam_typeB_degenerate} that the Lusztig families are labeled $\mathcal{F}_{i;n}^{\mrm{deg}}$ for $i = 0, \ds, n$.  

\begin{lemma} \label{lusztig_fam_typeB_deg_sgn}
For any $i$, tensoring with the sign character yields a bijection $\mathcal{F}_{i;n}^{\mrm{deg}} \stackrel{\sim}{\longrightarrow}  \mathcal{F}_{n-i;n}^{\mrm{deg}}$.
\end{lemma}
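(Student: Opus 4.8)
The plan is to determine how tensoring by the sign character $\sgn_W$ acts on the bipartition labelling the irreducible $W$-modules, and then to read off the effect on the families $\mathcal{F}_{i;n}^{\mrm{deg}}$ directly from their description in Lemma \ref{lusztig_fam_typeB_degenerate}. Concretely, I would prove that for every bipartition $\blambda = (\lambda^{(0)},\lambda^{(1)})$ of $n$ one has $\pi_{\blambda} \otimes \sgn_W \simeq \pi_{((\lambda^{(1)})',(\lambda^{(0)})')}$, where $\mu'$ denotes the conjugate (transpose) of a partition $\mu$. The starting point is the identification $\sgn_W = \pi_{(\emptyset,1^n)}$. Indeed, $\pi_{(\emptyset,1^n)} = \gamma \otimes \pi_{(1^n,\emptyset)}$, since the $\gamma$-twist moves the partition of an inflated $\fS_n$-module into the second component; and evaluating on reflections using \S\ref{typeB_reps} shows this character is $-1$ on every $s_{ij,u} \in \mathcal{S}_0$ (where $\gamma = 1$ and the inflated sign is $-1$) and $-1$ on every $\eps_j(-1) \in \mathcal{S}_1$ (where $\gamma = -1$ and the inflated sign is $1$), i.e. it is $\sgn_W$.

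To compute $\pi_{\blambda} \otimes \sgn_W$ I would set $r := |\lambda^{(0)}|$ and use (\ref{eq:Bnrep}) in the form $\pi_{\blambda} = \Ind_{B_r \times B_{n-r}}^{B_n} \pi_{(\lambda^{(0)},\emptyset)} \boxtimes \pi_{(\emptyset,\lambda^{(1)})}$. Since $\sgn_W$ restricts to $\sgn_{B_r} \boxtimes \sgn_{B_{n-r}}$ on the parabolic $B_r \times B_{n-r}$, the projection formula $\Ind_H^G(V) \otimes L \simeq \Ind_H^G(V \otimes \Res_H^G L)$ for a linear character $L$ gives
\[
\pi_{\blambda} \otimes \sgn_W \simeq \Ind_{B_r \times B_{n-r}}^{B_n} \bigl(\pi_{(\lambda^{(0)},\emptyset)} \otimes \sgn_{B_r}\bigr) \boxtimes \bigl(\pi_{(\emptyset,\lambda^{(1)})} \otimes \sgn_{B_{n-r}}\bigr).
\]
Each inner twist is elementary: writing $\sgn_{B_m}$ as the product of the $\gamma$-character of $B_m$ with the inflation of $\sgn_{\fS_m}$, the $\fS_m$-sign transposes the underlying partition while the $\gamma$-twist interchanges the two components, so that $\pi_{(\lambda^{(0)},\emptyset)} \otimes \sgn_{B_r} \simeq \pi_{(\emptyset,(\lambda^{(0)})')}$ and $\pi_{(\emptyset,\lambda^{(1)})} \otimes \sgn_{B_{n-r}} \simeq \pi_{((\lambda^{(1)})',\emptyset)}$. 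Reinserting these, harmlessly reordering the two tensor factors, and applying (\ref{eq:Bnrep}) once more (now with first component $(\lambda^{(1)})'$ of size $n-r$) yields precisely $\pi_{((\lambda^{(1)})',(\lambda^{(0)})')}$, which proves the claim.

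Granting the claim, the lemma follows at once. If $\blambda \in \mathcal{F}_{i;n}^{\mrm{deg}}$, i.e. $|\lambda^{(0)}| = i$, then the first component of $\pi_{\blambda} \otimes \sgn_W$ is $(\lambda^{(1)})'$, of size $|\lambda^{(1)}| = n - i$, so $\pi_{\blambda} \otimes \sgn_W \in \mathcal{F}_{n-i;n}^{\mrm{deg}}$. As $\sgn_W$ is a linear character with $\sgn_W \otimes \sgn_W \simeq \mathbf{1}$, the operation $-\otimes\sgn_W$ is an involution on $\Irr W$, hence a bijection, and it therefore restricts to a bijection $\mathcal{F}_{i;n}^{\mrm{deg}} \stackrel{\sim}{\longrightarrow} \mathcal{F}_{n-i;n}^{\mrm{deg}}$, as desired. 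The only point requiring care is the middle paragraph: keeping straight the interaction between the $\mu_2^n$-twist, which interchanges the two components of a bipartition, and the $\fS_n$-sign, which transposes each component. This is routine wreath-product character theory, so I expect no serious obstacle.
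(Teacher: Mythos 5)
Your argument is correct and rests on exactly the same identity as the paper's proof, namely $\pi_{\blambda}\otimes \sgn_W \simeq \pi_{((\lambda^{(1)})',(\lambda^{(0)})')}$, after which the statement is immediate from the description of $\mathcal{F}_{i;n}^{\mrm{deg}}$ and the fact that twisting by $\sgn_W$ is an involution. The only difference is that the paper simply cites \cite[Theorem 5.5.6(c)]{GeckPfeiffer} for this twist formula, whereas you derive it from (\ref{eq:Bnrep}) and the projection formula; your derivation is sound and self-contained, so this is a matter of presentation rather than substance.
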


\begin{proof}
If $\blambda = (\lambda^{(0)}, \lambda^{(1)}) \in \mathcal{P}_2(n)$, then $\pi_{\blambda} \otimes \sgn_W = \pi_{( (\lambda^{(1)})^*, (\lambda^{(0)})^*)}$ by \cite[Theorem 5.5.6(c)]{GeckPfeiffer}. Hence, if $\pi_{\blambda} \in \mathcal{F}_{i;n}^{\mrm{deg}}$, then $\pi_{\blambda} \otimes \sgn_W \in \mathcal{F}_{n-i;n}^{\mrm{deg}}$. As $\sgn_W$ is an automorphism on $\Irr W$, the claim follows.
\end{proof}

\begin{proposition}
If $\kappa = 0$ but $c_1 \neq 0$, there are no cuspidal Lusztig $\bc$-families.
\end{proposition}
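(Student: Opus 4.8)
The plan is to prove the Proposition by showing that \emph{every} degenerate Lusztig family $\mathcal{F}_{i;n}^{\mrm{deg}}$, for $0 \le i \le n$, is $\bj$-induced from a proper parabolic subgroup, so that none of them can be cuspidal. Since $\kappa = 0$, $c_1 \neq 0$, and we have assumed $L \geq 0$, we may work with $c_1 > 0$. The input I would establish first is the shape of the $\ba$-invariant in the degenerate case: for every $m$ and every bipartition, $\ba_{(\lambda^{(0)},\lambda^{(1)})} = c_1 \,|\lambda^{(1)}|$. Since $\kappa = 0$ this follows from the explicit Schur elements (it is the quantitative form of Lemma \ref{lusztig_fam_typeB_degenerate}, and it already recovers that family membership is detected by $|\lambda^{(1)}|$). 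On any symmetric group factor the restricted parameter is $0$, so there all $\ba$-invariants vanish by Example \ref{lusztig_0}, and the unique Lusztig family is $\Irr\fS_i$.

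For the families with $1 \le i \le n$ I would fix $i$ and use the maximal parabolic $W_I = \fS_i \times B_{n-i}$, obtained by deleting the node that separates a type-$A$ end (of rank $i-1$) from the type-$B$ part. Its Lusztig families are the products $\Irr\fS_i \boxtimes \mathcal{F}_{j;n-i}^{\mrm{deg}}$. I would take $\mathcal{G} \dopgleich \Irr\fS_i \boxtimes \mathcal{F}_{0;n-i}^{\mrm{deg}} = \{\pi_\beta \boxtimes \pi_{(\emptyset,\alpha)} \mid \beta \vdash i,\ \alpha \vdash n-i\}$, whose elements all have $\ba$-invariant $0 + c_1(n-i) = c_1(n-i)$. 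Inducing $\pi_\beta \boxtimes \pi_{(\emptyset,\alpha)}$ to $B_n$ in two steps, namely $\Ind_{\fS_i}^{B_i}\pi_\beta = \sum_{(\mu,\nu)\vdash i} c^\beta_{\mu\nu}\,\pi_{(\mu,\nu)}$ followed by (\ref{eq:typeBind}), produces constituents $\pi_{(\mu,\rho^{(1)})}$ with $\mu$ a summand determined by $\beta$ and $|\rho^{(1)}| = |\nu| + (n-i)$, hence with $\ba$-invariant $c_1(|\nu|+n-i) \ge c_1(n-i)$. Truncating to the constituents of $\ba$-invariant exactly $c_1(n-i)$ forces $\nu = \emptyset$, whence $\mu = \beta$ and $\rho^{(1)} = \alpha$, so that $\bj_{W_I}^{B_n}\bigl(\pi_\beta \boxtimes \pi_{(\emptyset,\alpha)}\bigr) = \pi_{(\beta,\alpha)}$. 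Thus $\bj_{W_I}^{B_n}$ restricts to a bijection $\mathcal{G} \stackrel{\sim}{\to} \mathcal{F}_{i;n}^{\mrm{deg}}$, so $\mathcal{F}_{i;n}^{\mrm{deg}}$ is $\bj$-induced and not cuspidal. The case $i = n$ is the specialisation $W_I = \fS_n$, $\bj_{\fS_n}^{B_n}\pi_\beta = \pi_{(\beta,\emptyset)}$.

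It remains to treat $i = 0$. Here $\mathcal{F}_{0;n}^{\mrm{deg}} = \{(\emptyset,\alpha) \mid \alpha \vdash n\}$ carries the maximal $\ba$-invariant $c_1 n$ and so cannot be reached by $\bj$-induction directly (every proper parabolic attains $\ba$-invariant at most $c_1(n-1)$). Instead I would invoke Lemma \ref{lusztig_fam_typeB_deg_sgn}, which gives $\mathcal{F}_{0;n}^{\mrm{deg}} \otimes \sgn_W = \mathcal{F}_{n;n}^{\mrm{deg}}$. The previous step shows $\mathcal{F}_{n;n}^{\mrm{deg}} = \bj_{\fS_n}^{B_n}(\Irr\fS_n)$, and since the definition of cuspidality allows $\bj$-induction to hit either $\mathcal{F}$ or $\mathcal{F}\otimes\sgn_W$, this exhibits $\mathcal{F}_{0;n}^{\mrm{deg}}$ as $\bj$-induced. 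Hence it is not cuspidal either, and all $n+1$ families are accounted for.

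The main obstacle is the precise $\ba$-invariant identity $\ba_{(\lambda^{(0)},\lambda^{(1)})} = c_1\,|\lambda^{(1)}|$ together with the exactness of the truncation in the second paragraph: I must check that the source $\ba$-invariant $c_1(n-i)$ equals the minimal $\ba$-invariant occurring among the induced constituents, and that truncation discards precisely the constituents with $|\nu| > 0$. Once this is pinned down, the remaining Littlewood--Richardson bookkeeping through (\ref{eq:typeBind}) and the sign-twist step for $i=0$ are routine.
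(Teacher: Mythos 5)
Your proof is correct and follows essentially the same route as the paper's: induce from a maximal parabolic of the form (symmetric group) $\times$ (type $B$), use additivity of the $\ba$-invariant together with the Littlewood--Richardson rule to show that truncation picks out the single constituent $\pi_{(\beta,\alpha)}$, and handle $\mathcal{F}_{0;n}^{\mrm{deg}}$ via the sign twist of Lemma \ref{lusztig_fam_typeB_deg_sgn}. The only difference is cosmetic: you induce from $\fS_i \times B_{n-i}$ with source family $\Irr\fS_i \boxtimes \mathcal{F}_{0;n-i}^{\mrm{deg}}$, whereas the paper works with $B_i \times \fS_{n-i}$, which amounts to relabelling $i \leftrightarrow n-i$.
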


\begin{proof}
We claim that $\bj_{B_i \times \fS_{n-i}}^{B_n}$ induces a bijection between the Lusztig family $\mathcal{F}_{i;i}^{\mrm{deg}} \times \Irr \fS_{n-i}$ of the parabolic subgroup $B_i \times \fS_{n-i}$ of $B_n$ and $\mathcal{F}_{i;n}^{\mrm{deg}}$ for $0 \leq i < n$. This shows that all $\mathcal{F}_{i;n}^{\mrm{deg}}$ with $i<n$ are induced, and since $\mathcal{F}_{0;n}^{\mrm{deg}} \otimes \sgn_{W} = \mathcal{F}_{n;n}^{\mrm{deg}}$ by Lemma \ref{lusztig_fam_typeB_deg_sgn}, this proves the claim. 

So, assume that $0 \leq i < n$. In \cite[Example 7.13]{Geck-Iancu-ordering} (see also \cite[\S2.4.3]{Geck.M;Jacon.N11Representations-of-H}) it is shown that the $\ba$-invariant in the degenerate case of $\bmu \in \mathcal{P}_2(i)$ is $\ba_{\bmu} = c_1 |\mu^{(1)}|$. Moreover, in the degenerate case, the restriction of the parameter to $\fS_{n-i}$ is zero so that the $\ba$-invariants $a_\nu$ are zero for all $\nu \in \mathcal{P}(n-i)$ by Example \ref{lusztig_0}. Hence,
\[
\bj_{B_i \times \fS_{n-i}}^{B_n} \pi_{\bmu} \boxtimes \pi_\nu = \sum_{ \substack{\blambda \in \mathcal{P}_2(n) \\ |\lambda^{(1)}| = |\mu^{(1)}|}} \langle \Ind_{B_i \times \fS_{n-i}}^{B_n} \pi_{\bmu} \boxtimes \pi_\nu, \pi_{\blambda} \rangle \pi_{\blambda} 
\]
for $\bmu \in \mathcal{P}_2(i)$ and $\nu \in \mathcal{P}(n-i)$. We will show that if $({\bmu},\nu) \in \mathcal{F}_{i;i}^{\mrm{deg}} \times \Irr \fS_{n-i}$, then
\begin{equation} \label{typeB_deg_no_cusp_claim}
\bj_{B_i \times \fS_{n-i}}^{B_n} \pi_{\bmu} \boxtimes \pi_\nu = \pi_{(\nu,\mu^{(1)})} \;.
\end{equation}
From this equation, the claim follows immediately. Since $\fS_{n-i}$ is a subgroup of $B_{n-i}$, we get the following relation using the branching rules:
\begin{align*}
\Ind_{B_i \times \fS_{n-i}}^{B_n} \pi_{\bmu} \boxtimes \pi_\nu  & = \Ind_{B_i \times B_{n-i}}^{B_n} \Ind_{B_i \times \fS_{n_i}}^{B_i \times B_{n-i}} \pi_{\bmu} \boxtimes \pi_\nu \\
 & = \Ind_{B_i \times B_{n-i}}^{B_n} \left( \pi_{\bmu} \boxtimes \Ind_{\fS_{n-i}}^{B_{n-i}} \pi_\nu \right) \\
& = \Ind_{B_i \times B_{n-i}}^{B_n} \left( \pi_{\bmu} \boxtimes \sum_{\boldsymbol{\alpha} \in \mathcal{P}_2(n-i)} c_{\boldsymbol{\alpha}}^\nu \pi_{\boldsymbol{\alpha}} \right) \\
& = \sum_{\boldsymbol{\lambda} \in \mathcal{P}_2(n)} \sum_{\boldsymbol{\alpha} \in \mathcal{P}_2(n-i)} c_{\mu^{(0)},\alpha^{(0)}}^{\lambda^{(0)}} c_{\mu^{(1)},\alpha^{(1)}}^{\lambda^{(1)}} c_{\boldsymbol{\alpha}}^\nu \pi_{\boldsymbol{\lambda}} \;.
\end{align*}
Note that the sum runs only over those $\boldsymbol{\alpha}$ with $\alpha^{(0)} , \alpha^{(1)} \subs \nu$ and $|\nu| = |\alpha^{(0)}| + |\alpha^{(1)}|$, and similarly only over those $\boldsymbol{\lambda}$ which satisfy 
\begin{equation} \label{typeB_deg_no_cusp_inclusion}
\mu^{(j)}, \alpha^{(j)} \subs \lambda^{(j)}
\end{equation}
and 
\begin{equation} \label{typeB_deg_no_cusp_sum}
|\lambda^{(j)}| = |\mu^{(j)}| + |\alpha^{(j)}|
\end{equation}
for $j \in \lbrace 1,2 \rbrace$. To show (\ref{typeB_deg_no_cusp_claim}) we need to show that among those $\boldsymbol{\lambda} \in \mathcal{P}_2(n)$ occurring in this sum such that $|\lambda^{(1)}| =  |\mu^{(1)}|$ we have
$$
c_{\mu^{(0)},\alpha^{(0)}}^{\lambda^{(0)}} c_{\mu^{(1)},\alpha^{(1)}}^{\lambda^{(1)}} c_{\boldsymbol{\alpha}}^\nu = \left\lbrace \begin{array}{ll} 1 & \tn{if } \blambda = (\nu,\mu^{(1)}), \bmu = (\emptyset, \mu^{(1)}), \boldsymbol{\alpha} = (\nu,\emptyset) \\ 0 & \tn{else.} \end{array} \right.
$$
So, suppose that $c_{\mu^{(0)},\alpha^{(0)}}^{\lambda^{(0)}} c_{\mu^{(1)},\alpha^{(1)}}^{\lambda^{(1)}} c_{\boldsymbol{\alpha}}^\nu \neq 0$. By (\ref{typeB_deg_no_cusp_inclusion}) we have $\mu^{(1)} \subs \lambda^{(1)}$, which implies that $\mu^{(1)}_k \leq \lambda^{(1)}_k$ for all $k$. Hence, as $|\lambda^{(1)}| =  |\mu^{(1)}|$ by assumption, we must have $\mu^{(1)} = \lambda^{(1)}$. In combination with (\ref{typeB_deg_no_cusp_sum}) this shows that $\alpha^{(1)} = \emptyset$. 

By definition of the Littlewood–Richardson coefficients, the coefficient $c_{\boldsymbol{\alpha}}^\nu = c_{\alpha^{(0)},\emptyset}^\nu$ is equal to the coefficient of the symmetric polynomial $s_\nu$ in the product $s_{\alpha^{(0)}} \cdot s_\emptyset = s_{\alpha^{(0)}} \cdot 1 = s_{\alpha^{(0)}}$. Hence,
$$
c_{\boldsymbol{\alpha}}^\nu = \left\lbrace \begin{array}{ll} 1 & \tn{if } \nu = \alpha^{(0)} \\ 0 & \tn{else} \end{array} \right.
$$
and therefore we must have $\nu = \alpha^{(0)}$. With the same argumentation we see that
$$
c_{\mu^{(1)},\alpha^{(1)}}^{\lambda^{(1)}} = c_{\lambda^{(1)}, \emptyset}^{\lambda^{(1)}} = 1 \;.
$$
Since $\bmu \in \mathcal{F}_{i;i}^{\mrm{deg}}$, we have $\mu^{(0)} = \emptyset$. Hence, 
$$
c_{\mu^{(0)},\alpha^{(0)}}^{\lambda^{(0)}} = c_{\emptyset, \nu}^{\lambda^{(0)}} = \left\lbrace \begin{array}{ll} 1 & \tn{if } \lambda^{(0)} = \nu \\ 0 & \tn{else} \end{array} \right.
$$
so that $\lambda^{(0)} = \nu$. This proves the claim.
\end{proof}

\subsection{Rigid modules} \label{typeB_rigid_modules}

We will now show that rigid modules exist precisely in the cuspidal cases and describe them explicitly. In this section, we consider again an arbitrary complex parameter $\bc$.

\begin{thm}\label{thm:rigidB}
There is a rigid $\H_\bc(W)$-module if and only if $\bc$ is cuspidal, i.e., $c_1 = m \kappa$ for some $m \in \pm \lbrack 0, n-1 \rbrack$ with $n = k(k+m)$ for some $k>0$. In this case there are exactly two rigid $\H_{\bc}(W)$-modules, namely $L_\bc(\blambda)$ with 
$$
\blambda = ((k^{k+m}),\emptyset) \;, \quad \textrm{or} \quad \blambda = (\emptyset,(k+m)^k) \;.
$$
\end{thm}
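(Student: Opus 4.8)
The plan is to solve the rigidity equation of Lemma \ref{rigidity_equation_lemma} directly, using the explicit commutators (\ref{eq:Brel1}) and (\ref{eq:Brel2}). Writing $s_{ij,u} = s_{ij}\eps_i(u)\eps_j(u)$ (valid for $u \in \mu_2$), these relations read, on a representation $\pi_\blambda$,
$$[y_i,x_j] = \kappa\, s_{ij}\bigl(1 - \eps_i(-1)\eps_j(-1)\bigr) \ (i\neq j), \qquad [y_i,x_i] = -2c_1\,\eps_i(-1) - \kappa\!\sum_{j\neq i}\! s_{ij}\bigl(1+\eps_i(-1)\eps_j(-1)\bigr),$$
and by Lemma \ref{rigidity_equation_lemma} the module $L_\bc(\blambda)$ is rigid precisely when all of these vanish on $\pi_\blambda$. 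First I would dispose of the case $\kappa = 0$: then $\bc\neq 0$ forces $c_1\neq 0$, the off-diagonal relations vanish automatically, but the diagonal one reduces to $-2c_1\,\eps_i(-1)$, which is never zero since $\eps_i(-1)$ is invertible. Hence no rigid modules exist, consistent with $\bc$ not being cuspidal when $\kappa=0$.

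Assume now $\kappa\neq 0$. The off-diagonal equations force $\eps_i(-1)\eps_j(-1) = \mathrm{id}$ on $\pi_\blambda$ for all $i\neq j$, which is equivalent to all $\eps_i(-1)$ acting by one common operator $E$. Since $E$ then commutes with every $s_{ij}$ and with the abelian $\mu_2^n$, it is central in $W$ and hence a scalar $\pm 1$. Inspecting $\pi_\blambda|_{\mu_2^n}$, whose constituents are exactly the characters $\chi_S$ with $|S| = |\lambda^{(1)}|$, shows that $\eps_i(-1)$ is scalar if and only if $|\lambda^{(1)}| \in \{0,n\}$, i.e. $\lambda^{(0)} = \emptyset$ or $\lambda^{(1)} = \emptyset$. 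This is the crucial reduction: a rigid module must have one of its two components empty.

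In the two surviving cases the off-diagonal relations already hold and $\pi_\blambda$ is, up to the $\gamma$-twist, inflated from $\fS_n$, so the diagonal relation collapses to a condition on the partition $\mu \in \{\lambda^{(0)},\lambda^{(1)}\}$: the element $Z_i := \sum_{j\neq i}\pi_\mu(s_{ij})$ must act as the scalar $-m$ when $\lambda^{(1)}=\emptyset$ and as $+m$ when $\lambda^{(0)}=\emptyset$, where $m := c_1/\kappa$. Writing $Z_1 = C_n - C_{n-1}$ as the difference of the transposition class sums of $\fS_n$ and of the point stabiliser $\fS_{n-1}$ of $1$, the multiplicity-free branching rule shows that $Z_1$ acts on the summand $\pi_{\mu^-}$ of $\pi_\mu|_{\fS_{n-1}}$ by $\mathrm{ct}(\mu)-\mathrm{ct}(\mu^-)=\mathrm{ct}(\mu/\mu^-)$, the content of the removed box. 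Hence $Z_i$ is scalar for every $i$ if and only if all removable boxes of $\mu$ share the same content; since distinct corners of a partition always have distinct contents, this forces $\mu$ to be a rectangle, and the common scalar is then the content of its unique corner. Matching this corner content with $\mp m$ together with $|\mu|=n$ pins down $\mu = (k^{k+m})$ (corner content $k-(k+m)=-m$) when $\lambda^{(1)}=\emptyset$, and $\mu = ((k+m)^k)$ (corner content $(k+m)-k=m$) when $\lambda^{(0)}=\emptyset$, where $n=k(k+m)$. In particular $\bc$ is forced to be cuspidal, we obtain exactly the two asserted modules, and by Remark \ref{typeB_cuspidal_param_unique} the pair $(k,m)$ is unique; this also agrees with the general fact (Theorem \ref{maintheorem:rigid_cuspidal}) that rigid modules lie in the unique cuspidal family.

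The main obstacle is the reduction of the second paragraph, namely controlling when the off-diagonal commutators vanish, since this is precisely what eliminates all bipartitions with both parts nonempty; everything afterwards is a clean branching-and-contents computation. I expect the identification of $\pi_\blambda|_{\mu_2^n}$ — equivalently, deciding when $\eps_i(-1)$ acts by a scalar — to require the most care, although it is standard wreath-product representation theory.
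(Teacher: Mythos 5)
Your proof is correct and follows essentially the same route as the paper's: the off-diagonal commutators force one component of the bipartition to be empty, and the diagonal relation then forces the remaining partition to be a rectangle with prescribed corner content, pinning down $(k^{k+m})$ and $((k+m)^k)$. The only differences are cosmetic --- you prove the ``scalar partial transposition sum implies rectangle'' step inline via class sums and the branching rule where the paper cites Montarani--Etingof, and you extract the ``only if $\bc$ is cuspidal'' direction from the same computation rather than from the geometric input of Corollary \ref{typeB_existence_cuspidals} and Theorem \ref{maintheorem:rigid_cuspidal}.
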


\begin{proof}
First of all, by Theorem \ref{maintheorem:rigid_cuspidal} and Corollary \ref{typeB_existence_cuspidals} there can only exist a rigid $\H_\bc(W)$-module if $\bc$ is cuspidal. So, assume that $c_1 = m\kappa$ for some $m \in \pm \lbrack 0, n-1 \rbrack$ with $n=k(k+m)$ for some $k>0$. By Proposition \ref{prop:someauto}, we can assume that $m \in \lbrack 0,n-1 \rbrack$. Let $\blambda = (\lambda^{(0)}, \lambda^{(1)})$ be a bipartition of $n$. By Lemma \ref{rigidity_equation_lemma} and equations (\ref{eq:Brel1}) and (\ref{eq:Brel2}) the representation $\pi_{\blambda}$ is $\bc$-rigid if and only if
\begin{equation}\label{eq:killv1}
2m \eps_i(-1)\cdot v +  \sum_{\substack{j=1 \\ j \neq i}}^n   (s_{ij} + s_{ij,-1}) \cdot v = 0 \;,  \quad \forall \ v \in \pi_{\blambda}, \ i = 1, \ldots, n \;,
\end{equation}
and
\begin{equation}\label{eq:killv2}
 (s_{ij} - s_{ij,-1}) \cdot v = 0 \;, \quad \forall \ v \in \pi_{\blambda}, \ i \neq j \;. 
\end{equation} 
Let $r \dopgleich |\lambda^{(0)}|$. Take $v$ to be a non-zero vector in the irreducible $(B_r \times B_{n-r})$-subrepresentation $\pi_{\lambda^{(0)}} \otimes \gamma \pi_{\lambda^{(1)}}$ inducing $\pi_{\blambda}$; see equation (\ref{eq:Bnrep}). Suppose that $r \notin \lbrace 0,n \rbrace$. Then we can find $i < j$ with $i \leq r$ and $r < j$. Due to this choice, we have $\eps_i(-1) \cdot v = v$ and $\eps_j(-1) \cdot v = -v$ as we twist by $\gamma$ in the second component. Hence, 
\[
s_{ij,-1} \cdot v = s_{ij} \eps_i(-1) \eps_j(-1) \cdot v = - s_{ij} \cdot v \;.
\]
Equation (\ref{eq:killv2}) thus says that $2 s_{ij} \cdot v = 0$ and therefore already $v=0$. This is a contradiction, so we must have $r \in \lbrace 0,n \rbrace$.
Assume that $r = n$. Then $\pi_{\blambda} = \pi_{\lambda^{(0)}}$. Now, equation (\ref{eq:killv2}) says that 
$$
\sum_{ \substack{j=1 \\ j \neq i}}^n s_{ij} \cdot v = - m v \;, \quad \forall \ v \in \pi_{\lambda^{(0)}}, \ i = 1, \ldots n \;.
$$
In other words, $\sum_{j \neq i} s_{ij}$ acts by a scalar on $\pi_{\lambda^{(0)}}$. This holds in particular for $i=1$, and now a standard result (see \cite[Lemma 2.4]{MontaraniEtingof}) implies that $\lambda^{(0)} = (l^b)$ is a rectangle for some positive integers $l,b$ with $lb = n$. The $n$-th Jucys–Murphy element $z_n = \sum_{j < n} s_{jn}$ acts on $\pi_{(l^b)}$ by multiplication by $l - b$ since every standard tableaux on $(a^b)$ must have $n$ in the top corner. Thus, $lb = n$ and $l + m = b$, so $n = l(l+m)$. Because of Remark \ref{typeB_cuspidal_param_unique} we must have $l=k$, proving the claim. If $r = 0$, then $\pi_{\blambda} = \gamma \pi_{\lambda^{(1)}}$ and the same argument shows that $\lambda^{(1)} = ((k+m)^k)$.   
\end{proof}

\begin{remark}
The proof of Theorem \ref{thm:rigidB} can be adapted to all the groups $G(\ell,1,n) = \Z_\ell \wr \fS_n$. 
\end{remark}

\subsection{Cuspidal Lusztig families vs. cuspidal Calogero–Moser families} \label{typeB_final_sect}

Combing all of the above results, we arrive at the proof of Theorem \ref{thm:mainresultintro} for type $B_n$.

\begin{thm}\label{thm:cuspidaltypeB}
Assume that $\bc \geq 0$. Then 
\begin{enum_thm}
\item A family (Lusztig = Calogero–Moser) is cuspidal in the sense of Lusztig if and only if it is cuspidal as a Calogero–Moser family. 
\item If $\kappa \neq 0$ and $n = k(k+m)$ for some $k,m \in \bbN$, then both rigid modules lie in the (unique) cuspidal family $\mc{F}_{k,m}^{\mrm{cusp}}$. 
\end{enum_thm}
\end{thm}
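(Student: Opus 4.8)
The plan is to reduce everything to a finite case analysis governed by the two parameters $\kappa$ and $c_1$, using throughout Corollary~\ref{cor:LusztigCMtypeB} so that the word ``family'' is unambiguous. First I would note that by Lemma~\ref{lusztig_rescaling} and Lemma~\ref{cm_scaling} I may rescale $\bc$ by a positive real, and that $\bc=0$ is already settled by Lemma~\ref{cuspidals_for_c_eq_0}. The strategy for (a) is to show that in every ``non-cuspidal'' regime both the set of cuspidal Lusztig families and the set of cuspidal Calogero--Moser families are \emph{empty}, so that the biconditional holds vacuously, and that in the single remaining regime there is exactly one cuspidal family on each side, which I must then prove to be the \emph{same} family.

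For the empty regimes I would argue as follows. If $\kappa=0$ (so $c_1\neq 0$), then there are no cuspidal Lusztig families by the proposition in \S\ref{typeB_cuspidal_lusztig_deg_sect}, while on the geometric side Theorem~\ref{thm:BSympleaves}\,(b) shows every leaf $\mc{L}_\lambda$ has dimension $2\ell(\lambda)\geq 2$, so there is no zero-dimensional leaf and hence no cuspidal Calogero--Moser family. If $\kappa>0$ but $c_1$ is not a non-negative integer multiple of $\kappa$, then by Proposition~\ref{lusztig_fam_typeB_non_multiple} all Lusztig families are singletons (hence none is cuspidal, by Lemma~\ref{singleton_lusztig_not_cuspidal}), and by Theorem~\ref{thm:BSympleaves}\ref{thm:BSympleaves:singular} the space $\cX_\bc(W)$ is smooth, so all Calogero--Moser families are singletons and none is cuspidal, by Lemma~\ref{singleton_cm_noncuspidal}. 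Finally, if $c_1=m\kappa$ with $m\in\bbN$ (rescaling to $\kappa=1$, $c_1=m$) but $n\neq k(k+m)$ for every $k>0$, then Theorem~\ref{typeB_cuspidal_lusztig} gives no cuspidal Lusztig family and Corollary~\ref{typeB_existence_cuspidals} gives no zero-dimensional leaf. In each of these cases (a) holds trivially.

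The essential case is $\kappa=1$, $c_1=m$, and $n=k(k+m)$ for some $k>0$. Here Theorem~\ref{typeB_cuspidal_lusztig} produces a \emph{unique} cuspidal Lusztig family, namely $\mc{F}_{k,m}^{\mrm{cusp}}$, and Corollary~\ref{typeB_existence_cuspidals} produces a \emph{unique} cuspidal Calogero--Moser family. The main obstacle is that uniqueness on each side does not by itself force these two distinguished families to coincide, since the two notions of cuspidality are a priori unrelated. My bridge is the rigid modules: by Theorem~\ref{thm:rigidB} there are exactly two, with labels $\blambda=((k^{k+m}),\emptyset)$ and $\blambda=(\emptyset,(k+m)^k)$, and by Theorem~\ref{maintheorem:rigid_cuspidal} both lie in a cuspidal Calogero--Moser family, which by uniqueness is \emph{the} cuspidal Calogero--Moser family.

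It then remains to check that these two labels belong to $\mc{F}_{k,m}^{\mrm{cusp}}$, which is a direct unwinding of the definition of $\lambda^\dagger$: taking $\lambda=(k^{k+m})$ the complement $k-\lambda$ in the box is empty, so $\lambda^\dagger=\emptyset$, giving $((k^{k+m}),\emptyset)$; taking $\lambda=\emptyset$ the complement is the full box, so $\lambda^\dagger=(k+m)^k$, giving $(\emptyset,(k+m)^k)$. This proves (b). Combining this with Corollary~\ref{cor:LusztigCMtypeB}, the unique cuspidal Calogero--Moser family contains both rigid labels and equals, as a block of the common partition of $\Irr W$, the Lusztig family $\mc{F}_{k,m}^{\mrm{cusp}}$ which also contains them; hence the two distinguished families coincide, completing (a). I expect the only real content beyond bookkeeping to be this last identification via rigid modules, together with the combinatorial verification that the rigid labels are the two ``extreme'' members of $\mc{F}_{k,m}^{\mrm{cusp}}$.
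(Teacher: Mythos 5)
Your proposal is correct and follows exactly the route the paper intends: the paper gives no separate proof of this theorem but assembles it from the preceding results precisely as you do, with the case analysis on $(\kappa,c_1)$ reducing everything to the regime $n=k(k+m)$, where the two rigid modules (Theorem \ref{thm:rigidB} plus Theorem \ref{maintheorem:rigid_cuspidal}) serve as the bridge identifying the unique cuspidal Calogero--Moser family with $\mc{F}_{k,m}^{\mrm{cusp}}$ via Corollary \ref{cor:LusztigCMtypeB}. Your direct verification that $((k^{k+m}),\emptyset)$ and $(\emptyset,(k+m)^k)$ lie in $\mc{F}_{k,m}^{\mrm{cusp}}$ from the definition of $\lambda^\dagger$ is the same bookkeeping the paper performs in Lemma \ref{typeB_cuspidal_family_minimal_reps}.
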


\section{Type $D$} \label{type_D}

The group $D_n$ is a normal subgroup of $B_n$ of index two. By setting $\bc = (0,\kappa)$, we get an embedding $\H_{\bc}(D_n) \hookrightarrow \H_{\bc}(B_n)$. Thus, we are in the situation of section \S\ref{clifford_theory}. We assume that $\kappa \neq 0$. Recall that the irreducible representations of $D_n$ are essentially given by unordered bipartitions of $n$. More precisely, if $\lambda$ and $\mu$ are a pair of partitions such that $\lambda \neq \mu$ and $|\lambda| + |\mu| = n$ then the set $\{ \lambda,\mu \}$ labels a simple $D_n$-module. If $\lambda = \mu$, then there are two non-isomorphic simple modules $\{ \lambda \}_1$ and $\{ \lambda\}_2$ labeled by $\lambda$. These modules are defined by
$$
\pi_{(\lambda,\mu)} |_{D_n} = \pi_{\{ \lambda,\mu \} } \quad \textrm{for } \lambda \neq \mu \;,
$$
and $\pi_{(\lambda,\lambda)} |_{D_n} = \pi_{\{ \lambda \}_1 } \oplus \pi_{\{ \lambda \}_2 }$.

\begin{lem}\label{lem:rigidD}
If there exists $k$ such that $n = k^2$ then there is a unique rigid $\H_\bc(D_n)$-module, which is $L_{\bc}(\{ (k^k),\emptyset \})$. Otherwise, there are no rigid modules.
\end{lem}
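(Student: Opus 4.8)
The plan is to reduce everything to the already-established classification of rigid modules for type $B$ (Theorem~\ref{thm:rigidB}) by means of the Clifford theory of \S\ref{clifford_theory}. Here we are exactly in the situation $W = B_n$, $K = D_n$, $\Gamma = W/K \cong \Z_2$. Following the convention of \S\ref{clifford_theory}, the $W$-equivariant extension to $\Ref(B_n)$ of the parameter on $\Ref(D_n)$ sets $\bc$ equal to zero on all reflections outside $D_n = G(2,2,n)$; in particular $c_1 = \bc(\mathcal{S}_1) = 0$, which is precisely the choice $\bc = (0,\kappa)$ made at the start of the section. Thus, in the notation of Theorem~\ref{thm:rigidB}, we are in the case $m = 0$, and the cuspidality condition $c_1 = m\kappa$ with $n = k(k+m)$ collapses to the single requirement $n = k^2$.

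The key input is Proposition~\ref{prop:rigidKcusp}(b), or more precisely the two facts established in its proof: an irreducible summand of $\Res^{B_n}_{D_n}\lambda$ of a $\bc$-rigid $\lambda \in \Irr(B_n)$ gives a $\bc$-rigid $D_n$-module, and conversely, if $\mu$ is a $\bc$-rigid $D_n$-module then any irreducible summand of $\Ind^{B_n}_{D_n}\mu$ labels a $\bc$-rigid $B_n$-module. Combining these two directions, a rigid $\H_\bc(D_n)$-module exists if and only if a rigid $\H_\bc(B_n)$-module exists. By Theorem~\ref{thm:rigidB} with $m = 0$, the latter happens if and only if $n = k^2$ for some $k > 0$, which immediately settles the ``otherwise'' clause of the lemma.

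It remains to treat the case $n = k^2$ and identify the module. Here Theorem~\ref{thm:rigidB} supplies exactly two rigid $\H_\bc(B_n)$-modules, $L_\bc((k^k),\emptyset)$ and $L_\bc(\emptyset,(k^k))$. By the restriction half of the correspondence, every rigid $\H_\bc(D_n)$-module must be an irreducible summand of the restriction to $D_n$ of one of these two $W$-modules. Using the description of $\Irr(D_n)$ recalled just before the lemma together with $(k^k) \neq \emptyset$, both restrictions coincide with a single simple $D_n$-module, since an unordered pair of distinct partitions labels one irreducible: $\pi_{((k^k),\emptyset)}|_{D_n} = \pi_{\{(k^k),\emptyset\}} = \pi_{\{\emptyset,(k^k)\}} = \pi_{(\emptyset,(k^k))}|_{D_n}$. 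Therefore the unique rigid $\H_\bc(D_n)$-module is $L_\bc(\{(k^k),\emptyset\})$, as asserted.

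I expect the only genuinely delicate point to be ensuring that the correspondence of Proposition~\ref{prop:rigidKcusp}(b) accounts for \emph{all} rigid $D_n$-modules, and not merely produces some of them; this is guaranteed precisely by the converse (induction) direction, which forbids any rigid $D_n$-module from arising without a rigid $B_n$-module above it. The remaining steps — pinning down $m = 0$, reading off the restriction of a type-$B$ irreducible to $D_n$, and observing that the two rigid type-$B$ modules restrict to a common irreducible — are routine once the Clifford-theoretic framework is in place.
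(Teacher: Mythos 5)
Your proof is correct and follows essentially the same route as the paper: both deduce the result from the type-$B$ classification (Theorem~\ref{thm:rigidB} with $m=0$) via the Clifford-theoretic correspondence of Proposition~\ref{prop:rigidKcusp}(b), and both conclude by noting that the two rigid $B_n$-modules restrict to the single irreducible $D_n$-module $\pi_{\{(k^k),\emptyset\}}$. Your explicit remark that the induction direction (plus Frobenius reciprocity) is what guarantees completeness of the list is exactly the point the paper leaves implicit.
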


\begin{proof}
By Theorem \ref{thm:rigidB}, if $n = k^2$ for some $k$, then the modules $L_\bc((k^k),\emptyset)$ and $L_\bc(\emptyset, (k^k))$ are the two rigid modules for $ \H_{\bc}(B_n)$ and if there exists no $k$ such that $n = k^2$, then there exists no rigid modules. Therefore, Proposition \ref{prop:rigidKcusp} implies that, in this latter case, there exist no rigid modules for $\H_{\bc}(D_n)$. Moreover, in the case $n = k^2$, the rigid $ \H_{\bc}(D_n)$-modules are precisely the modules of the form $L_\bc(\lambda)$, where $\pi_{\lambda}$ is an irreducible $D_n$-submodule of $\pi_{((k^k),\emptyset)}$ or $\pi_{(\emptyset, (k^k))}$. But both of these $B_n$-modules restrict to the irreducible $D_n$-module $\pi_{ \{ (k^k),\emptyset \} }$.
\end{proof}

\begin{thm} \label{thm:Dleaves}
Assume that $\kappa \neq 0$. The symplectic leaves of $\cX_{\bc}(D_n)$ are in bijection with the set $\{ k \ge 1 \ | \ k^2 \le n \}$, such that
\begin{enum_thm}
\item $\dim \mc{L}_k = 2 (n - k^2)$,
\item the leaf $\mc{L}_k$ is labeled by the conjugacy class of the parabolic subgroup $D_{k^2}$,
\item $\mc{L}_{k} \prec \mc{L}_{k'}$ if and only if $(D_{k^2}) \le (D_{(k')^2})$, if and only if $k \ge k'$.
\end{enum_thm}
\end{thm}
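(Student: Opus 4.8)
The plan is to deduce everything from Martino's description of the leaves of $\cX_\bc(B_n)$ (Theorem \ref{thm:BSympleaves}) by means of the Clifford theory of \S\ref{clifford_theory}, applied to $K = D_n \lhd W = B_n$ with $\Gamma = W/K \cong \mathbb{Z}_2$ and parameter $\bc = (0,\kappa)$. Proposition \ref{prop:ZWinZK} supplies the finite Poisson quotient map $\eta : \cX_\bc(D_n) \to \cX_\bc(B_n) = \cX_\bc(D_n)/\Gamma$, and since $\ZH_\bc(D_n)$ is a domain the source is irreducible. Because $c_1 = 0 = 0\cdot\kappa$, the relevant type-$B$ parameter has $m = 0$, so Theorem \ref{thm:BSympleaves} gives leaves $\mc{L}^B_k$ of $\cX_\bc(B_n)$ indexed by $\{k \ge 0 \mid k^2 \le n\}$ with $\dim \mc{L}^B_k = 2(n-k^2)$, which by the computation of \S\ref{typeB_leaf_parabolic} are labeled by $(B_{k^2})$ and totally ordered by $\mc{L}^B_k \prec \mc{L}^B_{k'} \iff k \ge k'$. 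The strategy is to transport this picture downstairs through $\eta$.

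First I would pin down which parabolic classes of $D_n$ can label a leaf. By Theorem \ref{thm:induceleaves} every leaf carries such a class, and Losev's Theorem \ref{pmax_parabolic_orbit} says a class $(W')$ occurs precisely when $\cX_{\bc'}(W')$ has a zero-dimensional leaf, the number of resulting leaves being $|\PMax(\ZH_{\bc'}(W'))/\Xi(W')|$. Using Lemma \ref{gmmn_parabolics} with $m=2$, every parabolic of $D_n$ is $B_n$-conjugate to $\fS_\lambda \times D_{n-|\lambda|}$, whose Calogero–Moser space factors as $\cX_\kappa(\fS_\lambda) \times \cX_\bc(D_{n-|\lambda|})$. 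Since type $A$ spaces are smooth (\S\ref{type_A}) and a smooth Calogero–Moser space has a zero-dimensional leaf only if it is a point, the first factor forces $\fS_\lambda$ to be trivial, so the parabolic is $D_m$ with $m = n-|\lambda|$. That $\cX_\bc(D_m)$ has a zero-dimensional leaf exactly when $m = k^2$ I would establish without circularity: the \emph{only if} by pushing such a leaf forward under the analogous map for $D_m \lhd B_m$ (Lemma \ref{lem:etaleaf}) and invoking Corollary \ref{typeB_existence_cuspidals}, and the \emph{if} from Lemma \ref{lem:rigidD} together with Theorem \ref{maintheorem:rigid_cuspidal}. This identifies the labelling classes as exactly $(D_{k^2})$ with $k^2 \le n$, proving (b); moreover $\cX_\bc(D_{k^2})$ has a \emph{unique} zero-dimensional leaf, so $\Xi(D_{k^2})$ acts on a singleton and Theorem \ref{pmax_parabolic_orbit} yields precisely one leaf $\mc{L}_k$ per such $k$, giving the bijection. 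The rank identity $2\dim\fh = 2\,\mathrm{rk}(D_{k^2}) + \dim\mc{L}_k$ from Theorem \ref{parabolic_cuspidal_induction} then forces $\dim \mc{L}_k = 2(n-k^2)$, which is (a).

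For the ordering (c), the equivalence $(D_{k^2}) \le (D_{(k')^2}) \iff k \ge k'$ is immediate from the definition of $\le$ (conjugacy into a subgroup) and the fact that $D_a$ sits as a parabolic of $D_b$ iff $a \le b$. One direction of the comparison with the geometric order is Theorem \ref{thm:induceleaves}\ref{thm:induceleaves:leq}, giving $\mc{L}_k \prec \mc{L}_{k'} \Rightarrow (D_{k^2}) \le (D_{(k')^2}) \Rightarrow k \ge k'$. For the converse I would transport the total order on the $B_n$-leaves across $\eta$: as $\eta$ is finite, surjective and closed and (by Lemma \ref{lem:etaleaf} with irreducibility) sends each leaf onto a single leaf, one gets $\eta(\mc{L}_k) = \mc{L}^B_k$ and $\mc{L}_k \subseteq \overline{\mc{L}_{k'}} \iff \mc{L}^B_k \subseteq \overline{\mc{L}^B_{k'}}$, so the chain $\cdots \prec \mc{L}_2 \prec \mc{L}_1$ is forced by the corresponding chain for $B_n$.

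The main obstacle I anticipate is the bookkeeping around the double cover $\eta$: one must show that each $\eta^{-1}(\mc{L}^B_k)$ consists of a single $D_n$-leaf rather than a pair swapped by $\Gamma$, i.e. correctly control the free and branch loci of the $\Gamma$-action (via the description of $\eta^{-1}(\cX_\bc(W)^{\mathrm{sm}})$ and the triviality of the $\Xi(D_{k^2})$-action in Theorem \ref{pmax_parabolic_orbit}), and simultaneously keep this count compatible with both the parabolic labelling and the closure order. One should also be careful with the smallest value of $k$, where the labelling parabolic is trivial and the corresponding leaf is the open dense one. Once the count ``one leaf per $(D_{k^2})$'' is secured, the dimension and ordering assertions are formal; the same reasoning recovers Proposition \ref{prop:rigidKcusp} and Theorem \ref{thm:WtoK} as consistency checks.
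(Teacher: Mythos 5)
Your proposal is correct and follows essentially the same route as the paper: existence of the zero-dimensional leaf via Lemma \ref{lem:rigidD} and Theorem \ref{maintheorem:rigid_cuspidal}, non-existence and uniqueness via Lemma \ref{lem:etaleaf} and the type $B$ classification, and the full enumeration, labelling and ordering via Lemma \ref{gmmn_parabolics} and Losev's Theorem \ref{pmax_parabolic_orbit}. The extra care you flag about whether $\eta^{-1}(\mc{L}^B_k)$ is one leaf or a $\Gamma$-swapped pair, and about the ordering, is warranted (the paper is terse on both points), but your resolution is the intended one.
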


\begin{proof}
By Lemma \ref{lem:rigidD} and Theorem \ref{maintheorem:rigid_cuspidal}, there exists at least one zero-dimensional leaf in $\cX_{\bc}(D_n)$ when $n = k^2$. But we know that there is exactly one zero-dimensional leaf in $\cX_{\bc}(B_n)$ when $n = k^2$. Thus, Lemma \ref{lem:etaleaf} implies that $\cX_{\bc}(D_n)$ contains exactly one zero-dimensional leaf when $n = k^2$. Since $\cX_{\bc}(B_n)$ contains no zero-dimensional leaves when $n \neq k^2$, Lemma \ref{lem:etaleaf} implies that $\cX_{\bc}(D_n)$ also contains no zero-dimensional leaves in this case.

Now, we apply Theorem \ref{pmax_parabolic_orbit}. By Lemma \ref{gmmn_parabolics}, the proper parabolic subgroups of $D_n$ are all conjugate to a subgroup of the form $D_m \times \s_{\lambda}$, where $0 \le m < n$ and $\lambda$ is a partition of $n - m$. We denote by $\bc'$ the restriction of $\bc$ to $D_m \times \fS_\lambda$. Let us consider when $\cX_{\bc'}(D_m \times \s_{\lambda}) = \cX_{\bc'}(D_m) \times \cX_{\bc}(\s_{\lambda})$ admits a zero dimensional leaf. Since $\kappa \neq 0$, there is a zero-dimensional leaf in $\cX_{\bc'}(\s_{\lambda})$ if and only if $\s_{\lambda} = \{ 1 \}$, i.e. if $\lambda = (1^{n-m})$. In this case $\cX_{\bc}(\s_{\lambda})$ is a point. Moreover, $\cX_{\bc'}(D_m)$ has a zero-dimensional leaf if and only if there exists a $k$ such that $m = k^2$. Thus, either $m = k^2$ and $\lambda = (1^{n-m})$, in which case there is exactly one zero-dimensional leaf in $\cX_{\bc'}(D_m \times \s_{\lambda})$, or there are no zero-dimensional leaves in $\cX_{\bc'}(D_m \times \s_{\lambda})$. Hence Theorem \ref{pmax_parabolic_orbit} implies the statements of the theorem.
\end{proof}

By \cite[Corollary 6.10]{CMPartitions}, the Calogero--Moser families for $\H_{\bc}(D_n)$, with $\bc \neq 0$, are described as follows. If $\lambda$ is a partition of $n/2$, then the two representations $\{ \lambda \}_{1}$ and $\{ \lambda \}_{2}$ each form a singleton family. Otherwise, $\{ \lambda,\mu \}$ and $\{ \lambda',\mu' \}$ are in the same family if and only if $\Res_{\{ \lambda,\mu \}}(x) = \Res_{\{ \lambda' ,\mu' \} }(x)$, where $\Res_{ \{ \lambda,\mu \} }(x) := \Res_{\lambda}(x) + \Res_{\mu}(x)$.

\begin{thm}\label{thm:typeDcusp}
Assume that $\bc \geq 0$.
\begin{enum_thm}
\item The Lusztig $\bc$-families for $D_n$ equal the Calogero--Moser $\bc$-families.
\item The cuspidal Lusztig $\bc$-families equal the cuspidal Calogero--Moser $\bc$-families.
\end{enum_thm}
\end{thm}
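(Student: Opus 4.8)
The strategy is to transfer everything from type $B_n$—where all the relevant data has just been computed—to type $D_n$ through the Clifford theory of \S\ref{clifford_theory}, applied with $K = D_n \lhd W = B_n$, $\Gamma = W/K \simeq \mathbb{Z}_2$, and the parameter $\bc = (0,\kappa)$ (so that $\bc$ is supported on $\Ref(D_n)$). Under this setup the nontrivial element of $\Gamma$ acts on bipartitions by swapping the two components, and $\pi_{(\lambda,\mu)}|_{D_n}$ is irreducible unless $\lambda = \mu$, where it splits as $\pi_{\{\lambda\}_1} \oplus \pi_{\{\lambda\}_2}$.

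For part (a), I would first rescale $\kappa = 1$ using Lemma \ref{lusztig_rescaling} and Lemma \ref{cm_scaling}. On the Calogero–Moser side, Theorem \ref{thm:WtoK} identifies $\Omega_\bc(D_n)$ with the $\Gamma$-restriction of $\Omega_\bc(B_n)$ (with the $\Gamma$-fixed classes split into two singletons), while Corollary \ref{cor:LusztigCMtypeB} gives $\Omega_\bc(B_n) = \lus_\bc(B_n)$. Evaluating the charged residue of Theorem \ref{typeB_cm_families} at the charge $\hat\bc = (0,0,-\kappa)$ shows that the $B_n$-family of $(\lambda^{(0)},\lambda^{(1)})$ is governed exactly by the symmetric combined residue $\Res_{\lambda^{(0)}}(x^{-\kappa}) + \Res_{\lambda^{(1)}}(x^{-\kappa})$, which is precisely the $D_n$ residue recalled from \cite[Corollary 6.10]{CMPartitions} above. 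On the Lusztig side I would invoke the classical description of the Lusztig families of $D_n$ as the same $\Gamma$-restrictions of the Lusztig families of $B_n$ at $c_1 = 0$ (with the split families in the $\lambda = \mu$ case). Since the two $B_n$ partitions agree by Corollary \ref{cor:LusztigCMtypeB} and both descend through the identical $\Gamma$-restriction, the resulting $D_n$ partitions coincide, with matching singleton families in the split case. This gives (a).

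For part (b), I would combine three ingredients. First, Theorem \ref{thm:Dleaves} shows that $\cX_\bc(D_n)$ has a zero-dimensional leaf—hence a cuspidal Calogero–Moser family—if and only if $n = k^2$, and then exactly one; by Lemma \ref{lem:rigidD} together with Theorem \ref{maintheorem:rigid_cuspidal} this unique cuspidal family is the one containing the rigid module $L_\bc(\{(k^k),\emptyset\})$. It then remains to establish the same behaviour for the cuspidal Lusztig families. I would deduce this from type $B$: by Theorem \ref{typeB_cuspidal_lusztig} (with $m = 0$) the group $B_n$ has a cuspidal Lusztig family at $\bc = (0,\kappa)$ if and only if $n = k^2$, namely the unique family $\mc{F}_{k,0}^{\mrm{cusp}}$, which contains $((k^k),\emptyset)$. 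Restricting to $D_n$ and using part (a), this maps to the $D_n$-family of $\{(k^k),\emptyset\}$, i.e. the cuspidal Calogero–Moser family above. The crucial remaining point is the compatibility of cuspidality under $D_n \subset B_n$: a Lusztig family of $B_n$ is $\bj$-induced from a proper parabolic $B_m \times \s_\lambda$ if and only if the corresponding $D_n$-family is $\bj$-induced from $D_m \times \s_\lambda$, which I would derive from the compatibility of $\bj$-induction with $\Res^{B_n}_{D_n}$ and the fact that the $\ba$-invariants are preserved on passing to $D_n$ at $c_1 = 0$.

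The main obstacle is precisely this last compatibility statement on the Lusztig side. On the Calogero–Moser side cuspidality is detected geometrically by Theorem \ref{thm:Dleaves} and transferred cleanly via Lemma \ref{lem:etaleaf} and Proposition \ref{prop:rigidKcusp}; but the Lusztig notion is defined through $\bj$-induction, and the parabolic subgroups of $D_n$ are of the form $D_m \times \s_\lambda$ rather than $B_m \times \s_\lambda$. Carefully matching the two $\bj$-induction operations across the index-two inclusion—in particular tracking the behaviour on the split representations $\{\lambda\}_1,\{\lambda\}_2$—is where the real work lies; once it is in place, parts (a) and (b) follow by the bookkeeping sketched above.
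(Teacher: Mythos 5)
Your part (a) follows essentially the same route as the paper: rescale to $(c_1,\kappa)=(0,1)$, combine Corollary \ref{cor:LusztigCMtypeB} with the residue description of the $D_n$ Calogero--Moser families from \cite[Corollary 6.10]{CMPartitions} and the known description of the Lusztig families of $D_n$ (the paper cites \cite[Section 22.26]{LusztigUnequalparameters} for this), and note that at $c_1=0$ both partitions of $\Irr B_n$ descend through the same $\Gamma$-restriction. That part is fine.

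The gap is in part (b), and you have flagged it yourself: the asserted equivalence that a Lusztig family of $B_n$ at $(0,\kappa)$ is $\bj$-induced from $B_l\times\s_\lambda$ if and only if the corresponding $D_n$-family is $\bj$-induced from $D_l\times\s_\lambda$ is nowhere proved, and it is not a routine consequence of the compatibility of $\Ind$ with $\Res^{B_n}_{D_n}$. Several points require genuine work: the parabolic subgroups of $B_n$ and $D_n$ do not match up cleanly ($B_1\times\s_{n-1}$ has no counterpart of the same shape, and the parabolics $\s_\lambda\subset D_n$ must be treated on their own); the $\ba$-invariants and the behaviour of $\bj$-induction on the split representations $\{\lambda\}_1,\{\lambda\}_2$ attached to degenerate symbols need separate care; and one must exclude the possibility of a cuspidal $D_n$-family whose corresponding $B_n$-family is induced only through parabolics with no $D_n$-analogue. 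This is precisely the content you defer, so as written the classification of cuspidal Lusztig families for $D_n$ is incomplete. The paper sidesteps the issue entirely: it quotes Lusztig's own classification \cite[Section 8.1]{Lusztig-characters-reductive-groups}, which states directly that $D_n$ has a unique cuspidal Lusztig family precisely when $n=k^2$, given by the symbol with rows $0,2,\ldots,2k-2$ and $1,3,\ldots,2k-1$; it then matches the content of that symbol with the content of the symbol of $((k^k),\emptyset)$ to identify the cuspidal Lusztig family with the family of $\{(k^k),\emptyset\}$, which is the unique cuspidal Calogero--Moser family by Lemma \ref{lem:rigidD} and Theorem \ref{thm:Dleaves}. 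To repair your argument, either carry out the $\bj$-induction compatibility across $D_n\subset B_n$ in detail or replace that step with this citation.
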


\begin{proof}
By Lemma \ref{lusztig_rescaling}, Lemma \ref{cm_scaling} and Lemma \ref{cuspidals_for_c_eq_0} we may assume that $(c_1,\kappa) = (0,1)$. The first part of the theorem follows from Corollary \ref{cor:LusztigCMtypeB}, \cite[Section 22.26]{LusztigUnequalparameters}, and the above description of the Calogero--Moser families.
As shown in \cite[Section 8.1]{Lusztig-characters-reductive-groups}, there is a unique cuspidal Lusztig family when $n = k^2$ and none otherwise. In the case $n = k^2$, it is the unique family containing the symbol
\begin{equation}\label{eq:cuspLusD}
\left( \begin{array}{cccc}
0, & 2, & \ds, & 2k - 2 \\
1, & 3, & \ds, & 2k - 1
\end{array}\right).
\end{equation}
The content of this symbol is $\sum_{i = 0}^{2k - 1} x^i$, which is the same as the content of the symbol
$$
S = \left( \begin{array}{cccc}
k, & k+1, & \ds, & 2k-1 \\
0, & 1, &\ds, & k-1
\end{array}\right).
$$
This is the symbol of $((k^k),\emptyset)$ in $\Sy_{(1,0);n}^k$, which implies that the cuspidal Lusztig family corresponding to the content of the symbol (\ref{eq:cuspLusD}) is the same as the Calogero–Moser family containing $\lbrace (k^k),\emptyset \rbrace$. By Lemma \ref{lem:rigidD} and Theorem \ref{thm:Dleaves}, this is the unique cuspidal Calogero–Moser family. 
\end{proof}

\section{Type $I_2(m)$} \label{dihedral_groups}

In this section we treat the case of dihedral groups. We show that almost all representations of the restricted rational Cherednik algebra are rigid. From this we easily obtain the proof of Theorem \ref{thm:mainresultintro}. We note that the results here together with \cite[Appendix B]{Thiel:2015aa} give a complete description of the representation theory of restricted rational Cherednik algebras for dihedral groups at all parameters.

\subsection{The group}
Throughout, we assume that $m \geq 5$ and choose a primitive $m$-th root of unity $\zeta \in \bbC$. Let $W$ be the Coxeter group of type $I_2(m)$. This is the dihedral group of order $2m$. It has two natural presentations, namely the Coxeter presentation $\langle s,t \mid s^2=t^2=(st)^m=1 \rangle$ and the geometric presentation $\langle s,r \mid r^m=1, s^2=1, s^{-1}rs = r^{-1} \rangle$ with a generating rotation $r \dopgleich st$ for the symmetries of a regular $m$-gon.

\subsection{Representations}

The representation theory of $W$ depends on the parity of $m$. In the following we use the same notation for the representations as in \cite[5.3.4]{GeckPfeiffer}, which essentially is also the same as in \cite{Geck.M;Jacon.N11Representations-of-H}.

If $m$ is odd, the conjugacy classes of $W$ are
\[
\lbrace 1 \rbrace, \; \lbrace r^{\pm 1} \rbrace, \; \lbrace r^{\pm 2 } \rbrace, \; \ldots, \; \lbrace r^{\pm (m-1)/2 } \rbrace, \; \lbrace r^ls \mid 0 \leq  l \leq m-1 \rbrace  \;,
\]
and so the total number of conjugacy classes is $(m+3)/2$. There are two irreducible one-dimensional representations: the trivial one $1_W$ and the sign representation $\eps:W \rarr \bbC$ with
\[
\eps(s) = -1 \;, \quad \eps(t) = -1\;,  \quad \eps(r) = 1 \;.
\]
The remaining $(m+3)/2-2 = (m-1)/2$ irreducible representations $\varphi_i$, $1 \leq i \leq (m-1)/2$, are all two-dimensional and are given by
\[
\varphi_i(s) = \begin{pmatrix} 0 & 1 \\ 1 & 0 \end{pmatrix}, \; \quad \varphi_i(t) \dopgleich \begin{pmatrix} 0 & \zeta^{-i} \\ \zeta^i & 0 \end{pmatrix} \;, \quad \varphi_i(r) = \begin{pmatrix} \zeta^i & 0 \\ 0 & \zeta^{-i} \end{pmatrix} \;.
\]
We denote the character of $\varphi_i$ by $\chi_i$.

If $m$ is even, then the conjugacy classes of $W$ are
\begin{align*}
& \lbrace 1 \rbrace \;, \; \lbrace r^{\pm 1} \rbrace \;, \; \lbrace r^{\pm 2 } \rbrace \;, \; \ldots,\lbrace r^{\pm m/2 } \rbrace \; , \\
& \lbrace r^{2k}s \mid 0 \leq k \leq (m/2)-1 \rbrace \;,\; \lbrace r^{2k+1}s \mid  0 \leq k \leq (m/2)-1 \rbrace \;,
\end{align*}
and so the total number of conjugacy classes is $(m+6)/2$. There are four irreducible one-dimensional representations: the trivial one $1_W$, the sign representation $\eps$, and two further representations $\eps_1,\eps_2$ with
\[
\begin{array}{llll}
\eps(s) = -1 \;, & \eps(t) = -1\;, & \eps(r) = 1 \;, \\
\eps_1(s) = 1 \;, & \eps_1(t) = -1 \;, & \eps_1(r) = -1 \;, \\
\eps_2(s) = -1 \;, & \eps_2(t) = 1 \;, & \eps_2(r) = -1 \;.
\end{array}
\]
The remaining $(m+6)/2-4 = (m-2)/2$ irreducible representations $\varphi_i$, $1 \leq i \leq (m-2)/2$, are all two-dimensional and are defined as in case $m$ is odd. Again, we denote the character of $\varphi_i$ by $\chi_i$.

\subsection{Reflections and parameters} \label{dihedral_reflections}
The two-dimensional faithful irreducible representation $\varphi_1$ of $W$ is a reflection representation in which precisely the elements $s_l \dopgleich r^l s$ for $0 \leq l \leq m-1$ act as reflections. We will always fix this representation as the reflection representation of $W$. Let $(y_1,y_2)$ be the standard basis of $\fh \dopgleich \bbC^2$ and let $(x_1,x_2)$ be the dual basis. We can easily verify that roots and coroots for the reflections $s_l$ are given by
\[
\alpha_{s_l} = x_1 - \zeta^{-l} x_2 \quad \tn{and} \quad \alpha_{s_l}^\vee = y_1 - \zeta y_2 \;.
\]
With this we see that the Cherednik coefficients $(y_i,x_j)_{s_l} = -(y_i,\alpha_{s_l})(\alpha_{s_l}^\vee, x_j)$ are:
\[
  (y_1,x_1)_{s_l} = -1 \;, \quad (y_1,x_2)_{s_l} =  \zeta^{-l} \;, \quad (y_2,x_1)_{s_l} = \zeta^{l}  \;, \quad (y_2,x_2)_{s_l} = -1 \;.
\]
If $m$ is odd, there is just one conjugacy class of reflections in $W$, namely the one of $s$ which is $\lbrace s_l \mid 0 \leq l \leq m-1 \rbrace$. If $m$ is even, there are two conjugacy classes of reflections in $W$, namely the one of $s$ which is $\lbrace s_{2l} \mid 0 \leq l \leq \frac{m}{2}-1 \rbrace$ and the one of $t$ which is $\lbrace s_{2l+1} \mid 0 \leq l \leq \frac{m}{2} -1 \rbrace$. 
Note that 
\[
\varphi_i(s_l) = \begin{pmatrix} 0 & \zeta^{il} \\ \zeta^{-il} & 0 \end{pmatrix} \;.
\]

If $\bc:\Ref(W) \rarr \bbC$ is a function which is constant on conjugacy classes, then, as in \cite[1.3.7]{Geck.M;Jacon.N11Representations-of-H}, we define
\[
b \dopgleich \bc(s) \;, \quad a \dopgleich \bc(t) \;.
\]
We fix such a function from now on and assume that $\bc \neq 0$. Note that if $m$ is odd, we have $a=b$.

\subsection{Summary}

We start with a tabular summary of the description of (cuspidal) Calogero–Moser families and rigid representations. To simplify notation we denote by $\mathcal{F}$ the set of two-dimensional irreducible characters of $W$. 
To allow a presentation which is independent of the parity of $m$ we set 
\[
\mathcal{R} \dopgleich \left\lbrace \begin{array}{ll} \lbrace \varphi_i \mid 1 < i \leq (m-1)/2 \rbrace = \mathcal{F} \setminus \lbrace \varphi_1 \rbrace& \tn{if } m \tn{ is odd} \\ \lbrace \varphi_i \mid 1 < i < (m-2)/2 \rbrace = \mathcal{F} \setminus \lbrace \varphi_1, \varphi_{(m-2)/2} \rbrace& \tn{if } m \tn{ is even.} \end{array} \right.
\]
We make the convention that we ignore $\eps_1$ and $\eps_2$ whenever $m$ is odd.

\begin{theorem} \label{dihedral_CM_theorem}
The (cuspidal) Calogero–Moser families and rigid representations of $\ol{\H}_{\bc}(W)$ are as listed in Table \ref{dihedral_bigtable}. 
\begin{table}[htbp]
\begin{tabular}{|c|c|c|c|}
\hline
Parameters & CM families & \specialcell{rigid \\ representations} & \specialcell{cuspidal \\ CM families} \\ \hline \hline
$a,b \neq 0$ and $a \neq \pm b$ & $\lbrace 1 \rbrace$, $\lbrace \eps \rbrace$, $\lbrace \eps_1 \rbrace$, $\lbrace \eps_2 \rbrace$, $\mathcal{F}$ & $\mathcal{R}$ & $\mathcal{F}$ \\ \hline
$a = 0$ and $b \neq 0$ & $\lbrace 1, \eps_2 \rbrace$, $\lbrace \eps,\eps_1 \rbrace$, $\mathcal{F}$ &  $\mathcal{R}$ & $\mathcal{F}$ \\ \hline
$a \neq 0$ and $b = 0$ &  $\lbrace 1, \eps_1 \rbrace$, $\lbrace \eps,\eps_2 \rbrace$, $\mathcal{F}$ & $\mathcal{R}$ & $\mathcal{F}$ \\ \hline
$a = b \neq 0$ & $\lbrace 1 \rbrace$, $\lbrace \eps \rbrace$, $\lbrace \eps_1, \eps_2 \rbrace \cup \mathcal{F}$ & $\eps_1$, $\eps_2$, $\varphi_{|\mathcal{F}|}$, $\mathcal{R}$ & $\lbrace \eps_1, \eps_2 \rbrace \cup \mathcal{F}$ \\ \hline
$a = -b \neq 0$ & $\lbrace \eps_1 \rbrace$, $\lbrace \eps_2 \rbrace$, $\lbrace 1,\eps \rbrace \cup \mathcal{F}$ & $1,\eps,\varphi_1$, $ \mathcal{R}$ & $\lbrace 1,\eps \rbrace \cup \mathcal{F}$ \\ \hline
\end{tabular}
\caption{The (cuspidal) Calogero–Moser families and rigid representations for dihedral groups.}
\label{dihedral_bigtable}
\end{table}
\end{theorem}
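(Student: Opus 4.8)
The plan is to establish the three columns of Table~\ref{dihedral_bigtable} in turn: the rigid representations by a direct computation with the rigidity criterion, the Calogero--Moser families from the block structure of $\overline{\H}_\bc(W)$, and finally the cuspidal families by combining these with Theorem~\ref{maintheorem:rigid_cuspidal}.

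\textbf{Rigid representations.} First I would apply Lemma~\ref{rigidity_equation_lemma}: a representation $\lambda$ is $\bc$-rigid exactly when $\sum_{l} \bc(s_l)\,(y,\alpha_{s_l})(\alpha_{s_l}^\vee,x)\,\lambda(s_l)=0$ for all $x,y$. Feeding in the Cherednik coefficients $(y_i,x_j)_{s_l}$ and the matrices $\varphi_i(s_l)$ from \S\ref{dihedral_reflections}, this becomes a finite system of $2\times 2$ matrix equations whose entries are sums of the shape $\sum_l \bc(s_l)\zeta^{jl}$. The weight $\bc(s_l)$ is constant ($=a=b$) when $m$ is odd and alternates between $b$ (even $l$) and $a$ (odd $l$) when $m$ is even, so each such sum splits into two geometric series in $\zeta^{2}$ and is evaluated by root-of-unity orthogonality. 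For the two-dimensional $\varphi_k$ the diagonal equations vanish identically, and the off-diagonal ones survive only when $k=1$ (forcing $a+b=0$) or, for even $m$, when $k=(m-2)/2$ (forcing $a=b$); every interior $\varphi_k\in\mathcal{R}$ is therefore rigid for all $\bc\neq 0$. The analogous scalar computation for the linear characters, using $\eps(s_l)=-1$ and $\eps_i(s_l)=\pm(-1)^l$, shows that $1,\eps$ are rigid iff $a=-b$ and $\eps_1,\eps_2$ iff $a=b$. This reproduces the rigid column.

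\textbf{Calogero--Moser families.} Next I would determine the blocks of $\overline{\H}_\bc(W)$, equivalently the partition $\Omega_\bc(W)$ of $\Irr W$ under Proposition~\ref{prop:simplebaby}. The efficient route is to use the central character of each $L_\bc(\lambda)$ as a block invariant, which turns the necessary linkage constraints into explicit linear conditions in $a,b$, and then to sharpen these to genuine block equalities using the submodule structure of the baby Verma modules $\Delta_\bc(\lambda)$. This analysis is carried out in full for dihedral groups in \cite[Appendix~B]{Thiel:2015aa}, and yields precisely the five parameter regimes and the merging patterns of the second column: the singleton and doubleton families of linear characters, together with a single ``large'' family $\mathcal{F}_0$ assembled from $\mathcal{F}$ (enlarged by $\{\eps_1,\eps_2\}$ when $a=b$ and by $\{1,\eps\}$ when $a=-b$).

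\textbf{Cuspidal families.} Finally I would read off the cuspidal column. Comparing the first two columns shows that in every regime all rigid representations lie in the single family $\mathcal{F}_0$, so Theorem~\ref{maintheorem:rigid_cuspidal} makes $\mathcal{F}_0$ cuspidal. For uniqueness I would use that zero-dimensional symplectic leaves are in bijection with cuspidal families: a zero-dimensional leaf has associated parabolic of rank $\dim\fh=2$, hence equal to $W$ itself, and the leaf stratification of $\cX_\bc(I_2(m))$ (cf.\ \cite[Tables~1,2]{Cuspidal}) attaches at most one leaf to the class $(W)$. Thus $\mathcal{F}_0$ is the only cuspidal family; the remaining families are non-cuspidal, the singletons directly by Lemma~\ref{singleton_cm_noncuspidal} and the linear-character doubletons because they cannot occupy the unique zero-dimensional leaf. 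I expect the genuine difficulty to lie in the second step: the rigidity computation is a routine root-of-unity exercise and the cuspidality statement is then essentially formal, whereas pinning down the exact Calogero--Moser families across all five regimes requires the complete submodule analysis of $\overline{\H}_\bc(W)$.
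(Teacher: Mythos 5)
Your proposal is correct and follows essentially the same route as the paper: the same root-of-unity computation for rigidity via Lemma~\ref{rigidity_equation_lemma}, a central-character computation (the paper uses the Euler-element criterion of \cite{Thiel:2014aa}) plus an external linkage result for the exact Calogero--Moser families (the paper cites \cite{Bellamy:2010aa} where you defer to \cite[Appendix B]{Thiel:2015aa}), and then Theorem~\ref{maintheorem:rigid_cuspidal} combined with the uniqueness of the zero-dimensional leaf from \cite{Cuspidal} for the cuspidal column. No gaps.
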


In the next three sections we will prove this theorem.

\subsection{Calogero–Moser families}

We recall from \cite{Thiel:2014aa} the notion of \word{Euler $\bc$-families}. These are defined by the action of the (central) Euler element of $\ol{\H}_\bc(W)$ on the simple modules and are coarser than the Calogero–Moser $\bc$-families. In \cite[Corollary 1]{Thiel:2014aa} a simple character theoretic formula for determining these families is given: two irreducible characters $\lambda$ and $\mu$ of $W$ lie in the same Euler $\bc$-family if and only if 
\[
\sum_{x \in \Ref(W)} \bc(x) \left( \frac{\lambda(x)}{\lambda(1)} - \frac{\mu(x)}{\mu(1)} \right) = 0 \;.
\]
This formula is in our case equivalent to
\[
a \left( \frac{\lambda(s)}{\lambda(1)} - \frac{\mu(s)}{\mu(1)} \right) + b \left( \frac{\lambda(t)}{\lambda(1)} - \frac{\mu(t)}{\mu(1)} \right) = 0\;.
\] 
From this it is easy to deduce that the Euler families are as in Table \ref{dihedral_bigtable}. In \cite{Bellamy:2010aa} the first author has proven that, for any $\bc$, the Euler $\bc$-families are in fact already Calogero–Moser $\bc$-families when $W$ is a dihedral group.

\subsection{Rigid representations}

We split the proof of the description of rigid representations into two parts, depending on the parity of $m$.

\begin{proposition} \label{dihedral_rigid_odd}
Assume that $m$ is odd. The following holds:
\begin{enum_thm}
\item The representations $1$, $\eps$, and $\varphi_1$ are not rigid.
\item The representation $\varphi_i$ is a rigid representation for all $1 < i \leq (m-1)/2$.
\end{enum_thm}
\end{proposition}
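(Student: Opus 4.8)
The plan is to apply the rigidity criterion of Lemma~\ref{rigidity_equation_lemma} directly, feeding in the explicit reflection data computed in \S\ref{dihedral_reflections}. Since $m$ is odd there is a single conjugacy class of reflections, so $\bc(s_l)=b$ for every $l$, and the reflections are exactly $s_l=r^ls$ for $0\le l\le m-1$. For a representation $\lambda$ the criterion~\eqref{rigidity_equation} then amounts to one matrix identity for each of the four pairs $(y_a,x_b)$ with $a,b\in\{1,2\}$, namely
\[
b\sum_{l=0}^{m-1}(y_a,\alpha_{s_l})(\alpha_{s_l}^\vee,x_b)\,\lambda(s_l)=0,
\]
and by the values of the Cherednik coefficients recorded in \S\ref{dihedral_reflections} each scalar factor $(y_a,\alpha_{s_l})(\alpha_{s_l}^\vee,x_b)$ is one of $\pm 1$ or $\pm\zeta^{\pm l}$. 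Thus the whole problem reduces to evaluating sums of the form $\sum_{l=0}^{m-1}\zeta^{kl}$, which vanish precisely when $m\nmid k$ and equal $m$ otherwise.

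For the two-dimensional representations I would substitute $\varphi_i(s_l)=\left(\begin{smallmatrix}0&\zeta^{il}\\\zeta^{-il}&0\end{smallmatrix}\right)$. Because this matrix is anti-diagonal, the diagonal entries of every sum vanish automatically, and the off-diagonal entries are geometric sums $\sum_l\zeta^{kl}$ with $k\in\{\pm i,\ \pm(i-1),\ \pm(i+1)\}$. When $1<i\le(m-1)/2$ the three absolute values satisfy $1\le |i-1|,\,i,\,i+1\le (m+1)/2<m$, so none of these exponents is divisible by $m$; hence all the sums vanish, all four equations hold, and $\varphi_i$ is rigid, proving part~(b). The very same computation isolates why $\varphi_1$ fails: for $i=1$ the exponent $i-1=0$ appears, producing a surviving sum equal to $m$, so~\eqref{rigidity_equation} is violated and $\varphi_1$ is not rigid.

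For the linear characters $1$ and $\eps$ the argument is even shorter, since $\lambda(s_l)$ is the constant $+1$, respectively $-1$: taking the pair $(y_1,x_1)$, whose coefficient $(y_1,\alpha_{s_l})(\alpha_{s_l}^\vee,x_1)$ is constant in $l$, already yields $\pm bm\neq 0$ because $\bc\neq 0$. This settles the remaining cases of part~(a). The only point requiring genuine care—the real content of the computation—is the bookkeeping of which root-of-unity sums vanish: one must confirm that even the boundary index $i=(m-1)/2$ keeps $i+1$ strictly below $m$, and pinpoint $i-1\equiv 0\pmod m$ as the single obstruction that separates the non-rigid $\varphi_1$ from the rigid $\varphi_i$ with $i>1$.
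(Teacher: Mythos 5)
Your proof is correct and follows essentially the same route as the paper: apply the rigidity criterion of Lemma~\ref{rigidity_equation_lemma} with the Cherednik coefficients from \S\ref{dihedral_reflections} and reduce everything to the vanishing of the geometric sums $\sum_{l}\zeta^{kl}$ for $k\in\{\pm i,\pm(i-1),\pm(i+1)\}$, with $i-1\equiv 0\pmod m$ as the sole obstruction at $i=1$. In fact you are slightly more complete than the printed proof, which treats only the two-dimensional representations and leaves the (easy) non-rigidity of $1$ and $\eps$ implicit.
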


\begin{proof}
  The representation $\varphi_i$ for $1 \leq i \leq (m-1)/2$ is rigid if and only if 
  \[
  0 = a \sum_{l=0}^{m-1} (y_k,x_j)_{s_l} \varphi_i(s_l) 
  \]
  for all $k,j \in \lbrace 1,2 \rbrace$. 
  As $a \neq 0$, this is equivalent to
  \begin{equation} \label{dihedral_group_rigiditiy_cond}
  \sum_{l=0}^{m-1} (y_k,x_j)_{s_l} \varphi_i(s_l) = 0
  \end{equation}
  for all $k,j \in \lbrace 1,2 \rbrace$. Note that 
  \[
  \sum_{l=0}^{m-1} (\zeta^q)^l = \left\lbrace \begin{array}{ll} m & \tn{if } q \in m\bbZ \\ 0 & \tn{else.} \end{array} \right.
  \]
  Using the Cherednik coefficients computed in \S\ref{dihedral_reflections}, equation  (\ref{dihedral_group_rigiditiy_cond}) is for $k=1=j$ and for $k=2=j$ equivalent to
  \[
  \sum_{l=0}^{m-1} \begin{pmatrix} 0 & \zeta^{il} \\ \zeta^{-il} & 0 \end{pmatrix}   = 0  \Longleftrightarrow \sum_{l=0}^{m-1} (\zeta^{i})^l = 0 \quad \tn{and} \quad \sum_{l=0}^{m-1} (\zeta^{-i})^l = 0
  \]
  and due to the aforementioned, this condition is satisfied if and only if $i \notin m \bbZ$. Since $1 \leq i \leq (m-1)/2$, this is always satisfied. For $k=1$ and $j=2$ equation (\ref{dihedral_group_rigiditiy_cond}) is equivalent to
  \begin{align*}
 &\sum_{l=0}^{m-1} \zeta^{-l} \begin{pmatrix} 0 & \zeta^{il} \\ \zeta^{-il} & 0 \end{pmatrix}   = 0 \Longleftrightarrow \sum_{l=0}^{m-1} \zeta^{il-l} = 0 \quad \tn{and} \quad \sum_{l=0}^{m-1} \zeta^{-il-l} = 0 \\
 & \Longleftrightarrow \sum_{l=0}^{m-1} (\zeta^{i-1})^l = 0 \quad \tn{and} \quad \sum_{l=0}^{m-1} (\zeta^{-i-1})^l = 0 \\
 &\Longleftrightarrow i-1 \notin m\bbZ \tn{ and } i+1 \notin m\bbZ \;.
  \end{align*}
  Due to $1 \leq i \leq (m-1)/2$ the condition $i+1 \notin m\bbZ $ is always satisfied. Hence, (\ref{dihedral_group_rigiditiy_cond}) holds for $k=1$ and $j=2$ if and only if $i \neq 1$. Finally, for $k=2$ and $j=1$ equation (\ref{dihedral_group_rigiditiy_cond}) is equivalent to 
  \begin{align*}
  &\sum_{l=0}^{m-1} \zeta^{l} \begin{pmatrix} 0 & \zeta^{il} \\ \zeta^{-il} & 0 \end{pmatrix}   = 0 \Longleftrightarrow \sum_{l=0}^{m-1} \zeta^{il+l} = 0 \quad \tn{and} \quad \sum_{l=0}^{m-1} \zeta^{-il+l} = 0 \\
 & \Longleftrightarrow \sum_{l=0}^{m-1} (\zeta^{i+1})^l = 0 \quad \tn{and} \quad \sum_{l=0}^{m-1} (\zeta^{-i+1})^l = 0 \\
 & \Longleftrightarrow i+1 \notin m\bbZ \tn{ and } i-1 \notin m\bbZ \;.  
  \end{align*}
  Again, the condition $i+1 \notin \bbZ$ is always satisfied and $i-1 \notin m\bbZ$ holds if and only if $i\neq 1$. This proves the claim.
\end{proof}

\begin{proposition}
Assume that $m$ is even. The following holds: 
\begin{enum_thm}
\item For any $1 < i < (m-2)/2$ the representation $\varphi_i$ is rigid.
\item The representation $\varphi_1$ is rigid if and only if $a = -b$.
\item The representation $\varphi_{(m-2)/2}$ is rigid if and only if $a = b$.
\item The representations $1$ and $\eps$ are rigid if and only if $a = -b$.
\item The representations $\eps_1$ and $\eps_2$ are rigid if and only if $a = b$.
\end{enum_thm}
\end{proposition}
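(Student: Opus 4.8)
The plan is to apply the rigidity criterion of Lemma~\ref{rigidity_equation_lemma} exactly as in the odd case (Proposition~\ref{dihedral_rigid_odd}), the only new feature being that for even $m$ the reflection set $\Ref(W)$ splits into the two conjugacy classes $\lbrace s_{2l} \rbrace$ and $\lbrace s_{2l+1} \rbrace$, carrying the two independent parameters $b = \bc(s)$ and $a = \bc(t)$. Using the Cherednik coefficients $(y_k,x_j)_{s_l}$ computed in \S\ref{dihedral_reflections} together with $\varphi_i(s_l) = \begin{pmatrix} 0 & \zeta^{il} \\ \zeta^{-il} & 0 \end{pmatrix}$, the rigidity equation for $\varphi_i$ becomes a family of anti-diagonal $2 \times 2$ matrix identities, one for each pair $(k,j)$. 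Since the parameter is locally constant in $l$ according to parity, everything reduces to a single parity-weighted exponential sum, which I would isolate as
\[
T(q) \dopgleich b \sum_{l \ \mathrm{even}} \zeta^{ql} + a \sum_{l \ \mathrm{odd}} \zeta^{ql} \,.
\]

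The key elementary step is to evaluate $T(q)$. Writing even indices as $l = 2p$ and odd indices as $l = 2p+1$ and summing the two geometric series over $p = 0, \dots, \tfrac{m}{2}-1$ shows that $T(q) = 0$ unless $q \equiv 0 \pmod{m/2}$; and when $q \equiv 0 \pmod{m/2}$ one gets $T(q) = \tfrac{m}{2}(a+b)$ if $q \equiv 0 \pmod{m}$ and $T(q) = \tfrac{m}{2}(b-a)$ if $q \equiv \tfrac{m}{2} \pmod{m}$ (using $\zeta^{m/2} = -1$). In particular $T(-q) = T(q)$.

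For parts (a), (b), (c) I would observe that, after extracting the two non-zero entries of each matrix equation and running through the four coefficient types, the eight resulting scalar conditions are precisely $T(\pm i) = 0$, $T(i-1) = 0$, $T(-i-1) = 0$, $T(i+1) = 0$, $T(1-i) = 0$; by evenness of $T$ these collapse to the three conditions $T(i) = 0$, $T(i-1) = 0$, $T(i+1) = 0$. Since $1 \leq i \leq \tfrac{m-2}{2}$ and $m \geq 6$, one has $i \not\equiv 0 \pmod{m/2}$, so $T(i) = 0$ automatically. The condition $T(i-1) = 0$ is non-trivial only when $i-1 \equiv 0 \pmod{m/2}$, i.e. $i = 1$, where it reads $T(0) = \tfrac{m}{2}(a+b) = 0$, forcing $a = -b$; and $T(i+1) = 0$ is non-trivial only when $i+1 \equiv 0 \pmod{m/2}$, i.e. $i = \tfrac{m-2}{2}$, where it reads $T(m/2) = \tfrac{m}{2}(b-a) = 0$, forcing $a = b$. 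This yields (a) for $1 < i < \tfrac{m-2}{2}$, (b), and (c); the hypothesis $m \geq 6$ is what guarantees that $1$ and $\tfrac{m-2}{2}$ are distinct and that no other exponent becomes $\equiv 0 \pmod{m/2}$.

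For (d) and (e), note $s_l = r^l s$, so each linear character satisfies $\chi(s_l) = \chi(r)^l \chi(s)$, giving $1(s_l) = 1$, $\eps(s_l) = -1$, $\eps_1(s_l) = (-1)^l$, $\eps_2(s_l) = (-1)^{l+1}$; writing uniformly $\chi(s_l) = \varepsilon_\chi \zeta^{(m/2)\delta_\chi l}$ with $\varepsilon_\chi \in \lbrace \pm 1 \rbrace$ and $\delta_\chi \in \lbrace 0,1 \rbrace$, the scalar rigidity equation for coefficient type $(k,j)$ becomes a value of $T$. The coefficients $(y_1,x_1)_{s_l} = (y_2,x_2)_{s_l} = -1$ give the condition $T(\tfrac{m}{2}\delta_\chi) = 0$, while the off-diagonal coefficients $\zeta^{\pm l}$ give $T(\tfrac{m}{2}\delta_\chi \pm 1) = 0$, which hold automatically since those exponents are $\not\equiv 0 \pmod{m/2}$. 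Thus rigidity is equivalent to $T(\tfrac{m}{2}\delta_\chi) = 0$, i.e. to $a = -b$ for $\chi \in \lbrace 1, \eps \rbrace$ (where $\delta_\chi = 0$) and to $a = b$ for $\chi \in \lbrace \eps_1, \eps_2 \rbrace$ (where $\delta_\chi = 1$). The main obstacle is purely bookkeeping: correctly tracking exponents modulo $m$ and modulo $m/2$ across the coefficient types and matrix entries and verifying that the unwanted conditions vanish — this is where the standing assumption $m \geq 5$ (hence $m \geq 6$ even) is repeatedly invoked to exclude small-$m$ coincidences such as $\tfrac{m}{2} \mid 2$.
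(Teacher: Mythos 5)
Your proposal is correct and follows exactly the route the paper intends: the paper's own proof merely says the result ``follows from a similar direct computation as in the proof of Proposition~\ref{dihedral_rigid_odd}'' and omits the details, and your parity-weighted sum $T(q)$ together with its evaluation (zero unless $q \equiv 0 \pmod{m/2}$, equal to $\tfrac{m}{2}(a+b)$ or $\tfrac{m}{2}(b-a)$ in the two remaining residue classes) is precisely the bookkeeping needed to carry that computation out. All the reductions — the collapse of the eight scalar conditions to $T(i)=T(i\pm 1)=0$ for $\varphi_i$, and the identification of the linear characters' conditions with $T(0)$ and $T(m/2)$ — check out.
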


\begin{proof} 
This follows from a similar direct computation as in the proof of Proposition \ref{dihedral_rigid_odd}. We omit the details here.
\end{proof}

\subsection{Cuspidal Calogero–Moser families}

For $m \geq 5$ the first author has shown in \cite[\S5.5]{Cuspidal} that independent of the parameter $\bc$ there is exactly one cuspidal Calogero–Moser family. It thus remains to identify this family. Since $m \geq 5$ we have $\mathcal{R} \neq \emptyset$, and as $\mathcal{R}$ is always contained in the Calogero–Moser family which in Table \ref{dihedral_bigtable} is claimed to be cuspidal, it follows from Theorem \ref{maintheorem:rigid_cuspidal} that this family is indeed the unique cuspidal one.

\subsection{Lusztig families} \label{dihedral_lusztig_families}

From now on we assume that $\bc \geq 0$. The Lusztig families of $W$ are listed in Table \ref{dihedral_lusztig_families_table} which is taken from \cite[1.7.3]{Geck.M;Jacon.N11Representations-of-H}.

\begin{table}[htbp]
\begin{tabular}{|c|c|} 
\hline
Parameters & Lusztig families \\ \hline \hline
$b=a > 0$ & $\lbrace 1_W \rbrace$, $\lbrace \eps \rbrace$, $\lbrace \eps_1,\eps_2 \rbrace \cup \mathcal{F}$ \\ \hline
$b>a > 0$ or $a>b>0$ & $\lbrace 1_W \rbrace$, $\lbrace \eps \rbrace$, $\lbrace \eps_1 \rbrace$, $\lbrace \eps_2 \rbrace$, $\mathcal{F}$ \\ \hline 
$b>a =0$ & $\lbrace 1_W, \eps_1 \rbrace$, $\lbrace \eps,\eps_2 \rbrace$, $\mathcal{F}$ \\ \hline 
$a>b=0$ & $\lbrace 1_W, \eps_2 \rbrace$, $\lbrace \eps,\eps_1 \rbrace$, $\mathcal{F}$ \\ \hline
\end{tabular}
\caption{Lusztig families}
\label{dihedral_lusztig_families_table}
\end{table}

Comparison with the Calogero–Moser families immediately yields the proof of Theorem \ref{thm:cm_equal_lusztig} for dihedral groups:

\begin{corollary} \label{dihedral_lusztig_qual_cm}
For any $\bc \geq 0$ the Lusztig $\bc$-families are equal to the Calogero–Moser $\bc$-families.
\end{corollary}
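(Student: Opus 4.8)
The plan is to prove the corollary by comparing, regime by regime, the two explicit classifications already available: the Calogero–Moser $\bc$-families from Theorem \ref{dihedral_CM_theorem} (Table \ref{dihedral_bigtable}) and the Lusztig $\bc$-families from Table \ref{dihedral_lusztig_families_table}. Both are partitions of the finite set $\Irr W$, so it is enough to check that they agree as partitions for every admissible parameter. The case $\bc = 0$ is already settled for all $W$ by Lemma \ref{lus_eq_cm_at_0}, so I may assume $\bc \neq 0$; together with the standing hypothesis $\bc \geq 0$ this means $a = \bc(t)$ and $b = \bc(s)$ are non-negative and not both zero.

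First I would cut down the list of cases. The sign hypothesis $\bc \geq 0$ excludes $a = -b \neq 0$ (which would force $a = b = 0$), so the last row of Table \ref{dihedral_bigtable} never occurs and the condition $a \neq \pm b$ reduces to $a \neq b$. What is left are exactly four regimes --- $a = b > 0$; $a, b > 0$ with $a \neq b$; $a = 0 < b$; and $b = 0 < a$ --- and these match the four rows of the Lusztig table. In each regime the set $\mathcal{F}$ of two-dimensional characters forms a single block on both sides (on its own when $a \neq b$, and together with $\eps_1, \eps_2$ when $a = b$), so this part of the two partitions agrees with no work. The remaining content is the grouping of the one-dimensional characters $1, \eps, \eps_1, \eps_2$, which on the Calogero–Moser side is governed by the Euler-family invariant underlying Theorem \ref{dihedral_CM_theorem}; on the four linear characters it takes only the values $\pm(a+b)$ and $\pm(a-b)$, and reading off which of these coincide in each regime produces the one-dimensional blocks to be matched against the Lusztig table.

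The one point that needs care --- and the only place I expect real friction --- is the labeling of the two non-trivial linear characters $\eps_1$ and $\eps_2$ in the degenerate regimes $a = 0$ and $b = 0$. Interchanging the two conjugacy classes of reflections is the same as interchanging $a$ and $b$, and this interchanges $\eps_1$ with $\eps_2$; one must therefore make sure that the representation labels taken from \cite[5.3.4]{GeckPfeiffer} in the Calogero–Moser computation are matched consistently with those used in the Lusztig table drawn from \cite[1.7.3]{Geck.M;Jacon.N11Representations-of-H}, since an unnoticed swap would make the $a = 0$ and $b = 0$ rows look different when in fact they correspond. Once this bookkeeping is pinned down, the four regimes agree term for term and the corollary follows.
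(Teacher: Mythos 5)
Your proposal is correct and takes essentially the same approach as the paper, whose entire proof is the one-line ``comparison'' of Table \ref{dihedral_bigtable} with Table \ref{dihedral_lusztig_families_table} after the row $a=-b\neq 0$ is discarded (as you note, it cannot occur for $\bc\geq 0$). Your caution about the $\eps_1$/$\eps_2$ labelling in the degenerate regimes is well placed: as printed, the rows $a=0<b$ and $b=0<a$ of the two tables only line up after the relabelling induced by swapping the two classes of reflections (equivalently $a\leftrightarrow b$), so pinning that convention down is indeed the only point of the comparison that requires actual care.
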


\subsection{Cuspidal Lusztig families}
In order to determine which of the Lusztig families are cuspidal we explicitly compute the $\bj$-induction. The group $W$ has two non-trivial proper parabolic subgroups: the group $P_1 \dopgleich \langle s \rangle$ and the group $P_2 \dopgleich \langle t \rangle$, which are both Coxeter groups of type $\rA_1$. Let $\psi_i$ be the non-trivial irreducible character of $P_i$ and note that this is the sign representation of this Coxeter group. It is not hard to compute that
\[
\Ind_{P_1}^W 1_{P_1} = 1_W + \eps_1 + \sum_{j} \chi_j \;, \quad \Ind_{P_1}^W \psi_1 = \eps + \eps_2 + \sum_{j} \chi_j + \delta 1_W \;,
\]
\[
\Ind_{P_2}^W 1_{P_2} = 1_W + \eps_2 + \sum_{j} \chi_j \;, \quad \Ind_{P_2}^W \psi_2 = \eps + \eps_1 + \sum_{j} \chi + \delta 1_W \;,
\]
where 
\[
\delta \dopgleich \left\lbrace \begin{array}{ll} 0 & \tn{if } m \tn{ is even} \\ 1 & \tn{if } m \tn{ is odd} \end{array} \right.
\]
and the sums are taken over all two-dimensional characters.

Lusztig's $\ba$-functions $\ba_\chi$ of the irreducible characters $\chi$ of $W$ with respect to $\bc$ are listed in Table \ref{dihedral_a_function} which is taken from \cite[1.3.7]{Geck.M;Jacon.N11Representations-of-H}, where the last row follows by symmetry.
\begin{table}[htbp]
\begin{tabular}{|c||c|c|c|c|c|}
\hline
Parameters & $\varphi_i$ & $1_W$ & $\eps$ & $\eps_1$ & $\eps_2$ \\ \hline \hline
$b = a > 0$ & $a$ & $0$ & $ma$ & $a$ & $a$ \\ \hline
$b > a \geq 0$ & $b$ & $0$ & $\frac{m}{2}(a+b)$ & $a$ & $\frac{m}{2}(b-a) + a$ \\ \hline
$a > b \geq 0$ & $a$ & $0$ & $\frac{m}{2}(a+b)$ & $\frac{m}{2}(a-b) + b$ & $b$\\ \hline
\end{tabular}
\caption{The $\ba$-function}
\label{dihedral_a_function}
\end{table}
Using \cite[1.3.3]{Geck.M;Jacon.N11Representations-of-H} we see that the $\ba$-functions for the irreducible characters of the parabolic subgroups with respect to the restriction of $\bc$ to these groups are as in Table \ref{dihedral_parabolic_a_function}.
\begin{table}[htbp]
\begin{tabular}{|c||c|c|c|c|}
\hline
$\chi$ & $1_{P_1}$ & $\psi_1$ & $1_{P_2}$ & $\psi_2$ \\ \hline \hline
$\ba_\chi$ & $0$ & $b$ & $0$ & $a$ \\ \hline
\end{tabular}
\caption{The $\ba$-function for the parabolic subgroups $P_i$.}
\label{dihedral_parabolic_a_function}
\end{table}
From these tables we can deduce that Lusztig's $\bj$-induction is as in Table \ref{dihedral_j_induction}.
\begin{table}[htbp]
\begin{tabular}{|c||c|c|c|c|}
\hline
Parameters & $\bj_{P_1}^W 1_{P_1}$ & $\bj_{P_1}^W \psi_1$ & $\bj_{P_2}^W 1_{P_2}$ & $\bj_{P_2}^W \psi_2$ \\ \hline \hline
$b=a>0$ & $1_W$ & $\eps_2 + \sum_j \chi_j$ & $1_W$ & $\eps_1 + \sum_j \chi_j$ \\ \hline
$b>a>0$ & $1_W$ & $\sum_j \chi_j$ & $1_W$ & $\eps_1$ \\ \hline
$b>a=0$ & $1_W + \eps_1$ & $\sum_j \chi_j$ & $1_W$ & $\eps_1$ \\ \hline \
$a>b>0$ & $1_W$ & $\eps_2$ & $1_W$ & $\sum_j \chi_j$ \\ \hline
$a>b=0$ & $1_W$ & $\eps_2$ & $1_W + \eps_2$ & $\sum_j \chi_j$ \\ \hline
\end{tabular}
\caption{$\bj$-induction.}
\label{dihedral_j_induction}
\end{table}

Using the table of $\bj$-inductions we can now easily determine the cuspidal Lusztig families.

\begin{lemma}
Let $\mathbf{c} \geq 0$. There is a unique cuspidal Lusztig family. This family is equal to $\lbrace \eps_1,\eps_2 \rbrace \cup \mathcal{F}$ if $b=a$, and otherwise it is equal to $\mathcal{F}$.
\end{lemma}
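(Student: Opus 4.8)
The plan is to read the answer directly off the truncated–induction computations in Table~\ref{dihedral_j_induction}, combined with the list of families in Table~\ref{dihedral_lusztig_families_table}, using the definition that a Lusztig family is cuspidal exactly when it is not $\bj$-induced from a proper parabolic subgroup. The only proper parabolic subgroups are $P_1=\langle s\rangle$ and $P_2=\langle t\rangle$, both of type $\mathrm{A}_1$, and the restriction of $\bc$ to $P_1$, resp.\ $P_2$, is the single value $b$, resp.\ $a$. First I would record the families of these $\mathrm{A}_1$-groups: when the relevant value is strictly positive the group has the two singleton families $\{1_{P_i}\}$ and $\{\psi_i\}$, whereas when the value is $0$ it has, by Example~\ref{lusztig_0}, a single family $\{1_{P_i},\psi_i\}$ whose unique constructible representation is the regular character $1_{P_i}+\psi_i$. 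This case distinction is precisely what separates the regimes $a,b>0$ from the degenerate regimes $a=0$ and $b=0$, so I would organise the proof around the four parameter regimes permitted by $\bc\ge 0$, $\bc\neq 0$ (that is, $a=b>0$; $b>a>0$ or $a>b>0$; $b>a=0$; $a>b=0$), reducing as usual by Lemma~\ref{lusztig_rescaling} to $\kappa$-normalised parameters.

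Next I would dispose of all the ``small'' families. Every singleton family of linear characters appearing in Table~\ref{dihedral_lusztig_families_table} contains a constructible character: this is visible from Table~\ref{dihedral_j_induction}, for instance $1_W=\bj_{P_1}^W 1_{P_1}$, and one of $\eps_1,\eps_2$ equals a $\bj_{P_i}^W\psi_i$ (e.g.\ $\{\eps_1\}=\bj_{P_2}^W\psi_2$ when $b>a>0$), the remaining ones being their $\sgn_W$-twists, hence again constructible. By Lemma~\ref{singleton_lusztig_not_cuspidal} none of these singleton families is cuspidal. For the two-element linear families occurring in the degenerate regimes, say $\{1_W,\eps_1\}$ when $b>a=0$, I would observe that they are $\bj$-induced from the single regular-character family of the parabolic whose restricted parameter vanishes: here $\bj_{P_2}^W(1_{P_2}+\psi_2)=1_W+\eps_1$ sets up an honest bijection of the two underlying characters $1_{P_2}\mapsto 1_W$, $\psi_2\mapsto\eps_1$, and the companion family $\{\eps,\eps_2\}$ is its $\sgn_W$-twist. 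The regimes $a>b=0$ are handled symmetrically. Thus all families other than the one containing the two-dimensional characters are $\bj$-induced, hence not cuspidal.

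It then remains to show that the single family $\mathcal{F}_0$ carrying all two-dimensional characters is cuspidal; this is $\{\eps_1,\eps_2\}\cup\mathcal{F}$ when $a=b$ and $\mathcal{F}$ otherwise, matching the claim. The guiding, regime-independent observation is that $\bj_{P_i}^W$ sends each irreducible character of $P_i$ either to a single linear character of $W$ or to a character containing the whole block $\sum_j\chi_j$; it never produces an isolated two-dimensional character. Since $m\ge 5$, the family $\mathcal{F}_0$ contains at least two distinct two-dimensional characters, so a $\bj$-induction setting up a bijection onto $\mathcal{F}_0$ (or onto $\mathcal{F}_0\otimes\sgn_W$) would have to realise each of these as the individual $\bj$-image of an irreducible of some $P_i$ — which by the observation is impossible. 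Hence $\mathcal{F}_0$ is not $\bj$-induced, it is cuspidal, and as all other families were shown non-cuspidal it is the unique cuspidal family in every regime.

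The step I expect to be the genuine obstacle is pinning down the notion of ``$\bj$-induced family'' tightly enough to run the degenerate cases correctly. The collapse of the two $\mathrm{A}_1$-families into a single two-element family, forced by a vanishing restricted parameter, is exactly what lets the paired linear families $\{1_W,\eps_1\}$ be $\bj$-induced via a genuine bijection of the underlying characters; with a careless reading of ``bijection'' one would instead declare these cuspidal and break the count. Keeping precise track of the restricted weight function on each $P_i$ — and therefore of how many families, and how many constructible representations per family, each parabolic contributes — is consequently the crux, and the rest of the verification is the routine matching of Table~\ref{dihedral_j_induction} against Table~\ref{dihedral_lusztig_families_table}.
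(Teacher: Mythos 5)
Your proposal is correct and follows essentially the same route as the paper: the paper's proof likewise records the Lusztig families of $P_1$ and $P_2$ (with the same case distinction according to whether the restricted parameter $b$, resp.\ $a$, vanishes) and then reads cuspidality off Table \ref{dihedral_j_induction}, merely leaving the verification implicit where you spell it out. Your additional observations — that the degenerate two-element linear families are genuinely $\bj$-induced via the collapsed regular-character family of the relevant $\mathrm{A}_1$, and that no $\bj_{P_i}^W$ of an irreducible is ever a single two-dimensional character, so the family carrying $\mathcal{F}$ cannot be induced — are exactly the details the paper's ``follows now easily'' elides.
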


\begin{proof}
The Lusztig families of the parabolic subgroup $P_i$ are $\lbrace 1_{P_i} \rbrace$ and $\lbrace \psi_i \rbrace$ if $b \neq 0$, respectively $a \neq 0$, and they are $\lbrace 1_{P_i},\psi_i \rbrace$ if $b=0$, respectively $a=0$. The claim follows now easily from the definition of cuspidality using the table of $\bj$-inductions.
\end{proof}

Comparison with the cuspidal Calogero–Moser families completes the proof of Theorem~\ref{thm:mainresultintro}.

\end{document}